\documentclass[a4paper, reqno, 11pt]{amsart}
\usepackage[foot]{amsaddr}

\DeclareSymbolFont{rsfscript}{OMS}{rsfs}{m}{n}
\DeclareSymbolFontAlphabet{\mathrsfs}{rsfscript}

\usepackage{amsfonts}
\usepackage[centertags]{amsmath}
\usepackage{amssymb}
\usepackage{amsthm}
\usepackage{graphicx}
\usepackage{caption}
\usepackage{enumerate}
\usepackage[textwidth=16cm, hmarginratio=1:1]{geometry}
\usepackage[colorlinks]{hyperref}
\usepackage{pstricks,pst-node,pst-text,pst-3d}
\usepackage{color}
\usepackage[all]{xy}
\usepackage{float}
\usepackage{amsmath}
\usepackage{longtable}
\usepackage{resizegather}
\usepackage{framed}
\usepackage{tikz}
\usetikzlibrary{fit}

\usepackage{marginnote}

\usepackage{rotating}
\usepackage{pdflscape}
\usepackage{mathdots}

\newtheorem{thm}{Theorem}[section]
\newtheorem{lem}[thm]{Lemma}
\newtheorem{cor}[thm]{Corollary}
\newtheorem{prop}[thm]{Proposition}
\newtheorem{problem}{Problem}[section]
\newtheorem*{multiprop}{Proposition 3.\arabic{thm}.i}

\theoremstyle{definition}

\theoremstyle{remark}

\newtheorem{example}[thm]{Example}
\newtheorem{step}{Step}
\newtheorem*{remks}{Remarks}
\newtheorem*{remk}{Remark}

\usepackage{multirow}

\DeclareMathOperator{\var}{\mathsf{var}}
\DeclareMathOperator{\con}{\mathsf{con}}
\DeclareMathOperator{\diag}{\mathsf{diag}}

\newcommand{\sgp}{semi\-group}
\newcommand{\sgps}{semi\-groups}
\def\malcev{\mathop{\hbox{$\bigcirc$\kern-9pt\raise1.25pt\hbox{\scriptsize$m$}\kern1.5pt}}}

\usepackage{datetime2}
\begin{document}
\title{Laws for triangular matrices}
\author{Xun Hu, Yanfeng Luo and Mikhail Volkov}

\address[Xun Hu, Yanfeng Luo]{\normalfont School of Mathematics and Statistics, Lanzhou University, Lanzhou 730000, P. R. China}
\address[Xun Hu]{\normalfont Department of Mathematics and Statistics, Chongqing Technology and Business University, Chongqing, $400067$, P. R. China}
\address[Mikhail Volkov]{\normalfont Institute of Natural Sciences and Mathematics, Ural Federal University, Lenina 51, 620000 Ekaterinburg, Russia}
\email{luoyf@lzu.edu.cn, m.v.volkov@urfu.ru}

\date{}
\renewcommand{\subjclassname}{\textup{2020} Mathematics Subject Classification}
\subjclass[2020]{15A24, 20M07, 08B05}

\begin{abstract}
We investigate identities satisfied by monoids of triangular matrices over fields and over additively idempotent semirings with 0 and 1, and exhibit several new instances in which these identities admit no finite axiomatization.
\end{abstract}

\keywords{Semiring; Triangular matrix; Monoid; Skew transposition; Finite Basis Problem}

\maketitle

\tableofcontents

\section*{Introduction}

Matrices\footnote{In this paper, the word `matrix' always means an $n\times n$ matrix; moreover, we always assume that $n\ge 2$.} and matrix operations constitute fundamental  tools in many branches of mathematics. Important properties of matrix operations are often expressed in the form of \emph{laws} or \emph{identities} such as the associative law for multiplication of matrices or the product law for transposition.

If one aims to classify matrix identities of a certain type, then a natural approach is to look for a collection of `basic' identities from which all other identities can be inferred. Such a collection is usually referred to as an \emph{identity basis} or simply a \emph{basis}. For instance, all identities of matrices over an infinite field that involve only multiplication are known to follow from the associative law. Thus, the associative law forms a basis of such `multiplicative' identities. On the other hand, some matrix semigroups may not admit any finite identity basis: a classical example here is the 6-element semigroup formed by the following $2\times 2$ matrices:
\begin{equation}
\label{eq:Brandt}
\begin{pmatrix} 0 & 0\\ 0 & 0\end{pmatrix},\
\begin{pmatrix} 1 & 0\\ 0 & 0\end{pmatrix},\
\begin{pmatrix} 0 & 1\\ 0 & 0\end{pmatrix},\
\begin{pmatrix} 0 & 0\\ 1 & 0\end{pmatrix},\
\begin{pmatrix} 0 & 0\\ 0 & 1\end{pmatrix},\
\begin{pmatrix} 1 & 0\\ 0 & 1\end{pmatrix}.
\end{equation}
(This example is known as the \emph{Brandt monoid} $B_2^1$; it is also sometimes referred to as the \emph{Perkins semigroup}, since the fact that $B_2^1$ has no finite identity basis was discovered by Peter Perkins~\cite{Per69}.) The Finite Basis Problem for matrix semigroups, that is, the natural question of classifying matrix semigroups into those that admit finite identity bases and those that do not, has been intensively investigated since the mid-1980s. The present paper contributes to a special instance of this problem, namely when semigroups of \textbf{triangular}\footnote{Here and throughout the paper, `triangular' means upper triangular.} matrices are considered.

Of course, multiplication is not the only natural operation on matrices. Arguably, the most important operations among the other matrix operations are addition and transposition. Studying matrix identities that involve multiplication and addition is a classical research direction, motivated by several important problems in geometry and algebra (see Shimshon Amitsur's survey~\cite{Ami74} for an overview of the origins of the theory) that has eventually led to the profound theory of PI-rings. For a comprehensive study of matrix laws that involve  multiplication and transposition, we refer to the article by Karl Auinger, Igor Dolinka, and the third-named author~\cite{ADV12}.

Since the set $T_n$ of all $n\times n$ triangular matrices is closed under addition, it makes perfect sense to study laws of triangular matrices that involve both multiplication and addition; such studies have indeed been carried out in ring and semiring theory; see, e.g., \cite{{Mal71,Pol80,Sid81}}. In contrast, the transpose of an upper triangular matrix is a lower triangular matrix, so the set $T_n$ is not closed under transposition. However, it is closed under \emph{skew transposition}, that is, reflection in the secondary diagonal. Identities involving both multiplication and skew transposition have been studied for certain semigroups of triangular matrices, and in the present paper we contribute to this direction as well.

The paper is structured as follows. Section~\ref{sec:preliminaries} collects the necessary preliminaries. Section~\ref{sec:plain} deals with the Finite Basis Problem for `plain' laws of triangular matrices (that is, those involving only multiplication), while Section~\ref{sec:unary} treats the problem for identities  involving both multiplication and skew transposition.

Our arguments use basic concepts of equational logic specialized to semigroups and semigroups with involution. We recall these concepts in Section~\ref{subsec:vocabulary}; a more general treatment, together with additional details, can be found in \cite[Chapter II]{BuSa81}. We assume that the reader is familiar with the notion of a nilpotent group. The few other semigroup and group notions that we need can be found in the early chapters of the standard textbooks \cite{Hall:1959,Howie:1995}. Otherwise, the paper is largely self-contained; in particular, no background in matrix theory is assumed.

\section{Preliminaries}
\label{sec:preliminaries}
\subsection{Which matrices and which matrix operations are considered}
\label{subsec:matrices}
The reader may observe that, in defining the Brandt monoid $B_2^1$, we have not specified the origin of the symbols 0 and 1 appearing in the six matrices in \eqref{eq:Brandt}. In fact, they may come from any field, or from the Boolean semiring $\mathbb{B}$, or, more generally, from any semiring $\mathbb{S}$ with $0\ne 1$ such that $0+0=0\cdot0=1\cdot0=0\cdot1=0$ and $0+1=1+0=1\cdot1=1$ (the value of the sum $1+1$ does not matter since this sum never arises when multiplying matrices in \eqref{eq:Brandt}). The latter configuration is the one we use in this paper as it allows for uniform treatment of matrices with entries of various kinds.

Thus, our base structures are semirings $\mathbb{S}=(S,+,\cdot)$ with two binary operations, addition $+$ and multiplication $\cdot$, such that
\begin{itemize}
\item[(S1)] $(S,+)$ is a commutative semigroup, that is,
\[
a+(b+c)=(a+b)+c\ \text{ and }\  a+b=b+a\  \text{ for all }\ a,b,c\in S;
\]
\item[(S2)] $(S,\cdot)$ is a semigroup, that is,
\[
a(bc)=(ab)c\ \text{ for all }\ a,b,c\in S;
\]
\item[(S3)] the left- and right-hand distributive laws hold, that is,
\[
a(b+c)=ab+ac\ \text{ and }\ (a+b)c=ac+bc\ \text{ for all }\ a,b,c\in S;
\]
\item[(S4)] there exists an element 0 that is neutral for addition and absorbing for multiplication, that is,
\[
0+a=a+0=a\ \text{ and }\  a\cdot0=0\cdot a=0\  \text{ for all }\ a\in S;
\]
\item[(S5)] there exists an element 1 different from 0 that is neutral for multiplication, that is,
\[
a\cdot1=1\cdot a=a\  \text{ for all }\ a\in S.
\]
\end{itemize}

Examples of semirings satisfying (S1)--(S5) include fields and, more generally, rings with $1\ne 0$, the Boolean semiring $\mathbb{B}:=(\{0,1\},+,\cdot)$ in which $1+1=1$, and, more generally, bounded distributive lattices. A very important example is the \emph{tropical semiring} $\mathbb{T}$. The latter is often defined on the set $\mathbb{R}$ of all real numbers augmented with the symbol $-\infty$ with the operations $\max\{a,b\}$ as addition and $a+b$ as multiplication. In this model, $-\infty$ is neutral for addition and absorbing for multiplication, while the number 0 is neutral for multiplication. To comply with the notation adopted in the present paper, we use another model of $\mathbb{T}$ in which the carrier is the set $\mathbb{R}_{\ge0}$ of all nonnegative reals, addition is still defined as $\max\{a,b\}$, and multiplication is the usual multiplication of reals. The two models are clearly isomorphic under the exponentiation map $\mathbb{R}\cup\{-\infty\}\to\mathbb{R}_{\ge0}$ defined by $a\mapsto 2^a$ for $a\in \mathbb{R}$ and $-\infty\mapsto 0$. In the model $(\mathbb{R}_{\ge0},\max,\cdot)$, the numbers 0 and 1 play their `normal' roles as in (S4) and (S5).

For a semiring $\mathbb{S}=(S,+,\cdot)$ satisfying (S1)--(S5), we denote by $M_n(\mathbb{S})$ the set of all $n\times n$ matrices with entries in $S$; this is a semigroup under the usual matrix multiplication
\[
\bigl(\alpha_{ij}\bigr)_{n\times n}\cdot\bigl(\beta_{ij}\bigr)_{n\times n}:=\left(\sum_{k=1}^n\alpha_{ik}\beta_{kj}\right)_{n\times n}.
\]
$M_n(\mathbb{S})$ is actually a monoid with the matrix
\[
E:=\begin{pmatrix}
    1&0&\cdots& 0&0\\
    0&1&\cdots& 0&0\\
    \vdots & \vdots & \ddots &\vdots & \vdots\\
    0&0&\cdots& 1&0\\
    0&0&\cdots& 0&1
\end{pmatrix}_{n\times n}
\]
as its neutral element for multiplication.

A matrix $\bigl(\alpha_{ij}\bigr)_{n\times n}\in M_n(\mathbb{S})$ is called (\emph{upper}) \emph{triangular} if $\alpha_{ij}=0$ for all $1\le j<i\le n$. The set $T_n(\mathbb{S})$ of all $n\times n$ triangular matrices forms a submonoid of $M_n(\mathbb{S})$. We will also consider several submonoids of $T_n(\mathbb{S})$, in particular, the following two, which can be defined over an arbitrary semiring $\mathbb{S}$:
\begin{itemize}
\item the monoid $U^0T_n(\mathbb{S})$ of triangular matrices whose diagonal entries are 0s or 1s,
    \[
    U^0T_n(\mathbb{S}):=\left\{\bigl(\alpha_{ij}\bigr)_{n\times n}\in T_n(\mathbb{S})\;\middle|\; \alpha_{ii}\in\{0,1\}\ \text{for}\ i=1,\dots,n\right\};
    \]
\item the monoid $UT_n(\mathbb{S})$ of \emph{unitriangular} matrices, that is, triangular matrices with all diagonal entries equal to 1,
    \[
    UT_n(\mathbb{S}):=\left\{\bigl(\alpha_{ij}\bigr)_{n\times n}\in T_n(\mathbb{S})\;\middle|\;  \alpha_{ii}=1\ \text{for}\ i=1,\dots,n\right\}.
    \]
\end{itemize}

For a matrix $A=\bigl(\alpha_{ij}\bigr)_{n\times n}$, we denote its \emph{skew transpose} $\bigl(\alpha_{n+1-j\, n+1-i}\bigr)_{n\times n}$ by $A^S$. `Geometrically', the skew transpose of a matrix is its reflection in the secondary diagonal; it is also easy to see that $A^S$ is equal to the usual transpose  $A^T$ conjugated by the matrix
\[
P:=\begin{pmatrix}
0 & 0 & \cdots & 0 & 1\\
0 & 0 & \cdots & 1 & 0\\
\vdots & \vdots & \iddots & \vdots & \vdots\\
0 & 1 & \cdots & 0 & 0\\
1 & 0 & \cdots & 0 & 0\\
\end{pmatrix}.
\]
The equalities $A^S=PA^TP$ and $P^2=E$ readily imply that skew transposition enjoys the same product law as the usual transposition: $(AB)^S=B^SA^S$. Since clearly $(A^S)^S=A$, we see that the skew transposition is an involution on the full matrix monoid $M_n(\mathbb{S})$. Although this involution appears in some studies of $M_n(\mathbb{S})$ (see, e.g. \cite{Lee76,Lee23b}), it plays an essentially secondary role there compared with the usual transposition. However, since skew transposition respects triangularity, it becomes the standard involution for semigroups consisting of triangular matrices. In particular, each of the monoids $UT_n(\mathbb{S})$, $U^0T_n(\mathbb{S})$,  $T_n(\mathbb{S})$ is closed under skew transposition.

For a matrix $A$, let $\diag(A)$ denote the diagonal matrix whose diagonal entries are those of $A$. The map $A\mapsto\diag(A)$ is an endomorphism of the monoid $T_n(\mathbb{S})$, and we will use this fact without explicit reference. Moreover, the map is compatible with skew transposition.

\subsection{Vocabulary of equational logic}
\label{subsec:vocabulary}
Even though the notions of identity and identity basis are intuitively clear, precise reasoning about them requires a formal framework. Such a framework, provided by equational logic, is succinctly presented, for example, in \cite[Chapter~II]{BuSa81}. For the reader’s convenience, we briefly review the basic vocabulary of equational logic in a form adapted to the needs of this paper. When doing so, we closely follow~\cite[Section 4.1]{AV20}.

We fix a countably infinite set $\mathcal{X}$ of \emph{variables} and let $\mathcal{X}^+$ denote the \emph{free semigroup} on $\mathcal{X}$, that is, the set of all non-empty (finite) words formed from the variables of $\mathcal{X}$, endowed with the binary operation of concatenation. Throughout the paper, we adopt the convention that $a,b,c,\dots, x,y,z$ (with or without subscripts) stand for variables, while $\mathbf{a}, \mathbf{b}, \mathbf{c}, \dots, \mathbf{x}, \mathbf{y}, \mathbf{z}$ (with or without subscripts) represent words. We occasionally employ the \emph{empty word}; whenever words under consideration are allowed to be empty, we state this explicitly.

If $S$ is a semigroup, any map $\varphi\colon\mathcal{X}\to S$ is called a \emph{substitution}. Each substitution admits a unique extension to a morphism $\mathcal{X}^+\to {S}$, still denoted by $\varphi$: if  $\mathbf{w}=w_1\cdots w_k$, where $w_1,\dots,w_k$ are variables, $\varphi(\mathbf{w}): =\varphi(w_1)\cdots\varphi(w_k)$. A \emph{semigroup identity}, or simply an \emph{identity} is a pair of words $(\mathbf{u},\mathbf{v})\in \mathcal{X}^+\times \mathcal{X}^+$, written as $\mathbf{u}\approx\mathbf{v}$. We say that a semigroup $S$ \emph{satisfies} $\mathbf{u}\approx\mathbf{v}$, or $\mathbf{u}\approx\mathbf{v}$ \emph{holds in} $S$ if $\varphi(\mathbf{u})=\varphi(\mathbf{v})$ for every substitution $\varphi\colon\mathcal{X}\to {S}$. An identity $\mathbf{u}\approx\mathbf{v}$ is \emph{nontrivial} if $\mathbf{u}$ and $\mathbf{v}$ are distinct words.

A unary operation $^*$ on a semigroup $S$ is an \emph{involution} if, for any $a,b\in {S}$,
\[
(ab)^*=b^*a^*\ \mbox{ and }\ (a^*)^*=a.
\]
In this case, $(S,{}^*)$ is called an \emph{involutory semigroup} or a \emph{semigroup with involution}; the semigroup $S$ is then referred to as the \emph{semigroup reduct} of $(S,{}^*)$. In the context of involutory semigroups, it is necessary to replace the free semigroup $\mathcal{X}^+$ by the \emph{free involutory semigroup} which can be realized as follows.
\begin{itemize}
    \item Let $\mathcal{X}^*:=\{x^*\mid x\in\mathcal{X}\}$ be a disjoint copy of $\mathcal{X}$ and form the union $\mathcal{X}\cup \mathcal{X}^*$.
    \item Extend the bijection $x\mapsto x^*$ to a bijection $\mathcal{X}\cup \mathcal{X}^*\to \mathcal{X}\cup \mathcal{X}^*$ by letting $(x^*)^*:=x$ for every $x\in \mathcal{X}$.
    \item Form the free semigroup $(\mathcal{X}\cup \mathcal{X}^*)^+$ on $\mathcal{X}\cup \mathcal{X}^*$.
    \item Let $\mathcal{I}(\mathcal{X}):=\left((\mathcal{X}\cup \mathcal{X}^*)^+,^*\right)$ where the unary operation $^*$ is defined by $y_1\cdots y_n\mapsto y_n^*\cdots y_1^*$ for all $y_i\in\mathcal{X}\cup \mathcal{X}^*$.
\end{itemize}
$\mathcal{I}(\mathcal{X})$ is the \emph{free involutory semigroup} on $\mathcal{X}$, and every map $\varphi\colon\mathcal{X}\to S$, where $(S,{}^*)$ is any involutory semigroup, extends uniquely to a morphism of involutory semigroups. Again, the notions of identity (of involutory semigroups) and of satisfaction of an identity are defined accordingly.

The following concepts and results are entirely parallel for semigroups and involutory semigroups. Hence, we formulate them only for the `plain' semigroup case. Given a semigroup $S$, we denote by $\mathsf{Id}(S)$ the set of all semigroup identities $\mathbf{u}\approx\mathbf{v}$ satisfied by $S$.

Given any collection $\Sigma$ of semigroup identities, we say that an identity   $\mathbf{u}\approx\mathbf{v}$ is a \emph{consequence of} $\Sigma$, or that $\Sigma$ \emph{implies} $\mathbf{u}\approx\mathbf{v}$, if every semigroup satisfying all identities in $\Sigma$ also satisfies $\mathbf{u}\approx \mathbf{v}$. Birkhoff's completeness theorem of equational logic (see \cite[Theorem 14.17]{BuSa81}) shows that this notion (which we have given a semantic definition) can be captured by a very transparent set of inference rules. These rules in fact formalize the most natural things one does with identities: substitution of a word for a variable, application of operations to identities (such as, say, multiplying both sides of an identity  on the right by the same word) and using symmetry and transitivity of equality.

An \emph{identity basis} for a semigroup $S$ is any set $\Sigma\subseteq\mathsf{Id}(S)$ such that every identity of $\mathsf{Id}(S)$ is a consequence of $\Sigma$. A semigroup $S$ is said to be \emph{finitely based} if it possesses a finite identity basis; otherwise $S$ is called \emph{nonfinitely based}.

With the main definitions of being finitely based and nonfinitely based in hand, we could already begin exploring our main theme, the Finite Basis Problem for semigroups of triangular matrices. However, in certain cases one can establish results that are essentially stronger than the mere presence or absence of a finite identity basis. These stronger results involve gradations of finite or infinite basedness, formulated in terms of semigroup classes called \emph{varieties}.

The class of all semigroups satisfying all identities from a given set $\Sigma$ of semigroup identities is called the \emph{variety defined by $\Sigma$} and is denoted by  $\var\Sigma$. It is easy to see that the satisfaction of an identity is inherited by forming direct products, taking subsemigroups and morphic images so that each variety is closed under these operators. In fact, varieties can be characterized by this closure property (the HSP-theorem, see \cite[Theorem 11.9]{BuSa81}).

Semigroup varieties are partially ordered under class containment (and even form a lattice that is the subject of extensive investigation; see the surveys \cite{SVV09,GLV22}). Given a semigroup $S$, the variety defined by $\mathsf{Id}(S)$ is called the \emph{variety generated by $S$} (it is the smallest variety that contains $S$); we denote this variety by $\var S$. Similarly, for a class $\mathbf{C}$ of semigroups, $\var\mathbf{C}$ stands for the \emph{variety generated by $\mathbf{C}$}, that is, the smallest variety that contains  $\mathbf{C}$.

 A variety $\mathbf{V}$ is said to be \emph{finitely based} if $\mathbf{V}=\var\Sigma$ for some finite set $\Sigma$ of identities; otherwise it is called \emph{nonfinitely based}. Observe that a semigroup and the variety it generates are either both finitely based or both nonfinitely based.

A variety $\mathbf{V}$ is \emph{hereditarily finitely based} if every semigroup contained in $\mathbf{V}$ is finitely based. We say that a semigroup $S$ is \emph{hereditarily finitely based} if the variety $\var S$ has this property. This is the strengthening of finite basedness that will be encountered in this paper.

The definitions of the two strengthenings of infinite basedness that we meet below involve two important properties of varieties. A variety $\mathbf{V}$ is said to be \emph{finitely generated} if $\mathbf{V}=\var S$ for some finite semigroup $S$, and \emph{locally finite} if all finitely generated semigroups in $\mathbf{V}$ are finite.

Now, a finitely generated variety $\mathbf{V}$ is said to be \emph{strongly nonfinitely based} if $\mathbf{V}$ is not contained in any finitely based, finitely generated variety. Similarly, a locally finite variety $\mathbf{V}$ is \emph{inherently nonfinitely based} if $\mathbf{V}$ is not contained in any finitely based, locally finite variety. Each of these two properties is stronger (in fact, much stronger) than that of being nonfinitely based as it is quite `contagious'. Indeed, if $\mathbf{V}$ is strongly or inherently nonfinitely based, then so are all finitely generated (respectively, locally finite) varieties containing $\mathbf{V}$. A semigroup $S$ is \emph{strongly} or \emph{inherently nonfinitely based} if the variety $\var S$ has this property.

On several occasions, we will use the following classical fact due to Garret Birkhoff:
\begin{prop}\cite[Theorem II.10.16]{BuSa81}\label{prop:birkhoff}
Every finitely generated variety is locally finite.
\end{prop}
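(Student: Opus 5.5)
The plan is the classical argument that a variety generated by a finite algebra is locally finite. Let $\mathbf{V}=\var S$ with $S$ finite, say $|S|=m$. By the HSP-theorem, every member of $\mathbf{V}$ is a morphic image of a subsemigroup of a direct power of $S$. It therefore suffices to show that the relatively free semigroup $F_k(\mathbf{V})$ on $k$ generators is finite for every $k$. Indeed, any $k$-generated semigroup in $\mathbf{V}$ is a morphic image of $F_k(\mathbf{V})$, since it satisfies all identities of $S$.

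To bound $F_k(\mathbf{V})$, I will realize it concretely as follows. Consider the set $S^{S^k}$ of all maps $S^k\to S$; this is a direct power of $S$ with $m^k$ factors, made into a semigroup under pointwise multiplication. It has cardinality $m^{m^k}$. For each variable $x_i$ with $1\le i\le k$, let $\pi_i\in S^{S^k}$ be the $i$-th projection.

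Let $F$ be the subsemigroup generated by $\pi_1,\dots,\pi_k$. Then $F$ lies in $\mathbf{V}$, being a subsemigroup of a direct power of $S$, and $|F|\le m^{m^k}$. A word $\mathbf{w}(x_1,\dots,x_k)$ is sent to the term function $(s_1,\dots,s_k)\mapsto\mathbf{w}(s_1,\dots,s_k)$. Two words have the same image in $F$ exactly when $S$ satisfies the identity between them, which is the same as $\mathbf{V}$ satisfying it. Hence $F\cong F_k(\mathbf{V})$.

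Now take any semigroup $T\in\mathbf{V}$ generated by $t_1,\dots,t_k$. The assignment $\pi_i\mapsto t_i$ extends to a well-defined surjective morphism $F\to T$. To see this, note that if two words agree on $F$, they form an identity of $S$, hence of $\mathbf{V}$, hence of $T$. Consequently $|T|\le m^{m^k}<\infty$.

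The only step needing care is the well-definedness of the morphism $F\to T$, that is, that equality of term functions on $S$ transfers to $T$. This follows directly from $\mathsf{Id}(S)\subseteq\mathsf{Id}(T)$, since $T\in\var S$ means precisely that $T$ satisfies all identities of $S$. The same argument works verbatim for involutory semigroups, with words taken in $\mathcal{I}(\mathcal{X})$.
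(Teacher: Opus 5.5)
Your proof is correct. It is the classical argument (the subsemigroup of $S^{S^k}$ generated by the projections maps onto every $k$-generated $T\in\var S$, giving $|T|\le m^{m^k}$), which is what lies behind the result the paper simply cites from Burris and Sankappanavar without proof. The appeal to the HSP-theorem is unnecessary, since by the paper's definition $T\in\var S$ already means $\mathsf{Id}(S)\subseteq\mathsf{Id}(T)$, and that is all your well-definedness step uses.
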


By Proposition~\ref{prop:birkhoff}, if a finite semigroup is inherently nonfinitely based, then it is also strongly nonfinitely based. The question of whether the converse is true was posed in \cite{Vol00,Vol01} and remained open for more than 20 years. Only recently have examples of strongly nonfinitely based  but not inherently nonfinitely based finite semigroups been found~\cite{GSV25}; one of these examples is the monoid of all $4\times4$ triangular matrices over the 2-element field~\cite[Theorem 11]{GSV25}.

The final concept we need is that of equational equivalence. Two semigroups are \emph{equationally equivalent} if they satisfy the same identities, or equivalently, they generate the same variety. Clearly, equationally equivalent semigroups behave the same with respect to the Finite Basis Problem: if one of them is finitely based, so is the other, and so on.

We conclude with a minor but, we believe, necessary clarification. All the semigroups whose Finite Basis Problem we study in this paper are in fact monoids, and we consistently refer to them as such. However, since we do not include the nullary operation corresponding to the neutral element in the signature, we consider them in the semigroup signature and hence study their \textbf{semigroup} identities rather than their monoid identities. This choice does not affect the answer to the Finite Basis Problem: indeed, it can easily be verified that a monoid has a finite basis of monoid identities if and only if its semigroup identities admit a finite basis. Some of the monoids under consideration are groups; again, we study their \textbf{semigroup} identities rather than their group identities. For groups, however, the answer to the Finite Basis Problem may depend essentially on the convention we adopt.

\section{Laws involving multiplication}
\label{sec:plain}
\subsection{Summary of known results and our contribution}
\label{subsec:summary}
To place our results in a proper context, we have composed Table~\ref{tab:plainstatus}, which provides a concise overview of the known facts about the Finite Basis Problem for several matrix monoids. It also incorporates some of the findings of the present paper (highlighted in blue), namely those that deal with matrix monoids defined over an arbitrary semiring.

The rows of Table~\ref{tab:plainstatus} are labeled by the underlying semirings $\mathbb{S}$ while the columns correspond to the matrix monoids $T_n(\mathbb{S})$, $U^0T_n(\mathbb{S})$, and $UT_n(\mathbb{S})$; for the sake of completeness, we have also included a column for the full matrix monoid $M_n(\mathbb{S})$. We have merged cells in a row labeled by a semiring whenever the corresponding matrix monoids over this semiring coincide.

In Table \ref{tab:plainstatus}, we always refer to the strongest result currently known (for example, if a monoid was shown to be NFB in~[X] and later proved to be SNFB in~[Y], we cite only~[Y]).

The reader will notice empty cells as well as some cells containing incomplete information when the results gathered in a cell do not cover all possible monoids corresponding to it. This means that, to the best of our knowledge, the Finite Basis Problem is open for monoids about which the table provides no information. An explicit list of open problems is given in Section~\ref{subsec:plainopen}.

\begin{landscape}
\begin{table}
\caption{Current status of the Finite Basis Problem for the monoids $M_n(\mathbb{S})$, $T_n(\mathbb{S})$, $U^0T_n(\mathbb{S})$, and $UT_n(\mathbb{S})$. Acronyms: FB = finitely based; HFB = hereditarily finitely based; NFB = nonfinitely based; SNFB = strongly nonfinitely based; INFB = inherently nonfinitely based. Footnote marks 1)--6) point to comments on the next page.}
    \label{tab:plainstatus}
 {\centering
    \begin{tabular}{|c|c|c|c|c|}
   \hline
   \rule[-8pt]{0pt}{20pt} Semiring $\mathbb{S}$ & $M_n(\mathbb{S})$ & $T_n(\mathbb{S})$ & $U^0T_n(\mathbb{S})$ & $UT_n(\mathbb{S})$\\
   \hline
    \rule[-2pt]{0pt}{16pt}  Boolean  semiring $\mathbb{B}$  & INFB\cite{Sap87a}$^{1)}$ & \multicolumn{2}{c|}{INFB for $n\ge4$ \cite{VG04} and $n=3$ \cite{LL11}} & SNFB for $n\ge5$ \cite{Vol04,GSV25}$^{2)}$\\
    \rule[-8pt]{0pt}{16pt}  &  & \multicolumn{2}{c|}{FB for $n=2$ \cite{LL11}} & FB for $n\le4$ \cite{Vol04}\\
   \hline
    \rule[-8pt]{0pt}{20pt} 2-element field $\mathbb{F}_2$ & INFB \cite{Sap87a} & \multicolumn{2}{c|}{SNFB for $n\ge4$ \cite{GSV25}; HFB for $n=2$ \cite{ZLL13}} & HFB$^{3)}$\\
    \hline
    \rule[-2pt]{0pt}{16pt} Finite field &  INFB \cite{Sap87a} & INFB  & SNFB for $n\ge4$ ({\blue Theorem~\ref{thm:strongU0T})} &  HFB$^{3)}$\\
    \rule[-8pt]{0pt}{16pt} with $\ge3$ elements &  & for $n\ge4$ \cite{VG03} & \rule[-4pt]{0pt}{16pt} FB for $n=2$ \cite{CHL16} & \\
    \hline
    \rule[-2pt]{0pt}{16pt}  Infinite field of &  FB$^{4)}$  & FB$^{4)}$ &  FB for $n=2$ \cite{CHL16} & HFB{}$^{3)}$\\
    \rule[-8pt]{0pt}{16pt} finite characteristic & & & & \\
    \hline
    \rule[-2pt]{0pt}{16pt} $\mathbb{Z}$ or field of &  FB$^{4)}$  & FB$^{4)}$ & NFB for $n=3$ \cite{Vol15} & NFB for $n\ge3$\\
    \rule[-8pt]{0pt}{16pt} characteristic 0 & &  &  FB for $n=2$ \cite{CHL16} & FB for $n=2$ \cite{Sap87a}$^{5)}$\\
    \hline
    \rule[-2pt]{0pt}{16pt} Tropical semiring $\mathbb{T}$  & & NFB for $n=2$ \cite{CHLS16} & INFB for $n\ge3$ & SNFB for $n\ge5$ \cite{JF19,GSV25}\\
    \rule[-8pt]{0pt}{16pt}   &  & and $n=3$ \cite{HZL21} & FB for $n=2$ ({\blue Corollary~\ref{cor:inherentU0T}}) & FB for $n\le4$ \cite{JF19}$^{6)}$\\
    \hline
    \rule{0pt}{16pt} Bounded & INFB & INFB for $n\ge3$ & INFB for $n\ge3$ & SNFB for $n\ge5$ \cite{JF19,GSV25} \\
    \rule[-4pt]{0pt}{14pt} distributive  lattice & ({\blue Corollary~\ref{cor:lattice}})  &  FB for $n=2$ & FB for $n=2$ ({\blue Corollary~\ref{cor:inherentU0T}}) & FB for $n\le4$ \cite{JF19}$^{6)}$\\
    \rule[-8pt]{0pt}{18pt} & &({\blue Corollary~\ref{cor:trianglelattice}})&&\\
    \hline
    \rule{0pt}{16pt} Semiring with & & &  INFB for $n\ge3$, & SNFB for $n\ge5$ \cite{JF19,GSV25} \\
    \rule[-8pt]{0pt}{18pt} idempotent addition & & & FB for $n=2$ ({\blue Corollary~\ref{cor:inherentU0T}}) & FB for $n\le4$ \cite{JF19}$^{6)}$\\
    \hline
    \end{tabular}\par}
\end{table}
\end{landscape}

\begin{minipage}{0.96\linewidth}
\small
{}$^{1)}$ That the monoid $M_n(\mathbb{B})$ is inherently nonfinitely based is not explicitly stated in \cite{Sap87a}, but the Brandt monoid $B_2^1$ embeds into $M_2(\mathbb{B})$ whence $\var B_2^1\subseteq\var M_2(\mathbb{B})\subseteq\var M_n(\mathbb{B})$. Since the monoid $B_2^1$ is inherently nonfinitely based \cite[Corollary 6.1]{Sap87a}, so is the monoid $M_n(\mathbb{B})$.

\smallskip

{}$^{2)}$ In \cite{Vol04}, it is shown that the monoid $UT_n(\mathbb{B})$ is equationally equivalent to the \emph{Catalan monoid} $C_n$, that is, the monoid of all order preserving and extensive selfmaps on the chain $1<2<\dots<n$, and for $n\ge 5$, the latter monoid is strongly nonfinitely based by~\cite[Theorem~8]{GSV25} combined with~\cite[Proposition~2.7]{SV23}.

\smallskip

{}$^{3)}$ For every field $F$ of finite characteristic $p$, the monoid $UT_n(F)$ is a nilpotent group of exponent $p^{\lceil\log_p n\rceil}$. For any group $(G,\cdot,{}^{-1})$ of exponent $e>1$, every basis $\Gamma$ of its group identities can be converted into a basis for the semigroup identities of the semigroup $G$ by replacing, for every variable $x$, each occurrence of $x^{-1}$ with $x^{e-1}$ in all identities in $\Gamma$ and adding the identities $x^ey\approx y\approx yx^e$. Clearly, the resulting identity basis for $G$ is finite whenever $\Gamma$ is finite. The group identities of every nilpotent group have a finite basis by Roger Lyndon's theorem~\cite[Theorem 34.14]{Neu67}. Since the variety $\var UT_n(F)$ consists of  nilpotent groups of exponent dividing $p^{\lceil\log_p n \rceil}$, it is hereditarily finitely based.

\smallskip

{}$^{4)}$ That all identities of the monoid $M_n(R)$, where $R$ is $\mathbb{Z}$ or an infinite field of arbitrary characteristic, follow from the associative law is stated in \cite[Lemma~2]{GM78}. The proof of \cite[Lemma~2]{GM78} operates with triangular matrices and therefore applies to the monoid $T_n(R)$ as well. Explicitly, the fact that all identities of $T_2(R)$ follow from the associative law is stated in~\cite[Theorem 4.2]{Kam22}.

\smallskip

$^{5)}$ A nilpotent group has a finite basis for its semigroup identities if and only if it is either abelian or of finite exponent \cite[Proposition 5]{Sap87a}. The monoid $UT_n(R)$, where $R$ is $\mathbb{Z}$ or a field of  characteristic 0, is a nilpotent group of infinite exponent and is non-abelian for $n\ge3$.

\smallskip

{}$^{6)}$ By \cite[Corollary~3.4]{JF19}, for every nontrivial semiring $\mathbb{S}$ with idempotent addition (in particular, for the tropical semiring or a bounded distributive lattice), the monoid $UT_n(\mathbb{S})$ is equationally equivalent to the Catalan monoid $C_n$, which is strongly nonfinitely based for $n\ge 5$ and finitely based for $n\le4$; see Comment {}$^{2)}$ above. We note that in \cite{JF19}, semirings are generally assumed to have commutative multiplication. While this assumption is essential for some results in \cite{JF19}, a closer inspection of the arguments leading to Corollary~3.4 shows that it is unnecessary for the conclusion to hold.
\end{minipage}

\bigskip

\subsection{Proofs}
\subsubsection{Matrices over finite semirings}
\label{subsubsec:finite fields}
We begin with a result that applies to matrices over arbitrary finite semirings.\footnote{The authors are grateful to Marianne Johnson for drawing their attention to the fact that the argument applied to matrices over finite fields in an earlier version of the present paper works equally well in this general situation.} For the 2-element field $\mathbb{F}_2$, this result was established in~\cite[Theorem 11]{GSV25}. The proof in the general case exploits the same idea as in that special case. However, here we depart from a strongly nonfinitely based monoid that is much smaller than the one used in \cite{GSV25} and therefore appears to be of independent interest.
\begin{thm}\label{thm:strongU0T}
Let $\mathbb{S}$ be a finite semiring. For each $n\ge 4$, the monoid $U^0T_n(\mathbb{S})$ is strongly nonfinitely based.
\end{thm}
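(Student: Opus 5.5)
The plan is to exhibit a small strongly nonfinitely based monoid $M$ that lies in $\var U^0T_4(\mathbb{S})$ for every finite semiring $\mathbb{S}$. Since $U^0T_n(\mathbb{S})$ is finite, $\var U^0T_n(\mathbb{S})$ is finitely generated. Strong nonfinite basedness is inherited by finitely generated varieties containing a strongly nonfinitely based one. Hence it will suffice to show $M\in\var U^0T_4(\mathbb{S})\subseteq\var U^0T_n(\mathbb{S})$ for $n\ge 4$. The inclusion $U^0T_4\hookrightarrow U^0T_n$ is obtained by placing a $4\times4$ block in the upper left corner and the identity elsewhere.

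For the source of strong nonfinite basedness I would follow \cite{GSV25}. There the proof for $T_4(\mathbb{F}_2)$ rests on a concrete finite monoid: a Catalan-type or Rees-quotient monoid $S(W)$ built from a word $W$ such as $xyx$ or $xyxy$, or a relative of the Catalan monoid $C_4$. That monoid is shown to be strongly nonfinitely based via a criterion that combines the (non-)INFB behaviour with a nonfinite basis argument relative to every finitely generated supervariety. I would take as the target a monoid $M$ generated by a few $0/1$ triangular $4\times4$ matrices. All its entries, including those of its products, should lie in $\{0,1\}$ and be computed without ever forming a sum $1+1$. This makes $M$ independent of $\mathbb{S}$, in the spirit of the discussion of \eqref{eq:Brandt}. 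Concretely, $M$ would consist of the matrix units $e_{ij}$ ($i\le j$), the idempotents $e_{ii}$ together with suitable diagonal $0/1$ matrices, and $E$. These generate a $0$-$1$ monoid inside $U^0T_4(\mathbb{S})$ that plays the role of the "much smaller" strongly nonfinitely based monoid mentioned before the theorem. A natural candidate is a monoid whose variety contains the relevant $S(W)$ from \cite{GSV25}, for instance by realising the word-automaton structure of $S(W)$ through paths in the chain $1\to2\to3\to4$.

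The key steps are as follows. First, fix $M$ as an explicit submonoid of $U^0T_4(\mathbb{S})$ generated by $0/1$ matrices, and check that every product stays a $0/1$ matrix with at most one nonzero contribution per entry. This gives an isomorphism type independent of $\mathbb{S}$. Second, show that $M$, or a monoid in $\var M$, is strongly nonfinitely based by invoking the corresponding theorem from \cite{GSV25} (Theorem 8 or the argument behind Theorem 11), possibly together with \cite[Proposition~2.7]{SV23}. Third, conclude by the contagion property stated in Section~\ref{subsec:vocabulary} and Proposition~\ref{prop:birkhoff}.

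The main obstacle will be the second step: identifying precisely which monoid from \cite{GSV25} is being realised and verifying that it embeds into, or divides, $U^0T_4(\mathbb{S})$ using only entries $0,1$ without sums. Triangularity restricts which automata on four states can be modelled. I would first try to realise the monoid of all extensive order-preserving partial behaviours on $\{1,2,3,4\}$ with a zero, that is, $C_4$ with an adjoined zero-type diagonal. Failing that, I would redo the \cite{GSV25} argument for $T_4(\mathbb{F}_2)$, which used only $0/1$ matrices, and check that it never uses $1+1$ and never uses non-$\{0,1\}$ diagonal entries.
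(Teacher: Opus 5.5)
Your reduction is the right one and is also the paper's. You find a strongly nonfinitely based monoid of $0/1$ matrices inside $U^0T_4(\mathbb{S})$ whose products never require $1+1$. Then you use the embedding $U^0T_4(\mathbb{S})\hookrightarrow U^0T_n(\mathbb{S})$ and the fact that strong nonfinite basedness passes to finitely generated supervarieties.

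The gap is that the step carrying all the content is never done: naming such a monoid and proving it is strongly nonfinitely based. Moreover, the one concrete candidate you give fails. A monoid generated by matrix units $e_{ij}$ and diagonal $0/1$ matrices consists only of diagonal $0/1$ matrices and single units $e_{ij}$ with $i<j$. So it lies in $\var N$ for $N:=\{\text{diagonal } 0/1 \text{ matrices}\}\cup\{e_{ij}\mid i<j\}$. One checks that $N$ satisfies $\mathbf{u}\approx\mathbf{v}$ exactly when both sides have the same content, the same linear letters in the same order, and each repeated letter occurs in the same intervals determined by the linear letters. In particular $N$ satisfies $xtx\approx x^2tx$, so already the sparse word $xtx$ is not an isoterm. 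This description is also finitely axiomatizable: commuting adjacent repeated letters, $x^3\approx x^2$, $xzx^2\approx xzx\approx x^2zx$ and finitely many variants reach a normal form. Hence your $M$ lies in a finitely based, finitely generated variety and cannot be strongly nonfinitely based. Likewise $C_4$, with or without an adjoined zero, is finitely based; $C_n$ is strongly nonfinitely based only for $n\ge 5$. Finally, the criterion you need from \cite{GSV25} is simply Proposition~\ref{prop:sparse}: if every sparse word is an isoterm, the monoid is strongly nonfinitely based.

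The missing idea is an element that fixes some states while moving others, such as $a=e_{11}+e_{23}+e_{44}$. Take $b=e_{12}$ and $c=e_{34}$. A letter sent to $a$ may then occur on either side of linear letters sent to $b$ and $c$ without killing the product, since $ab=b$ and $ca=c$. Two occurrences between them do kill it, since $ba^2=a^2c=O$. From such matrices the paper builds the $10$-element monoid $G_4=\{E,O,e,a,a^2,b,ba,c,ac,bac\}$, where $e=e_{11}+e_{22}+e_{44}$. It verifies by explicit substitutions that every sparse word is an isoterm for $G_4$ and then applies Proposition~\ref{prop:sparse}.

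Your fallback can also be turned into a proof once you name the right monoid. The one behind \cite[Theorem 11]{GSV25} is $IC_4$, which is what your ``order-preserving extensive partial maps'' would contain. Represent a partial injection $\alpha$ by the matrix with $1$ in position $(i,\alpha(i))$. This gives upper triangular matrices (by extensivity) with $0/1$ diagonal. Since every row and column has at most one $1$, each entry of a product is a sum with at most one nonzero term. So $IC_4$ embeds in $U^0T_4(\mathbb{S})$ for every semiring, and \cite[Theorem 8]{GSV25} finishes.

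That route is shorter but relies entirely on \cite{GSV25}. The paper's route is self-contained modulo Proposition~\ref{prop:sparse} and yields a much smaller monoid generating a strictly smaller variety.
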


Consider the following eight matrices from $U^0T_4(\mathbb{S})$:

\begin{equation}
\label{eq:G4}
\begin{tabular}{cccc}
$\left(\begin{matrix} 1&0&0&0\\0&1&0&0\\0&0&0&0\\0&0&0&1 \end{matrix}\right)$,
&
$\left(\begin{matrix} 1&0&0&0\\0&0&1&0\\0&0&0&0\\0&0&0&1 \end{matrix}\right)$,
&
$\left(\begin{matrix} 1&0&0&0\\0&0&0&0\\0&0&0&0\\0&0&0&1 \end{matrix}\right)$,
&
$\left(\begin{matrix} 0&1&0&0\\0&0&0&0\\0&0&0&0\\0&0&0&0 \end{matrix}\right)$,
\\
\rule{0cm}{.3cm}$e$&$a$&$a^2$&$b$\\[5pt]
$\left(\begin{matrix} 0&0&1&0\\0&0&0&0\\0&0&0&0\\0&0&0&0 \end{matrix}\right)$,
&
$\left(\begin{matrix} 0&0&0&0\\0&0&0&0\\0&0&0&1\\0&0&0&0 \end{matrix}\right)$,
&
$\left(\begin{matrix} 0&0&0&0\\0&0&0&1\\0&0&0&0\\0&0&0&0 \end{matrix}\right)$,
&
$\left(\begin{matrix} 0&0&0&1\\0&0&0&0\\0&0&0&0\\0&0&0&0 \end{matrix}\right)$.
\\
\rule{0cm}{.3cm}$ba$&$c$&$ac$&$bac$
\end{tabular}
\end{equation}
It is easy to verify that, together with the  $4\times4$ identity matrix $E$ and the $4\times4$ zero  matrix $O$, the eight matrices in~\eqref{eq:G4} form a submonoid of $U^0T_4(\mathbb{S})$; we denote this monoid by $G_4$.

The definition of strong nonfinite basedness ensures that a finite monoid is strongly nonfinitely based whenever it possesses a strongly nonfinitely based subsemigroup. Since the monoid $U^0T_4(\mathbb{S})$ naturally embeds into $U^0T_n(\mathbb{S})$ for each $n\ge 4$, it suffices, in order to obtain Theorem~\ref{thm:strongU0T}, to prove the following:

\begin{prop}\label{prop:G4}
The monoid $G_4$ is strongly nonfinitely based.
\end{prop}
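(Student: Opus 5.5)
The plan is to use the method of~\cite{GSV25} directly: exhibit, for each finite semigroup $T$ with $G_4\in\var T$ and each $k$, an identity of $T$ that does not follow from the identities of $T$ in at most $k$ variables. By Proposition~\ref{prop:birkhoff} and the definition, this shows that no finitely based, finitely generated variety contains $G_4$.

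Fix such a $T$. Since $T$ is finite, it satisfies $x^{m}\approx x^{m+p}$ for some $m,p\ge1$. We may take $m$ large and $p$ a multiple of all relevant periods. From $T$ we build a family of words $\mathbf{u}_n,\mathbf{v}_n$ in $n$ variables such that $\mathbf{u}_n\approx\mathbf{v}_n$ holds in $T$. The construction exploits the idempotent $a^2=\operatorname{diag}(1,0,0,1)$ and the path $1\xrightarrow{b}2\xrightarrow{a}3\xrightarrow{c}4$ that produces $bac$. As in~\cite{GSV25}, I would take Zimin-like words in which many factors are raised to the power $m$ or $m+p$. For instance, $\mathbf{u}_n=x_1^{m}\cdots x_n^{m}\,\mathbf{z}\,x_n^{m+p}\cdots x_1^{m+p}$ and $\mathbf{v}_n$ is obtained by shifting $p$ copies between two symmetric positions. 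Then $\mathbf{u}_n\approx\mathbf{v}_n$ is a formal consequence of $x^m\approx x^{m+p}$, so it holds in $T$. I would calibrate the exact shape by checking on $G_4$ which words are \emph{isoterms}, meaning they are not equal in $G_4$ to any other word. The guiding principle is that only one pattern of the form ``$b\cdot a\cdot c$ surrounded by $e$-like idempotents'' evaluates to $bac$, while everything else collapses to $O$.

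The non-derivability step is the core. Suppose $\mathbf{u}_n\approx\mathbf{v}_n$ followed from the $k$-variable identities of $T$, with $n\gg k$. Then there is a derivation in which one step applies a $k$-variable identity $\mathbf{s}\approx\mathbf{t}$ of $T$. This step changes $\mathbf{u}_n$, or a word sharing its relevant combinatorial invariant, into a word where the invariant has changed. Every identity of $T$ holds in $G_4$, so $\mathbf{s}\approx\mathbf{t}$ holds in $G_4$. Using substitutions into $\{e,a,a^2,b,ba,c,ac,bac,E,O\}$, I would extract from $G_4$ a description of the identities it satisfies. Such a description would record the content, the occurrences of letters as $1$ versus $\ge2$, and the relative order of certain occurrences ``between'' idempotent blocks. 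Since $\mathbf{s}$ has only $k<n$ variables, under the substitution realizing the step some variable of $\mathbf{s}$ must be sent to a long factor covering several $x_i$. A counting argument should then show that the $G_4$-invariant of this factor cannot change. Consequently no step of the derivation moves the invariant separating $\mathbf{u}_n$ from $\mathbf{v}_n$, which is a contradiction.

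I expect the main obstacle to be this last step. One must find the right invariant: it has to be preserved by every identity of $G_4$ in few variables applied inside $\mathbf{u}_n$, yet it must distinguish $\mathbf{u}_n$ from $\mathbf{v}_n$. The invariant must also be insensitive to the choice of $T$, that is, to $m$ and $p$. I would first try to prove a lemma characterizing the identities of $G_4$ via the ``$bac$-path'' evaluations. This lemma would say that $\mathbf{s}\approx\mathbf{t}$ holds in $G_4$ if and only if the two sides have the same set of scattered subwords of the shape (idempotent context, letter, letter, letter, idempotent context) with the multiplicity-one conditions forced by $a^3=a^2$. I would then mimic the corresponding argument of~\cite{GSV25} for $U^0T_4(\mathbb{F}_2)$, which becomes simpler because $G_4$ is much smaller.
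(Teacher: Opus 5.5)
There is a genuine gap; in fact, the plan as written defeats itself. By your own description, the candidate witnesses $\mathbf{u}_n\approx\mathbf{v}_n$ are formal consequences of $x^m\approx x^{m+p}$, which is a \emph{one-variable} identity of $T$. They are therefore derivable from the identities of $T$ in at most $k$ variables for every $k\ge1$, so the non-derivability step cannot succeed for this family.

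Beyond that, the core of the argument is only announced, never supplied: the invariant, the characterization of the identities of $G_4$ (unproven, and not needed), and the ``counting argument'' are all missing. The claim that some variable of $\mathbf{s}$ must be sent to a long factor covering several $x_i$ is also unjustified. A derivation step may act on a short factor, with every variable of $\mathbf{s}$ sent to a single letter.

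The missing idea is that you do not need to rebuild the machinery of \cite{GSV25}. Proposition~\ref{prop:sparse} already says that a finite monoid is strongly nonfinitely based once every sparse word is an isoterm for it. The proof then reduces to showing that if $G_4$ satisfies $\mathbf{u}\approx\mathbf{v}$ with $\mathbf{u}$ sparse, then $\mathbf{v}=\mathbf{u}$. This is a short list of explicit substitutions, with unnamed variables sent to $E$:
\begin{itemize}
\item $x\mapsto O$ gives $\con(\mathbf{u})=\con(\mathbf{v})$.
\item $x\mapsto b$, using $b^2=O$, shows that both words have the same linear letters.
\item $t_1\mapsto ba$, $t_2\mapsto c$, using $cb=O$, shows that the linear letters occur in the same order.
\item Cut both words at the linear letters $t_1,\dots,t_k$ into blocks $\mathbf{a}_i$, $\mathbf{b}_i$ of repeated letters. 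Flank the $i$-th block by suitable images of $t_i$, $t_{i+1}$ chosen among $b$, $ba$, $c$, $ac$, and send the block letters to $a$ or $e$. The relations $ab=eb=b$, $ca=ce=c$, $ea=a$, $ae=a^2$, $ba^2=a^2c=O$ then show, in turn, that corresponding blocks have equal content, that every letter occurs at most once in $\mathbf{b}_i$ (it does so in $\mathbf{a}_i$ by sparseness), and that the letters occur in the same order within corresponding blocks.
\end{itemize}
Your ``$bac$-path'' intuition is exactly what drives these substitutions. It should be spent on verifying the isoterm property, not on searching for a derivation invariant.
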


The proof of Proposition~\ref{prop:G4} relies on a sufficient condition for strong nonfinite basedness established in~\cite{GSV25}. To state this condition, we need a few notions concerning semigroup words.

The set of all variables occurring in a word $\mathbf w$ is called the \emph{content} of $\mathbf w$ and is denoted by $\con(\mathbf w)$. The content of the empty word is defined as the empty set.

A variable $x\in\con(\mathbf w)$ is said to be \emph{linear} if $x$ occurs exactly once in $\mathbf w$ and \emph{repeated} otherwise. A word $\mathbf w$ is said to be \emph{sparse} if every two occurrences of a repeated variable in $\mathbf w$ sandwich some linear variable. A word $\mathbf u$ is called an \emph{isoterm for} a semigroup $S$ if the only word $\mathbf v$ such that $S$ satisfies the identity $\mathbf u\approx\mathbf v$ is the word $\mathbf u$ itself.

\begin{prop}[{\cite[Remark 9]{GSV25}}]
\label{prop:sparse}
If every sparse word is an isoterm for a finite monoid, then the monoid is strongly nonfinitely based.
\end{prop}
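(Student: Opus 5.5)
This statement is quoted from \cite[Remark 9]{GSV25}, and in the paper it is used as a black box. If I had to reprove it, I would argue by contradiction using the syntactic method of Perkins and Sapir.

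\emph{Setup.} Let $M$ be a finite monoid for which every sparse word is an isoterm. Suppose $\var M\subseteq\mathbf V=\var F$, where $F$ is a finite semigroup and $\mathbf V$ is finitely based, say by a finite set $\Sigma$ of identities that together involve fewer than $k$ variables. Since $M\in\mathbf V$, every identity of $F$ holds in $M$. Hence every sparse word is an isoterm for $F$ as well; this is the only property of $F$, besides finiteness, that I intend to use.

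\emph{Step 1: a nontrivial identity of $F$.} Let $F$ satisfy $x^p\approx x^{p+q}$ with $p,q\ge1$. For $n\gg k$ set $\mathbf y=y_1y_2\cdots y_n$, $\mathbf u_n=\mathbf y^{p}$ and $\mathbf v_n=\mathbf y^{p+q}$. Then $F$ satisfies the nontrivial identity $\mathbf u_n\approx\mathbf v_n$. The word $\mathbf u_n$ is not sparse (for $p\ge2$; otherwise I would replace $p$ by $2p$), because two consecutive occurrences of $y_1$ sandwich only repeated variables.

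\emph{Step 2: no nontrivial elementary step.} I would show that $\mathbf u_n\approx\mathbf v_n$ does not follow from $\Sigma$. By Birkhoff's completeness theorem it suffices to prove that every elementary $\Sigma$-derivation step out of $\mathbf u_n$ returns $\mathbf u_n$. Such a step replaces a factor $\theta(\mathbf p)$ of $\mathbf u_n$ by $\theta(\mathbf q)$, where $\mathbf p\approx\mathbf q$ lies in $\Sigma$ and $\theta$ is a substitution.

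Since $\mathbf p$ has fewer than $k<n$ variables, at least one value $\theta(x)$ must have length comparable to $n/k$, or else $\theta(\mathbf p)$ is a short factor. A short factor of $\mathbf u_n$ is a linear word or something very close to one, and the step is then trivial.

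In the remaining case I would rename variables to pass from $\mathbf p\approx\mathbf q$ to an identity $\mathbf p'\approx\mathbf q'$ of $F$. Concretely, I substitute for each variable of $\mathbf p$ a word that inserts fresh linear letters. I would arrange this so that $\mathbf p'$ is sparse: two occurrences of a repeated variable of $\mathbf p$ that are far apart in $\mathbf u_n$ are then separated by a linear letter. Because $\mathbf p'$ is an isoterm for $F$, this forces $\mathbf p'=\mathbf q'$, and hence $\theta(\mathbf p)=\theta(\mathbf q)$. The same argument must be repeated for the words reachable from $\mathbf u_n$; I would check that all of them keep the rigid periodic shape of $\mathbf u_n$.

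\emph{Main obstacle.} I expect the hardest step to be choosing $\mathbf u_n$, and the auxiliary linear letters, so that every $\Sigma$-step really reduces to a sparse-isoterm situation. The plain power $\mathbf y^p$ may be too symmetric, since $\theta$ could map a variable onto a full period $\mathbf y$. If that happens, my first attempt would be a Zimin-like or interleaved modification in which distinct periods use disjoint alphabets glued by a few repeated letters. This keeps every factor over fewer than $k$ variable-values embeddable into a sparse pattern, while the global identity $\mathbf u_n\approx\mathbf v_n$ still holds in $F$ by pigeonhole on $F$.
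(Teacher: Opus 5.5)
The paper gives no proof of this statement; it imports it from \cite{GSV25}. Your plan therefore has to stand on its own, and as written it cannot work.

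\textbf{Step 2 is false for your identity.} The identity $\mathbf u_n\approx\mathbf v_n$ from Step~1 is the image of the one-variable identity $x^p\approx x^{p+q}$ under the substitution $x\mapsto y_1\cdots y_n$. Since $\Sigma$ is a basis of $\var F$, it derives $x^p\approx x^{p+q}$. Applying the substitution to every word of that derivation gives a $\Sigma$-derivation of $\mathbf u_n\approx\mathbf v_n$. Its first word-changing step is a nontrivial elementary step out of $\mathbf u_n$. So the goal of Step~2, that every elementary $\Sigma$-step out of $\mathbf u_n$ returns $\mathbf u_n$, is simply false for this $\mathbf u_n$. This is not the `too symmetric' nuisance you flag at the end; it is fatal.

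\textbf{No variant using only what you use about $F$ can succeed.} About $F$ you use only three facts: $\var F$ is periodic, it contains $M$ (so sparse words are isoterms for it), and it is finitely based. All three hold for every finitely based locally finite variety containing $M$. A proof using nothing else would therefore show that $M$ is inherently nonfinitely based, and that conclusion does not follow from the hypothesis. The monoid $G_4$ has all sparse words as isoterms (proof of Proposition~\ref{prop:G4}). Yet it is a submonoid of $U^0T_4(F)$ for any finite field $F$, and the paper records, via \cite{VG03} and \cite{Sap87b}, that such monoids are not inherently nonfinitely based. Hence neither is $G_4$.

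\textbf{What is missing.} You need an identity of $F$ whose validity rests on $F$ being one finite semigroup, not merely on periodicity. One example of such a property is that an identity can be tested on substitutions taking at most $|F|$ values. At the same time, the identity must not be reachable by elementary steps using identities in fewer than $k$ variables. Your repair does not supply such an identity. `Pigeonhole on $F$' yields equalities like $a_1\cdots a_i=a_1\cdots a_j$ in which $i<j$ depend on the chosen elements, and that is not an identity.

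\textbf{The reduction to the hypothesis in Step~2 is also unsound.} A substitution sends all occurrences of a variable to the same word. So any fresh letter inserted into the image of a repeated variable of $\mathbf p$ is itself repeated. You can never make two occurrences of a repeated variable sandwich a linear letter unless they already do in $\mathbf p$. The isoterm hypothesis therefore controls only those $\mathbf p$ that are already sparse. The left-hand word must be designed so that every $\mathbf p$ in fewer than $k$ variables mapping onto one of its factors is sparse, or otherwise harmless.

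Designing that word together with the right identity of $F$ is where all the work lies, and the proposal leaves both open.
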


\begin{proof}[Proof of Proposition~\ref{prop:G4}]
We will show that every sparse word $\mathbf{u}$ is an isoterm for the monoid $G_4$; the claim will then follow from Proposition~\ref{prop:sparse}.

Thus, suppose that $G_4$ satisfies $\mathbf{u}\approx\mathbf{v}$ for some word $\mathbf{v}$. We have to prove that $\mathbf{v}=\mathbf{u}$.

We proceed step by step, establishing increasingly many similarities between $\mathbf{u}$ and $\mathbf{v}$ until eventually showing that these words coincide. In doing so, we argue by contraposition: assuming that the claim at a given step is false, we provide a substitution $\varphi\colon\mathcal{X}\to G_4$ such that $\varphi(\mathbf{u})\ne\varphi(\mathbf{v})$, thereby contradicting the assumption that $G_4$ satisfies $\mathbf{u}\approx\mathbf{v}$.

To present substitutions compactly, we assign a name to each matrix in~\eqref{eq:G4} (displayed beneath the corresponding matrix). The names are chosen to be consistent with matrix multiplication: the matrix denoted by $a^2$ is indeed the square of the one denoted by $a$, the matrix denoted by $bc$ is indeed the product of the matrices denoted by $b$ and $c$, and so on. In this notation, the following equalities are easily seen to hold in the monoid $G_4$:
\[
ea=a,\ ae=a^2,\ eb=ab=be=b,\ ce=ca=c,\ b^2=c^2=bc=cb=ba^2=a^2c=ec=O.
\]

For compactness, we adopt the convention that any variable whose image under a substitution is not explicitly specified is mapped by that substitution to the identity matrix $E$.

\begin{step}
$\con(\mathbf u)=\con(\mathbf v)$.
\end{step}

\begin{proof}
If a variable $x$ occurs in only one of the words $\mathbf{u}$ and $\mathbf{v}$, then under the substitution $\varphi$ mapping $x$ to the zero matrix $O$, the image of the word in which $x$ occurs is $O$, while the image of the other word is $E$. Hence,  $\varphi(\mathbf{u})\ne\varphi(\mathbf{v})$.
\end{proof}

\begin{step}
If a variable is linear in $\mathbf{u}$, then it is linear in $\mathbf{v}$, and vice versa.
\end{step}

\begin{proof}
If a variable $x$ occurs only once in one of the words $\mathbf{u}$ and $\mathbf{v}$ and more than once in the other, then under the substitution $\varphi$ mapping $x$ to the matrix $b$, the image of the word in which $x$ occurs only once is $b$ while the image of the other is $O$ since $b^2=O$. Hence, $\varphi(\mathbf{u})\ne\varphi(\mathbf{v})$.
\end{proof}

\begin{step}
The linear variables appear in $\mathbf{u}$ and $\mathbf{v}$ in the same order.
\end{step}

\begin{proof}
If the claim is false, there are linear variables $t_1$ and $t_2$ such that $t_1$ precedes $t_2$ in $\mathbf{u}$, but not in $\mathbf{v}$. Let   the substitution $\varphi$ be defined by $\varphi(t_1):=ba$ and $\varphi(t_2):=c$. Then $\varphi(\mathbf{u})=bac$, but $\varphi(\mathbf{v})=O$ since $cb=0$. Hence, $\varphi(\mathbf{u})\ne\varphi(\mathbf{v})$.
\end{proof}

If all variables in $\mathbf{u}$ are linear, the equality $\mathbf{v}=\mathbf{u}$ follows from Steps 1--3. Otherwise, we decompose $\mathbf{u}$ and $\mathbf{v}$ as follows:
\begin{equation}\label{eq:dec}
\begin{split}
\mathbf{u} = \mathbf{a}_0t_1 \mathbf{a}_1t_2 \cdots t_{k-1}\mathbf{a}_{k-1} t_k \mathbf{a}_k,\\
\mathbf{v} = \mathbf{b}_0t_1 \mathbf{b}_1t_2 \cdots t_{k-1}\mathbf{b}_{k-1} t_k \mathbf{b}_k,
\end{split}
\end{equation}
where $t_1, \dots, t_k$ are the linear variables of $\mathbf{u}$ and $\mathbf{v}$ and each of the words $\mathbf{a}_0, \mathbf{a}_1, \dots, \mathbf{a}_{k-1}, \mathbf{a}_k$ and $\mathbf{b}_0, \mathbf{b}_1, \dots, \mathbf{b}_{k-1}, \mathbf{b}_k$ either is empty or involves only variables that are repeated in $\mathbf{u}$ and $\mathbf{v}$. It remains to show that $\mathbf{a}_i = \mathbf{b}_i$ for all $i=0,1,\dots,k-1,k$. In the next steps, we consider an arbitrary index $i$.

\begin{step}
$\con(\mathbf{a}_i) = \con(\mathbf{b}_i)$.
\end{step}

\begin{proof}
Assume that there exists a variable $z$ that occurs in $\mathbf{a}_i$ but not in $\mathbf{b}_i$. Let the substitution $\varphi$ be defined by $\varphi(z):=a$ and
\[
\begin{cases}
\varphi(t_1):=ac&\text{if }\ i=0,\\
\varphi(t_i):=ba,\  \varphi(t_{i+1}):=c&\text{if }\ 0<i<k,\\
\varphi(t_k):=ba&\text{if }\ i=k.
\end{cases}
\]
Then $\varphi(\mathbf{u})=O$ because $ba^2=a^2c=O$. On the other hand, using the equalities $ab=b$ and $ca=c$, we obtain
\[
\varphi(\mathbf{v})=\begin{cases}
ac&\text{if }\ i=0,\\
bac&\text{if }\ 0<i<k,\\
ba&\text{if }\ i=k.
\end{cases}
\]
Hence, $\varphi(\mathbf{u})\ne\varphi(\mathbf{v})$.

The same argument shows that if a variable occurs in $\mathbf{b}_i$, then it must also occur in $\mathbf{a}_i$.
\end{proof}

\begin{step}
Every variable occurs in each of the words $\mathbf{a}_i$ and $\mathbf{b}_i$ at most once.
\end{step}

\begin{proof}
For $\mathbf{a}_i$, this follows from the condition that the word $\mathbf{u}$ is sparse, combined with the fact that all the variables that may occur in $\mathbf{a}_i$ are repeated in $\mathbf{u}$.

Now suppose that there exists a variable $z$ that occurs in $\mathbf{b}_i$ more than once. By Step 4 and the observation of the preceding paragraph, $z$ also occurs in $\mathbf{a}_i$ but exactly once. Let the substitution $\varphi$ be defined by $\varphi(z):=a$ and
\[
\begin{cases}
\varphi(t_1):=c&\text{if }\ i=0,\\
\varphi(t_i):=b,\  \varphi(t_{i+1}):=c&\text{if }\ 0<i<k,\\
\varphi(t_k):=b&\text{if }\ i=k.
\end{cases}
\]
Then $\varphi(\mathbf{v})=O$ because $ba^2=a^2c=O$. On the other hand, using the equalities $ab=b$ and $ca=c$, we obtain
\[
\varphi(\mathbf{u})=\begin{cases}
ac&\text{if }\ i=0,\\
bac&\text{if }\ 0<i<k,\\
ba&\text{if }\ i=k.
\end{cases}
\]
Hence, $\varphi(\mathbf{u})\ne\varphi(\mathbf{v})$.
\end{proof}

\begin{step}
$\mathbf{a}_i = \mathbf{b}_i$.
\end{step}

\begin{proof}
Taking into account Steps 4 and 5, it remains to show that the variables forming the words $\mathbf{a}_i$ and $\mathbf{b}_i$ occur in these words in the same order.

Take any variables $z_1$ and $z_2$ such that $z_1$ precedes $z_2$ in $\mathbf{a}_i$, but not in $\mathbf{b}_i$.  Let the substitution $\varphi$ be defined by $\varphi(z_1):=e$, $\varphi(z_2):=a$ and
\[
\begin{cases}
\varphi(t_1):=c&\text{if }\ i=0,\\
\varphi(t_i):=b,\  \varphi(t_{i+1}):=c&\text{if }\ 0<i<k,\\
\varphi(t_k):=b&\text{if }\ i=k.
\end{cases}
\]
Then $\varphi(\mathbf{v})=O$ because $ae=a^2$, and $ba^2=a^2c=O$. On the other hand, using the equalities $ea=a$, $ab=eb=b$ and $ca=ce=c$, we obtain
\[
\varphi(\mathbf{u})=\begin{cases}
ac&\text{if }\ i=0,\\
bac&\text{if }\ 0<i<k,\\
ba&\text{if }\ i=k.
\end{cases}
\]
Hence,  $\varphi(\mathbf{u})\ne\varphi(\mathbf{v})$.
\end{proof}
From Step 6 and the decompositions \eqref{eq:dec} we obtain the equality $\mathbf{v}=\mathbf{u}$, thus completing the proof of Proposition~\ref{prop:G4}.
\end{proof}

\begin{remks}
1. The 10-element strongly nonfinitely based monoid $G_4$ belongs to a family of small    strongly nonfinitely based monoids found by Sergey Gusev with the help of Mace4 model finder~\cite{Mace4}; see~\cite{GV*} for details. Our proof of strong nonfinite basedness of $G_4$ is different from the proof in~\cite{GV*}.

\smallskip

2. As mentioned, the fact that the monoid $U^0T_n(\mathbb{F}_2)=T_n(\mathbb{F}_2)$ is strongly nonfinitely based for each $n\ge 4$ was established in \cite[Theorem 11] {GSV25}. The proof in~\cite{GSV25} relies on the 42-element monoid $IC_4$ consisting of all order preserving and extensive\footnote{A partial selfmap $\alpha$ on $\{1,2,3,4\}$ is \emph{order preserving} if $i\le j$ implies $\alpha(i)\le\alpha(j)$ for all $i,j$ in the domain of $\alpha$, and \emph{extensive} if $i\le \alpha(i)$ for every $i$ in the domain of $\alpha$.} partial one-to-one selfmaps on the chain $1<2<3<4$; the monoid $IC_4$ is strongly nonfinitely based by \cite[Theorem 8]{GSV25}.

The monoid $G_4$ admits a natural embedding into $IC_4$; Figure~\ref{fig:bijection} shows the selfmaps corresponding to the nonzero matrices of $G_4$ (using the notation introduced in \eqref{eq:G4}), and the zero matrix $O$ corresponds to the nowhere defined map.
\begin{figure}[h]
\centering
\begin{tikzpicture}
[scale=0.9]
\foreach \x in {3.5,5,9,10.5,14.5,16} \foreach \y in {9.5,10.5,11.5,14,15,16,18.5,19.5,20.5,12.5,17,21.5} \filldraw (\x,\y) circle (2pt);

\foreach \x in {3.25,5.25,8.75,10.75,14.25,16.25} \foreach \y in {9.5,14,18.5} \node at (\x,\y) {\tiny 1};
\foreach \x in {3.25,5.25,8.75,10.75,14.25,16.25} \foreach \y in {10.5,15,19.5} \node at (\x,\y) {\tiny 2};
\foreach \x in {3.25,5.25,8.75,10.75,14.25,16.25} \foreach \y in {11.5,16,20.5} \node at (\x,\y) {\tiny 3};
\foreach \x in {3.25,5.25,8.75,10.75,14.25,16.25} \foreach \y in {12.5,17,21.5} \node at (\x,\y) {\tiny 4};

\node[] at (2.3,20) {$E\longmapsto$};
\draw (3.5,18.5) edge[-latex] (5,18.5);
\draw (3.5,19.5) edge[-latex] (5,19.5);
\draw (3.5,20.5) edge[-latex] (5,20.5);
\draw (3.5,21.5) edge[-latex] (5,21.5);

\node[] at (7.8,20) {$e\longmapsto$};
\draw (9,18.5) edge[-latex] (10.5,18.5);
\draw (9,19.5) edge[-latex] (10.5,19.5);
\draw (9,21.5) edge[-latex] (10.5,21.5);

\node[] at (13.3,20) {$a\longmapsto$};
\draw (14.5,18.5) edge[-latex] (16,18.5);
\draw (14.5,19.5) edge[-latex] (16,20.5);
\draw (14.5,21.5) edge[-latex] (16,21.5);

\node[] at (2.3,15.5) {$a^2\longmapsto$};
\draw (3.5,14) edge[-latex] (5,14);
\draw (3.5,17) edge[-latex] (5,17);

\node[] at (7.8,15.5) {$b\longmapsto$};
\draw (9,14) edge[-latex] (10.5,15);

\node[] at (13.3,15.5) {$c\longmapsto$};
\draw (14.5,16) edge[-latex] (16,17);

\node[] at (2.3,11) {$ac\longmapsto$};
\draw (3.5,10.5) edge[-latex] (5,12.5);

\node[] at (7.8,11) {$ba\longmapsto$};
\draw (9,9.5) edge[-latex] (10.5,11.5);

\node[] at (13.3,11) {$bac\longmapsto$};
\draw (14.5,9.5) edge[-latex] (16,12.5);
\end{tikzpicture}
\caption{The embedding of $G_4$ into $IC_4$}\label{fig:bijection}
\end{figure}

The embedding $G_4\hookrightarrow IC_4$ implies that $\var G_4\subseteq\var IC_4$; moreover, the inclusion is strict, since it is easy to see that the identity $x^2\approx x^3$ holds in $G_4$ but fails in $IC_4$. Thus, the monoid $G_4$ is not only smaller in size than $IC_4$, but also generates a smaller variety. Therefore, one may expect that Proposition~\ref{prop:G4} will find applications beyond the scope of \cite[Theorem 8]{GSV25}.
\end{remks}

In view of Theorem~\ref{thm:strongU0T}, it is natural to ask what can be said about the Finite Basis Problem for the remaining monoids in the header row of Table~\ref{tab:plainstatus}, namely, $M_n(\mathbb{S})$, $T_n(\mathbb{S})$, and $UT_n(\mathbb{S})$, when $\mathbb{S}$ is a finite semiring. The monoid $M_n(\mathbb{S})$ is inherently nonfinitely based for every $n$ and every finite semiring $\mathbb{S}$. Indeed, as mentioned, the Brandt monoid $B_2^1$ embeds into the monoid of all $2\times 2$ matrices over every semiring $\mathbb{S}$ with $0\ne 1$ such that $0+0=0\cdot0=1\cdot0=0\cdot1=0$ and $0+1=1+0=1\cdot1=1$. Since the monoid $B_2^1$ is inherently nonfinitely based \cite[Corollary 6.1]{Sap87a} and $M_2(\mathbb{S})$ naturally embeds into $M_n(\mathbb{S})$ for every $n$, the claim follows.

Since $U^0T_n(\mathbb{S})$ is a submonoid of the monoid $T_n(\mathbb{S})$, the latter is strongly nonfinitely based for $n\ge 4$ by Theorem~\ref{thm:strongU0T}. In fact, under mild conditions, one can extract stronger results from known information about the monoids $T_n(\mathbb{B})$ and $T_n(F)$, where $F$ is a finite field. Roughly speaking, if the ground structure is a `proper' semiring, that is, one far from being a ring, then the argument from \cite{LL11} applies, whereas if it is close to a ring of odd characteristic, then so does the argument from \cite{VG03}.

\begin{prop}\label{prop:Tn(S)summary}
Let $\mathbb{S}$ be a finite semiring and let $R$ be the subsemigroup generated by\/ $1$ in the additive semigroup of $\mathbb{S}$. The monoid $T_n(\mathbb{S})$ is inherently nonfinitely based if either
\begin{itemize}
    \item[{(a)}] $n\ge 3$ and $0\notin R$, or
    \item[{(b)}] $n\ge 4$ with $0\in R$, and $|R|$ divisible by an odd prime.
\end{itemize}
\end{prop}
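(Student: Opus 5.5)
Both parts go the same way: inside $T_n(\mathbb{S})$ we find a submonoid (or a divisor, i.e.\ a morphic image of a submonoid) that is already known to be inherently nonfinitely based, namely $T_3(\mathbb{B})$ in case (a) and the finite-field example of \cite{VG03} in case (b). Inherent nonfinite basedness is inherited upward by locally finite varieties, and $\var T_n(\mathbb{S})$ is locally finite by Proposition~\ref{prop:birkhoff}. So once $\var T_n(\mathbb{S})$ is shown to contain an inherently nonfinitely based finite semigroup, the statement follows.

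\emph{Case (a).} Since $\mathbb{S}$ is finite, the additive cyclic semigroup $R=\{1,1+1,1+1+1,\dots\}$ is finite. The hypothesis $0\notin R$ means no sum of 1s vanishes, so $R\cup\{0\}$ is closed under $+$ and $\cdot$. I will define a map $\pi\colon R\cup\{0\}\to\mathbb{B}$ by $0\mapsto0$ and $r\mapsto1$ for $r\in R$. This $\pi$ is a semiring morphism: a sum of two elements is $0$ only if both are $0$, because $0\notin R$, and a product of two nonzero elements of $R$ is again in $R$. Let $M$ be the submonoid of $T_n(\mathbb{S})$ of triangular matrices with entries in $R\cup\{0\}$. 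Applying $\pi$ entrywise gives a surjective monoid morphism $M\to T_n(\mathbb{B})$; surjectivity holds because $0,1$ lift to themselves. Hence $T_n(\mathbb{B})\in\var T_n(\mathbb{S})$. For $n\ge3$, $T_n(\mathbb{B})$ is inherently nonfinitely based by \cite{VG04} for $n\ge4$ and by \cite{LL11} for $n=3$ (Table~\ref{tab:plainstatus}). This settles (a). The one point needing care is that entrywise application of $\pi$ respects matrix multiplication, i.e.\ the additivity of $\pi$ on the sums $\sum_k\alpha_{ik}\beta_{kj}$; this is exactly where $0\notin R$ is used.

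\emph{Case (b).} Now $0\in R$, so $R$ is a finite cyclic group; it is the additive group of the prime subring $\mathbb{Z}/m$ with $m=|R|$. Multiplication on $R$ is the ring multiplication, since $(1+\dots+1)(1+\dots+1)$ expands by distributivity. Therefore $T_n(\mathbb{Z}/m)$ embeds in $T_n(\mathbb{S})$. Let $p$ be an odd prime dividing $m$. The reduction $\mathbb{Z}/m\to\mathbb{F}_p$ is a ring morphism, so entrywise it gives a surjection $T_n(\mathbb{Z}/m)\to T_n(\mathbb{F}_p)$. Thus $T_n(\mathbb{F}_p)\in\var T_n(\mathbb{S})$.

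The main obstacle is to get inherent nonfinite basedness of $T_n(\mathbb{F}_p)$ for odd $p$ and $n\ge4$. Table~\ref{tab:plainstatus} credits \cite{VG03} with this for finite fields with at least $3$ elements, and I would invoke that result directly. The point to check is that the argument of \cite{VG03} does not need more than $\mathbb{F}_p$, for instance field elements of large multiplicative order; it plausibly uses only that $-1\ne1$, via a suitable submonoid mapping onto a known inherently nonfinitely based semigroup. If $\mathbb{F}_p$ itself is covered by \cite{VG03}, we are done. Otherwise I would extract from that paper the concrete inherently nonfinitely based semigroup it embeds in $T_4(F)$ and check that its defining matrices use only the entries $0,\pm1$ with $-1\ne1$; this is where oddness of $p$ enters. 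Finally, $T_4$ embeds in $T_n$ for $n\ge4$ via the block $\diag$-type embedding, which completes (b).
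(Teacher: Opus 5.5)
Your proposal is correct and follows the paper's proof essentially step for step: in (a) you apply the morphism $R\cup\{0\}\twoheadrightarrow\mathbb{B}$ entrywise to get $T_n(\mathbb{B})\in\var T_n(\mathbb{S})$, and in (b) you pass to the quotient $T_n(R)\twoheadrightarrow T_n(R/pR)$ with $R/pR\cong\mathbb{F}_p$, which reduces the claim to \cite{VG03}. The hedging and fallback plan in (b) are not needed, because the result of \cite{VG03} covers every finite field with at least $3$ elements, in particular $\mathbb{F}_p$ for odd $p$; the paper concludes in exactly this way.
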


\begin{proof}
By definition, the set $R$ is closed under addition, but it is easy to see that it is also closed under multiplication.

(a) If $0\notin R$, then $R^0:=R\cup\{0\}$ forms a subsemiring in $\mathbb{S}$, and the onto map $R^0\twoheadrightarrow\mathbb{B}$ defined by $0\mapsto 0$ and $r\mapsto 1$ for $r\in R$ is a semiring morphism.  Applying this morphism entrywise to matrices in $T_n(R^0)$ induces an onto monoid morphism $T_n(R^0)\twoheadrightarrow T_n(\mathbb{B})$. The monoid $T_n(\mathbb{B})$ is inherently nonfinitely based for every $n\ge 3$; this is \cite[Theorem 4.1]{LL11} for $n=3$ and \cite[Theorem 2.1]{VG04} for $n\ge 4$\footnote{Theorem~2.1 in \cite{VG04} erroneously claimed that the monoid $T_3(\mathbb{B})$ is not inherently nonfinitely based; this error was fixed in~\cite{LL11}.}. Thus, for $n\ge 3$, the variety $\var T_n(\mathbb{S})$ contains an inherently nonfinitely based semigroup, and hence the monoid $T_n(\mathbb{S})$ is itself inherently nonfinitely based.

(b) If $0\in R$, then $R$ is easily seen to be a subring of $\mathbb{S}$. If $p$ is an odd prime dividing $|R|$, the quotient ring $R/pR$ is the field with $p$ elements, and $T_n(R/pR)$ is a quotient of the monoid $T_n(R)$. By the main result of \cite{VG03}, the monoid of all triangular $n\times n$ matrices over any field with at least 3 elements is inherently nonfinitely based whenever $n\ge 4$. Thus, for $n\ge 4$, the variety $\var T_n(\mathbb{S})$ contains the inherently nonfinitely based monoid $T_n(R/pR)$, and hence the monoid $T_n(\mathbb{S})$ is itself inherently nonfinitely based.
\end{proof}

Similar observations can be made in relation to the monoid $UT_n(\mathbb{S})$, where $\mathbb{S}$ is a finite semiring. If the subsemigroup generated by $1$ in the additive semigroup of $\mathbb{S}$ does not contain $0$, then the argument from the proof of Proposition~\ref{prop:Tn(S)summary}(a) shows that the variety $\var UT_n(\mathbb{S})$ contains the monoid $T_n(\mathbb{B})$. The latter monoid is strongly nonfinitely based for $n\ge 5$;  see Comment $^{2)}$ after Table~\ref{tab:plainstatus}. Hence, the monoid $UT_n(\mathbb{S})$ with $n\ge 5$ is also strongly nonfinitely based in this case.

We can also observe that if $\mathbb{S}$ is a ring, then the monoid $UT_n(\mathbb{S})$ is a  nilpotent group, and if, in addition, $\mathbb{S}$ is finite, the group will be finite as well. By the reasoning from Comment $^{3)}$ after Table~\ref{tab:plainstatus}, $UT_n(\mathbb{S})$ is hereditarily finitely based in this situation.

Finally, we note that for every (not necessarily finite) semiring $\mathbb{S}$, the monoid $UT_2(\mathbb{S})$ is commutative, and every commutative semigroup is hereditarily finitely based~\cite[Theorem~9]{Per69}.

\subsubsection{Matrices over finite fields}
We turn now to matrices over fields. It follows from \cite[Corollary~2]{VG03} that for any finite field $F$, the variety $\var U^0T_n(F)$ does not contain the Brandt monoid $B_2^1$, and the proof of \cite[Lemma~2]{VG03} shows that all subgroups of the monoid $U^0T_n(F)$ are $p$-groups where $p$ is the characteristic of $F$, and hence are nilpotent. By~\cite[Theorem~2]{Sap87b}, these two properties imply that no monoid of the form $U^0T_n(F)$ is inherently nonfinitely based.

In some sense, however, the monoids $U^0T_n(F)$ with $n\ge4$ are at a very short `edit' distance from being inherently nonfinitely based. Indeed, if one only slightly modifies the condition on the diagonal entries by allowing just one of them to take the values $\pm1$ rather than $0,1$, then one obtains a monoid of the same size as $U^0T_n(F)$ and of a similar structure, but inherently nonfinitely based whenever the characteristic of the field is odd (so that $-1\ne 1$) and $n\ge4$.

To establish this rather unexpected fact, we employ the equational characterization of  inherently nonfinitely based monoids from \cite{Vol00}.

In a finite semigroup $S$, the $|S|!$-th power of every element is an idempotent. Following the standard convention in finite semigroup theory, we write $\omega$ instead of $|S|!$, with the understanding that whenever an expression involving variables and the symbol $\omega$ is evaluated in a finite semigroup $S$, each occurrence of $\omega$ is interpreted by $|S|!$.

Define a sequence of expressions by:
\[
[x,y]_1:=x^{\omega-1}y^{\omega-1}xy,\ \ [x,y]_{k+1}:=[[x,y]_k,y]_1.
\]

\begin{prop}\cite[Proposition 4.4]{Vol00}\footnote{In identity (3) in  \cite{Vol00}, which corresponds to identity \eqref{eq:engel} here, the terms $(\mathbf{e}y\mathbf{e})^{\omega-1} \mathbf{e}y^{\omega+1}\mathbf{e}$ and $\mathbf{e}z\mathbf{e}$ on the left-hand side were erroneously interchanged. This error does not affect the proof of \cite[Proposition 4.4]{Vol00}.}
\label{prop:wfb}
A finite monoid $M$ is inherently nonfinitely based if and only if it violates one of the identities:
\begin{gather}
\bigl((xy)^{\omega}(yx)^{\omega}(xy)^{\omega}\bigr)^{\omega}\approx(xy)^{\omega},\label{eq:ds}\\
[(\mathbf{e}y\mathbf{e})^{\omega-1} \mathbf{e}y^{\omega+1}\mathbf{e},\mathbf{e}z\mathbf{e}]_\omega\approx\mathbf{e},\ \text{where}\ \mathbf{e}:=(xyzt)^{\omega}.\label{eq:engel}
\end{gather}
\end{prop}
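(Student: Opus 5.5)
This proposition is cited from \cite{Vol00}; within this paper it will not be reproved, only used. If I had to prove it, I would follow the route of Sapir's characterization of inherently nonfinitely based finite semigroups combined with the monoid refinement.

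The starting point is Sapir's theorem \cite{Sap87b}. A finite semigroup is inherently nonfinitely based if and only if the Zimin words $Z_n$ ($Z_1=x_1$, $Z_{n+1}=Z_nx_{n+1}Z_n$) are all isoterms for it. In the monoid case this is equivalent to $\var M$ either containing the Brandt monoid $B_2^1$ or having some non-nilpotent subgroup; this is the form quoted earlier in the paper via \cite[Theorem~2]{Sap87b}. The plan is to translate each of these two conditions into the failure of one pseudo-identity.

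The first identity, \eqref{eq:ds}, handles the Brandt case. Its failure says that some regular $\mathcal{D}$-class of $M$ is not a subsemigroup, since $(xy)^\omega$ and $(yx)^\omega$ are $\mathcal{D}$-related idempotents whose product leaves the class. I would show this holds exactly when $B_2$ (equivalently $B_2^1$, because $M$ is a monoid) divides $M$. The key facts are two. First, a finite semigroup with $B_2\notin\var M$ lies in $\mathbf{DS}$. Second, $\mathbf{DS}$ is defined by \eqref{eq:ds}. Conversely, a failure of \eqref{eq:ds} yields a Rees quotient of a subsemigroup that is isomorphic to $B_2$.

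The second identity, \eqref{eq:engel}, handles the group case. Assuming $M\in\mathbf{DS}$, set $\mathbf{e}=(xyzt)^\omega$. The elements $(\mathbf{e}y\mathbf{e})^{\omega-1}\mathbf{e}y^{\omega+1}\mathbf{e}$ and $\mathbf{e}z\mathbf{e}$ then lie in the maximal subgroup at the idempotent $\mathbf{e}$; this is where the $\mathbf{DS}$ property is used. The expression $[\cdot,\cdot]_\omega$ is an iterated group commutator written semigroup-wise, so \eqref{eq:engel} is an $\omega$-Engel condition. A finite group satisfies the $k$-Engel law for all pairs exactly when it is nilpotent (Zorn's theorem). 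Finally, I would check that every element pair of every maximal subgroup arises from some substitution of $x,y,z,t$. For an idempotent $f$ and $g,h\in H_f$, one can take for instance $x=f$, $y=g$, $z=h$, $t=f$, possibly adjusted so that $\mathbf{e}=f$. With that choice, \eqref{eq:engel} holds in $M$ if and only if all subgroups are nilpotent.

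The main obstacle is this last surjectivity and matching step. The specific term $(\mathbf{e}y\mathbf{e})^{\omega-1}\mathbf{e}y^{\omega+1}\mathbf{e}$ has to be shown to equal $g$ under the chosen substitution, and also to land in $H_{\mathbf{e}}$ for arbitrary substitutions. Both facts rely on $\mathbf{DS}$ structure, so the two identities must be combined carefully rather than treated independently.
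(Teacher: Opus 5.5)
You have a genuine gap: the reduction you start from is false, and the rest of the plan inherits the error. (The paper does not reprove this proposition; it cites \cite{Vol00} and only uses the direction ``violation implies inherently nonfinitely based''.)

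\textbf{The reduction is false.} Sapir's result \cite{Sap87b} gives only one implication: if $B_2^1\notin\var M$ and all subgroups of $M$ are nilpotent, then $M$ is not inherently nonfinitely based. That is all the paper quotes. The converse fails. A finite non-nilpotent group such as $\mathfrak{S}_3$ satisfies $x\approx x^{\omega+1}$, so $Z_1$ is not an isoterm, and $\mathfrak{S}_3$ is not inherently nonfinitely based (it is even finitely based).

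Accordingly, \eqref{eq:engel} is not a nilpotency test. In a finite group $\mathbf{e}=1$, so the first argument of the commutator is $y^{-1}y=1$, and every finite group satisfies \eqref{eq:engel}.

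This also breaks your matching step. For $g,h\in H_f$ and $x=t=f$, $y=g$, $z=h$, you get $\mathbf{e}=f$ and $(\mathbf{e}y\mathbf{e})^{\omega-1}\mathbf{e}y^{\omega+1}\mathbf{e}=g^{-1}g=f$, not $g$. More generally, any substitution with $y\in H_{\mathbf{e}}$ makes this element equal to $\mathbf{e}$. So the ``main obstacle'' you flag is not a technicality: the equivalence with nilpotency of subgroups is simply untrue.

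A smaller slip: a failure of \eqref{eq:ds} may produce $A_2$ rather than $B_2$ as a divisor, for instance in $A_2^1$. The conclusion $B_2^1\in\var M$ still follows, since $B_2\in\var A_2$.

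\textbf{What \eqref{eq:engel} actually says, given \eqref{eq:ds}.} Let $e$ be an idempotent with $e\le_{\mathcal J}y$. In $\mathbf{DS}$ one also gets $e\le_{\mathcal J}y^{\omega}$, so $eye$, $ey^{\omega+1}e$ and $eze$ (for $e\le_{\mathcal J}z$) all lie in $H_e$. The first argument of the commutator is then the defect $d=(eye)^{-1}\,ey^{\omega+1}e$. It measures how far $s\mapsto ese$ is from being multiplicative on $\langle y\rangle$. It can be nontrivial only when $y$ lies strictly $\mathcal{J}$-above $e$. This is exactly the situation of the matrix $a$ in the proof of Theorem~\ref{thm:pm1}, where $d=f_{-1}$.

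The variables $x$ and $t$ let $\mathbf{e}$ be any idempotent below $y$, and let $\mathbf{e}z\mathbf{e}$ be any element of $H_{\mathbf{e}}$. The relevant group-theoretic fact is Baer's theorem, not Zorn's: the right Engel elements of a finite group form its hypercentre. Hence \eqref{eq:engel} says that all such defects lie in the hypercentre of $H_e$.

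\textbf{The missing idea.} What has to be proved, and what \cite{Vol00} extracts from Sapir's work, is this: for a finite monoid $M\in\mathbf{DS}$, the Zimin words are isoterms for $M$ exactly when some defect $(eye)^{-1}ey^{\omega+1}e$ lies outside the hypercentre of $H_e$. Your outline does not address this statement at all.
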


For any field $F$ of odd characteristic, denote by $U^{\pm}_{11}T_n(F)$ the submonoid of $T_n(F)$ consisting of matrices whose (1,1) entry is $\pm1$, while all other diagonal entries are 0s or 1s; that is,
    \begin{equation}\label{eq:UpmT}
 U^{\pm}_{11}T_n(F):=\left\{\bigl(\alpha_{ij}\bigr)_{n\times n}\in T_n(F)\;\middle|\; \alpha_{11}=\pm1,\ \alpha_{22}, \dots,\alpha_{nn}\in\{0,1\}\right\}.
    \end{equation}

\begin{thm}\label{thm:pm1}
For any finite field $F$ of odd characteristic and any $n\ge4$, the monoid $U^{\pm}_{11}T_n(F)$ is inherently nonfinitely based.
\end{thm}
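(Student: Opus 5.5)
The plan is to apply the criterion of Proposition~\ref{prop:wfb}. We exhibit a substitution into $U^{\pm}_{11}T_4(F)$ that violates identity~\eqref{eq:engel}. We then pass to arbitrary $n\ge4$ via the embedding $A\mapsto\begin{pmatrix}A&O\\O&E_{n-4}\end{pmatrix}$. This embedding maps $U^{\pm}_{11}T_4(F)$ into $U^{\pm}_{11}T_n(F)$ as a submonoid, since the (1,1) entry is preserved and the new diagonal entries are $1$. So it suffices to treat $n=4$, and identity~\eqref{eq:ds} can be left alone.

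\textbf{Why \eqref{eq:engel} should fail.} Interpreted in a finite monoid $M$, the left-hand side of~\eqref{eq:engel} lives in the maximal subgroup $H_{\mathbf e}$ of the local monoid $\mathbf eM\mathbf e$, where $\mathbf e=\varphi\bigl((xyzt)^\omega\bigr)$. Its first argument $(\mathbf ey\mathbf e)^{\omega-1}\mathbf ey^{\omega+1}\mathbf e$ measures the discrepancy between $\mathbf ey\mathbf e$ and $\mathbf ey^{\omega+1}\mathbf e$. If $\mathbf e=E$, this discrepancy is trivial, because $y^{\omega-1}y^{\omega+1}=y^{2\omega}=E$. So $\mathbf e$ must be a proper idempotent.

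The natural choice is $\mathbf e=\diag(1,0,0,1)$. Then $\mathbf eM\mathbf e$ consists of matrices supported on positions $(1,1),(1,4),(4,4)$. Its group $H_{\mathbf e}$ consists of the matrices with $\alpha_{11}=\pm1$, $\alpha_{44}=1$ and arbitrary $\alpha_{14}\in F$. This is the affine group $\{\xi\mapsto\pm\xi+\beta\}$ over $F$, and it is not nilpotent. Indeed, let $g$ be the element with sign $-1$ and $u_\beta$ the translation by $\beta$. Then $[u_\beta,g]_1=u_\beta^{-1}g^{-1}u_\beta g$ is a translation by $\pm2\beta$, and iterating gives $[u_\beta,g]_k=u_{(\pm2)^k\beta}$. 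Since $\operatorname{char}F$ is odd, $2$ is invertible, so $[u_\beta,g]_\omega\neq\mathbf e$ whenever $\beta\ne0$. This is exactly where $-1\ne1$ is used; with only $0,1$ on the diagonal the group would be unipotent. Thus the goal is to arrange $\mathbf ez\mathbf e=g$ (sign $-1$) and the first argument a nontrivial translation $u_\beta$.

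\textbf{Construction.} The first argument will be a nontrivial translation if $y$ does not touch the corner $(1,4)$ in one step but reaches it through the middle coordinates $2,3$ in several steps. For instance, take $y$ with diagonal $(1,1,0,1)$ and off-diagonal entries $y_{12}=y_{23}=y_{34}=1$. Then $\mathbf ey\mathbf e=\mathbf e$, while $y^m$ acquires a nonzero $(1,4)$ entry for $m\ge3$, coming from the path $1\to2\to\dots\to2\to3\to4$. Its value $\beta$ depends on $m$, so I would compute $y^{\omega+1}$ explicitly and check that the resulting $(1,4)$ entry is nonzero in $F$. If that entry degenerates (for instance, if it is a multiple of $p$), I would adjust the entries of $y$.

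Take $z=\diag(-1,0,0,1)$. Choose $x$ and $t$ (for example $t=E$, and $x$ diagonal) so that $(xyzt)^\omega=\diag(1,0,0,1)$. Here one checks that the product $xyzt$ has diagonal $(1,0,0,1)$ up to the sign at position $(1,1)$, which is killed by the even exponent $\omega$. One also checks that its nilpotent middle part vanishes in the $\omega$-power, with no stray $(1,4)$ entry. With these choices, $\mathbf ez\mathbf e=g$.

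\textbf{Main obstacle.} The main difficulty is this simultaneous choice of $x,y,z,t$. Three things must hold at once: $\mathbf e$ comes out to be exactly $\diag(1,0,0,1)$, possibly after conjugation by a unitriangular element; $\mathbf ez\mathbf e$ has sign $-1$; and $\mathbf ey^{\omega+1}\mathbf e$ differs from $\mathbf ey\mathbf e$ by a nonzero translation. The corner entries of the various $\omega$-powers interact, so I expect to need some bookkeeping of the $(1,4)$ entries. If the direct choice fails, I would first try taking $x$ and $t$ to be idempotents that absorb the unwanted corner terms, or else replacing $\mathbf e$ by a conjugate idempotent.
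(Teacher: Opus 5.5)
Your overall strategy is the paper's: violate \eqref{eq:engel} in $U^{\pm}_{11}T_4(F)$, and use that in the affine group $H_{\mathbf e}$, commuting with the sign $-1$ element multiplies a translation by $-2$. There is a genuine gap, however. The proof consists of exhibiting a concrete violating substitution, and the one you propose does not violate \eqref{eq:engel}.

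Take your $y$, with diagonal $(1,1,0,1)$ and $y_{12}=y_{23}=y_{34}=1$. A walk from $1$ to $4$ stays at $1$, steps to $2$, stays at $2$, then passes $2\to3\to4$ without pausing at $3$ (since $y_{33}=0$), and stays at $4$. Hence $(y^m)_{14}=\binom{m-1}{2}$, and $(y^{\omega+1})_{14}=\binom{\omega}{2}=\omega(\omega-1)/2$. This is $0$ in $F$, because $p\mid\omega$ and $p$ is odd. Consequently $\mathbf ey^{\omega+1}\mathbf e=\mathbf e=\mathbf ey\mathbf e$, so the first argument of the commutator is $\mathbf e$ and the left-hand side evaluates to $\mathbf e$.

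This is not a numerical accident that rescaling an entry would cure. With $\mathbf e=\diag(1,0,0,1)$, the translation part of the first argument depends only on $y$. As soon as $y_{22}$ or $y_{33}$ is nonzero, the walks lingering in the middle coordinates contribute binomial-type sums in $\omega$ that vanish in $F$. The missing idea is that the middle $2\times2$ block of $y$ must be nilpotent, so that only boundedly many walks pass through coordinates $2,3$ and the $\omega$-power cannot swallow their contribution. You also never actually fix $x$ and $t$; that part is easy.

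The repair is to set $y_{22}=0$, so $y$ has diagonal $(1,0,0,1)$ and $y_{12}=y_{23}=y_{34}=1$. Now every walk from $1$ to $4$ goes straight through $2$ and $3$. So $(y^m)_{14}=m-2$ for $m\ge2$, and $(y^{\omega+1})_{14}=\omega-1=-1$ in $F$. Take $x=\diag(1,0,0,1)$, $z=\diag(-1,0,0,1)$ and $t=E$. Then:
\begin{itemize}
\item $xyzt=\diag(-1,0,0,1)$, hence $\mathbf e=\diag(1,0,0,1)$ because $\omega$ is even;
\item $\mathbf ez\mathbf e=g$ and $\mathbf ey\mathbf e=\mathbf e$;
\item the first argument is $u_{-1}$;
\item the left-hand side of \eqref{eq:engel} equals $u_{-(-2)^{\omega}}\ne\mathbf e$.
\end{itemize}

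Completed this way, your argument is a legitimate variant of the paper's. In the paper, $x=t=f$ is a non-diagonal conjugate of $\diag(1,0,0,1)$ carrying the entries $(1,2)$ and $(3,4)$, while $y=a$ carries only the $(2,3)$ entry. So the translation already appears in $\mathbf ey\mathbf e=faf$, and $\mathbf ey^{\omega+1}\mathbf e=fa^2f=f$ is trivial, because squaring kills the $(2,3)$ entry (the same nilpotency phenomenon). You instead keep $\mathbf e$ diagonal and let the translation come from $y^{\omega+1}$.
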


\begin{proof}
We will show that the monoid $U^{\pm}_{11}T_4(F)$ violates identity \eqref{eq:engel}. Then Proposition~\ref{prop:wfb} implies that $U^{\pm}_{11}T_4(F)$ is inherently nonfinitely based, and since $U^{\pm}_{11}T_4(F)$ naturally embeds into $U^{\pm}_{11}T_n(F)$ for any $n>4$, the general claim will follow.

Consider the following  $4\times4$ matrices from $U^{\pm}_{11}T_4(F)$:
\[
f:=\left(\begin{matrix} 1&1&0&0\\0&0&0&0\\0&0&0&1\\0&0&0&1 \end{matrix}\right),\ \
a:=\left(\begin{matrix} 1&0&0&0\\0&0&1&0\\0&0&0&0\\0&0&0&1 \end{matrix}\right),\ \
h:=\left(\begin{matrix} -1&-1&0&0\\\phantom{-}0&\phantom{-}0&0&0\\\phantom{-}0&\phantom{-}0&0&1\\\phantom{-}0&\phantom{-}0&0&1 \end{matrix}\right).
\]
It is easy to check that $f=f^2=h^2=(fahf)^2$ and $h=hf=fh=h^3$.

Define a substitution $\varphi\colon\{x,y,z,t\}\to U^{\pm}_{11}T_4(F)$ by letting
\[
\varphi(x)=\varphi(t):=f, \ \ \varphi(y):=a, \ \ \varphi(z):=h.
\]
The value of the right-hand side of \eqref{eq:engel} under this substitution is $\varphi(\mathbf{e})=(fahf)^\omega=f$ since $\omega$ is an even number.

To compute the value of the left-hand side, first notice that $\varphi(\mathbf{e}z\mathbf{e})=fhf=h$. Further, it is easy to see that all powers of the matrix $a$ with exponents greater than one are equal to
\[
a^2=\left(\begin{matrix} 1&0&0&0\\0&0&0&0\\0&0&0&0\\0&0&0&1 \end{matrix}\right),
\]
whence $\varphi(\mathbf{e}y^{\omega+1}\mathbf{e})=fa^2f=f$. Finally, it is easy to compute that
\[
\varphi(\mathbf{e}y\mathbf{e})=faf=\left(\begin{matrix} 1&1&0&1\\0&0&0&0\\0&0&0&1\\0&0&0&1 \end{matrix}\right).
\]

Now consider for each $\gamma\in F$, the matrix
\[
f_\gamma:=\left(\begin{matrix} 1&1&0&\gamma\\0&0&0&0\\0&0&0&1\\0&0&0&1 \end{matrix}\right).
\]
Notice that $faf=f_1$ and $f=f_0$. It is easy to see that $f_\gamma f_\delta=f_{\gamma+\delta}$ for all $\gamma,\delta\in F$. From this, a straightforward induction yields $f_\gamma^k=f_{k\gamma}$ for every positive integer $k$. Hence,
\begin{equation}\label{eq:inversion}
f_\gamma^{\omega-1}=f_{(\omega-1)\gamma}=f_{\omega\gamma-\gamma}=f_{-\gamma},
\end{equation}
since $\omega\gamma=0$, as $\omega$ is a multiple of the characteristic of the field $F$. The equality $h^3=h$ ensures that every odd power of the matrix $h$ equals $h$. In particular, $h^{\omega-1}=h$ since the number $\omega-1$ is odd. Thus, the expression $[f_\gamma,h]_1$ expands as
\[
[f_\gamma,h]_1=f_\gamma^{\omega-1}h^{\omega-1}f_\gamma h=f_{-\gamma}hf_{\gamma}h,
\]
and computing the last product yields $f_{-2\gamma}$. By induction, one computes that
\begin{equation}\label{eq:commutator}
    [f_\gamma,h]_k=f_{(-2)^k\gamma}
\end{equation}
for every positive integer $k$.

We are now ready to compute the value of the left-hand side of~\eqref{eq:engel} under the substitution $\varphi$:
\begin{align*}
    \varphi\bigl([(\mathbf{e}y\mathbf{e})^{\omega-1} \mathbf{e}y^{\omega+1}\mathbf{e},\mathbf{e}z\mathbf{e}]_\omega\bigr)
   &=[f_1^{\omega-1}f,h]_\omega&&\text{since }\ \varphi(\mathbf{e}y\mathbf{e})=f_1,\ \varphi(\mathbf{e}y^{\omega+1}\mathbf{e})=f,\ \varphi(\mathbf{e}z\mathbf{e})=h\\
   &=[f_1^{\omega-1},h]_\omega&&\text{since $f_1f=f_1f_0=f_1$}\\
   &=[f_{-1},h]_\omega&&\text{by \eqref{eq:inversion}}\\
   &=f_{-(-2)^{\omega}}&&\text{by \eqref{eq:commutator}}.
\end{align*}
Since  the characteristic of $F$ is odd, $-(-2)^{\omega}\ne0$, whence the value $f_{-(-2)^{\omega}}$ of the left-hand side of~\eqref{eq:engel} differs from $f_0=f$, which was shown above to be the value of the right-hand side.
\end{proof}

\subsubsection{Matrices over fields of characteristic 0}
\label{subsubsec:infinite fields}

For the monoids $T_n(F)$ and $UT_n(F)$ with $F$ an infinite field or the ring $\mathbb{Z}$ of integers, the Finite Basis Problem falls within the scope of certain general results; see Comments $^{3)}$, $^{4)}$, and $^{5)}$ after Table~\ref{tab:plainstatus}. For the intermediate monoid $U^0T_n(F)$, the problem remains open for $n\ge 4$. For $F$ a field of characteristic~0 or the ring $\mathbb{Z}$, the monoid $U^0T_3(F)$ was shown to be nonfinitely based in~\cite[Theorem~2]{Vol15}\footnote{There is an inaccuracy in the proof of Proposition~4 in~\cite{Vol15}, namely, in Case 3 of the proof. It is claimed there that the subsemigroup $S_{010}:=\left\{\left(\begin{smallmatrix}
0 &  \alpha_{12} & \alpha_{13}\\
0 & 1            & \alpha_{23}\\
0 & 0            & 0
\end{smallmatrix}\right)\;\middle|\; \alpha_{12},\alpha_{13},\alpha_{23}\in\mathbb{R}\right\}$ of $U^0T_3(\mathbb{R})$ satisfies the identity $xyx\approx x$, which is incorrect. However, the semigroup $S_{010}$ does satisfy the weaker identity $xyzxy\approx xy$, and this is fully sufficient for the proof of Proposition~4.}. In~\cite{CHL16}, the monoid $U^0T_2(F)$ was shown to be finitely based for $F$ any field or the ring $\mathbb{Z}$, and an explicit finite basis was provided.

Here we demonstrate that the finite basis property of $U^0T_2(F)$ is, so to speak, fragile: if $F$ is a field of characteristic~0 or the ring $\mathbb{Z}$, the monoid $U^{\pm}_{11}T_2(F)$ turns out to be nonfinitely based. The matrices of the latter monoid (defined in~\eqref{eq:UpmT} above) differ from those in $U^0T_2(F)$ only in their (1,1) entry, which is allowed to take the values $\pm1$ rather than $0,1$.

To prove this observation, we utilize a tool developed by Karl Auinger, Yuzhu Chen, and the present authors in~\cite{ACHLV15}. In order to present it, we need to recall a few concepts.

The first concept is that of the Mal'cev product \cite{Mal67}. The \emph{Mal'cev product} of two classes of \sgps\ $\mathbf{A}$ and $\mathbf{B}$ is the class of all \sgps\ $S$ that admit a congruence $\theta$ such that the quotient \sgp\ $S/\theta$ lies in $\mathbf{B}$ while every $\theta$-class that is a sub\sgp\ in $S$ lies in $\mathbf{A}$.

The next concept we need is that of a Zimin word. Let $x_1,x_2,\dots,x_k,\dots$ be a sequence of distinct variables. The sequence $\{Z_k\}_{k=1,2,\dots}$ of \emph{Zimin words} is defined inductively by
\[
Z_1:=x_1,\quad Z_{k+1}:=Z_kx_{k+1}Z_k.
\]

Finally, recall that a \sgp\ is said to be \emph{periodic} if each of its one-generated subsemigroups is finite and \emph{locally finite} if each of its finitely generated subsemigroups is finite.

Now we state the result of \cite{ACHLV15} in a form that is convenient for the use in the present paper.

\begin{prop}\cite[Theorem~3.4 + Remark~3.5]{ACHLV15}
\label{prop:kauffman}
A semigroup $S$ is nonfinitely based whenever it is contained in the variety generated by the Mal'cev product of a semigroup variety all of whose periodic members are locally finite with a locally finite semigroup variety, and each Zimin word is an isoterm for $S$.
\end{prop}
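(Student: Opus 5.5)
The plan is to argue by contradiction, in the spirit of Sapir's proof that finite semigroups with all Zimin words as isoterms are inherently nonfinitely based. The Mal'cev-product hypothesis will stand in for the local finiteness of $\var S$, which is not assumed here.

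\emph{Setup.} Suppose $S$ has a finite identity basis $\Sigma$, and let $m$ bound the number of variables occurring in $\Sigma$. Then any semigroup $T$ whose $m$-generated subsemigroups all lie in $\var S$ satisfies $\Sigma$, and hence lies in $\var S$.

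\emph{Step 1: translating the isoterm hypothesis.} For a word $\mathbf w$, let $S(\mathbf w)$ be the Rees quotient of the free monoid by the ideal of all words that are not factors of $\mathbf w$. I would use the standard fact (due to Jackson) that $\mathbf w$ is an isoterm for $S$ exactly when $S(\mathbf w)\in\var S$. Since every Zimin word is an isoterm, every $S(Z_k)$ lies in $\var S$. The same then holds for $S(\mathbf w)$ whenever $\mathbf w$ is a factor of some image of a Zimin word under a substitution.

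\emph{Step 2: the witness semigroup.} Take an infinite word $\mathbf s$ over a finite alphabet with two properties: every factor of $\mathbf s$ is a factor of some Zimin word $Z_k$, and $\mathbf s$ is uniformly recurrent. The limit of the $Z_k$, that is, the Zimin sequence itself, is the natural candidate. Let $T$ be the Rees quotient of the free semigroup on this alphabet by the ideal of non-factors of $\mathbf s$. Then:
\begin{itemize}
\item $T$ is finitely generated and infinite.
\item $T$ is periodic, indeed nil: every sufficiently high power of an element is $0$, because $\mathbf s$ is built from Zimin words and so has no factors of the form $\mathbf u^r$ with $r$ large.
\end{itemize}

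\emph{Step 3: $m$-generated subsemigroups lie in $\var S$.} This is the step I expect to be the main obstacle. Let $\mathbf u_1,\dots,\mathbf u_m$ be elements of $T$. I would argue that the subsemigroup they generate is a morphic image of a subsemigroup of a finite product of semigroups $S(Z_k)$. The point is that sufficiently long words in the $\mathbf u_i$ which do not vanish in $T$ are forced, by recurrence and by the self-similar structure of Zimin words, to be substitution instances of Zimin words. Each generated subsemigroup should therefore be controlled by finitely many $S(Z_k)$, which lie in $\var S$ by Step 1. To make this precise, I would try to show that each such subsemigroup is itself finite, and then verify that it satisfies every identity of $S$ directly, by reducing each identity to the isoterm property of suitable Zimin words.

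\emph{Step 4: conclusion.} By the setup, $T\in\var S\subseteq\var(\mathbf V\malcev\mathbf W)$. Each finitely generated member of this variety is a quotient of a subsemigroup of a finite product of members of $\mathbf V\malcev\mathbf W$. So, up to replacing $T$ by a suitable finitely generated cover $T'$ (itself in $\mathbf V\malcev\mathbf W$ and mapping onto $T$), I would obtain:
\begin{itemize}
\item a congruence $\theta$ on $T'$ with $T'/\theta\in\mathbf W$, so $T'/\theta$ is finite, since $\mathbf W$ is locally finite;
\item every $\theta$-class that is a subsemigroup lies in $\mathbf V$ and is periodic, because $T$ is nil; care is needed to keep this periodicity when passing from $T$ to the cover $T'$.
\end{itemize}
By the hypothesis on $\mathbf V$, these periodic classes are locally finite. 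A finite extension of locally finite semigroups in this sense is locally finite, by a Brown-type theorem. Hence $T$ would be finite, contradicting Step 2, so $S$ is nonfinitely based.
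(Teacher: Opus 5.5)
Your overall architecture is the right one: a finite basis gives finite axiomatic rank, an infinite finitely generated nil semigroup would then lie in $\var S$, and Brown's theorem applied to the Mal'cev product forbids this. But Steps 1--3, which carry the whole weight, fail as written.

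\textbf{Steps 1--3.} The word required in Step 2 does not exist. A factor of $Z_k$ of length at least $2^j$ contains all of $x_1,\dots,x_j$. Hence, over an $r$-letter alphabet, factors of Zimin words (even up to renaming letters) have length less than $2^{r+1}$. The Zimin sequence uses infinitely many letters, so the corresponding Rees quotient is not finitely generated.

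If you meant factors of substitution instances of Zimin words, the condition is vacuous, since every word is an instance of $Z_1=x_1$. Moreover, the last sentence of Step 1 is then false: isoterms pass to factors but not to substitution instances. For example, $B_2^1$ satisfies the hypotheses of the proposition (take $\mathbf W=\var B_2^1$ and the equality congruence), and all Zimin words are isoterms for it. Yet it satisfies $x^2\approx x^3$, so $S(x^3)\notin\var B_2^1$ although $x^3$ is an instance of $Z_2$.

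Step 3 is then only a hope, and its proposed form is problematic. If every $m$-generated subsemigroup of $T$ were finite, $T$ itself would be finite unless the alphabet has more than $m$ letters.

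What is missing is exactly Sapir's Burnside-type theorem from \cite{Sap87a}: a variety of finite axiomatic rank in which all nilsemigroups are locally finite satisfies a nontrivial identity $Z_k\approx\mathbf w$. Its proof rests on different combinatorics. The paper cites \cite{ACHLV15} without reproducing a proof, and that argument, as far as I can tell, invokes this theorem rather than constructing a witness by hand.

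\textbf{Step 4.} The periodicity problem you flagged is real but easy to close. Let $\pi\colon T'\to T$ be onto, and let $\theta$ be the congruence on $T'$ witnessing membership in the Mal'cev product. Replace $T'$ by its image $R$ under $t\mapsto(\pi(t),t\theta)$ in $T\times T'/\theta$.
\begin{itemize}
\item The second projection maps $R$ onto $T'/\theta\in\mathbf W$, which is locally finite.
\item The preimage of an idempotent $e$ is isomorphic to $\pi(e)$, viewing $e$ as a $\theta$-class. This is a homomorphic image of a member of $\mathbf V$ and a subsemigroup of the periodic semigroup $T$, hence locally finite.
\item Brown's theorem makes $R$ locally finite, and so is its image $T$ under the first projection.
\end{itemize}
Thus all periodic members of the variety generated by the Mal'cev product are locally finite. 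If $S$ were finitely based, $\var S$ would have finite axiomatic rank, and Sapir's theorem would produce a nontrivial identity $Z_k\approx\mathbf w$ holding in $S$. This contradicts the isoterm hypothesis.

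Replacing your Steps 1--3 by the citation of Sapir's theorem, and Step 4 by the argument just given, yields a complete proof.
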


\begin{prop}\label{prop:2x2pm1}
For any field $F$ of characteristic $0$, the monoid $U^{\pm}_{11}T_2(F)$ is nonfinitely based.
\end{prop}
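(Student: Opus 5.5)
The plan is to apply Proposition~\ref{prop:kauffman} to $S:=U^{\pm}_{11}T_2(F)$. This requires two things: a suitable Mal'cev product decomposition of $S$, and a proof that every Zimin word is an isoterm for $S$.

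\emph{The Mal'cev product.} Write a matrix of $S$ as $\left(\begin{smallmatrix}\varepsilon&\alpha\\0&\delta\end{smallmatrix}\right)$ with $\varepsilon=\pm1$ and $\delta\in\{0,1\}$. The map $A\mapsto\diag(A)$ is a morphism onto the finite commutative monoid $D:=\{\diag(\varepsilon,\delta)\}$ of order 4. Let $\theta$ be its kernel, so that $S/\theta\cong D$ lies in the locally finite variety $\var D$. Each $\theta$-class that is a subsemigroup is the preimage of an idempotent of $D$, so $\varepsilon=1$. There are two cases.
\begin{itemize}
\item If $\delta=1$, the class is the group $UT_2(F)\cong(F,+)$, which is abelian.
\item If $\delta=0$, the class $\{\left(\begin{smallmatrix}1&\alpha\\0&0\end{smallmatrix}\right)\}$ multiplies by $AB=A$, so it is a left zero semigroup.
\end{itemize}
Both classes lie in the variety $\mathbf{A}$ defined by $xyz\approx xzy$ (in a left zero semigroup both sides equal $x$). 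Periodic members of this variety are locally finite: a finitely generated member has finitely many words modulo the commutation of all letters after the first, and periodicity bounds the exponents. Thus $S$ belongs to the Mal'cev product of $\mathbf{A}$ with $\var D$, as Proposition~\ref{prop:kauffman} requires.

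\emph{Zimin words are isoterms.} This is the main obstacle. Suppose $S\models Z_k\approx\mathbf{v}$.

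First, substituting $\left(\begin{smallmatrix}1&\gamma\\0&1\end{smallmatrix}\right)$ for variables, and using characteristic $0$ (infinite exponent), shows that every variable occurs equally often in $Z_k$ and in $\mathbf{v}$.

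Second, choose $\varphi(x_i):=\left(\begin{smallmatrix}1&1\\0&0\end{smallmatrix}\right)$ and set all other variables to $E$. The product of a word is then this same matrix, so this substitution gives no information about order. Instead I will use pairs of substitutions mixing elements with $\delta=0$ and $\delta=1$. With $x_i\mapsto\left(\begin{smallmatrix}1&0\\0&0\end{smallmatrix}\right)$ and $x_j\mapsto\left(\begin{smallmatrix}1&1\\0&1\end{smallmatrix}\right)$, the $(1,2)$ entry of the product counts the occurrences of $x_j$ that precede the first occurrence of $x_i$. Similarly, the sign matrix $\left(\begin{smallmatrix}-1&0\\0&1\end{smallmatrix}\right)$ combined with a unitriangular matrix records the parity of the occurrences of the sign variable preceding each occurrence of the unipotent variable. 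Concretely, with $x_i\mapsto\left(\begin{smallmatrix}-1&0\\0&1\end{smallmatrix}\right)$ and $x_j\mapsto\left(\begin{smallmatrix}1&1\\0&1\end{smallmatrix}\right)$, the $(1,2)$ entry equals $\sum\pm1$ over occurrences of $x_j$, with sign $(-1)^{\#x_i\text{ before}}$.

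In characteristic $0$ these signed counts are faithful enough to recover, for every pair $x_i,x_j$, the pattern of alternation between occurrences of $x_i$ and $x_j$. I would then argue by induction on $k$, using the structure $Z_k=Z_{k-1}x_kZ_{k-1}$. The variable $x_1$ occurs $2^{k-1}$ times and alternates with every other variable. Deleting $x_1$ (mapping it to $E$) reduces to the Zimin word $Z_{k-1}$ on the renamed variables $x_2,\dots,x_k$, which is an isoterm by the induction hypothesis. Hence $\mathbf{v}$ with $x_1$ deleted equals $Z_k$ with $x_1$ deleted. The signed-count invariants for the pairs $(x_1,x_j)$ then force each occurrence of $x_1$ into the same gap as in $Z_k$, which gives $\mathbf{v}=Z_k$.

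Checking this last placement, in particular that the parity and signed counts pin down positions exactly, is the delicate step.
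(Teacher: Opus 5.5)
Your Mal'cev-product step contains a concrete error. For $A=\left(\begin{smallmatrix}1&\alpha\\0&0\end{smallmatrix}\right)$ and $B=\left(\begin{smallmatrix}1&\beta\\0&0\end{smallmatrix}\right)$ one has $AB=B$, not $AB=A$. So the $\theta$-class with $\delta=0$ is a \emph{right} zero semigroup. For distinct elements there, $xyz=z\neq y=xzy$, so $xyz\approx xzy$ fails, and you have not placed $S$ in the Mal'cev product of $\mathbf{A}$ with $\var D$.

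The fix is the paper's choice, the identity $xyz\approx yxz$. It holds in the abelian group $UT_2(F)$ and in every right zero semigroup, and its periodic models are locally finite by your own argument, now with ``all letters but the last commute''.

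The same left/right slip affects your remark on $x_i\mapsto\left(\begin{smallmatrix}1&0\\0&0\end{smallmatrix}\right)$. The $(1,2)$ entry then counts the occurrences of $x_j$ \emph{after the last} occurrence of $x_i$. Since the $(1,1)$ entries are $\pm1$, ``before the first'' cannot be detected this way. You do not need this substitution anyway.

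For the isoterm part you take a genuinely different route from the paper. The paper stops after balancedness: by \cite[Lemma 5.2.1]{Sap87a}, a group satisfying a nontrivial balanced identity $Z_k\approx\mathbf w$ is nilpotent, while the subgroup generated by $\left(\begin{smallmatrix}-1&0\\0&1\end{smallmatrix}\right)$ and $\left(\begin{smallmatrix}1&1\\0&1\end{smallmatrix}\right)$ has trivial center. Your induction does work, but the step you left open needs one observation, which goes as follows.

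\begin{itemize}
\item By induction and balancedness, $\mathbf v=x_1^{n_0}y_1x_1^{n_1}\cdots y_mx_1^{n_m}$ with $y_1\cdots y_m=Z_{k-1}(x_2,\dots,x_k)$, $m=2^{k-1}-1$ and $n_0+\dots+n_m=2^{k-1}$. In $Z_k$ all $n_i=1$.
\item In $Z_k$ the letter $y_i$ is preceded by exactly $i$ copies of $x_1$. Moreover, $x_2$ occupies precisely the odd positions $i$, and each $x_j$ with $j\ge3$ only even ones.
\item Hence the signed count for the pair $(x_1,x_j)$ in $Z_k$ is $-2^{k-2}$ for $j=2$ and $2^{k-j}$ for $j\ge3$. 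That is, it equals $\pm$ the number of occurrences of $x_j$.
\item A sum of that many terms $\pm1$ attains this extreme value only if all signs agree. So in $\mathbf v$ also $n_0+\dots+n_{i-1}\equiv i\pmod 2$ for $i=1,\dots,m$, i.e., $n_0,\dots,n_{m-1}$ are odd.
\item $n_m$ is odd as well, since the total $2^{k-1}$ is even while $n_0+\dots+n_{m-1}$ is a sum of an odd number of odd terms.
\item Thus $m+1$ positive integers sum to $m+1$, so all $n_i=1$ and $\mathbf v=Z_k$.
\end{itemize}

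Your argument is elementary and self-contained, but it relies on the explicit formula for the $(1,2)$ entry of a product of $2\times2$ triangular matrices. The paper's argument, resting on Sapir's lemma, is shorter and is reused for $G^{\pm}_3(F)$ in Proposition~\ref{prop:group3pm}.
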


\begin{proof}
We will verify that $U^{\pm}_{11}T_2(F)$ satisfies the conditions of Proposition~\ref{prop:kauffman}.

Let $\theta$ denote the kernel of the morphism $\diag\colon U^{\pm}_{11}T_2(F)\to D$, where
\[
D:=\left\{\left(\begin{smallmatrix}
1 & 0\\
0 & 1
\end{smallmatrix}\right), \
\left(\begin{smallmatrix}
1 & 0\\
0 & 0
\end{smallmatrix}\right), \
\left(\begin{smallmatrix}
-1 &  0\\
\phantom{-}0 & 1
\end{smallmatrix}\right), \
\left(\begin{smallmatrix}
-1 &  0\\
\phantom{-}0 & 0
\end{smallmatrix}\right)
\right\}
\]
is the submonoid formed by the diagonal matrices from $U^{\pm}_{11}T_2(F)$. Only two $\theta$-classes are subsemigroups in $U^{\pm}_{11}T_2(F)$, namely,
\[
S_{11}:=\left\{\left(\begin{smallmatrix}
1 & \alpha\\
0 & 1
\end{smallmatrix}\right)\;\middle|\; \alpha\in F\right\}\ \text{ and }\
S_{10}:=\left\{\left(\begin{smallmatrix}
1 & \alpha\\
0 & 0
\end{smallmatrix}\right)\;\middle|\; \alpha\in F\right\}.
\]
A direct computation shows that $S_{11}$ satisfies the identity $xy\approx yx$, while $S_{10}$ satisfies the identity $xy\approx y$. Hence both $S_{11}$ and $S_{10}$ satisfy the identity $xyz\approx yxz$. It is well known (and easy to verify) that every periodic semigroup satisfying the latter identity is locally finite. Denote by $\mathbf{U}$ the variety defined by the identity $xyz\approx yxz$ and by $\mathbf{V}$ the variety generated by the semigroup $D$. Then these varieties satisfy the assumptions of Proposition~\ref{prop:kauffman} (here $\mathbf{V}$ is locally finite since it is generated by a finite semigroup; see Proposition~\ref{prop:birkhoff}, and the monoid $U^{\pm}_{11}T_2(F)$ belongs to the Mal'cev product of $\mathbf{U}$ with $\mathbf{V}$.

It remains to verify that each Zimin word is an isoterm for $U^{\pm}_{11}T_2(F)$. Since the characteristic of the field $F$ is $0$, the map $m\mapsto\left(\begin{smallmatrix}
1 & m\\
0 & 1
\end{smallmatrix}\right)$ is an embedding of the additive semigroup $\mathbb{N}$ of positive integers into $U^{\pm}_{11}T_2(F)$. It is well known that every identity $\mathbf{u}\approx\mathbf{v}$ satisfied by $\mathbb{N}$ is \emph{balanced}, that is, every variable occurs equally many times in $\mathbf{u}$ and in $\mathbf{v}$. Therefore if an identity of the form $Z_k\approx\mathbf{w}$ holds in $U^{\pm}_{11}T_2(F)$, it must be balanced. By \cite[Lemma 5.2.1]{Sap87a}, every group satisfying a nontrivial balanced identity of the form $Z_k\approx\mathbf{w}$ is nilpotent. However, the subgroup of $U^{\pm}_{11}T_2(F)$ generated by the matrices $\left(\begin{smallmatrix}
-1 &  0\\
\phantom{-}0 & 1
\end{smallmatrix}\right)$ and $\left(\begin{smallmatrix}
1 & 1\\
0 & 1
\end{smallmatrix}\right)$ is not nilpotent because it has trivial center. Hence every nontrivial identity of the form $Z_k\approx\mathbf{w}$ fails in $U^{\pm}_{11}T_2(F)$.
\end{proof}

Clearly, the argument in the proof of Proposition~\ref{prop:2x2pm1} also applies to the monoid $U^{\pm}_{11}T_2(\mathbb{Z})$.

For any field $F$ of characteristic different from $2$, let $U^{\pm}T_n(F)$ denote the submonoid of $T_n(F)$ consisting of matrices whose diagonal entries are 0s, 1s or --1s; that is,
\begin{equation}\label{eq:UpmTfull}
 U^{\pm}T_n(F):=\left\{\bigl(\alpha_{ij}\bigr)_{n\times n}\in T_n(F)\;\middle|\; \alpha_{ii}\in\{0,\pm1\}\ \text{for}\ i=1,\dots,n\right\}.
\end{equation}

\begin{cor}\label{cor:UpmT2}
For any field $F$ of characteristic $0$, the monoid $U^{\pm}T_2(F)$ is nonfinitely based.
\end{cor}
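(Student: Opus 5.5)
The plan is to rerun the argument of Proposition~\ref{prop:2x2pm1} for $U^{\pm}T_2(F)$ rather than derive the corollary from it. The reason is that nonfinite basedness does not pass to supersemigroups, so the inclusion $U^{\pm}_{11}T_2(F)\subseteq U^{\pm}T_2(F)$ alone gives nothing. Fortunately, Proposition~\ref{prop:kauffman} has two hypotheses, and each transfers easily: the isoterm condition passes \emph{upward}, and the Mal'cev product membership is checked exactly as before.

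First, I take $\theta$ to be the kernel of the morphism $\diag\colon U^{\pm}T_2(F)\to D'$. Here $D'$ is the $9$-element monoid of diagonal matrices with entries in $\{0,\pm1\}$, which is finite, so $\var D'$ is locally finite by Proposition~\ref{prop:birkhoff}. The $\theta$-classes that are subsemigroups are those indexed by idempotent diagonals $\diag(\epsilon_1,\epsilon_2)$ with $\epsilon_i\in\{0,1\}$, which gives four classes. The class $S_{11}$ is commutative and $S_{10}$ satisfies $xy\approx y$, as in the proof of Proposition~\ref{prop:2x2pm1}. A direct computation shows that $S_{01}=\left\{\left(\begin{smallmatrix}0&\alpha\\0&1\end{smallmatrix}\right)\right\}$ satisfies $xy\approx x$. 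The class $S_{00}$ is the null semigroup of strictly upper triangular matrices, satisfying $xy\approx zt$.

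I then need one variety $\mathbf{U}$ all of whose periodic members are locally finite and which contains all four classes. The choice $xyz\approx yxz$ fails for the left zero semigroup $S_{01}$, since there $xyz=x$ but $yxz=y$. So I would instead take $\mathbf{U}$ to be the variety defined by $xyzt\approx xzyt$. One checks that $S_{11}$, $S_{10}$, $S_{01}$ and $S_{00}$ all satisfy it. Its periodic members are locally finite: any long word can be rewritten with its interior letters sorted, and periodicity then bounds the exponents. Thus $U^{\pm}T_2(F)$ lies in the Mal'cev product of $\mathbf{U}$ with $\var D'$.

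Second, the isoterm condition. Since $U^{\pm}_{11}T_2(F)$ is a submonoid of $U^{\pm}T_2(F)$, every identity of the larger monoid holds in the smaller one. By the proof of Proposition~\ref{prop:2x2pm1}, each Zimin word is an isoterm for $U^{\pm}_{11}T_2(F)$, and therefore it is an isoterm for $U^{\pm}T_2(F)$ as well. Proposition~\ref{prop:kauffman} then yields the claim. The only step needing care is choosing $\mathbf{U}$ and verifying local finiteness of its periodic members; I expect this to be routine, since $\mathbf{U}$ is a permutative variety.
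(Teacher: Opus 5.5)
Your proposal is correct and follows the paper's proof: the same nine-element quotient of diagonal matrices, the same four subsemigroup $\theta$-classes, and the same switch to the variety $\mathbf{U}$ defined by $xyzt\approx xzyt$ (needed because the left zero class $S_{01}$ violates $xyz\approx yxz$), with Proposition~\ref{prop:kauffman} finishing the argument. Your short argument that periodic members of $\mathbf{U}$ are locally finite is sound; it fills in what the paper cites as known. Deducing the Zimin isoterm property from the submonoid $U^{\pm}_{11}T_2(F)$ is also valid, and a slightly cleaner justification than the paper's remark that the earlier argument applies unchanged.
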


\begin{proof}
 The proof of Proposition~\ref{prop:2x2pm1} can be easily adapted to work for $U^{\pm}T_2(F)$. First, we extend the submonoid $D$ to the set of all diagonal matrices in $U^{\pm}T_2(F)$. The kernel $\theta$ of the morphism $\left(\begin{smallmatrix}
\alpha_{11} & \alpha_{12}\\0 & \alpha_{22}
\end{smallmatrix}\right)\mapsto\left(\begin{smallmatrix}
\alpha_{11} & 0\\0 & \alpha_{22}
\end{smallmatrix}\right)$ has, in addition to $S_{11}$ and $S_{10}$, two further classes that are subsemigroups, namely,
\[
S_{01}:=\left\{\left(\begin{smallmatrix}
0 & \alpha\\
0 & 1
\end{smallmatrix}\right)\;\middle|\; \alpha\in F\right\}\ \text{ and }\
S_{00}:=\left\{\left(\begin{smallmatrix}
0 & \alpha\\
0 & 0
\end{smallmatrix}\right)\;\middle|\; \alpha\in F\right\}.
\]
It is readily verified that $S_{01}$ satisfies the identity $xy\approx x$, while $S_{00}$ satisfies the identity $xy\approx zt$. Hence all $\theta$-classes that are subsemigroups satisfy the identity $xyzt\approx xzyt$, and every periodic semigroup satisfying this identity is known to be locally finite. Now, if we denote by $\mathbf{U}$ the variety defined by the identity $xyzt\approx xzyt$ and by $\mathbf{V}$ the variety generated by the $9$-element semigroup of all diagonal matrices in $U^{\pm}T_2(F)$, the proof of Proposition~\ref{prop:2x2pm1} applies without further modification.
\end{proof}

It was noticed in \cite[Section 4]{Vol15} that the monoid $U^{\pm}T_3(\mathbb{R})$ is nonfinitely based. In fact, the following general result holds.

\begin{thm}\label{thm:UpmTn}
For any field $F$ of characteristic $0$, the monoid $U^{\pm}T_n(F)$ is nonfinitely based.
\end{thm}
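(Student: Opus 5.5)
We apply Proposition~\ref{prop:kauffman} to $U^{\pm}T_n(F)$, generalizing the argument used for Proposition~\ref{prop:2x2pm1} and Corollary~\ref{cor:UpmT2}.

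First, let $\theta$ be the kernel of the morphism $\diag\colon U^{\pm}T_n(F)\to D_n$. Here $D_n$ is the finite monoid of diagonal matrices with entries in $\{0,\pm1\}$. Then $\mathbf{V}:=\var D_n$ is locally finite by Proposition~\ref{prop:birkhoff}.

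A $\theta$-class is a subsemigroup exactly when its diagonal $d$ is idempotent, i.e.\ $d\in\{0,1\}^n$. Such a class $S_d$ consists of triangular matrices with fixed $0/1$ diagonal $d$. The key claim is that all these classes lie in a single variety $\mathbf{U}$ defined by a finite set of identities whose periodic members are locally finite. My candidate is the following. Write $A=d+N$ with $N$ strictly upper triangular. Then a product $A_1\cdots A_m$ expands as a sum of terms $dN_{i_1}d\cdots$, each containing at most $n-1$ factors $N$. Terms with few $N$'s depend only on the matrices at the chosen positions, while the factors $d$ between them act as projections.

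I expect $S_d$ to satisfy an identity of the form
\[
\mathbf{x}\,y z\,\mathbf{x}'\approx \mathbf{x}\,zy\,\mathbf{x}',
\]
where $\mathbf{x}$ and $\mathbf{x}'$ are products of $n$ fresh variables each; this generalizes $xyzt\approx xzyt$ from Corollary~\ref{cor:UpmT2}. If that fails, I would fall back on the weaker claim that $S_d$ lies in the variety generated by $UT_n(F)$-type nilpotent pieces combined with rectangular-band-like behaviour. Concretely, I would show that $S_d$ satisfies a nontrivial identity of the shape $\mathbf{w}_1 \mathbf{s}\mathbf{w}_2\approx \mathbf{w}_1\mathbf{s}'\mathbf{w}_2$, where $\mathbf{s}'$ permutes $\mathbf{s}$. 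Periodic semigroups satisfying any such permutation identity are locally finite; this is a known result, of which the cases used for Corollary~\ref{cor:UpmT2} are instances. For $d=(1,\dots,1)$ the class $S_d$ is the nilpotent group $UT_n(F)$, which is not commutative for $n\ge3$. A permutation identity with long enough prefix and suffix still holds there, though: any such identity $\mathbf{x}yz\mathbf{x}'\approx\mathbf{x}zy\mathbf{x}'$ is equivalent in a group to $yz\approx zy$, which fails in $UT_n(F)$. So the fallback must instead be the condition that $\mathbf{U}$ has locally finite periodic members. For the whole class this follows because periodic nilpotent groups are locally finite, combined with a Mal'cev-product decomposition of $S_d$ by its own diagonal blocks. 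Choosing $\mathbf{U}$ correctly is the main obstacle, and I would try to take $\mathbf{U}$ to be the variety generated by all $S_d$ and verify the periodic-locally-finite property directly. That verification would use the fact that a finitely generated periodic subsemigroup of $S_d$ is finite, since its entries generate a finite additive structure once periodicity forces the nilpotent parts to have finite order. In characteristic $0$, periodic elements of $UT_n(F)$-like parts are trivial, so this should be easy.

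Finally, Zimin words are isoterms, by the argument of Proposition~\ref{prop:2x2pm1}. The monoid $U^{\pm}_{11}T_2(F)$ embeds into $U^{\pm}T_n(F)$ by placing a $2\times2$ block in the upper left corner and padding the diagonal with $1$s. Hence every identity $Z_k\approx\mathbf{w}$ satisfied by $U^{\pm}T_n(F)$ holds in $U^{\pm}_{11}T_2(F)$. The proof of Proposition~\ref{prop:2x2pm1} shows that this forces $\mathbf{w}=Z_k$, via balancedness from the copy of $\mathbb{N}$ and the non-nilpotent subgroup combined with \cite[Lemma 5.2.1]{Sap87a}. Proposition~\ref{prop:kauffman} then yields the theorem.
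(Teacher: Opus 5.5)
There is a genuine gap in the first hypothesis of Proposition~\ref{prop:kauffman}. For $n\ge 3$ you never produce a variety $\mathbf{U}$ that contains all the classes $S_d$ and whose periodic members are all locally finite.

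\begin{itemize}
\item Your explicit candidate fails, as you yourself observe.
\item The fallback argument does not prove what is needed. That periodic subsemigroups of the $S_d$ are finite, or that $UT_n(F)$ has no nontrivial periodic elements, says nothing about the periodic members of $\var\{S_d\}$. Those are morphic images of subsemigroups of direct products. For comparison, the free semigroup has no periodic elements at all, yet its variety contains infinite finitely generated periodic (even nil) semigroups.
\item Local finiteness of periodic nilpotent groups only controls the groups in $\mathbf{U}$. The real issue is the nilsemigroups. You have neither an identity of Zimin type nor finite axiomatic rank with which to invoke Proposition~\ref{prop:sapir}.
\end{itemize}

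Since $S_{(1,\dots,1)}=UT_n(F)$, you need a variety containing $UT_n(F)$ all of whose periodic members are locally finite. The paper obtains this only for $n=3$, from the balanced identity \eqref{eq:Z4}. Remark~1 after Proposition~\ref{prop:group3pm} explains that for $n\ge4$ Mal'cev's identity $\mathbf{u}_4\approx\mathbf{u}'_4$ holds in periodic semigroups that are not locally finite.

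In fact, if your step worked, the identical argument would show that the group $G^{\pm}_n(F)$ is nonfinitely based for every $n$. One would decompose $G^{\pm}_n(F)$ via $\diag$ onto $D^{\pm}_n$, whose only idempotent class is $UT_n(F)$. Zimin words are isoterms for $G^{\pm}_n(F)$ by balancedness and non-nilpotence, exactly as in the proof of Proposition~\ref{prop:group3pm}. The paper lists this question as open for $n\ge 4$. So your central step is not an easy verification; it would settle a problem the paper leaves open.

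The paper avoids Proposition~\ref{prop:kauffman} for $n\ge3$ altogether. It uses Proposition~\ref{prop:old} with $T=U^0T_n(F)$ and $G=\mathfrak{S}_3$:
\begin{itemize}
\item $A_2$ and $\mathfrak{S}_3$ lie in $\var U^{\pm}T_n(F)$.
\item Every square lies in $U^0T_n(F)$.
\item The substantial step is the identity \eqref{eq:nilpotence}. It holds in $U^0T_n(F)$ because this monoid is an epigroup of bounded $\mathfrak{J}$-height. So the values of the words $\mathbf{w}_{i,2N}$ fall into a single subgroup, which is nilpotent of class at most $n-1$ by Lemma~\ref{lem:nilpotent}. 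It forces nilpotence in any group because $\varphi^{2N}$ is an automorphism of the free group. Hence $\mathfrak{S}_3\notin\var U^0T_n(F)$.
\end{itemize}
This route needs information only about the subgroups of $U^0T_n(F)$ and about one finite group, never about periodic semigroups in $\var UT_n(F)$. Your Zimin-isoterm argument, via the block embedding of $U^{\pm}_{11}T_2(F)$, is correct. So is your treatment of the case $n=2$, which is Corollary~\ref{cor:UpmT2}.
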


\begin{proof}
Since the case of the monoid $U^{\pm}T_2(F)$ has been dealt in Corollary~\ref{cor:UpmT2}, we assume throughout the proof that $n\ge 3$.

We will use a condition for nonfinite basedness due to the third-named author~\cite{Vol89}. Its formulation involved the 5-element semigroup $A_2$ formed by the  following matrices
\[
\begin{pmatrix} 0 & 0\\ 0 & 0\end{pmatrix},\
\begin{pmatrix} 1 & 0\\ 0 & 0\end{pmatrix},\
\begin{pmatrix} 0 & 1\\ 0 & 0\end{pmatrix},\
\begin{pmatrix} 1 & 0\\ 1 & 0\end{pmatrix},\
\begin{pmatrix} 0 & 1\\ 0 & 1\end{pmatrix}.
\]
\begin{prop}\cite[Theorem on p. 188]{Vol89}\label{prop:old}
Let $S$ be a semigroup such that the semigroup $A_2$ belongs to the variety $\var S$. Assume that there exist a subsemigroup $T$ of $S$ and a group $G$ of exponent $d$ in the variety $\var S$ such that  $s^d\in T$ for all $s\in S$, and the group $G$ does not belong to the variety $\var T$. Then the semigroup $S$ is nonfinitely based.
\end{prop}

We will verify that the conditions of Proposition~\ref{prop:old} hold for the monoid $U^{\pm}T_n(F)$ with $U^0T_n(F)$ in the role of the subsemigroup $T$ and the symmetric group $\mathfrak{S}_3$ of exponent 6 in the role of the group $G$.

\setcounter{step}{0}
\begin{step}
The semigroup $A_2$ belongs to the variety $\var U^{\pm}T_n(F)$.
\end{step}

\begin{proof}
Since the monoid $U^{\pm}T_3(F)$ naturally embeds into $U^{\pm}T_n(F)$ for each $n\ge 3$, it suffices to show that $A_2$ belongs to the variety $\var U^{\pm}T_3(F)$.

Since the field $F$ has characteristic 0, it contains the ring $\mathbb{Z}$. Consider the sub\sgp\ $M$ of $\mathrm{UT}_3(\mathbb{Z})$ generated by the matrices
\[
p:=\begin{pmatrix}
1 & 0 & 0\\
0 & 0 & 0\\
0 & 0 & 1
\end{pmatrix}\quad
\text{and}\quad q:=\begin{pmatrix}
1 & 1 & 0\\
0 & 0 & 1\\
0 & 0 & 1
\end{pmatrix}.
\]
Clearly, for each matrix $(\mu_{ij})\in M$, one has $\mu_{ij}\ge0$ and $\mu_{11}=\mu_{33}=1$, whence the set $M^+$ of all matrices $(\mu_{ij})\in M$ such that $\mu_{13}>0$ forms an ideal of $M$. A straightforward
calculation shows that, besides $p$, and $q$, the only matrices in $M\setminus M^+$ are $pq=\left(\begin{smallmatrix}
1 & 1 & 0 \\
0 & 0 & 0\\
0 & 0 & 1
\end{smallmatrix}\right)$ and
$qp=\left(\begin{smallmatrix}
1 & 0 & 0 \\
0 & 0 & 1\\
0 & 0 & 1
\end{smallmatrix}\right)$.
Consider the following bijection between $M\setminus M^+$ and the set of non-zero matrices in $A_2$:
\[
p\mapsto\left(\begin{smallmatrix} 1 & 0 \\ 1 & 0
\end{smallmatrix}\right),\quad
q\mapsto\left(\begin{smallmatrix} 0 & 1 \\ 0 & 0
\end{smallmatrix}\right),\quad
pq\mapsto\left(\begin{smallmatrix} 0 & 1 \\ 0 & 1
\end{smallmatrix}\right),\quad
qp\mapsto\left(\begin{smallmatrix} 1 & 0 \\ 0 & 0
\end{smallmatrix}\right).
\]
Extending this map to $M$ by sending all elements from $M^+$ to $\left(\begin{smallmatrix} 0 & 0 \\ 0 & 0
\end{smallmatrix}\right)$ yields a morphism from $M$ onto $A_2$. Thus, as a morphic image of a sub\sgp\ of ${U^{\pm}T}_3(F)$, the semigroup $A_2$ belongs to the variety $\var U^{\pm}T_3(F)$.
\end{proof}

\begin{step}
The symmetric group $\mathfrak{S}_3$ belongs to the variety $\var U^{\pm}T_n(F)$.
\end{step}

\begin{proof}
In the proof of Proposition~\ref{prop:2x2pm1}, we have already used the matrices $\left(\begin{smallmatrix}
-1 &  0\\
\phantom{-}0 & 1
\end{smallmatrix}\right)$ and $\left(\begin{smallmatrix}
1 & 1\\
0 & 1
\end{smallmatrix}\right)$. Their images under the morphism $U^{\pm}T_2(\mathbb{Z})\twoheadrightarrow U^{\pm}T_2(\mathbb{Z}/3\mathbb{Z})$ that extends the natural morphism $\mathbb{Z}\twoheadrightarrow \mathbb{Z}/3\mathbb{Z}$ generate in $U^{\pm}T_2(\mathbb{Z}/3\mathbb{Z})$ a subgroup isomorphic to $\mathfrak{S}_3$. Thus, $\mathfrak{S}_3$ belongs even to the variety $\var U^{\pm}T_2(F)$, and hence, to the variety $\var U^{\pm}T_n(F)$ for each $n$.
\end{proof}

\begin{step}
$a^6\in U^0T_n(F)$ for all matrices $a\in U^{\pm}T_n(F)$.
\end{step}

\begin{proof}
This readily follows from the definitions of $U^{\pm}T_n(F)$ and $U^0T_n(F)$. In fact, even the square of every triangular matrix with diagonal entries in $\{0,\pm1\}$ is a triangular matrix with diagonal entries in $\{0,1\}$.
\end{proof}

\begin{step}
The symmetric group $\mathfrak{S}_3$ does not belong to the variety $\var U^0T_n(F)$.
\end{step}

To accomplish this, the most involved step, we describe a procedure for constructing a semigroup identity that holds in the monoid $U^0T_n(F)$ for an arbitrary field $F$ and forces any group satisfying it to be nilpotent. The claim will then follow from the known fact that the group $\mathfrak{S}_3$ is not nilpotent.

We will need a few facts about matrix semigroups over arbitrary fields.

\begin{lem}\label{lem:nilpotent}
Every subgroup of the monoid $U^0T_n(F)$ is nilpotent of class at most $n-1$.
\end{lem}

\begin{proof}
The proof that the group of units of $U^0T_n(F)$, namely, the unitriangular group $UT_n(F)$, is nilpotent of class $n-1$ can be found in many textbooks, and the case of an arbitrary subgroup can be treated in a similar way. We present the argument for completeness.

So, let $H$ be a subgroup of the monoid $U^0T_n(F)$ and let $e$ be the identity element of $H$. Then for an arbitrary matrix $g\in H$, we have $g=eg$ and $e=gh$ where $h$ is the inverse of $g$ in $H$. Therefore $\diag(g)=\diag(e)\diag(g)$ and $\diag(e)=\diag(g)\diag(h)$. Since diagonal entries of $e$, $g$, and $h$ are all equal to 0 or 1, these equalities imply that $\diag(e)=\diag(g)$. Hence $H-e:=\{g-e\mid g\in H\}$ consists of matrices with zero main diagonal. The set $R$ of all matrices from  $eT_n(F)e$ with zero main diagonal is a ring under matrix addition and multiplication and $R^n=\{O\}$, where $O$ is the $n\times n$ zero matrix. We have $H=e+(H-e)\subseteq e+R$, and $e+R$ is a group under matrix multiplication that is nilpotent of class $n-1$ by \cite[Section 2.2.2]{CMZ17}.
\end{proof}

An \emph{epigroup} is a semigroup in which a power of every element belongs to a subgroup.

\begin{lem}\label{lem:eprigroup}
The monoid $U^0T_n(F)$ is an epigroup.
\end{lem}

\begin{proof}
This can be verified directly (using the Fitting decomposition), but one may also invoke the following general result:
\begin{lem}\label{lem:putcha}\cite[Theorem~1.4]{Okn98}
Let $F$ be an arbitrary field. A subsemigroup $S$ of the monoid $M_n(F)$ is an epigroup\footnote{In the monograph \cite{Okn98}, epigroups are  called $\pi$-\emph{regular semigroups}.} whenever there is a family of polynomials $P_k(x_{ij})$ in the  variables $x_{ij}$, $i,j=1,\dots,n$, such that a matrix $\bigl(\alpha_{ij}\bigr)_{n\times n}$ belongs to $S$ if and only if $P_k(\alpha_{ij})=0$ for every $k$.
\end{lem}
Lemma~\ref{lem:putcha} applies to the monoid $U^0T_n(F)$  by taking the family consisting of the polynomials $x_{ij}$ for all $1\le j<i\le n$, together with $x^2_{ii}-x_{ii}$ for each $i=1,\dots,n$.
\end{proof}

A semigroup is said to have \emph{$\mathfrak{J}$-height less than} $K$ if every chain
\[
I_1\subseteq I_2\subseteq\dots\subseteq I_K
\]
of its principal ideals contains two equal ideals.

\begin{lem}\label{lem:okninski}\cite[Corollary 3.9(3)]{Okn98}
Every subepigroup of the monoid $M_n(F)$ has $\mathfrak{J}$-height less than $N:=2^{2n}\prod_{i=1}^{n}\binom{n}{i}^2$.
\end{lem}

We can now start constructing a semigroup identity with the required properties.

Let $x_1,x_2,\dots,x_k,\dots$ be a sequence of distinct variables. Following Mal’cev \cite{Mal53}, we inductively define two sequences of words $\{\mathbf{u}_k\}_{k=1,2,\dots}$ and $\{\mathbf{u}'_k\}_{k=1,2,\dots}$ by
\[
\mathbf{u}_1:=x_1,\quad \mathbf{u}'_1:=x_2,\quad \mathbf{u}_{k+1}:=\mathbf{u}_kx_{k+2}\mathbf{u}'_k,\quad  \mathbf{u}'_{k+1}:=\mathbf{u}'_kx_{k+2}\mathbf{u}_k.
\]
\begin{lem}\label{lem:malcev}\cite[Theorem 1]{Mal53}
A group satisfies the identity $\mathbf{u}_n\approx\mathbf{u}'_n$ if and only if it is nilpotent of class at most $n-1$.
\end{lem}

By construction, the words $\mathbf{u}_n$ and $\mathbf{u}'_n$ involve the variables $x_1,\dots,x_{n+1}$. Define the following $n+1$ words over $\mathcal{Y}:=\{x_1,\dots,x_{n+1}\}$:
\begin{align}
\label{eq:words}
\mathbf{w}_1&:=x_1^2x_2\cdots x_{n+1}x_1,\notag\\
\mathbf{w}_2&:=x_1x_2^2\cdots x_{n+1}x_1,\notag\\
\ldots&\hbox to 3.3cm{\dotfill}\\
\mathbf{w}_{n}&:=x_1x_2\cdots x_{n}^2x_{n+1}x_1,\notag\\
\mathbf{w}_{n+1}&:=x_1x_2\cdots x_{n+1}x_1.\notag
\end{align}
The construction comes from~\cite[Section~2]{AVG09}. The last line of \eqref{eq:words} stands out because the word $\mathbf{w}_{n+1}$ involves no squared variable, unlike all the preceding words. The reason for this distinction will become clear shortly.

Denote by $\varphi$ the endomorphism of $\mathcal{Y}^+$ that extends the map $x_i\mapsto\mathbf{w}_i$, $i=1,\dots,n+1$. For each $k=1,2,\dotsc$, let $\mathbf{w}_{i,k}=\varphi^k(x_i)$. We claim that the identity
\begin{equation}\label{eq:nilpotence}
\mathbf{v}:=\mathbf{u}_n(\mathbf{w}_{1,2N},\dots,\mathbf{w}_{n+1,2N})\approx \mathbf{u}'_n(\mathbf{w}_{1,2N},\dots,\mathbf{w}_{n+1,2N})=:\mathbf{v}',
\end{equation}
where $N$ is the parameter from Lemma~\ref{lem:okninski}, has the requested properties: it holds in $U^0T_n(F)$ and forces any group satisfying it to be nilpotent.

 We start with the following fact which is a slight modification of \cite[Lemma 2]{KV24}.
\begin{lem}\label{lem:almeida}
If an epigroup $S$ has $\mathfrak{J}$-height less than $N$, then for every substitution $\mathcal{Y}\to{S}$, there is a subgroup $H$ of $S$ such that the values of all words $\mathbf{w}_{1,2N},\dots,\mathbf{w}_{n+1,2N}$ under this substitution belong to $H$.
\end{lem}

There are two differences between Lemma~\ref{lem:almeida} and Lemma~2 of~\cite{KV24}.

The first is that, in the latter lemma, the semigroup $S$ is required to be stable rather than an epigroup. Stability is a property of semigroups defined in terms of Green's relations, but we do not need to define it here---it is known that every epigroup is stable~\cite{HM79}, and therefore every result proved for stable semigroups automatically applies to epigroups.

The second difference is that, in the statement of Lemma~2 of~\cite{KV24}, the parameter $N$ upper-bounds the number of principal ideals rather than the length of their chains. However, the proof of the lemma uses only the fact that chains consisting of distinct principal ideals cannot have length exceeding $N$.

Thus, we may regard Lemma~\ref{lem:almeida} as having already been established in~\cite{KV24} and therefore omit its proof.

The following reasoning also reuses some tricks from \cite[Section~3]{KV24}, although for a quite distinct purpose. However, because of several important differences, we provide a full self-contained proof rather than a list of modifications that would have to be made to the arguments in \cite[Section~3]{KV24}.

Lemmas~\ref{lem:eprigroup}, \ref{lem:okninski}, and \ref{lem:almeida} ensure that under an arbitrary substitution $\zeta\colon\mathcal{Y}\to U^0T_n(F)$, the values of the words $\mathbf{w}_{1,2N},\dots,\mathbf{w}_{n+1,2N}$ lie in a subgroup of $U^0T_n(F)$. By Lemma~\ref{lem:nilpotent}, every subgroup of $U^0T_n(F)$ is nilpotent of class at most $n-1$, and the identity $\mathbf{u}_n\approx\mathbf{u}'_n$ holds in every such group by Lemma~\ref{lem:malcev}. Therefore, substituting for $x_1,\dots,x_{n+1}$ the values of the words $\mathbf{w}_{1,2N},\dots,\mathbf{w}_{n+1,2N}$ yields
\[
\mathbf{u}_n(\zeta(\mathbf{w}_{1,2N}),\dots,\zeta(\mathbf{w}_{n+1,2N}))=\mathbf{u}'_n(\zeta(\mathbf{w}_{1,2N}),\dots,\zeta(\mathbf{w}_{n+1,2N})).
\]
However,
\begin{gather*}
\mathbf{u}_n(\zeta(\mathbf{w}_{1,2N}),\dots,\zeta(\mathbf{w}_{n+1,2N}))=\zeta(\mathbf{u}_n(\mathbf{w}_{1,2N},\dots,\mathbf{w}_{n+1,2N}))=\zeta(\mathbf{v}),\\
\mathbf{u}'_n(\zeta(\mathbf{w}_{1,2N}),\dots,\zeta(\mathbf{w}_{n+1,2N}))=\zeta(\mathbf{u}'_n(\mathbf{w}_{1,2N},\dots,\mathbf{w}_{n+1,2N}))=\zeta(\mathbf{v}'),
\end{gather*}
and hence $\mathbf{v}$ and $\mathbf{v}'$ take the same value under $\zeta$. Since the substitution was arbitrary, the identity $\mathbf{v}\approx\mathbf{v}'$ holds in $U^0T_n(F)$.

Now take an arbitrary group $G$ satisfying the identity $\mathbf{v}\approx\mathbf{v}'$. We aim to show that then $G$ satisfies the identity $\mathbf{u}_n\approx\mathbf{u}'_n$ as well. This amounts to verifying that $\mathbf{u}_n(h_1,\dots,h_{n+1})=\mathbf{u}'_n(h_1,\dots,h_{n+1})$ for an arbitrary $m$-tuple of elements $h_1,\dots,h_{n+1}\in\ G$.

The free semigroup $\mathcal{Y}^+$ is a subsemigroup in the free group $\mathcal{FG}(\mathcal{Y})$ over $\mathcal{Y}$. The endomorphism $\varphi\colon x_i\mapsto w_i$ of $\mathcal{Y}^+$ extends to an endomorphism of $\mathcal{FG}(\mathcal{Y})$, still denoted by $\varphi$. The words $\mathbf{w}_1,\dots\mathbf{w}_{n+1}$ defined by \eqref{eq:words} generate $\mathcal{FG}(\mathcal{Y})$ since in $\mathcal{FG}(\mathcal{Y})$, one can express $x_1,\dots,x_{n+1}$ via $\mathbf{w}_1,\dots,\mathbf{w}_{n+1}$ as follows:
\begin{align*}
x_1&=\mathbf{w}_1\mathbf{w}_{n+1}^{-1},\\
x_2&=x_1^{-1}\mathbf{w}_2\mathbf{w}_{n+1}^{-1}x_1,\\
x_3&=(x_1x_2)^{-1}\mathbf{w}_3\mathbf{w}_{n+1}^{-1}x_1x_2,\\
\ldots&\hbox to 3.5cm{\dotfill}\\
x_{n}&=(x_1x_2\cdots x_{n-1})^{-1}\mathbf{w}_{n}\mathbf{w}_{n+1}^{-1}x_1x_2\cdots x_{n-1},\\
x_{n+1}&=(x_1x_2\cdots x_{n})^{-1}\mathbf{w}_{n+1}x_1^{-1}.
\end{align*}
(This is where the distinct expression for $\mathbf{w}_{n+1}$ comes into play.) Hence $\varphi$ treated as an endomorphism of $\mathcal{FG}(\mathcal{Y})$ is onto, and so is any power of $\varphi$. It is well known that every onto endomorphism of a finitely generated free group is an automorphism; see, e.g., \cite[Proposition~I.3.5]{LS77}. Denote by $\varphi^{-2N}$ the inverse of the automorphism $\varphi^{2N}$ of $\mathcal{FG}(\mathcal{Y})$ and let $g_i=\varphi^{-2N}(x_i)$, $i=1,\dots,n+1$. Then
\begin{multline}
\label{eq:auto}
\mathbf{w}_{i,2N}(g_1,\dots,g_{n+1})=\mathbf{w}_{i,2N}(\varphi^{-2N}(x_1),\dots,\varphi^{-2N}(x_{n+1}))\\
{}=\varphi^{-2N}(\mathbf{w}_{i,2N}(x_1,\dots,x_{n+1}))=\varphi^{-2N}(\varphi^{2N}(x_i))=x_i
\end{multline}
for all $i=1,\dots,n+1$. Since the equalities~\eqref{eq:auto} hold in the free $(n+1)$-generated group, they remain valid under any interpretation of the variables $x_1,\dots,x_{n+1}$ by arbitrary $n+1$ elements of an arbitrary group. Now we define a substitution $\zeta\colon\mathcal{Y}\to G$ letting
\[
\zeta(x_i):=g_i(h_1,\dots,h_{n+1}),\ \ i=1,\dots,n+1.
\]
Then in view of~\eqref{eq:auto} we have
\begin{multline*}
\zeta(\mathbf{w}_{i,2N}(x_1,\dots,x_{n+1}))= \mathbf{w}_{i,2N}(\zeta(x_1),\dots,\zeta(x_{n+1}))\\
{}= \mathbf{w}_{i,2N}\bigl(g_1(h_1,\dots,h_{n+1}),\dots,g_{n+1}(h_1,\dots,h_{n+1})\bigr)=h_i
\end{multline*}
for all $i=1,\dots,n+1$. Hence we have
\begin{align*}
\mathbf{u}_n(h_1,\dots,h_{n+1})&=\mathbf{u}_n(\zeta(\mathbf{w}_{1,2N}(x_1,\dots,x_{n+1})),\dots,\zeta(\mathbf{w}_{n+1,2N}(x_1,\dots,x_{n+1})))\\
&=\zeta(\mathbf{u}_n(\mathbf{w}_{1,2N}(x_1,\dots,x_{n+1}),\dots,\mathbf{w}_{n+1,2N}(x_1,\dots,x_{n+1})))\\
&=\zeta(\mathbf{v}(x_1,\dots,x_{n+1})),
\end{align*}
and, similarly, $\mathbf{u}'_n(h_1,\dots,h_{n+1})=\zeta(\mathbf{v}'(x_1,\dots,x_{n+1}))$. Since the identity $\mathbf{v}\approx\mathbf{v}'$ holds in $G$, the values of the words $\mathbf{v}$ and $\mathbf{v}'$ under $\zeta$ are equal, whence $\mathbf{u}_n(h_1,\dots,h_{n+1})=\mathbf{u}'_n(h_1,\dots,h_{n+1})$, as required. By Lemma~\ref{lem:malcev} the group $G$ is nilpotent. This completes the proof of Step 4 and of Theorem~\ref{thm:UpmTn}.
\end{proof}

\begin{remk}
If $F$ is a field of characteristic 0 containing, for some $m>2$, the roots of the equation $x^m=1$, then the proofs of Corollary~\ref{cor:UpmT2} and Theorem~\ref{thm:UpmTn} apply, with minor adjustments, to the monoid of all triangular $n\times n$ matrices over $F$, whose diagonal entries satisfy $x^{m+1}=x$. E.g, for each $m>2$, the monoid of all triangular complex $n\times n$ matrices with diagonal entries in the set $\{0,1,\xi,\dots,\xi^{m-1}\}$, where $\xi$ is a primitive $m$-th root of unity, is nonfinitely based.
\end{remk}

What happens if, instead of relaxing the condition imposed on the diagonal entries, we strengthen it by restricting the entries to $\pm1$ (so excluding 0)? The resulting set
\[
G^{\pm}_n(F):=\left\{\bigl(\alpha_{ij}\bigr)_{n\times n}\in T_n(F)\;\middle|\; \alpha_{ii}\in\{\pm1\}\ \text{for}\ i=1,\dots,n\right\}
\]
is nothing but  the group of units of the monoid $U^{\pm}T_n(F)$. It is an extension of the  subgroup $UT_n(F)$, which is nilpotent of class $n-1$, by the $2^n$-element group $D^{\pm}_n$ of the diagonal matrices with diagonal entries $\pm1$. In~\cite[Section~5]{Sap87a}, Mark Sapir stated a conjecture whose validity implies that if a group $G$ that is an extension of a nilpotent group of class $c$ by a finite group is finitely based as a semigroup, then $G$ is either abelian or of finite exponent. If $F$ is a field of characteristic 0, the group $G^{\pm}_n(F)$ is neither abelian nor of finite exponent. Thus, if Sapir's conjecture holds, then $G^{\pm}_n(F)$ is nonfinitely based as a semigroup for every $n$. So far, the conjecture has been established only for $c=1$, that is, for extensions of abelian groups by finite ones~\cite[Proposition~6]{Sap87a}. This implies that the group $G^{\pm}_2(F)$ is nonfinitely based as a semigroup.

Using the techniques from \cite{Vol15}, we can move one step further.

\begin{prop}\label{prop:group3pm}
For any field $F$ of characteristic $0$, the group $G^{\pm}_3(F)$ is nonfinitely based as a semigroup.
\end{prop}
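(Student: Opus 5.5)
The plan is to apply Proposition~\ref{prop:kauffman} to $S:=G^{\pm}_3(F)$, in the same way as in Proposition~\ref{prop:2x2pm1} and Corollary~\ref{cor:UpmT2}. The natural congruence is the kernel $\theta$ of the morphism $\diag\colon G^{\pm}_3(F)\to D^{\pm}_3$ onto the $8$-element elementary abelian $2$-group of diagonal matrices with entries $\pm1$. Since $D^{\pm}_3$ is a group, the only $\theta$-class that is a subsemigroup is the class of the identity, namely $UT_3(F)$. So I will take $\mathbf{V}:=\var D^{\pm}_3$, which is locally finite by Proposition~\ref{prop:birkhoff}. For $\mathbf{U}$ I will take the semigroup variety defined by Mal'cev's identity $\mathbf{u}_3\approx\mathbf{u}'_3$. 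By Lemma~\ref{lem:malcev} this identity holds in the class-$2$ nilpotent group $UT_3(F)$. Then $G^{\pm}_3(F)$ lies in the Mal'cev product of $\mathbf{U}$ with $\mathbf{V}$.

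The isoterm condition is handled exactly as in the proof of Proposition~\ref{prop:2x2pm1}. Because $F$ has characteristic $0$, the matrices $E+mE_{12}$ with $m\in\mathbb{N}$ embed the additive semigroup $\mathbb{N}$ into $G^{\pm}_3(F)$. Hence every identity of $G^{\pm}_3(F)$ is balanced. By \cite[Lemma 5.2.1]{Sap87a}, a group satisfying a nontrivial balanced identity $Z_k\approx\mathbf{w}$ is nilpotent. But $G^{\pm}_3(F)$ contains the non-nilpotent group generated by $\diag(-1,1,1)$ and $E+E_{12}$: its upper left $2\times2$ corner is the centreless group used in Proposition~\ref{prop:2x2pm1}. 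So every Zimin word is an isoterm for $G^{\pm}_3(F)$.

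The main obstacle is the remaining hypothesis on $\mathbf{U}$: every periodic semigroup satisfying $\mathbf{u}_3\approx\mathbf{u}'_3$ must be locally finite. This is where I expect to need the techniques of \cite{Vol15}. My first attempt is as follows. In a periodic semigroup $P\in\mathbf{U}$, Lemma~\ref{lem:malcev} makes every subgroup nilpotent of class at most $2$, and periodic nilpotent groups are locally finite. Substituting idempotent powers $x^\omega$ into $\mathbf{u}_3\approx\mathbf{u}'_3$ should then show that $P$ has no Brandt-type obstruction. From there I would try to verify local finiteness along the chain of principal ideals of a finitely generated subsemigroup, using that the identity is of Mal'cev-nilpotent type. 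If this general claim turns out to be false or too hard, the fallback is to shrink $\mathbf{U}$ to $\var UT_3(F)$ and prove local finiteness only for its periodic members.

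For that fallback I would proceed as follows. Let $P$ be a periodic member of $\var UT_3(F)$. Then $P$ satisfies all balanced consequences of the nilpotency-of-class-$2$ identities together with $x^{m+r}\approx x^m$. I would show directly that these force $P$ to be a nil-extension of a semilattice of finite nilpotent groups, with bounded nil-index, and hence locally finite.

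Once this local finiteness is in place, Proposition~\ref{prop:kauffman} immediately yields that $G^{\pm}_3(F)$ is nonfinitely based.
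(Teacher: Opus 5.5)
Your framework is right: the Mal'cev product of $\mathbf{U}$ with $\var D^{\pm}_3$ fits Proposition~\ref{prop:kauffman}, and your isoterm argument (balancedness via $\mathbb{N}$, \cite[Lemma 5.2.1]{Sap87a}, non-nilpotency) matches what is needed. The gap is the step you yourself flag: you never prove that the periodic members of $\mathbf{U}$ are locally finite, and neither sketch can do it as stated.

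\emph{First attempt.} Knowing that subgroups are locally finite and that there is no Brandt-type obstruction says nothing about periodic nilsemigroups, which is exactly where Burnside-type pathologies occur. Moreover, no argument that uses only the ``Mal'cev-nilpotent type'' of the identity can succeed. By \cite[Theorem~2]{Zim80}, there are periodic semigroups that are not locally finite but satisfy $\mathbf{u}_4\approx\mathbf{u}'_4$, and your reasoning would apply to them verbatim.

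\emph{Fallback.} It rests on a false structural claim. Since $\mathbb{N}$ embeds in $UT_3(F)$, the variety $\var UT_3(F)$ contains all commutative semigroups. For instance, it contains the periodic monoid $\{1,a,0\}$ with $a^2=0$, which is not a nil-extension of a semilattice of groups. It also contains the $0$-direct union of the cyclic nilsemigroups $\{a_k,a_k^2,\dots,a_k^{k-1},0\}$ with $a_k^k=0$, $k=1,2,\dots$, which is a nilsemigroup of unbounded nil-index.

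The missing idea is a specific syntactic consequence of $\mathbf{u}_3\approx\mathbf{u}'_3$. Substitute $x_1x_2x_1$ for $x_2$, then multiply on the left by $x_1x_2$ and on the right by $x_2x_1$. This gives
\[
Z_4=x_1x_2x_1x_3x_1x_2x_1x_4x_1x_2x_1x_3x_1x_2x_1\approx (x_1x_2)^2x_1x_3x_1x_4x_1x_3x_1(x_2x_1)^2,
\]
a nontrivial balanced identity whose left-hand side is a Zimin word. By \cite[Lemma~3.3]{Sap87a}, every semigroup variety satisfying a nontrivial balanced identity of the form $Z_k\approx\mathbf{w}$ has all its periodic members locally finite. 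This applies directly to your $\mathbf{U}$ defined by $\mathbf{u}_3\approx\mathbf{u}'_3$; the paper uses $\var UT_3(F)$, which satisfies the same identity. With this step inserted your proof goes through and no fallback is needed.
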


\begin{proof}
We apply Proposition~\ref{prop:kauffman}. To this end, we nust verify two conditions: first, that the group $G^{\pm}_3(F)$ is contained in the variety generated by the Mal'cev product of a semigroup variety all of whose periodic members are locally finite and a locally finite semigroup variety; and second, that each Zimin word is an isoterm for $G^{\pm}_3(F)$.

Since $G^{\pm}_3(F)$ is an extension of the subgroup $UT_3(F)$ by the $8$-element group $D^{\pm}_3$, it belongs to the Mal'cev product of $\var UT_3(F)$ with $\var D^{\pm}_3$. The latter variety is locally finite, as it is generated by a finite semigroup; see Proposition~\ref{prop:birkhoff}. The group $UT_3(F)$ is nilpotent of class~2 and so, by Lemma~\ref{lem:malcev}, it satisfies the identity
\[
\mathbf{u}_3=x_1x_3x_2x_4x_2x_3x_1\approx x_2x_3x_1x_4x_1x_3x_2=\mathbf{u}'_3.
\]
Substituting $x_1x_2x_1$ for $x_2$ in this identity, we obtain that $UT_3(F)$ also satisfies  the identity
\[
x_1x_3x_1x_2x_1x_4x_1x_2x_1x_3x_1\approx x_1x_2x_1x_3x_1x_4x_1x_3x_1x_2x_1.
\]
Multiplying this identity through by $x_1x_2$ on the left and by $x_2x_1$ on the right, we obtain the identity
\begin{equation}\label{eq:Z4}
x_1x_2x_1x_3x_1x_2x_1x_4x_1x_2x_1x_3x_1x_2x_1\approx (x_1x_2)^2x_1x_3x_1x_4x_1x_3x_1(x_2x_1)^2.
\end{equation}
The left-hand side of the identity \eqref{eq:Z4} is the Zimin word $Z_4$, while the right-hand side is different from $Z_4$. A direct inspection shows that the identity \eqref{eq:Z4} is balanced. We are now in a position to apply the fact established in \cite[Lemma~3.3]{Sap87a} to the variety $\var UT_3(F)$: if a semigroup variety satisfies a nontrivial balanced identity of the form $Z_k\approx\mathbf{w}$, then all periodic semigroups of this variety  are locally finite.

It remains to verify that each Zimin word is an isoterm for $G^{\pm}_3(F)$. The argument here essentially repeats that from the proof of Proposition~\ref{prop:2x2pm1}. Since the characteristic of $F$ is 0, the additive semigroup of positive integers embeds into $G^{\pm}_3(F)$ via the map  $m\mapsto\left(\begin{smallmatrix}
1 & m & 0\\
0 & 1 & 0\\
0 & 0 &  1
\end{smallmatrix}\right)$. Therefore every identity holding in $G^{\pm}_3(F)$ must be balanced. By \cite[Lemma 5.2.1]{Sap87a}, every group satisfying a nontrivial balanced identity of the form $Z_k\approx\mathbf{w}$ is nilpotent. However, the group $G^{\pm}_3(F)$ is not nilpotent.
\end{proof}

\begin{remks}
1. The argument in the proof of Proposition~\ref{prop:group3pm} fails for the group  $G^{\pm}_4(F)$. Even though the subgroup $UT_4(F)$ satisfies the identity $\mathbf{u}_4\approx\mathbf{u}'_4$ by Lemma~\ref{lem:malcev}, this identity, in contrast to the identity $\mathbf{u}_3\approx\mathbf{u}'_3$, is known to hold in certain periodic but not locally finite semigroups~\cite[Theorem~2]{Zim80}.

2. The group identities of the group $G^{\pm}_n(F)$ are finitely based for every $n$, as all these groups fall under the scope of the general result by Alexei Krasilnikov on the laws of extensions of nilpotent groups by abelian ones~\cite[Theorem 1]{Kra91}.
\end{remks}

\subsubsection{Matrices over infinite fields of prime characteristic}

What can be said about the Finite Basis Problem for the monoid $U^{\pm}T_n(F)$ when $F$ is an infinite field of prime characteristic? The argument in the proof of Theorem~\ref{thm:UpmTn} fails at its first step, since in this case the semigroup $A_2$ can be shown not to belong to the variety $\var U^{\pm}T_n(F)$. Nevertheless, we can prove the following.

\begin{thm}\label{thm:UpmTn-prime}
For any infinite field $F$ of characteristic $p>2$ and any $n\ge4$, the monoid $U^{\pm}T_n(F)$ is inherently nonfinitely based.
\end{thm}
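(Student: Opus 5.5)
The plan is to reduce the statement to Theorem~\ref{thm:pm1}. The remaining work is to show that the variety $\var U^{\pm}T_n(F)$ is locally finite, because inherent nonfinite basedness is defined only for locally finite varieties.

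\emph{Containing an inherently nonfinitely based monoid.} Since $F$ has characteristic $p>2$, it contains the prime field $\mathbb{F}_p$, and $\mathbb{F}_p$ has odd characteristic. Hence $U^{\pm}_{11}T_n(\mathbb{F}_p)$ is a submonoid of $U^{\pm}T_n(F)$. By Theorem~\ref{thm:pm1}, for $n\ge 4$ this finite monoid is inherently nonfinitely based. So $\var U^{\pm}T_n(F)$ contains an inherently nonfinitely based locally finite variety. By the `contagious' nature of inherent nonfinite basedness, it then suffices to prove that $\var U^{\pm}T_n(F)$ is itself locally finite.

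\emph{Local finiteness: reduction to one uniform bound.} I would prove the following stronger claim. There is a function $f\colon\mathbb{N}\to\mathbb{N}$, depending only on $n$ and $p$ and not on $F$, such that every $k$-generated subsemigroup of $U^{\pm}T_n(F)$ has at most $f(k)$ elements. This suffices: the $k$-generated free object of $\var U^{\pm}T_n(F)$ embeds in a direct product of $k$-generated subsemigroups of $U^{\pm}T_n(F)$, so it is a subdirect product of semigroups of size at most $f(k)$. It therefore realizes only boundedly many distinct word functions and is finite.

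\emph{Local finiteness: counting products of generators.} To get the bound, fix generators $A_1,\dots,A_k$ and write $A_i=D_i+N_i$, where $D_i=\diag(A_i)$ has entries in $\{0,\pm1\}$ and $N_i$ is strictly upper triangular. Expanding a product $A_{i_1}\cdots A_{i_m}$ by distributivity, every term containing $n$ or more factors $N_j$ vanishes. Each surviving term has the form
\[
\Delta_0N_{j_1}\Delta_1N_{j_2}\cdots N_{j_r}\Delta_r,\qquad r<n,
\]
where each $\Delta_s$ is a product of diagonal matrices with entries in $\{0,\pm1\}$. There are at most $3^n$ possible values of each $\Delta_s$ and at most $k$ choices for each $N_j$. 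Terms with the same data $(\Delta_0,j_1,\Delta_1,\dots,j_r,\Delta_r)$ coincide. So the product is a $\mathbb{Z}$-linear combination of boundedly many fixed matrices, and since the characteristic is $p$, each coefficient only matters modulo $p$. This yields a bound of the shape $3^n\cdot p^{(\text{number of such data})}$, which is independent of $F$ and of the generators. (Equivalently, one could use the Mal'cev-product decomposition via $\diag$, as in Proposition~\ref{prop:2x2pm1}, and apply a Brown-type argument to the $\theta$-classes; but the direct count seems cleaner.)

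\emph{Main obstacle.} The delicate point is the bookkeeping in this expansion. One must check that the coefficient attached to each data string is an integer reduced mod $p$, rather than an arbitrary field element, and that the set of data strings is bounded by a function of $k$, $n$ and $p$ alone. Once this uniform bound is secured, local finiteness follows, and Theorem~\ref{thm:pm1} finishes the proof.
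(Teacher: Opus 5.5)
Your proof is correct. The reduction to Theorem~\ref{thm:pm1} through the prime subfield matches the paper, but your proof of the key step, local finiteness of $\var U^{\pm}T_n(F)$, follows a genuinely different and much more elementary route.

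The paper proceeds in four stages. It places $U^{\pm}T_n(F)$, via the morphism $\diag$, in the Mal'cev product of the variety generated by the classes $S_P=\diag^{-1}(e_P)$ with $\var D$. It then uses Brown's theorem to reduce to the $S_P$. Next it shows that every $S_P$ satisfies $x_1\cdots x_t\approx(x_1\cdots x_t)^{p^{\lceil\log_p n\rceil}+1}$ together with the nilpotence identity~\eqref{eq:nilpotence}; this uses epigroup theory, namely results of Shevrin and Okni\'nski. Finally it invokes Sapir's Theorem~P.

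Your expansion $A_i=D_i+N_i$ replaces all of this:
\begin{itemize}
\item The coefficient of a data string $(\Delta_0,j_1,\Delta_1,\dots,j_r,\Delta_r)$ is the number of sets of positions that produce it. It is therefore an element of the prime field.
\item There are at most $\sum_{r=0}^{n-1}3^{n(r+1)}k^r$ data strings.
\item Hence every $k$-generated subsemigroup has at most $p^{\sum_{r<n}3^{n(r+1)}k^r}$ elements.
\end{itemize}
So the `main obstacle' you flag is already settled by your own count.

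The passage from this uniform bound to the variety is the standard one. To make it airtight, note that there are only finitely many isomorphism types of semigroups with $k$ marked generators and at most $f(k)$ elements. Consequently the $k$-generated free object embeds into a finite product of such semigroups.

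What each route buys:
\begin{itemize}
\item Your route gives an explicit uniform bound. It uses neither $p>2$ nor the infiniteness of $F$ at this stage. It works verbatim for triangular matrices over any field of characteristic $p$ whose diagonal entries lie in a fixed finite multiplicatively closed subset.
\item The paper's route is considerably heavier. In return it produces explicit identities for the classes $S_P$, and it recycles the nilpotence identity constructed for Theorem~\ref{thm:UpmTn}.
\end{itemize}
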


\begin{proof}
 In view of Theorem~\ref{thm:pm1}, it suffices to prove that the variety $\var U^{\pm}T_n(F)$ is locally finite. Indeed, the matrices of $U^{\pm}T_n(F)$ whose entries all lie in the prime subfield of the field $F$ form a submonoid isomorphic to the monoid $U^{\pm}T_n(\mathbb{Z}/p\mathbb{Z})$. The submonoid $U^{\pm}_{11}T_n(\mathbb{Z}/p\mathbb{Z})$ of $U^{\pm}T_n(\mathbb{Z}/p\mathbb{Z})$ is inherently nonfinitely based for each $n\ge4$ by Theorem~\ref{thm:pm1}. Therefore, $\var U^{\pm}T_n(F)$ contains an inherently nonfinitely based semigroup, and, clearly, every locally finite variety containing such a semigroup is itself inherently nonfinitely based.

We use a sufficient condition for the local finiteness of a semigroup variety, which is an easy consequence of a well-known result of Tom Brown. We include the proof for the sake of completeness and because a convenient reference is lacking.

\begin{lem}\label{lem:brown}
 The variety generated by the Mal'cev product of two locally finite semigroup varieties is locally finite.
\end{lem}

\begin{proof}
Brown~\cite{Br68}, see also \cite[Theorem~1]{Br71}, proved that if $\varphi\colon S\twoheadrightarrow T$ is a morphism of the semigroup $S$ onto the locally finite semigroup $T$ such that $\varphi^{-1}(e)$ is a locally finite subsemigroup of $S$ for each idempotent $e$ of $T$, then $S$ is locally finite. This readily implies that the Mal'cev product $\mathbf{M}$ of two locally finite semigroup varieties consists entirely of locally finite semigroups. The Mal'cev product of two semigroup varieties is known to be closed under taking subsemigroups and forming direct products~\cite[Theorems~1 and~2]{Mal67}. Consequently, by the HSP-theorem \cite[Theorem 11.9]{BuSa81}, the variety $\var\mathbf{M}$ consists of morphic images of semigroups from $\mathbf{M}$. Since the morphic image of a locally finite semigroup is again locally finite, it follows that the variety $\var\mathbf{M}$ is locally finite.
\end{proof}

Consider the kernel $\theta$ of the morphism $\diag\colon U^{\pm}T_n(F)\to D$, where $D$ is the submonoid formed by the diagonal matrices from $U^{\pm}T_n(F)$. The idempotents of $D$ are the $2^n$ diagonal matrices $e_P$ corresponding to the subsets $P\subseteq\{1,2,\dots,n\}$; here $e_P:=(\varepsilon_{ij})_{n\times n}$ with
\[
\varepsilon_{ij}=\begin{cases}
1 &\text{if $i=j\in P$},\\
0 &\text{if $i\ne j$ or $i=j\notin P$}.
\end{cases}
\]
Correspondingly, the $\theta$-classes that are subsemigroups of $U^{\pm}T_n(F)$ are of the form
\[
\diag^{-1}(e_P)=\left\{\bigl(\alpha_{ij}\bigr)_{n\times n}\in UT^0_n(F)\;\middle|\; \alpha_{ii}=1 \text{ if and only if } i\in P\right \}.
\]
We denote the subsemigroup $\diag^{-1}(e_P)$ by $S_P$ in the rest of the proof.

Clearly, the monoid $U^{\pm}T_n(F)$ lies in the Mal'cev product of the variety generated by all semigroups $S_P$ with the variety $\var D$. Therefore, $\var U^{\pm}T_n(F)$ is contained in the variety generated by this Mal'cev product. The variety $\var D$ is locally finite by Proposition~\ref{prop:birkhoff} since its generating semigroup $D$ is finite. Lemma~\ref{lem:brown} thus reduces the problem of establishing the local finiteness of $\var U^{\pm}T_n(F)$ to the same task for the variety $\var\left\{S_P\mid P\subseteq\{1,2,\dots,n\}\right\}$.

By Lemma~\ref{lem:putcha}, every semigroup $S_P$ is an epigroup: the corresponding family of polynomials consists of the polynomials $x_{ij}$ for all $1\le j<i\le n$, together with $x_{ii}-1$ for each $i\in P$ and $x_{jj}$ for each $j\notin P$.

For idempotents $e,f$ of a semigroup, define
\begin{equation}
\label{eq:natorder}
e \le f \quad \Longleftrightarrow \quad ef = fe = e;
\end{equation}
this relation is known to be a partial order on the set of all idempotents. In every semigroup $S_P$ this partial order is simply the equality relation. Indeed,  suppose $e,f$ are idempotents in $S_P$ with $e\le f$. Then $\diag(e)=e_P=\diag(f)$ whence $f-e$ is a triangular matrix with zero main diagonal. Therefore $(f-e)^n=O$, where $O$ is the $n\times n$ zero matrix. Since $f^2=f$ and $e^2=ef = fe = e$, expanding the binomial $(f-e)^n$ yields
\[
(f - e)^n = \sum_{k=0}^n \binom{n}{k} f^{\,n-k} (-e)^k = f + \sum_{k=1}^n \binom{n}{k} (-1)^k e=f-e.
\]
Hence, $f-e=O$, and so $f=e$.

By \cite[Proposition~3]{Sh94}, distinct idempotents of an epigroup $S$ are all incomparable with respect to the order~\eqref{eq:natorder} if and only if $S$ has a least ideal, and this ideal is equal to the union of all subgroups of $S$. (Such epigroups are termed \emph{Archimedean} in \cite{Sh94}.) So, let $K_P$ be the least ideal of $S_P$. We now prove that the product of any $t:=2^n\prod_{i=1}^{n}\binom{n}{i}$ matrices from $S_P$ falls into $K_P$.

Indeed, take any $a_1,\dots,a_t\in S_P$. For each $\ell=1,\dots,t$, let $R_\ell$ be the right ideal of $S_P$ generated by the product $a_1\cdots a_\ell$. Then
\begin{equation}\label{eq:right chain}
R_1\supseteq R_2\supseteq\dots\supseteq R_t.
\end{equation}
If some ideal in the chain \eqref{eq:right chain} is $\{O\}$ (which is possible only when $P=\varnothing$ and so $K_P=\{O\}$), then certainly $a_1\cdots a_t=O\in K_P$. Otherwise, we invoke \cite[Corollary 3.9(3)]{Okn98}, according to which the length of every chain of nonzero principal right ideals in every subepigroup of the monoid $M_n(F)$ is less than $t$. Then $R_\ell=R_{\ell+1}$ for some $1\le\ell<t$, which implies that
\[
a_1\cdots a_\ell=a_1\cdots a_\ell a_{\ell+1}b
\]
for some $b$ that is either a matrix from $S_P$ or the $n\times n$ identity matrix. Repeatedly substituting $a_1\cdots a_\ell a_{\ell+1}b$ for the factor $a_1\cdots a_\ell$ on the right-hand side, we obtain
\[
a_1\cdots a_\ell=a_1\cdots a_\ell(a_{\ell+1}b)^k
\]
for all $k=1,2,\dotsc$. For some $k$, the matrix $(a_{\ell+1}b)^k$ belongs to a subgroup of the epigroup $S_P$, and all subgroups of $S_P$ are contained in $K_P$. Hence, $(a_{\ell+1}b)^k$ lies in $K_P$. Since $K_P$ is an ideal, it follows that $a_1\cdots a_\ell\in K_P$, and therefore $a_1\cdots a_t\in K_P$, as claimed.

It is easy to see that every subgroup $H$ of $S_P$ has exponent dividing $p^{\lceil\log_p n\rceil}$; we include the argument for completeness. If $e$ is the identity element of $H$, then $\diag(e)=e_P=\diag(h)$ for an arbitrary $h\in H$, and hence $a:=h-e$ is a triangular matrix with zero main diagonal. Since $he=eh$, one has $ea=ae$, whence for every $r\ge 1$,
\[
h^{p^r}=(e+a)^{p^r}=\sum_{k=0}^{p^r} \binom{p^r}{k} e^{\,p^r-k}a^k=e+a^{p^r},
\]
because $F$ has characteristic $p$ and each binomial coefficient $\binom{p^r}{k}$ with $1\le k<p^r$ is divisible by~$p$. Taking $r=\lceil\log_p n\rceil$, we have $p^r\geq n$, and hence $a^{p^r}=O$. Therefore $h^{p^{\lceil\log_p n\rceil}}=e$. Thus every element of $H$ has order dividing $p^{\lceil\log_p n\rceil}$, and so the exponent of $H$ divides $p^{\lceil\log_p n\rceil}$.

We conclude that every subgroup of $S_P$ satisfies the identity $x\approx x^{p^{\lceil\log_p n\rceil}+1}$. As shown, $a_1\cdots a_t\in K_P$ for any matrices $a_1,\dots,a_t\in S_P$. Since $K_P$ is a union of subgroups, the product $a_1\cdots a_t$ in fact belongs to a subgroup of $S_P$.
Therefore, the identity
\begin{equation}\label{eq:finite index}
 x_1\cdots x_t\approx (x_1\cdots x_t)^{p^{\lceil\log_p n\rceil}+1}
\end{equation}
holds in each semigroup $S_P$.

In Step 4 of the proof of Theorem~\ref{thm:UpmTn}, we have constructed the identity \eqref{eq:nilpotence} which holds in the monoid $U^0T_n(F)$ and has the property that every group satisfying it is nilpotent of class at most $n-1$. Since every $S_P$ is a subsemigroup of $U^0T_n(F)$, the identity \eqref{eq:nilpotence} also holds in each semigroup $S_P$.

Thus, the variety  $\var\left\{S_P\mid P\subseteq\{1,2,\dots,n\}\right\}$ is contained in the variety $\mathbf{U}$ defined by the identities \eqref{eq:finite index} and \eqref{eq:nilpotence}. To prove that the former variety is locally finite, it therefore suffices to show that $\mathbf{U}$ is locally finite. To that end, we employ a powerful result of Sapir, which is a part of \cite[Theorem~P]{Sap87a}.

We recall two notions involved in Sapir's result. A variety is said to be of \emph{finite axiomatic rank} if, for some fixed $k$, it can be defined by a set of identities involving at most $k$ variables. (For instance, every finitely based variety is of finite axiomatic rank.) A semigroup $S$ is called a \emph{nilsemigroup} if $S$ has a zero and, for each element $s\in S$, some power of $s$ equals zero.

\begin{prop}
\label{prop:sapir}
A variety $\mathbf{V}$ of finite axiomatic rank consisting of periodic semigroups is locally finite if \textup(and, obviously, only if\textup) all groups and all nilsemigroups in $\mathbf{V}$  are locally finite.
\end{prop}

By construction, the variety $\mathbf{U}$ is finitely based. Clearly, every semigroup satisfying the identity \eqref{eq:finite index} is periodic, so $\mathbf{U}$ consists of periodic semigroups. If a nilsemigroup $S$ satisfies \eqref{eq:finite index}, then the product of any $t$ elements of $S$ equals zero. Therefore, for every $m$, an $m$-generated subsemigroup of $S$ contains at most $1+m+m^2+\dots m^{t-1}$ elements, and hence $S$ is locally finite. By the choice of the identity \eqref{eq:nilpotence}, every group in $\mathbf{U}$ is nilpotent, and it is well known that periodic nilpotent groups are locally finite. Thus, Proposition~\ref{prop:sapir} applies to the variety $\mathbf{U}$, yielding its local finiteness and thus completing the proof of Theorem~\ref{thm:UpmTn-prime}.
\end{proof}

\subsubsection{Matrices over additively idempotent semirings}
\label{subsubsec:ai-semirings}

Here we consider matrix monoids over an arbitrary \emph{additively idempotent} semiring, that is, a semiring $\mathbb{S}=(S,+,\cdot)$ satisfying, in addition to (S1)–-(S5), the condition
\[
a+a=a\  \text{ for all }\ a\in S.
\]
Additively idempotent semirings naturally arise in various areas of mathematics (e.g., idempotent analysis, tropical geometry, and optimization) and computer science, where rings and other `more classical' algebras fail to provide adequate tools; see the contributions in the conference volumes \cite{Guna98,LiMa:2005} and the monograph \cite{Bis04} for diverse examples. Recall that the tropical semiring $\mathbb{T}$ and bounded distributive lattices are special instances of additively idempotent semirings.

\begin{thm}\label{thm:ai-semirings}
For every additively idempotent semiring\ $\mathbb{S}$, the monoid $U^0T_n(\mathbb{S})$ is equationally equivalent to the monoid $T_n(\mathbb{B})$ of all $n\times n$ triangular Boolean matrices.
\end{thm}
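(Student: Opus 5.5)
We prove the two inclusions $\var T_n(\mathbb{B})\subseteq\var U^0T_n(\mathbb{S})$ and $\var U^0T_n(\mathbb{S})\subseteq\var T_n(\mathbb{B})$ separately.

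The first inclusion is easy. Since $\mathbb{S}$ is additively idempotent, $1+1=1$, so $\{0,1\}$ is a subsemiring of $\mathbb{S}$ isomorphic to $\mathbb{B}$. Hence $T_n(\mathbb{B})=U^0T_n(\mathbb{B})$ embeds as a submonoid of $U^0T_n(\mathbb{S})$, and every identity of $U^0T_n(\mathbb{S})$ holds in $T_n(\mathbb{B})$.

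For the converse, let $\mathbf u\approx\mathbf v$ hold in $T_n(\mathbb{B})$. We use a combinatorial description of words evaluated on triangular matrices. Fix a substitution $x\mapsto A_x\in U^0T_n(\mathbb{S})$ and a word $\mathbf w=w_1\cdots w_m$. The $(i,j)$ entry of the image of $\mathbf w$ is
\[
\sum_{i=k_0\le k_1\le\dots\le k_m=j}\ \prod_{s=1}^m (A_{w_s})_{k_{s-1}k_s}.
\]
Group the nondecreasing index sequences by their set of \emph{jumps}, that is, positions $s$ with $k_{s-1}<k_s$. On a non-jump position the factor is a diagonal entry in $\{0,1\}$, which equals $1$ or kills the term. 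Since the diagonal entries are $0$ or $1$, these factors are central. So each surviving term is an ordered product of off-diagonal entries $(A_{w_{s_1}})_{i\,l_1}(A_{w_{s_2}})_{l_1 l_2}\cdots$ along a strictly increasing chain $i<l_1<\dots<j$ ($r\le n-1$ jumps). The term survives exactly when every letter between consecutive jumps has diagonal entry $1$ at the current level. Because addition is idempotent, repeated terms collapse. Hence the entry equals the sum, over all strictly increasing chains $i=l_0<l_1<\dots<l_r=j$ and all letter tuples $(y_1,\dots,y_r)$, of the products $(A_{y_1})_{l_0l_1}\cdots(A_{y_r})_{l_{r-1}l_r}$. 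A tuple is included precisely when $\mathbf w$ has a factorization $\mathbf w=\mathbf p_0y_1\mathbf p_1\cdots y_r\mathbf p_r$ in which every letter of $\mathbf p_q$ has diagonal entry $1$ in position $l_q$.

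This existence condition is purely Boolean. It depends only on the $0/1$ pattern of the diagonals and on which variables are used as jump letters. We now test it in $T_n(\mathbb{B})$. Choose Boolean matrices $B_x$ with the same diagonal as $A_x$. For a fixed chain $l_0<\dots<l_r$ and tuple $(y_1,\dots,y_r)$, we need to witness it by an entry of a product, and the off-diagonal markers must distinguish which variable jumps at which step. To do this, take $r$ fresh "markers", or more simply use the chain positions themselves: put $(B_x)_{l_{q-1}l_q}=1$ exactly when $x=y_q$ (with $x=y_q$ possibly for several $q$), and set all other off-diagonal entries to $0$. Then the $(l_0,l_r)$ entry of the image of $\mathbf w$ is $1$ iff the chain with exactly these jump letters is realizable. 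The only possible chain from $l_0$ to $l_r$ passes through $l_1,\dots,l_{r-1}$, and at step $q$ only $y_q$ can jump. Since $\mathbf u\approx\mathbf v$ holds in $T_n(\mathbb{B})$, the words $\mathbf u$ and $\mathbf v$ realize exactly the same (chain, tuple, diagonal pattern) data. Therefore the sums computed in $\mathbb{S}$ coincide term-set-wise, and the two images in $U^0T_n(\mathbb{S})$ are equal.

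The main obstacle is the bookkeeping in the collapse step. We must justify that the diagonal factors may be removed and that the entry is exactly an idempotent sum indexed by the set of realizable (chain, tuple) pairs, with no dependence on multiplicities and no commutativity of $\mathbb{S}$ needed (the off-diagonal factors stay in order). We must also check that the Boolean test substitution isolates a single chain even when a letter repeats among the $y_q$. This holds because entries off the chosen chain positions are zero and jumps must strictly increase, so the route is forced.
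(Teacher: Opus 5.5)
Your proof is correct, but its key step differs from the paper's. Both arguments start from the walk expansion of $\varphi(\mathbf u)_{ij}$ (the paper's Lemma~\ref{lem:ijexpansion}), and both get the easy inclusion from the copy of $T_n(\mathbb{B})$ inside $U^0T_n(\mathbb{S})$.

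The paper keeps the summands indexed by occurrences of scattered subwords together with their gap sequences. It then imports the syntactic description of the identities of $T_n(\mathbb{B})$ from \cite{Vol25}, namely Proposition~\ref{prop:identitiesTnB}: each occurrence in $\mathbf u$ with gaps $G_1,\dots,G_{k+1}$ is matched by an occurrence in $\mathbf v$ with gaps $G'_s\subseteq G_s$. Using the natural order $a\le b\iff a+b=b$, it dominates every summand for $\mathbf u$ by a summand for $\mathbf v$. This gives $\varphi(\mathbf u)_{ij}\le\varphi(\mathbf v)_{ij}$, and equality follows by symmetry.

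You instead use idempotent addition at the outset to collapse the entry to a sum over the \emph{set} of realizable (chain, tuple) pairs. You note that this set depends only on the $0/1$ diagonal pattern. You then detect membership directly with a Boolean substitution whose off-diagonal support is a single forced path from $l_0$ to $l_r$. This yields equality of the two index sets outright, with no order-theoretic comparison and no external characterization. In effect you re-derive inline the direction of \cite{Vol25} that the paper cites: letting the set of variables with diagonal entry $1$ at level $l_q$ be an arbitrary set turns your realizability condition into exactly the gap-containment condition.

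What the paper's formulation buys is portability. In Theorem~\ref{thm:si-semirings} the diagonal entries are arbitrary commuting idempotents below $1$. There the summands no longer simply survive unchanged or vanish, so there is no Boolean support to detect, and your support-detection step has no direct analogue. The paper's domination argument via $G'_s\subseteq G_s$ and $e\le 1$ goes through there with only notational changes.
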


The proof of Theorem~\ref{thm:ai-semirings} relies on techniques developed by Marianne Johnson and Peter Fenner in~\cite{JF19}, as well as on a combinatorial characterization of the identities holding in the monoid $T_n(\mathbb{B})$ obtained by the third-named author~\cite{Vol25}. To state this characterization, we need some further notions related to semigroup words.

The \emph{length} $|\mathbf{w}|$ of a word $\mathbf{w}=w_1\cdots w_k$, where $w_1, \dots, w_k$ are variables, is $k$; the length of the empty word is 0. We say that $\mathbf{w}$ is a \emph{scattered subword} of a word $\mathbf{u}$ if there exist words $\mathbf{u}_1, \mathbf{u}_2, \ldots, \mathbf{u}_k, \mathbf{u}_{k+1}$ (some of which may be empty) such that
\begin{equation}\label{l-scattered subword}
\mathbf{u}=\mathbf{u}_1w_1 \mathbf{u}_2\cdots \mathbf{u}_kw_k \mathbf{u}_{k+1}.
\end{equation}
We refer to any decomposition of the form (\ref{l-scattered subword}) as an \emph{occurrence} of $\mathbf{w}$ in $\mathbf{u}$. We say that $\mathbf{w}$ occurs in a word $\mathbf{u}$ \emph{with gaps} $G_1, G_2, \ldots, G_{k+1}$ if there is an occurrence (\ref{l-scattered subword}) of $\mathbf{w}$ as a scattered subword in $\mathbf{u}$ such that $G_i=\mathsf{con}(\mathbf{u}_i)$ for $i=1, \ldots, k+1$.

\begin{example}
The three occurrences of the word $x$ as a scattered subword in $x^2yx$ are:
\[
x^2yx=\begin{cases}
\underline{x}\cdot xyx,&\text{with gaps $\varnothing,\{x,y\}$;}\\
x\cdot\underline{x}\cdot yx,&\text{with gaps $\{x\},\{x,y\}$};\\
x^2y\cdot\underline{x},&\text{with gaps $\{x,y\},\varnothing$.}
\end{cases}
\]
The two occurrences of the word $xy$ as a scattered subword in $x^2yx$ are:
\[
x^2yx=\begin{cases}
\underline{x}\cdot x\cdot\underline{y}\cdot x,&\text{with gaps $\varnothing,\{x\},\{x\}$;}\\
x\cdot\underline{x}\underline{y}\cdot x,&\text{with gaps $\{x\},\varnothing,\{x\}$.}
\end{cases}
\]
\end{example}

We adopt the convention that the empty word occurs as a scattered subword in every word $\mathbf{u}$ in a unique way with gap $G_1=\mathsf{con}(\mathbf{u})$.

The identities holding in the monoid $T_n(\mathbb{B})$ are characterized as follows:

\begin{prop}\cite[Corollary 2.3]{Vol25}\label{prop:identitiesTnB}
The monoid $T_n(\mathbb{B})$ satisfies an identity if and only if every word of length $k<n$ that occurs in one side of the identity with gaps $G_1,G_2, \ldots, G_{k+1}$ occurs in the other side of the identity with gaps $G'_1, G'_2, \ldots, G'_{k+1}$ such that $G'_\ell\subseteq G_\ell$ for $\ell=1, 2, \ldots, k+1$.
\end{prop}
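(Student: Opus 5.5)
The statement is a combinatorial reading of Boolean matrix multiplication, and I would prove the two implications separately. Both directions rest on a path interpretation of products of triangular Boolean matrices. Let $\varphi\colon\mathcal{X}\to T_n(\mathbb{B})$ be a substitution and let $\mathbf{u}=u_1\cdots u_m$. Then the $(i,j)$ entry of $\varphi(\mathbf{u})$ equals $1$ if and only if there is a weakly increasing sequence of levels $i=r_0\le r_1\le\dots\le r_m=j$ with $\varphi(u_s)_{r_{s-1}r_s}=1$ for every $s$.

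I would group the positions $s$ of such a sequence by whether $r_{s-1}<r_s$ (a \emph{jump}) or $r_{s-1}=r_s$ (a \emph{stay}).
\begin{itemize}
\item The letters at the jump positions spell a scattered subword $\mathbf{w}=w_1\cdots w_k$ of $\mathbf{u}$. Since the levels strictly increase at each jump within $\{1,\dots,n\}$, we get $k\le j-i\le n-1$, so $k<n$.
\item Let $q_0<q_1<\dots<q_k$ be the distinct levels visited. The letters at stay positions between the $(\ell-1)$-st and $\ell$-th jump form a gap $G_\ell$, and every $x\in G_\ell$ has $\varphi(x)_{q_{\ell-1}q_{\ell-1}}=1$.
\end{itemize}
Conversely, suppose $\mathbf{w}$ occurs in $\mathbf{u}$ with gaps $G_1,\dots,G_{k+1}$, and there are levels $i=q_0<\dots<q_k=j$ such that
\begin{itemize}
\item $\varphi(w_\ell)_{q_{\ell-1}q_\ell}=1$ for every $\ell$, and
\item $\varphi(x)_{q_{\ell-1}q_{\ell-1}}=1$ for all $x\in G_\ell$ (with $q_{k+1}:=q_k$ for the last gap).
\end{itemize}
Then this data yields a path, so $\varphi(\mathbf{u})_{ij}=1$. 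I would state this as a lemma first.

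\emph{Sufficiency.} Assume the gap condition holds for $\mathbf{u}\approx\mathbf{v}$, and suppose $\varphi(\mathbf{u})_{ij}=1$. The lemma produces $\mathbf{w}$ with $|\mathbf{w}|<n$, gaps $G_\ell$, and levels $q_\ell$. By hypothesis $\mathbf{w}$ also occurs in $\mathbf{v}$ with gaps $G'_\ell\subseteq G_\ell$. The same levels $q_\ell$ work for this occurrence, because every letter of $G'_\ell$ lies in $G_\ell$ and hence has the required diagonal entry. So $\varphi(\mathbf{v})_{ij}=1$, and the symmetric argument gives $\varphi(\mathbf{u})=\varphi(\mathbf{v})$.

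\emph{Necessity.} Suppose $\mathbf{w}=w_1\cdots w_k$ with $k<n$ occurs in $\mathbf{u}$ with gaps $G_1,\dots,G_{k+1}$. I would use the copy of $T_{k+1}(\mathbb{B})$ in the upper-left corner of $T_n(\mathbb{B})$. Since we deal with semigroup identities, it does not matter how the remaining block is filled; zeros will do. Define $\varphi(x)$ to have
\begin{itemize}
\item diagonal entry $1$ at level $\ell$ ($1\le\ell\le k+1$) exactly when $x\in G_\ell$,
\item superdiagonal entry $(\ell,\ell+1)$ equal to $1$ exactly when $x=w_\ell$,
\item all other entries $0$.
\end{itemize}
Because the only off-diagonal $1$s are on the superdiagonal, any path from level $1$ to level $k+1$ must pass through every level $1,2,\dots,k+1$ in turn. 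Its jump letters therefore spell exactly $w_1\cdots w_k$, and the letters it stays on at level $\ell$ all lie in $G_\ell$. Hence $\varphi(\mathbf{v})_{1,k+1}=1$ if and only if $\mathbf{w}$ occurs in $\mathbf{v}$ with gaps $G'_\ell\subseteq G_\ell$. On the other hand, $\varphi(\mathbf{u})_{1,k+1}=1$ by the given occurrence. So if the identity holds, the required occurrence in $\mathbf{v}$ exists.

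I expect the main care to be in this last construction: checking that no other path creates a spurious $1$, and treating the boundary cases correctly, namely the empty word ($k=0$, where the condition reduces to $\con(\mathbf{v})\subseteq\con(\mathbf{u})$) and letters $w_\ell$ that also belong to some gaps. The superdiagonal-only design is what makes the occurrence in $\mathbf{v}$ forced.
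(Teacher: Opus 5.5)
Your proof is correct. The paper itself does not prove this statement: it quotes it from \cite{Vol25} and uses it as a black box. The useful comparison is therefore with the machinery the paper builds around it.

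Your path lemma is the case $\mathbb{S}=\mathbb{B}$ of Lemma~\ref{lem:ijexpansion} (recall $U^0T_n(\mathbb{B})=T_n(\mathbb{B})$). Over $\mathbb{B}$, the expansion~\eqref{eq:expansion} says exactly that $\varphi(\mathbf{u})_{ij}=1$ if and only if some occurrence of some $\mathbf{w}$ with $|\mathbf{w}|\le j-i$, together with some strictly increasing walk from $i$ to $j$, makes all factors equal to $1$.

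Your sufficiency half is then the proof of Theorem~\ref{thm:ai-semirings} specialized to $\mathbb{S}=\mathbb{B}$. There, each summand for $\mathbf{u}$ is dominated by the summand for $\mathbf{v}$ with the same walk and smaller gaps. So the paper in effect contains that half. What it genuinely outsources is your necessity half, which is precisely the direction invoked in the proofs of Theorems~\ref{thm:ai-semirings} and~\ref{thm:si-semirings}.

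Your construction for necessity does what you claim. It takes the corner copy of $T_{k+1}(\mathbb{B})$ with off-diagonal $1$s only on the superdiagonal, which is the standard automaton for $G_1^*w_1G_2^*\cdots w_kG_{k+1}^*$. A path from $1$ to $k+1$ makes exactly $k$ unit jumps, the $\ell$-th on a position carrying $w_\ell$, and each stay at level $\ell$ is on a letter of $G_\ell$. Repeated letters among the $w_\ell$, or letters of $\mathbf{w}$ that also lie in gaps, create no spurious $1$s. This is because whether a position is a jump or a stay is determined by the path, not by the letter.

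What your route buys is a short, self-contained justification of the characterization on which the paper's results for additively idempotent semirings rest, in place of an external citation.
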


We will need an explicit expansion formula for the value of any word under an arbitrary substitution in the monoid $U^0T_n(\mathbb{S})$. For this, we introduce another batch of notions and notation.

Let $[n]:=\{1, 2, \ldots, n\}$. An $m$-tuple $\lambda=(\lambda_1, \lambda_2, \ldots, \lambda_m)$ over $[n]$ is called a \emph{walk of length $m$ from $\lambda_1$ to  $\lambda_m$} if $\lambda_1\leq \lambda_2\leq \ldots\leq\lambda_m$. It is clear that every walk has the form
\[
	\lambda=(i_1, \dots, i_1, i_2, \dots, i_2, \dots, i_k, \dots, i_k),
\]
where $i_1, i_2, \dots, i_k\in [n]$ are such that $i_1 < i_2 < \cdots < i_k$, and thus $k\le n$.

A walk $(\rho_1, \rho_2, \ldots, \rho_k)$ is \emph{strictly increasing} if $\rho_1< \rho_2< \dots <\rho_k$. For $i,j\in [n]$ with $i<j$, denote by  $[n]^{k}_{i,j}$ the set of all strictly increasing walks of length $k+1$ from $i$ to $j$ in $[n]$; note that the superscript in this notation is one less than the length.

For a word $\mathbf{u}$, denote the set of all its scattered subwords of length at most $k$ by $\mathsf{sca}_k(\mathbf{u})$. If $\mathbf{w}=w_1\cdots w_{|\mathbf{w}|}$ is a scattered subword of $\mathbf{u}$, denote by $S_{\mathbf{u}}^{\mathbf{w}}$ the set of all gap sequences of the occurrences of $\mathbf{w}$ in $\mathbf{u}$, that is,
\[
S_{\mathbf{u}}^{\mathbf{w}}:=\left\{G=(G_1, G_2,\dots, G_{|\mathbf{w}|+1})\;\middle|\;\mathbf{w} \ \mbox{occurs in $\mathbf{u}$ with gaps}\ G_1, \ldots, G_{|\mathbf{w}|+1}\right\}.
\]

For a matrix $A\in M_n(\mathbb{S})$ and $i,j\in [n]$, we denote the $(i,j)$ entry of $A$ by $A_{ij}$.

The next lemma is inspired by \cite[Lemma 2.1]{JF19}.

\begin{lem}\label{lem:ijexpansion}
Let\/ $\mathbb{S}$ be a semiring. For any substitution $\varphi\colon \mathcal{X}\to U^0T_n(\mathbb{S})$, any word $\mathbf{u}\in\mathcal{X}^+$, and any $i, j\in [n]$ with $i\le j$,
\begin{equation}\label{eq:expansion}
	\varphi(\mathbf{u})_{ij}=\sum_{\mathbf{w}\in \mathsf{sca}_{j-i}(\mathbf{u})}\sum_{G\in S_\mathbf{u}^\mathbf{w}}\sum_{(\rho_1,\dots,\rho_{|\mathbf{w}|+1})\in [n]^{|\mathbf{w}|}_{i, j}} \prod_{k=1}^{|\mathbf{w}|} \varphi (w_k)_{\rho_{k}\rho_{k+1}} \prod_{s=1}^{|\mathbf{w}|+1}\prod_{x\in G_s} \varphi (x)_{\rho_s \rho_s},
\end{equation}
where $w_k$ in the factor $\prod\limits_{k=1}^{|\mathbf{w}|} \varphi (w_k)_{\rho_{k}\rho_{k+1}}$  stands for the $k$-th from the left variable of $\mathbf{w}$ if the scattered subword $\mathbf{w}\in\mathsf{sca}_{j-i}(\mathbf{u})$ is nonempty; otherwise, the factor is understood as $1$.
\end{lem}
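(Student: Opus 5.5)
We expand the product $\varphi(\mathbf{u})=\varphi(u_1)\cdots\varphi(u_m)$, where $\mathbf{u}=u_1\cdots u_m$, entrywise by the definition of matrix multiplication. This gives
\[
\varphi(\mathbf{u})_{ij}=\sum_{\lambda}\prod_{r=1}^{m}\varphi(u_r)_{\lambda_{r-1}\lambda_r},
\]
where $\lambda=(\lambda_0,\lambda_1,\dots,\lambda_m)$ ranges over all tuples in $[n]^{m+1}$ with $\lambda_0=i$ and $\lambda_m=j$. The expansion uses only distributivity (S3) and associativity/commutativity of addition (S1). No commutativity of multiplication is needed, because within each product the factors stay in the order of the letters of $\mathbf{u}$.

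\textbf{Step 1: restrict to walks.} All matrices are triangular, so $\varphi(u_r)_{\lambda_{r-1}\lambda_r}=0$ whenever $\lambda_{r-1}>\lambda_r$. By (S4), any product containing such a factor is $0$ and may be dropped. Only tuples with $i=\lambda_0\le\lambda_1\le\dots\le\lambda_m=j$ survive.

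\textbf{Step 2: a bijection.} For each surviving $\lambda$, let $r_1<\dots<r_k$ be the positions where it jumps, that is, where $\lambda_{r-1}<\lambda_r$. Put $\mathbf{w}:=u_{r_1}\cdots u_{r_k}$ and let $\rho=(\lambda_0,\lambda_{r_1},\dots,\lambda_{r_k})$ be the strictly increasing sequence of distinct values. Then $\rho\in[n]^k_{i,j}$, so in particular $k\le j-i$. The positions $r_1,\dots,r_k$ define an occurrence of $\mathbf{w}$ in $\mathbf{u}$. Its gap words $\mathbf{u}_1,\dots,\mathbf{u}_{k+1}$ consist exactly of the letters at non-jump positions, and the letters in $\mathbf{u}_s$ sit where $\lambda$ stays at $\rho_s$.

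Conversely, take an occurrence of a word $\mathbf{w}$ of length $k\le j-i$ together with a strictly increasing $\rho\in[n]^k_{i,j}$. These data determine $\lambda$ uniquely. So the surviving $\lambda$ are in bijection with the pairs (occurrence, $\rho$).

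Under this bijection, the product for $\lambda$ equals
\[
\prod_{r\notin\{r_1,\dots,r_k\}}\varphi(u_r)_{\lambda_r\lambda_r}\ \text{(interleaved in order with)}\ \prod_{\ell}\varphi(w_\ell)_{\rho_\ell\rho_{\ell+1}}.
\]

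\textbf{Step 3: the main obstacle.} We must pass from this interleaved product to the stated form, in which gaps appear only through their contents $G_s$, each variable once, and in a rearranged order. This is where membership in $U^0T_n(\mathbb{S})$ is used. Each diagonal factor $\varphi(x)_{\rho_s\rho_s}$ lies in $\{0,1\}$, and $0$ and $1$ are central and idempotent in $\mathbb{S}$ by (S4)–(S5).

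If any diagonal factor is $0$, the whole product is $0$, and the corresponding factor $\prod_{x\in G_s}\varphi(x)_{\rho_s\rho_s}$ is also $0$. Otherwise all diagonal factors are $1$ and can be deleted, so repetitions and order within a gap do not matter. Either way, the product equals
\[
\prod_{\ell}\varphi(w_\ell)_{\rho_\ell\rho_{\ell+1}}\prod_s\prod_{x\in G_s}\varphi(x)_{\rho_s\rho_s}.
\]

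Finally, we regroup the sum. We sum first over $\mathbf{w}\in\mathsf{sca}_{j-i}(\mathbf{u})$ and then over occurrences. The summand depends on an occurrence only through its gap sequence $G$. So we must check that the sum over occurrences matches the sum over $G\in S^{\mathbf{w}}_{\mathbf{u}}$.

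Here lies the subtle point. Distinct occurrences can share a gap sequence, and $S^{\mathbf{w}}_{\mathbf{u}}$ is a set, so the occurrence-sum carries multiplicities that the $G$-sum does not. I would handle this by reading the sum in \eqref{eq:expansion} as taken over occurrences, with multiplicity. Alternatively, one can note that in the intended application (the additively idempotent case, as in \cite{JF19}) the multiplicities are irrelevant because $a+a=a$. I would state this reading explicitly.

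The case $i=j$ is the degenerate instance $\mathbf{w}$ empty, with gap $G_1=\con(\mathbf{u})$ and the empty-product convention.
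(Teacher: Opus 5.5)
Your proof follows the paper's argument: expand over walks, cut each walk at its jumps into an occurrence of $\mathbf{w}$ together with a strictly increasing $\rho\in[n]^{|\mathbf{w}|}_{i,j}$, and use that the diagonal entries lie in $\{0,1\}$ (central idempotents) to replace each gap factor by $\prod_{x\in G_s}\varphi(x)_{\rho_s\rho_s}$ and move it aside. Your multiplicity caveat is a genuine defect of the statement, and the paper's proof, which only matches each summand with some summand, silently relies on $a+a=a$. For example, take $\mathbf{u}=x^4$, $n=2$ and $\varphi(x)=\left(\begin{smallmatrix}1&1\\0&1\end{smallmatrix}\right)$ over $\mathbb{Q}$: the left side of \eqref{eq:expansion} is $4$, but the occurrences of $x$ at positions $2$ and $3$ have the same gap sequence $(\{x\},\{x\})$, so the right side, summed over the set $S^{x}_{\mathbf{u}}$, is only $3$. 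So the lemma must either count occurrences with multiplicity or be restricted to additively idempotent $\mathbb{S}$, which is the only case where it is applied (Theorem~\ref{thm:ai-semirings}).
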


\begin{proof}
Let $\mathbf{u}=u_1u_2\cdots u_m$ with $u_1, \dots, u_m\in \mathcal{X}$. By the definition of matrix multiplication,
\[
\varphi(\mathbf{u})_{ij}=\sum\varphi(u_1)_{\lambda_1\lambda_2} \varphi(u_2)_{\lambda_2\lambda_3}\cdots \varphi(u_m)_{\lambda_m\lambda_{m+1}},
\]
where the sum runs over all $(m+1)$-tuples $(\lambda_1, \ldots, \lambda_{m+1})$ with entries in $[n]$ such that $\lambda_1=i$ and $\lambda_{m+1}=j$. Since each matrix $\varphi(u_s)$ is triangular, we have $\varphi(u_s)_{k\ell}=0$ whenever $k > \ell$. Consequently, only tuples satisfying
\[
\lambda_1 \leq \lambda_2 \leq \cdots \leq \lambda_{m+1}
\]
can contribute to the sum. We may therefore restrict to such nondecreasing $(m+1)$-tuples and write
\begin{equation}\label{eq:walkexpansion}
 \varphi(\mathbf{u})_{ij}=\sum\limits_{(\lambda_1, \ldots, \lambda_{m+1})}\varphi(u_1)_{\lambda_1\lambda_2} \varphi(u_2)_{\lambda_2\lambda_3}\cdots \varphi(u_m)_{\lambda_m\lambda_{m+1}},
\end{equation}
where the sum is now taken over all walks $(\lambda_1, \ldots, \lambda_{m+1})$ with $\lambda_1=i$ and $\lambda_{m+1}=j$.

If $i=j$, \eqref{eq:walkexpansion} reduces to
\[
 \varphi(\mathbf{u})_{ii}=\varphi(u_1)_{ii} \varphi(u_2)_{ii}\cdots \varphi(u_m)_{ii}.
\]
Taking into account that the diagonal entries of matrices in $U^0T_n(\mathbb{S})$ are 0s and 1s, we can rewrite the product $\varphi(u_1)_{ii} \varphi(u_2)_{ii}\cdots \varphi(u_m)_{ii}$ as $\prod\limits_{x\in\mathsf{con}(\mathbf{u})} \varphi (x)_{ii}$. On the other hand, under our convention that the empty word occurs as a scattered subword in $\mathbf{u}$ in a unique way with gap $G_1=\mathsf{con}(\mathbf{u})$, the right-hand side of \eqref{eq:expansion} reduces to exactly the same expression. Hence, the equality \eqref{eq:expansion} holds for $i=j$.

Now let  $i<j$. For each summand
\begin{equation}\label{eq:summand}
    \varphi(u_1)_{\lambda_1\lambda_2} \varphi(u_2)_{\lambda_2\lambda_3}\cdots \varphi(u_m)_{\lambda_m \lambda_{m+1}}
\end{equation}
on the right-hand side of \eqref{eq:walkexpansion}, represent the walk $(\lambda_1, \ldots, \lambda_{m+1})$ as
\[
(\lambda_1, \dots, \lambda_{m+1})=(i_1, \dots, i_1, i_2, \dots, i_2, \dots, i_{k+1}, \dots, i_{k+1})
\]
with $i_1 < i_2 < \cdots < i_{k+1}$, where the first occurrences of $i_2, i_3, \ldots, i_{k+1}$ are at positions $s_1+1$, $s_2+1$, \dots, $s_k+1$, respectively. Let $w_1:=u_{s_1}$, $w_2:=u_{s_2}$, \dots, $w_k:=u_{s_{k}}$, and $\mathbf{w}:=w_1w_2\cdots w_k$, and set
\[
\mathbf{u}_1:=u_1\cdots u_{s_1-1},\ \mathbf{u}_2:=u_{s_1+1}\cdots u_{s_2-1}, \dots, \mathbf{u}_{k+1}:=u_{s_k+1}\cdots u_m.
\]
(Here and throughout, an expression of the form $x_p x_{p+1}\cdots x_q$ is understood to equal the empty word whenever $p>q$.) Then $\mathbf{w}$ is a scattered subword of $\mathbf{u}$ of length $k\leq j-i$, and
\[
\mathbf{u}=\mathbf{u}_1w_1 \mathbf{u}_2\cdots \mathbf{u}_kw_k \mathbf{u}_{k+1},
\]
is an occurrence of $\mathbf{w}$ in $\mathbf{u}$ with gaps $G_1:=\mathsf{con}(\mathbf{u}_1), \ldots, G_{k+1}:=\mathsf{con}(\mathbf{u}_{k+1})$.

Using this representation of the word $\mathbf{u}$, we can rewrite the summand \eqref{eq:summand} as
\[
	\varphi(u_1)_{\lambda_1\lambda_2} \varphi(u_2)_{\lambda_2\lambda_3}\cdots \varphi(u_m)_{\lambda_{m} \lambda_{m+1}}= \varphi(\mathbf{u}_1)_{i_1 i_1}\varphi(w_1)_{i_1i_2}\cdots\varphi(w_k)_{i_{k}i_{k+1}}\varphi(\mathbf{u}_{k+1})_{i_{k+1} i_{k+1}}.
\]
Since the diagonal entries of the matrices in $U^0T_n(\mathbb{S})$ are 0s and 1s, we can rewrite $\varphi(\mathbf{u}_s)_{i_s i_s}$ as $\prod\limits_{x\in G_s} \varphi (x)_{i_s i_s}$ for each $s=1,2,\dots,k+1$. Since both 0 and 1 commute with every element of $\mathbb{S}$, we can collect all such products at the end, thus obtaining
\[
\varphi(u_1)_{\lambda_1\lambda_2} \varphi(u_2)_{\lambda_2\lambda_3}\cdots \varphi(u_m)_{\lambda_{m} \lambda_{m+1}}= \prod_{t=1}^{k}\varphi(w_t)_{i_t i_{t+1}} \prod_{s=1}^{k+1}\prod_{x\in G_s} \varphi (x)_{i_s i_s}.
\]
Therefore, each summand \eqref{eq:summand} is equal to a summand on the right-hand side of \eqref{eq:expansion}.
	
It should be clear that, conversely, each summand on the right-hand side of \eqref{eq:expansion} can be rewritten into the form \eqref{eq:summand}, and so it is  equal to a summand on the right-hand side of~\eqref{eq:walkexpansion}. Hence the equality \eqref{eq:expansion} holds also for $i<j$.
\end{proof}

The final ingredient for the proof of Theorem~\ref{thm:ai-semirings} is a partial order on the semiring $\mathbb{S}$. Define $a\le b$ if and only if $a+b=b$; it is well known (and easy to see) that the relation $\le$ is indeed a partial order on any additively idempotent semiring which is compatible with addition and multiplication in the sense that $a\le b$ implies $a+c\le b+c$, $ac\le bc$, and $ca\le cb$ for all $a,b,c$.

\begin{proof}[Proof of Theorem~\ref{thm:ai-semirings}]
The matrices from $U^0T_n(\mathbb{S})$ all of whose entries are 0s and 1s form a  submonoid isomorphic to $T_n(\mathbb{B})$. Hence, every identity holding in $U^0T_n(\mathbb{S})$ holds in  $T_n(\mathbb{B})$. Therefore, to establish the equational equivalence of these monoids, it suffices to show that $U^0T_n(\mathbb{S})$ satisfies every identity $\mathbf{u}\approx \mathbf{v}$ holding in  $T_n(\mathbb{B})$.
	
Let $\varphi\colon \mathcal{X} \to U^0T_n(\mathbb{S})$ be an arbitrary substitution. Since $\mathbf{u}$ and $\mathbf{v}$ share scattered subwords of length at most $n-1$, in particular, they have the same scattered subwords of length~$1$, that is, the same variables. Thus, $\mathsf{con}(\mathbf{u}) = \mathsf{con}(\mathbf{v})$. Consequently, $\varphi(\mathbf{u})_{ii} = \varphi(\mathbf{v})_{ii}$ for all $i \in [n]$. Therefore, to prove that $\varphi(\mathbf{u}) = \varphi(\mathbf{v})$, we only need to show that $\varphi(\mathbf{u})_{ij} = \varphi(\mathbf{v})_{ij}$ for all $i,j \in [n]$ with $i < j$. Due to the symmetry, it suffices to verify that $\varphi(\mathbf{u})_{ij}\le\varphi(\mathbf{v})_{ij}$ whenever $1\le i<j\le n$.
	
Let $\mathbf{w}=w_1w_2\cdots w_k$ be a scattered subword of both $\mathbf{u}$ and $\mathbf{v}$, where $w_1, w_2, \ldots, w_k\in \mathcal{X}$. Without loss of generality, assume $k\leq j-i$. Consider any occurrence of $\mathbf{w}$ in $\mathbf{u}$ with gaps $G_1,G_2, \dots, G_{k+1}$. Since the identity $\mathbf{u}\approx\mathbf{v}$ holds in $T_n(\mathbb{B})$, it follows from Proposition \ref{prop:identitiesTnB} that $\mathbf{w}$ occurs in $\mathbf{v}$ with gaps $G'_1,G'_2, \dots, G'_{k+1}$ such that $G'_s\subseteq G_s$ for each $s=1, 2, \ldots, k+1$.

Since the diagonal entries of matrices in $U^0T_n(\mathbb{S})$ are $0$s or $1$s, so are the products $\prod\limits_{x\in G_s}\ \varphi(x)_{\rho_s \rho_s}$ and $\prod\limits_{x\in G'_s}\ \varphi(x)_{\rho_s \rho_s}$. In the order of $\mathbb{S}$, $0<1$, and therefore, the inclusion $G'_s\subseteq G_s$ ensures that
\[
\prod_{x\in G_s}\ \varphi(x)_{\rho_s \rho_s}=\prod_{x\in  G_s\setminus G'_s}\ \varphi(x)_{\rho_s \rho_s}\cdot\prod_{x\in  G'_s}\ \varphi(x)_{\rho_s \rho_s} \le \prod_{x\in G'_s}\ \varphi(x)_{\rho_s \rho_s}.
\]
The order $\le$ is compatible with multiplication, whence for any $(\rho_1, \rho_2, \dots, \rho_{k+1})\in [n]^{k}_{i,j}$,
\[
\prod_{t=1}^{k}\ \varphi(w_t)_{\rho_t \rho_{t+1}}\prod_{s=1}^{k+1}\prod_{x\in G_s}\ \varphi(x)_{\rho_s \rho_s} \le
\prod_{t=1}^{k}\ \varphi(w_t)_{\rho_t \rho_{t+1}}\prod_{s=1}^{k+1}\prod_{x\in {G}'_s}\ \varphi(x)_{\rho_s \rho_s}.
\]
Hence every summand of expansion \eqref{eq:expansion} of $\varphi(\mathbf{u})_{ij}$ is less than or equal to some summand in the analogous expansion of $\varphi(\mathbf{v})_{ij}$. Since the order $\le$ is compatible with addition and adding extra summands increases the sum, it follows that
\[
\begin{split}
	\varphi(\mathbf{u})_{ij} & \stackrel{\eqref{eq:expansion}}{=}\sum_{\mathbf{w}\in \mathsf{sca}_{j-i}(\mathbf{u})}\sum_{G\in S_\mathbf{u}^\mathbf{w}}\sum_{(\rho_1,\dots,\rho_{|\mathbf{w}|+1})\in [n]^{|\mathbf{w}|}_{i, j}}\prod_{t=1}^{k}\ \varphi(w_t)_{\rho_t \rho_{t+1}}\prod_{s=1}^{k+1}\prod_{x\in G_s}\ \varphi(x)_{\rho_s \rho_s} \\
	& \le\sum_{\mathbf{w}\in \mathsf{sca}_{j-i}(\mathbf{v})}\sum_{G'\in S_\mathbf{v}^\mathbf{w}}\sum_{(\rho_1,\dots,\rho_{|\mathbf{w}|+1})\in [n]^{|\mathbf{w}|}_{i, j}}\prod_{t=1}^{k}\ \varphi(w_t)_{\rho_t \rho_{t+1}}\prod_{s=1}^{k+1}\prod_{x\in G'_s}\ \varphi(x)_{\rho_s \rho_s}
	\stackrel{\eqref{eq:expansion}}{=} \varphi(\mathbf{v})_{ij},
\end{split}
\]
as required.
\end{proof}

Theorem~\ref{thm:ai-semirings} parallels the result of Johnson and Fenner \cite[Corollary 3.4]{JF19} who considered monoids of unitriangular matrices and proved the equational equivalence of $UT_n(\mathbb{S})$ and $UT_n(\mathbb{B})$.

Due to the importance of the tropical semiring $\mathbb{T}$, the following specialization of Theorem~\ref{thm:ai-semirings} seems worth stating:
\begin{cor}\label{cor:SequivalentB}
The monoid $U^0T_n(\mathbb{T})$ is equationally equivalent to the monoid $T_n(\mathbb{B})$ of all $n\times n$ triangular Boolean matrices. 	
\end{cor}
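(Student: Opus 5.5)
This is a direct specialization of Theorem~\ref{thm:ai-semirings}, so the plan is to verify that the tropical semiring $\mathbb{T}$ meets its hypotheses and then invoke it. No new combinatorics is needed.

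First, we check that the model $\mathbb{T}=(\mathbb{R}_{\ge0},\max,\cdot)$ fixed in Section~\ref{subsec:matrices} satisfies (S1)--(S5).
\begin{itemize}
\item (S1) holds because $\max$ is associative and commutative.
\item (S2) holds because ordinary multiplication of nonnegative reals is associative.
\item (S3) holds because multiplication by a nonnegative real is monotone, so $a\max\{b,c\}=\max\{ab,ac\}$, and symmetrically on the right.
\item (S4) holds because $0$ is neutral for $\max$ on $\mathbb{R}_{\ge0}$ and absorbing for multiplication.
\item (S5) holds because $1\ne0$ is the multiplicative identity.
\end{itemize}

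Second, $\mathbb{T}$ is additively idempotent, since $\max\{a,a\}=a$ for every $a$. These two checks put $\mathbb{T}$ within the scope of Theorem~\ref{thm:ai-semirings}.

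Applying that theorem with $\mathbb{S}=\mathbb{T}$ gives directly that $U^0T_n(\mathbb{T})$ is equationally equivalent to $T_n(\mathbb{B})$.

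There is no real obstacle. The only point worth noting is that the $\max$-plus model on $\mathbb{R}\cup\{-\infty\}$ is isomorphic to this model via $a\mapsto 2^a$. Equational equivalence is invariant under isomorphism of the underlying semiring, since the isomorphism induces an isomorphism of the matrix monoids. So the statement holds whichever model of $\mathbb{T}$ is used.
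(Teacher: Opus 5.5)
Your proposal is correct and matches the paper, which states this corollary as an immediate specialization of Theorem~\ref{thm:ai-semirings}. Your checks of (S1)--(S5) and of additive idempotency for $(\mathbb{R}_{\ge0},\max,\cdot)$ are exactly what that specialization requires.
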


We mention in passing that Corollary~\ref{cor:SequivalentB} allows one to establish  non-obvious structural properties of the monoid $U^0T_n(\mathbb{T})$ without any matrix computations. For instance, combined with the local finiteness of the finitely generated variety $\var T_n(\mathbb{B})$, Corollary~\ref{cor:SequivalentB}  readily implies that $U^0T_n(\mathbb{T})$ is locally finite. (This fact is, of course, known as a special case of a general result of St\'ephane Gaubert~\cite{Gau96}, who proved that periodic matrix semigroups over $\mathbb{T}$ are locally finite.)

Back to our main theme, the Finite Basis Problem, Theorem~\ref{thm:ai-semirings} leads to a complete solution for matrix monoids from the family $U^0T_n(\mathbb{S})$.

\begin{cor}\label{cor:inherentU0T}
For every additively idempotent semiring $\mathbb{S}$, the monoid $U^0T_n(\mathbb{S})$ with $n\ge 3$ is inherently nonfinitely based. The monoid $U^0T_2(\mathbb{S})$ is finitely based.
\end{cor}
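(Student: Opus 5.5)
The plan is to reduce both claims to known facts about $T_n(\mathbb{B})$ via Theorem~\ref{thm:ai-semirings}. By that theorem, $U^0T_n(\mathbb{S})$ and $T_n(\mathbb{B})$ are equationally equivalent, so $\var U^0T_n(\mathbb{S})=\var T_n(\mathbb{B})$. Every property at stake (finite basedness, inherent nonfinite basedness) is a property of the generated variety. It therefore suffices to know the status of $T_n(\mathbb{B})$.

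For $n\ge3$, I would quote the results recalled in the proof of Proposition~\ref{prop:Tn(S)summary}(a): $T_3(\mathbb{B})$ is inherently nonfinitely based by \cite[Theorem 4.1]{LL11}, and $T_n(\mathbb{B})$ is inherently nonfinitely based for $n\ge4$ by \cite[Theorem 2.1]{VG04}. The variety $\var T_n(\mathbb{B})$ is locally finite by Proposition~\ref{prop:birkhoff}, since $T_n(\mathbb{B})$ is finite. Hence the locally finite variety $\var U^0T_n(\mathbb{S})$ coincides with an inherently nonfinitely based one, so $U^0T_n(\mathbb{S})$ is inherently nonfinitely based. Note that $U^0T_n(\mathbb{S})$ may be infinite, for example over $\mathbb{T}$. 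The definition of the property only concerns the locally finite variety it generates, which here equals $\var T_n(\mathbb{B})$, so no issue arises.

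For $n=2$, I would invoke \cite{LL11}, where $T_2(\mathbb{B})$ is shown to be finitely based (Table~\ref{tab:plainstatus}). Equational equivalence then transfers a finite identity basis verbatim to $U^0T_2(\mathbb{S})$.

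The only point needing care is that the notion of inherent nonfinite basedness for a possibly infinite monoid is applied to its variety. This is exactly how the paper defines it, and Proposition~\ref{prop:birkhoff} guarantees local finiteness. There is no real obstacle beyond correctly citing the $n=3$ case from \cite{LL11} rather than \cite{VG04}, given the erratum noted earlier.
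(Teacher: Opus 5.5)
Your proposal is correct and follows essentially the same route as the paper: transfer via the equational equivalence of Theorem~\ref{thm:ai-semirings} to $T_n(\mathbb{B})$, then cite \cite{LL11} for $n=2$ (finitely based) and $n=3$, and \cite{VG04} for $n\ge4$ (inherently nonfinitely based). Your added remark that $\var U^0T_n(\mathbb{S})=\var T_n(\mathbb{B})$ is locally finite by Proposition~\ref{prop:birkhoff} makes explicit a point the paper leaves implicit, and it is accurate.
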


\begin{proof}
By Theorem~\ref{thm:ai-semirings}, the answers to  the Finite Basis Problem for the monoids $U^0T_n(\mathbb{S})$ and $T_n(\mathbb{B})$ coincide. It is known that the latter monoid is finitely based for $n=2$~\cite[Theorem 3.5]{LL11} and inherently nonfinitely based for $n=3$~\cite[Theorem 4.1]{LL11} and $n\ge 4$~\cite[Theorem 2.1]{VG04}
\end{proof}

\subsubsection{Matrices over semirings with small idempotents}
\label{subsubsec:si-semirings}

Arguments in Subsection~\ref{subsubsec:ai-semirings} can be extended to certain monoids of triangular matrices with more relaxed restrictions on the diagonal elements. The price to pay is that the underlying additively idempotent semirings must then satisfy additional conditions, which nevertheless hold in many important instances.

Suppose that a semiring $\mathbb{S}$ satisfies (S1)--(S5), and in addition, its multiplicative idempotents commute, that is, $\mathbb{S}$ satisfies the implication
\begin{equation}\label{eq:commutingidem}
    e=e^2\ \&\ f=f^2\to ef=fe.
\end{equation}
Then the product of any two multiplicative idempotents is a multiplicative idempotent whence the set $ET_n(\mathbb{S})$ of all $n\times n$ triangular matrices with idempotent diagonal entries,
\begin{equation}\label{eq:ET}
    ET_n(\mathbb{S}):=\left\{\bigl(\alpha_{ij}\bigr)_{n\times n}\in T_n(\mathbb{S})\;\middle|\;  \alpha^2_{ii}=\alpha_{ii}\ \text{for}\ i=1,\dots,n\right\},
\end{equation}
forms a submonoid of the monoid $T_n(\mathbb{S})$. The monoid $ET_n(\mathbb{S})$ is the principal object of this subsection.

Clearly, $ET_n(\mathbb{S})=U^0T_n(\mathbb{S})$ whenever 0 and 1 are the only multiplicative idempotents in $\mathbb{S}$, as is the case for fields, $\mathbb{Z}$, and $\mathbb{T}$. Otherwise, the monoid $ET_n(\mathbb{S})$ strictly contains $U^0T_n(\mathbb{S})$ and can be much larger. In particular, if every element of $\mathbb{S}$ is multiplicatively idempotent (for instance, if $\mathbb{S}$ is a bounded distributive lattice), then $ET_n(\mathbb{S})=T_n(\mathbb{S})$.

The following is an analogue of Lemma~\ref{lem:ijexpansion} for the monoid $ET_n(\mathbb{S})$.
\begin{lem}\label{lem:ijexpansion_forES}
Let\/ $\mathbb{S}$ be a semiring whose multiplicative idempotents commute. For any substitution $\varphi\colon \mathcal{X}\to ET_n(\mathbb{S})$, any word $\mathbf{u}\in\mathcal{X}^+$, and any $i, j\in [n]$ with $i\le j$,
\begin{equation}\label{eq:expansion_forES}
	\varphi(\mathbf{u})_{ij}=\sum_{\mathbf{w}\in \mathsf{sca}_{j-i}(\mathbf{u})}\sum_{G\in S_\mathbf{u}^\mathbf{w}}\sum_{(\rho_1,\dots,\rho_{|\mathbf{w}|+1})\in [n]^{|\mathbf{w}|}_{i, j}}  \prod_{x\in G_1} \varphi (x)_{\rho_1 \rho_1}\cdot \prod_{s=1}^{|\mathbf{w}|}\varphi (w_s)_{\rho_{s}\rho_{s+1}}\left(\prod_{x\in G_{s+1}} \varphi (x)_{\rho_{s+1} \rho_{s+1}}\right),
\end{equation}
where $w_s$ in the factor $\prod\limits_{s=1}^{|\mathbf{w}|}\varphi (w_s)_{\rho_{s}\rho_{s+1}}\left(\prod\limits_{x\in G_{s+1}} \varphi (x)_{\rho_{s+1} \rho_{s+1}}\right)$ stands for the $s$-th from the left variable of $\mathbf{w}$ if the scattered subword $\mathbf{w}\in\mathsf{sca}_{j-i}(\mathbf{u})$ is nonempty; otherwise, the factor is understood as $1$.
\end{lem}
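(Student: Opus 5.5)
The plan is to follow the proof of Lemma~\ref{lem:ijexpansion} step by step. The one difference is that diagonal entries can no longer be moved freely: they are idempotents, not necessarily $0$ or $1$, so they need not commute with the off-diagonal entries. Writing $\mathbf{u}=u_1\cdots u_m$, we expand $\varphi(\mathbf{u})_{ij}$ by the definition of the matrix product. Triangularity kills every index tuple that is not nondecreasing, which leaves the sum~\eqref{eq:walkexpansion} over walks $(\lambda_1,\dots,\lambda_{m+1})$ from $i$ to $j$.

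First, the case $i=j$. Then $\varphi(\mathbf{u})_{ii}=\varphi(u_1)_{ii}\cdots\varphi(u_m)_{ii}$ is a product of idempotents. By~\eqref{eq:commutingidem}, the multiplicative idempotents of $\mathbb{S}$ form a commutative subsemigroup in which every element is idempotent, i.e.\ a semilattice. Hence the product can be reordered, and repeated factors can be dropped. This gives $\prod_{x\in\mathsf{con}(\mathbf{u})}\varphi(x)_{ii}$, a well-defined expression precisely because the order of the factors is irrelevant. Under the convention on the empty scattered subword, this is exactly the right-hand side of~\eqref{eq:expansion_forES}.

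Next, the case $i<j$. As in Lemma~\ref{lem:ijexpansion}, split each walk at the positions $s_1<\dots<s_k$ where it strictly increases. This yields a scattered subword $\mathbf{w}=u_{s_1}\cdots u_{s_k}$ with $k\le j-i$, an occurrence $\mathbf{u}=\mathbf{u}_1w_1\mathbf{u}_2\cdots w_k\mathbf{u}_{k+1}$ with gaps $G_s=\mathsf{con}(\mathbf{u}_s)$, and a strictly increasing walk $(i_1,\dots,i_{k+1})\in[n]^k_{i,j}$. The summand is then the product, in its original order,
\[
\varphi(\mathbf{u}_1)_{i_1i_1}\,\varphi(w_1)_{i_1i_2}\,\varphi(\mathbf{u}_2)_{i_2i_2}\cdots\varphi(w_k)_{i_ki_{k+1}}\,\varphi(\mathbf{u}_{k+1})_{i_{k+1}i_{k+1}}.
\]
We do not move any factor across an off-diagonal entry. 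We only rewrite each block $\varphi(\mathbf{u}_s)_{i_si_s}$, which is a product of commuting idempotents, as $\prod_{x\in G_s}\varphi(x)_{i_si_s}$, using the semilattice argument from the case $i=j$. The result is exactly the summand of~\eqref{eq:expansion_forES}, in the order displayed there: the first gap product, then alternately an off-diagonal entry and the next gap product. When a gap $\mathbf{u}_s$ is empty, its product is the empty product $1$.

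Finally, we must check that this correspondence is a bijection between walks $\lambda$ and triples (scattered subword of length at most $j-i$, occurrence, strictly increasing walk from $i$ to $j$). Given a triple, we recover $\lambda$ by setting $\lambda$ equal to $\rho_s$ throughout the $s$-th segment. Since sums in $\mathbb{S}$ are commutative, the two sums agree.

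The only genuinely new point, and the main obstacle, is the noncommutativity. We must make sure that nothing beyond reordering and deduplicating factors within one diagonal block is used. That step is justified by~\eqref{eq:commutingidem} together with $e^2=e$, since $\mathbf{u}_s$ may repeat variables. No other step needs 0s and 1s.
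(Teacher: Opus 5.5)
Your argument follows the paper's proof: expand via \eqref{eq:walkexpansion}, settle $i=j$ using the fact that the diagonal entries lie in a semilattice of commuting idempotents, and for $i<j$ cut each walk at its strict ascents. You then rewrite each diagonal block $\varphi(\mathbf{u}_s)_{i_si_s}$ in place as $\prod_{x\in G_s}\varphi(x)_{i_si_s}$, never moving it past an off-diagonal entry. All of that is sound.

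The gap is the last step. Your bijection is onto triples $(\mathbf{w},\text{occurrence},\rho)$. But the middle sum in \eqref{eq:expansion_forES} runs over $G\in S^{\mathbf{w}}_{\mathbf{u}}$, which is defined as the \emph{set} of gap sequences, and distinct occurrences can have identical gaps. For example, in $\mathbf{u}=x^4$ the second and third occurrences of $x$ both have gaps $(\{x\},\{x\})$. Commutativity of addition therefore does not turn your occurrence-indexed sum into the right-hand side as written. Under the stated hypotheses nothing can. Take $\mathbb{S}=\mathbb{Z}$, whose only idempotents are $0,1$, and let $n=2$, $\varphi(x)=\left(\begin{smallmatrix}1&1\\0&1\end{smallmatrix}\right)$. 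Then $\varphi(x^4)_{12}=4$, whereas the right-hand side of \eqref{eq:expansion_forES} equals $3$: there are three distinct gap sequences for $\mathbf{w}=x$, each contributing $1$, and the empty word contributes nothing since $[2]^0_{1,2}=\varnothing$.

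So you have to adjust what is proved, in one of two ways.
\begin{enumerate}
\item Read $S^{\mathbf{w}}_{\mathbf{u}}$ as a multiset, i.e.\ sum over occurrences. Then your bijection completes the proof. It is also more rigorous than the paper's argument, which only notes that every summand on one side equals some summand on the other and conversely.
\item Assume that $\mathbb{S}$ is additively idempotent, as it is in every application of the lemma (Theorem~\ref{thm:si-semirings}, Proposition~\ref{prop:ET2D}). Then a finite sum depends only on the set of values of its summands, so occurrences with equal gaps may be merged. In that setting the paper's summand-matching argument is exactly what is needed.
\end{enumerate}
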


\begin{proof}
We proceed similarly to the proof of Lemma~\ref{lem:ijexpansion}: write  $\mathbf{u}=u_1u_2\cdots u_m$, where $u_1, \dots, u_m$ are variables, and represent $\varphi(\mathbf{u})_{ij}$ as the sum in \eqref{eq:walkexpansion} taken over all walks $(\lambda_1, \ldots, \lambda_{m+1})$ with $\lambda_1=i$ and $\lambda_{m+1}=j$.

If $i=j$, \eqref{eq:walkexpansion} reduces to
\[
 \varphi(\mathbf{u})_{ii}=\varphi(u_1)_{ii} \varphi(u_2)_{ii}\cdots \varphi(u_m)_{ii}.
\]
Since the diagonal entries of matrices in $ET_n(\mathbb{S})$ are idempotents, we may use \eqref{eq:commutingidem} to rearrange the product $\varphi(u_1)_{ii} \varphi(u_2)_{ii}\cdots \varphi(u_m)_{ii}$ into blocks of equal factors and then, by idempotency, replace each block with a single factor. This reduces the product to $\prod\limits_{x\in\mathsf{con}(\mathbf{u})} \varphi (x)_{ii}$. The right-hand side of \eqref{eq:expansion_forES} reduces to exactly the same expression, by our convention that the empty word occurs as a scattered subword in $\mathbf{u}$ in a unique way, with gap $G_1=\mathsf{con}(\mathbf{u})$. Hence, the equality \eqref{eq:expansion_forES} holds for $i=j$.

Now let  $i<j$. As in the proof of Lemma~\ref{lem:ijexpansion}, consider the   summand \eqref{eq:summand} in \eqref{eq:walkexpansion} corresponding to the walk
\[
(\lambda_1, \ldots, \lambda_{m+1})=(i_1, \ldots, i_1, i_2, \ldots, i_2, \ldots, i_{k+1}, \ldots, i_{k+1})
\]
with $i_1 < i_2 < \cdots < i_{k+1}$ and the first occurrences of $i_2, i_3, \dots, i_{k+1}$ at positions $s_1+1$, $s_2+1$, \dots, $s_k+1$, respectively. We assign to \eqref{eq:summand} an occurrence
\[
\mathbf{u}=\mathbf{u}_1w_1 \mathbf{u}_2\cdots \mathbf{u}_kw_k \mathbf{u}_{k+1},
\]
of a scattered subword $\mathbf{w}:=w_1w_2\cdots w_k$ of length $k\leq j-i$ such that  $w_1:=u_{s_1}$, $w_2:=u_{s_2}$, \dots, $w_k:=u_{s_k}$, and
\[
\mathbf{u}_1:=u_1\cdots u_{s_1-1},\ \mathbf{u}_2:=u_{s_1+1}\cdots u_{s_2-1}, \dots, \mathbf{u}_{k+1}:=u_{s_k+1}\cdots u_m.
\]
Let $G_1:=\mathsf{con}(\mathbf{u}_1), \dots, G_{k+1}:=\mathsf{con}(\mathbf{u}_{k+1})$ be the gaps of this occurrence. Rewrite the summand \eqref{eq:summand}  as
\[
	\varphi(u_1)_{\lambda_1\lambda_2} \varphi(u_2)_{\lambda_2\lambda_3}\cdots \varphi(u_m)_{\lambda_{m} \lambda_{m+1}}= \varphi(\mathbf{u}_1)_{i_1 i_1}\varphi(w_1)_{i_1i_2}\cdots\varphi(w_k)_{i_{k}i_{k+1}}\varphi(\mathbf{u}_{k+1})_{i_{k+1} i_{k+1}}.
\]
Taking into account that the diagonal entries of matrices in $ET_n(\mathbb{S})$ are commuting idempotents, we conclude that $\varphi(\mathbf{u}_s)_{i_s i_s}=\prod\limits_{x\in G_s} \varphi (x)_{i_s i_s}$ for each $s=1,2,\dots,k+1$. Hence,
\[
\varphi(u_1)_{\lambda_1\lambda_2} \varphi(u_2)_{\lambda_2\lambda_3}\cdots \varphi(u_m)_{\lambda_{m} \lambda_{m+1}}=\prod_{x\in G_1} \varphi (x)_{i_1 i_1}\cdot \prod_{s=1}^{k}\varphi (w_s)_{i_{s}i_{s+1}}\left(\prod_{x\in G_{s+1}} \varphi (x)_{i_{s+1} i_{s+1}}\right).
\]
Thus, each summand \eqref{eq:summand} is equal to a summand on the right-hand side of \eqref{eq:expansion_forES}.
	
Conversely, each summand on the right-hand side of \eqref{eq:expansion_forES} can be rewritten into the form \eqref{eq:summand}, and so it is equal to a summand on the right-hand side of~\eqref{eq:walkexpansion}. Hence the equality \eqref{eq:expansion_forES} holds also for $i<j$.
\end{proof}

To obtain an analogue of Theorem~\ref{thm:ai-semirings} for matrix monoids of the form $ET_n(\mathbb{S})$, we need a further restriction on the underlying semiring. Recall that the relation $\le$ defined by $a\le b$ if and only if $a+b=b$ is a compatible partial order on additively idempotent semirings. We say that an additively idempotent semiring has \emph{small idempotents} if $e\le 1$ for every multiplicative idempotent $e$. Alternatively, semirings with small idempotents can be defined as additively idempotent semirings satisfying the implication
\[
e=e^2\to e+1=1.
\]

Examples of semirings with small idempotents include bounded distributive lattices, and more generally, \emph{inclines} with 0 and 1. Inclines are additively idempotent semirings satisfying the identities $x+xy\approx x\approx x+yx$; these identities are equivalent to the inequalities $xy\le x\ge yx$, which in the presence of 1 are equivalent to $y\le 1$. (Thus, in an incline with 1, all elements are smaller than or equal to 1, not only idempotents.) Matrices over inclines have been considered in the literature; see the monograph~\cite{CKR84}, the survey~\cite{KR04}, and the references therein.

\begin{thm}\label{thm:si-semirings}
For every semiring $\mathbb{S}$ with commuting and small idempotents, the monoid $ET_n(\mathbb{S})$ is equationally equivalent to the monoid $T_n(\mathbb{B})$ of all $n\times n$ triangular Boolean matrices.
\end{thm}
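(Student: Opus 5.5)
The plan is to mimic the proof of Theorem~\ref{thm:ai-semirings}, with Lemma~\ref{lem:ijexpansion_forES} replacing Lemma~\ref{lem:ijexpansion}. One direction is immediate. The matrices with all entries in $\{0,1\}$ lie in $ET_n(\mathbb{S})$, since $0$ and $1$ are idempotents, and they form a submonoid isomorphic to $T_n(\mathbb{B})$. Here we use $1+1=1$, which holds because $\mathbb{S}$ is additively idempotent. Hence every identity of $ET_n(\mathbb{S})$ holds in $T_n(\mathbb{B})$. Conversely, we take an identity $\mathbf{u}\approx\mathbf{v}$ of $T_n(\mathbb{B})$ and an arbitrary substitution $\varphi\colon\mathcal{X}\to ET_n(\mathbb{S})$, and we must show $\varphi(\mathbf{u})=\varphi(\mathbf{v})$.

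\textbf{Diagonal entries.} Proposition~\ref{prop:identitiesTnB}, applied to words of length $1$, gives $\con(\mathbf{u})=\con(\mathbf{v})$ (recall $n\ge2$). The case $i=j$ of Lemma~\ref{lem:ijexpansion_forES} then gives $\varphi(\mathbf{u})_{ii}=\prod_{x\in\con(\mathbf{u})}\varphi(x)_{ii}=\varphi(\mathbf{v})_{ii}$. This product is well defined regardless of order because the factors are commuting idempotents.

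\textbf{Off-diagonal entries.} For $i<j$, by symmetry it suffices to prove $\varphi(\mathbf{u})_{ij}\le\varphi(\mathbf{v})_{ij}$ in the compatible order $a\le b\iff a+b=b$. Fix a summand of \eqref{eq:expansion_forES} for $\mathbf{u}$, given by a word $\mathbf{w}$ of length $k\le j-i<n$, gaps $G_1,\dots,G_{k+1}$ and a strictly increasing walk $\rho$. Proposition~\ref{prop:identitiesTnB} supplies an occurrence of $\mathbf{w}$ in $\mathbf{v}$ with gaps $G'_s\subseteq G_s$. We compare the $\mathbf{u}$-summand with the $\mathbf{v}$-summand having the same $\mathbf{w}$ and $\rho$ and gaps $G'$.

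The key point is the inequality
\[
\prod_{x\in G_s}\varphi(x)_{\rho_s\rho_s}\le\prod_{x\in G'_s}\varphi(x)_{\rho_s\rho_s}.
\]
To prove it, note that the diagonal factors commute, so the left-hand side equals $e\cdot\prod_{x\in G'_s}\varphi(x)_{\rho_s\rho_s}$, where $e:=\prod_{x\in G_s\setminus G'_s}\varphi(x)_{\rho_s\rho_s}$. By \eqref{eq:commutingidem}, $e$ is a product of commuting idempotents and hence itself an idempotent. Since $\mathbb{S}$ has small idempotents, $e\le1$. Compatibility of $\le$ with multiplication then yields $ef\le f$ for $f=\prod_{x\in G'_s}\varphi(x)_{\rho_s\rho_s}$.

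The summand in \eqref{eq:expansion_forES} interleaves these diagonal products with the off-diagonal factors $\varphi(w_s)_{\rho_s\rho_{s+1}}$, and those need not commute with anything. This is exactly why Lemma~\ref{lem:ijexpansion_forES} keeps each gap product in its place. Compatibility of $\le$ with left and right multiplication still lets us replace each gap product by a larger one in situ, one position at a time, and by transitivity the $\mathbf{u}$-summand is $\le$ the corresponding $\mathbf{v}$-summand.

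\textbf{Summing up.} Finally, as in the proof of Theorem~\ref{thm:ai-semirings}, if each summand $a_\alpha$ of the $\mathbf{u}$-expansion satisfies $a_\alpha\le b_{\beta(\alpha)}$ for some summand of the $\mathbf{v}$-expansion, then $\sum a_\alpha\le\sum b_\beta$. This holds even when $\beta$ is not injective: by idempotency and compatibility with addition, $\sum_\alpha a_\alpha\le\sum_\alpha b_{\beta(\alpha)}\le\sum_\beta b_\beta$.

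The main obstacle is the noncommutativity of $\mathbb{S}$. The argument for $U^0T_n(\mathbb{S})$ moved the $0/1$ diagonal factors to the end, which is no longer possible here. The plan handles this by working with the positional form \eqref{eq:expansion_forES} and using only the commutation among idempotents, via \eqref{eq:commutingidem}, to split each gap product as $e\cdot f$.
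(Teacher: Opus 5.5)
Your proposal is correct and matches the paper's proof essentially step for step. The $\{0,1\}$-submonoid gives one direction, and the converse combines Lemma~\ref{lem:ijexpansion_forES} with Proposition~\ref{prop:identitiesTnB}, splits each gap product as $e\cdot f$ with $e\le 1$, uses multiplicative compatibility in place, and compares the two expansions via additive compatibility. Your extra remarks only make explicit what the paper leaves implicit: that $1+1=1$ is needed for the $\{0,1\}$-submonoid, that a product of commuting idempotents is an idempotent, and that a non-injective matching of summands is harmless because addition is idempotent.
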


\begin{proof}
The arguments from the proof of Theorem~\ref{thm:ai-semirings} apply \emph{mutatis mutandis}. It suffices to show that $ET_n(\mathbb{S})$ satisfies every identity $\mathbf{u}\approx \mathbf{v}$ holding in  $T_n(\mathbb{B})$, that is,  $\varphi(\mathbf{u}) = \varphi(\mathbf{v})$ for an arbitrary substitution $\varphi\colon \mathcal{X} \to ET_n(\mathbb{S})$.

By Proposition~\ref{prop:identitiesTnB}, the words $\mathbf{u}$ and $\mathbf{v}$ share scattered subwords of length at most $n-1$. In particular, they share variables, that is, $\mathsf{con}(\mathbf{u}) = \mathsf{con}(\mathbf{v})$. Since the diagonal entries of matrices in $ET_n(\mathbb{S})$ are commuting idempotents, we have
\[
\varphi(\mathbf{u})_{ii}=\prod_{x\in\mathsf{con}(\mathbf{u})}=\prod_{x\in\mathsf{con}(\mathbf{v})}=\varphi(\mathbf{v})_{ii}
\]
for all $i\in [n]$. It remains to verify that $\varphi(\mathbf{u})_{ij} = \varphi(\mathbf{v})_{ij}$ for all $i,j \in [n]$ with $i < j$, that is,  $\varphi(\mathbf{u})_{ij}\le\varphi(\mathbf{v})_{ij}$ and $\varphi(\mathbf{v})_{ij}\le\varphi(\mathbf{u})_{ij}$. We check only the first inequality, as the second will follow by symmetry.

Let $\mathbf{w}=w_1w_2\cdots w_k$ with $k\leq j-i$ be a scattered subword of both $\mathbf{u}$ and $\mathbf{v}$, where $w_1, w_2, \ldots, w_k\in \mathcal{X}$.  If  $\mathbf{w}$ occurs in $\mathbf{u}$ with gaps $G_1,G_2, \dots, G_{k+1}$,  by Proposition \ref{prop:identitiesTnB}, $\mathbf{w}$ occurs in $\mathbf{v}$ with gaps $G'_1,G'_2, \dots, G'_{k+1}$ such that $G'_s\subseteq G_s$ for $1\le s \le k+1$. We then have
\begin{align*}
\prod_{x\in G_s}\ \varphi(x)_{\rho_s \rho_s}&=\prod_{x\in  G_s\setminus G'_s} \varphi(x)_{\rho_s \rho_s}\cdot\prod_{x\in  G'_s}\ \varphi(x)_{\rho_s \rho_s}&&\text{since the diagonal entries commute}\\
&\le \prod_{x\in G'_s}\ \varphi(x)_{\rho_s \rho_s}&&\text{since $\prod_{x\in  G_s\setminus G'_s} \varphi(x)_{\rho_s \rho_s}\le 1$}.
\end{align*}
Using the multiplicative compatibility of the order $\le$, for any $(\rho_1, \rho_2, \dots, \rho_{k+1})\in [n]^{k}_{i,j}$, we obtain
\begin{multline*}
\prod_{x\in G_1} \varphi (x)_{\rho_1 \rho_1}\cdot \prod_{s=1}^{k}\varphi (w_s)_{\rho_{s}\rho_{s+1}}\left(\prod_{x\in G_{s+1}} \varphi (x)_{\rho_{s+1} \rho_{s+1}}\right)\\ \le
\prod_{x\in G'_1} \varphi (x)_{\rho_1 \rho_1}\cdot \prod_{s=1}^{k}\varphi (w_s)_{\rho_{s}\rho_{s+1}}\left(\prod_{x\in G'_{s+1}} \varphi (x)_{\rho_{s+1} \rho_{s+1}}\right).
\end{multline*}
Hence every summand of expansion \eqref{eq:expansion_forES} of $\varphi(\mathbf{u})_{ij}$ is less than or equal to some summand in the analogous expansion of $\varphi(\mathbf{v})_{ij}$. Since the order $\le$ is compatible with addition and adding extra summands increases the sum, we obtain $\varphi(\mathbf{u})_{ij}\le\varphi(\mathbf{v})_{ij}$, as required.
\end{proof}

Theorem~\ref{thm:si-semirings} allows one to solve the Finite Basis Problem for monoids of the form $ET_n(\mathbb{S})$, where $\mathbb{S}$ is an arbitrary semiring with commuting and small idempotents.

\begin{cor}\label{cor:inherentET}
For every semiring $\mathbb{S}$ with commuting and small idempotents, the monoid $ET_n(\mathbb{S})$ with $n\ge 3$ is inherently nonfinitely based. The monoid $ET_2(\mathbb{S})$ is finitely based.
\end{cor}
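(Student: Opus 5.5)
This corollary follows from Theorem~\ref{thm:si-semirings} in the same way that Corollary~\ref{cor:inherentU0T} follows from Theorem~\ref{thm:ai-semirings}. By Theorem~\ref{thm:si-semirings}, for every semiring $\mathbb{S}$ with commuting and small idempotents, the monoid $ET_n(\mathbb{S})$ is equationally equivalent to $T_n(\mathbb{B})$. Hence $\var ET_n(\mathbb{S})=\var T_n(\mathbb{B})$.

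Each property in the statement depends only on the generated variety. Being finitely based is a property of $\var S$, as noted in Section~\ref{subsec:vocabulary}. Being inherently nonfinitely based is, by definition, the statement that $\var S$ is inherently nonfinitely based. So it suffices to know the answers for $T_n(\mathbb{B})$.

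For $n=2$, the monoid $T_2(\mathbb{B})$ is finitely based by \cite[Theorem 3.5]{LL11}. For $n=3$ it is inherently nonfinitely based by \cite[Theorem 4.1]{LL11}, and for $n\ge 4$ by \cite[Theorem 2.1]{VG04}.

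One point needs care: the definition of inherent nonfinite basedness applies to locally finite varieties. Here $\var ET_n(\mathbb{S})=\var T_n(\mathbb{B})$ is generated by a finite monoid, so it is locally finite by Proposition~\ref{prop:birkhoff}. This holds even if $\mathbb{S}$, and hence $ET_n(\mathbb{S})$, is infinite, so the notion is meaningful.

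There is no genuine obstacle. The only thing to check is that the hypotheses of Theorem~\ref{thm:si-semirings} are exactly those of the corollary. They are: the corollary assumes commuting idempotents, which makes $ET_n(\mathbb{S})$ a monoid, and small idempotents, which includes additive idempotency.
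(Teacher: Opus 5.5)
Your proposal is correct and follows the paper's route: the paper omits the proof, saying it repeats that of Corollary~\ref{cor:inherentU0T}. That is, it transfers the known status of $T_n(\mathbb{B})$ (finitely based for $n=2$ by \cite{LL11}, inherently nonfinitely based for $n=3$ by \cite{LL11} and for $n\ge 4$ by \cite{VG04}) through the equational equivalence given by Theorem~\ref{thm:si-semirings}. Your extra remark that $\var ET_n(\mathbb{S})=\var T_n(\mathbb{B})$ is locally finite, even when $\mathbb{S}$ is infinite, is a useful detail that the paper leaves implicit.
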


The proof repeats that of Corollary~\ref{cor:inherentU0T}, and we omit it.

Recall that $ET_n(L)=T_n(L)$ for any bounded distributive lattice $L$. Hence, Corollary~\ref{cor:inherentET} specializes to the following solution to the Finite Basis Problem for monoids of triangular matrices over lattices.

\begin{cor}\label{cor:trianglelattice}
For every bounded distributive lattice $L$, the monoid $T_n(L)$ is inherently nonfinitely based for $n\ge3$ and finitely based for $n=2$.
\end{cor}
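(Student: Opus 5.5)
This follows directly from Corollary~\ref{cor:inherentET}. The plan is to check that a bounded distributive lattice $L$ satisfies the hypotheses of that corollary, and that $ET_n(L)$ coincides with $T_n(L)$.

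First, $L$ is a semiring in the sense of (S1)--(S5). Take join as addition and meet as multiplication, with the bottom element as $0$ and the top element as $1$. Distributivity gives (S3). The bottom is neutral for join and absorbing for meet, which gives (S4). The top is neutral for meet, which gives (S5). We must assume $0\ne1$, i.e.\ that $L$ is nontrivial, as required by (S5).

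Next, $L$ has commuting and small idempotents.
\begin{itemize}
\item Addition is idempotent since $a\vee a=a$.
\item Every element is a multiplicative idempotent since $a\wedge a=a$.
\item Multiplication is commutative, so implication \eqref{eq:commutingidem} holds trivially.
\item Every element $e$ satisfies $e\vee 1=1$, that is, $e\le1$. So in particular all idempotents are small.
\end{itemize}

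Because every element of $L$ is a multiplicative idempotent, the defining condition $\alpha_{ii}^2=\alpha_{ii}$ in \eqref{eq:ET} imposes no restriction. Hence $ET_n(L)=T_n(L)$, as already noted after \eqref{eq:ET}.

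Corollary~\ref{cor:inherentET} then says that $T_n(L)$ is inherently nonfinitely based for $n\ge3$ and finitely based for $n=2$. Equivalently, one can route through Theorem~\ref{thm:si-semirings}, which shows $T_n(L)$ is equationally equivalent to $T_n(\mathbb{B})$, and then quote \cite[Theorem 3.5]{LL11} for $n=2$, \cite[Theorem 4.1]{LL11} for $n=3$, and \cite[Theorem 2.1]{VG04} for $n\ge4$. Inherent nonfinite basedness transfers along this equivalence because it is a property of the generated variety.

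The only real work is the hypothesis check above, and none of it is a genuine obstacle.
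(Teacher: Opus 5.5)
Your proposal is correct and follows the paper's route: the paper obtains this corollary by observing that $ET_n(L)=T_n(L)$ and then specializing Corollary~\ref{cor:inherentET}. Your explicit check of the hypotheses (idempotent join, commutative meet, every element below the top, so every idempotent is small) is exactly the verification the paper leaves implicit.
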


We will say more about matrix monoids over lattices in the next subsection.

\subsubsection{Matrices over distributive lattices}

Semigroups of matrices over distributive lattices have been considered in the literature from various viewpoints; see, e.g., \cite{Give64,Cha70,Sko86,Tan02}. Their identities, however, do not appear to have been explored so far. In particular, the following easy observation does not seem to have been recorded previously.

\begin{prop}\label{prop:lattices}
For every bounded distributive lattice $L$, the monoid $M_n(L)$ is equationally equivalent to the monoid $M_n(\mathbb{B})$ of all $n\times n$ Boolean matrices.
\end{prop}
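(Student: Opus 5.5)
One inclusion is immediate. The entries $0,1$ of $L$ form a subsemiring isomorphic to $\mathbb{B}$, so $M_n(\mathbb{B})$ embeds into $M_n(L)$ and every identity of $M_n(L)$ holds in $M_n(\mathbb{B})$. For the converse, I would avoid any combinatorial description of the identities of $M_n(\mathbb{B})$ and use a representation argument instead: show that $M_n(L)$ belongs to the variety $\var M_n(\mathbb{B})$.

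The plan rests on the classical Birkhoff--Stone representation. Every bounded distributive lattice $L$ embeds, as a bounded lattice, into a power $\mathbb{B}^I$ of the two-element lattice. In semiring terms this embedding preserves $+=\vee$, $\cdot=\wedge$, $0$ and $1$. Concretely, one can take $I$ to be the set of prime filters of $L$ and map $a\mapsto(\chi_P(a))_{P\in I}$. Prime filters separate points by the prime ideal theorem, and each map $\chi_P\colon L\to\mathbb{B}$ is a bounded lattice morphism. Since $L$ is commutative and multiplicatively idempotent, these are exactly semiring morphisms $L\to\mathbb{B}$ sending $0\mapsto0$ and $1\mapsto1$.

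Next I would apply each $\chi_P$ entrywise. This gives a map $M_n(L)\to M_n(\mathbb{B})$, and it is a monoid morphism because matrix multiplication is computed purely by semiring operations: the $(i,j)$ entry of a product is a finite sum of products, and $\chi_P$ preserves both. Putting these morphisms together yields a monoid morphism $M_n(L)\to\prod_{P\in I}M_n(\mathbb{B})=M_n(\mathbb{B})^I$. It is injective because the prime filters separate the elements of $L$, hence they separate the entries of any two distinct matrices. Therefore $M_n(L)$ is isomorphic to a submonoid of a direct power of $M_n(\mathbb{B})$, and by closure of varieties under products and subsemigroups, $M_n(L)$ satisfies every identity of $M_n(\mathbb{B})$.

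The only step needing care is the existence of enough morphisms $L\to\mathbb{B}$ to separate points, which is the prime filter theorem for distributive lattices and may be cited as standard. If I wanted to avoid choice, I would reduce to finitely generated sublattices. A given substitution, together with the finitely many products involved in the two sides of an identity, uses only finitely many elements of $L$. These generate a finite distributive sublattice $L_0$ containing $0$ and $1$, and $\varphi$ takes values in $M_n(L_0)$. For finite $L_0$, join-irreducibles give the separating homomorphisms explicitly, and the argument above goes through with $L_0$ in place of $L$.
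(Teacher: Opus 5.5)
Your proof is correct and follows essentially the same route as the paper: embed $M_n(\mathbb{B})$ into $M_n(L)$ for one direction, and for the other apply point-separating lattice morphisms $L\to\mathbb{B}$ entrywise, so that $M_n(L)$ embeds into a direct power of $M_n(\mathbb{B})$. The paper phrases this as separating each pair of distinct matrices by a single such morphism, and it does not need your choice-free reduction to finite sublattices.
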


\begin{proof}
The matrices with entries 0 and 1 form a submonoid of $M_n(L)$ isomorphic to the monoid $M_n(\mathbb{B})$. Therefore, every identity of $M_n(L)$ holds in $M_n(\mathbb{B})$.

To prove the converse, we embed $M_n(L)$ into a direct power of $M_n(\mathbb{B})$. For this, it suffices to construct, for any two distinct matrices $a,b\in M_n(L)$, a semigroup morphism $M_n(L)\to M_n(\mathbb{B})$ whose images of $a$ and $b$ are distinct; see \cite[Section II.8]{BuSa81}. If $a=\bigl(\alpha_{ij}\bigr)$ and $b=\bigl(\beta_{ij}\bigr)$, then the inequality $a\ne b$ implies that $\alpha_{k\ell}\ne\beta_{k\ell}$ for some $k,\ell\in\{1,\dots,n\}$. Since every distributive lattice embeds into a direct power of $\mathbb{B}$, there exists a lattice morphism $\psi\colon L\to\mathbb{B}$ such that  $\psi(\alpha_{k\ell})\ne\psi(\beta_{k\ell})$. Lift $\psi$ to a map $\varphi\colon M_n(L)\to M_n(\mathbb{B})$ by letting $\varphi\bigl((\gamma_{ij})\bigr):=\bigl(\psi(\gamma_{ij})\bigr)$ for every matrix $\bigl(\gamma_{ij}\bigr)\in M_n(L)$. It is easy to verify that $\varphi$ is a semigroup morphism, and by construction, $\varphi(a)\ne\varphi(b)$. Therefore, every identity of $M_n(\mathbb{B})$ holds in $M_n(L)$.
\end{proof}

The following corollary, which we include for completeness, is now immediate.

\begin{cor}\label{cor:lattice}
For every bounded distributive lattice $L$, the monoid $M_n(L)$ is inherently nonfinitely based.
\end{cor}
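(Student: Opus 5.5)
The corollary follows from Proposition~\ref{prop:lattices} together with known facts about the Brandt monoid. By Proposition~\ref{prop:lattices}, $M_n(L)$ and $M_n(\mathbb{B})$ are equationally equivalent, so $\var M_n(L)=\var M_n(\mathbb{B})$. Since inherent nonfinite basedness is a property of the generated variety, it suffices to show that $M_n(\mathbb{B})$ is inherently nonfinitely based.

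This holds because the Brandt monoid $B_2^1$ embeds into $M_2(\mathbb{B})$. The six matrices in~\eqref{eq:Brandt} are Boolean, and they are closed under multiplication, since no sum $1+1$ ever arises. Moreover, $M_2(\mathbb{B})$ embeds into $M_n(\mathbb{B})$ as the upper-left corner block, with $1$'s placed on the remaining diagonal positions. Consequently
\[
\var B_2^1\subseteq\var M_n(\mathbb{B})=\var M_n(L).
\]
By \cite[Corollary 6.1]{Sap87a}, $B_2^1$ is inherently nonfinitely based.

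It remains to pass this property up to $\var M_n(L)$. By definition, being inherently nonfinitely based applies to locally finite varieties and is inherited by every locally finite variety containing an inherently nonfinitely based one. So we must check that $\var M_n(L)$ is locally finite. This follows because it equals $\var M_n(\mathbb{B})$, which is generated by a finite monoid and is therefore locally finite by Proposition~\ref{prop:birkhoff}.

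The only step needing care is this local finiteness check. The lattice $L$ may be infinite, so $M_n(L)$ itself need not be finite, and the argument must go through the equality of varieties rather than through $M_n(L)$ directly. The remaining steps are immediate.
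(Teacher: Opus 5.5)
Your proposal is correct and follows essentially the paper's route. It uses Proposition~\ref{prop:lattices} to identify $\var M_n(L)$ with $\var M_n(\mathbb{B})$, and then the fact (Comment~$^{1)}$ after Table~\ref{tab:plainstatus}) that $M_n(\mathbb{B})$ is inherently nonfinitely based via the embedding of $B_2^1$ and Proposition~\ref{prop:birkhoff}; you have just written out the local-finiteness step that the paper leaves implicit.
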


\begin{proof}
Recall that the monoid $M_n(\mathbb{B})$ of all $n\times n$ Boolean matrices is inherently nonfinitely based; see Comment $^{1)}$ after Table~\ref{tab:plainstatus}.
\end{proof}

\begin{remks}
1. The argument used in Proposition~\ref{prop:lattices} works equally well for the  submonoids $T_n(L)=ET_n(L)$, $U^0T_n(L)$, and $UT_n(L)$ of the monoid $M_n(L)$. Thus, we have a simple proof that the monoids $T_n(L)=ET_n(L)$ and $U^0T_n(L)$ are equationally equivalent to $T_n(\mathbb{B})=ET_n(\mathbb{B})=U^0T_n(\mathbb{B})$ while the monoid $UT_n(L)$ is equationally equivalent to $UT_n(\mathbb{B})$. Even though these equivalences are special cases of those established in Subsections~\ref{subsubsec:ai-semirings} and \ref{subsubsec:si-semirings} and respectively~\cite{JF19} by syntactic means, we believe that their direct derivation from basic principles is worth recording.

2. Proposition~\ref{prop:lattices} allows one to obtain easy, uniform, and calculation-free proofs for several facts about matrices over distributive lattices that were previously established via matrix computations.

For a typical instance, in~\cite{Give64}, one of the first papers in the area, Yehoshafat Give'on proved that for an arbitrary bounded distributive lattice $L$, the monoid $M_n(L)$ is locally finite~\cite[Theorem 1]{Give64}. By Proposition~\ref{prop:lattices}, $M_n(L)$ belongs to the finitely generated variety $\var M_n(\mathbb{B})$, and by Proposition~\ref{prop:birkhoff}, every finitely generated variety is locally finite. Hence the result.

Yet another structural property of the monoid $M_n(L)$ established in~\cite{Give64} (and later rediscovered in~\cite{Sko86}) is direct finiteness. Recall that a monoid is called \emph{directly finite} (or \emph{Dedekind finite}) if it satisfies the implication
\begin{equation}\label{eq:dedekind}
    ab=1 \to ba=1.
\end{equation}
By the proof of Proposition~\ref{prop:lattices}, $M_n(L)$ embeds into a direct power of the finite (and hence directly finite) monoid $M_n(\mathbb{B})$. Since implications are inherited by direct powers and submonoids, it follows that $M_n(L)$ satisfies \eqref{eq:dedekind}.
\end{remks}

\subsection{Open problems}
\label{subsec:plainopen}
Here we summarize the as yet unsettled instances of the Finite Basis Problem for monoids of triangular matrices over the semirings considered above.

For monoids of triangular matrices over a finite field, only the cases of $2\times 2$ and $3\times 3$ matrices remain open. It is known that for any finite field $F$, the monoid $T_3(F)$ is neither inherently nonfinitely based~\cite{VG03} nor hereditarily finitely based~\cite{ZLL13}. However, nothing we currently know excludes  the options for $T_3(F)$ to be strongly nonfinitely based or, oppositely, finitely based.

\begin{problem}
 Determine the status of the Finite Basis Problem for the monoid of all $3\times 3$ triangular matrices over a finite field.
\end{problem}

The monoid $T_2(\mathbb{F}_2)$ was shown to be finitely based in~\cite{ZLL12}, where an explicit finite basis for its identities was provided. It was subsequently proved to be hereditarily finitely based in~\cite{ZLL13}. Moreover, $T_2(\mathbb{F}_2)$ falls within the scope of the general positive solution obtained in~\cite{CHL16} for the Finite Basis Problem for monoids of the form $U^0T_2(F)$, where $F$ is an arbitrary field, since $T_2(\mathbb{F}_2)=U^0T_2(\mathbb{F}_2)$. For monoids of $2\times 2$ triangular matrices over finite fields with more than two elements, the question of their finite basedness remains open.

\begin{problem}
 Determine the status of the Finite Basis Problem for the monoid of all $2\times 2$ triangular matrices over a finite field with at least $3$ elements.
\end{problem}

In view of the aforementioned result of~\cite{CHL16} and Theorem~\ref{thm:strongU0T}, the case $n=3$ is the only one for which the Finite Basis Problem for monoids of the form $U^0T_n(F)$, where $F$ is a finite field, has not yet been settled.

\begin{problem}
 Determine the status of the Finite Basis Problem for the monoid of all $3\times 3$ triangular matrices with diagonal entries $0$ and $1$ over a finite field.
\end{problem}

 Monoids $U^0T_2(F)$, where $F$ is an infinite field, are finitely based, with explicit finite basis found in~\cite{CHL16}. Monoids $U^0T_3(F)$, where $F$ is a field of characteristic 0, is nonfinitely based~\cite{Vol15}.  That is all we currently know about the Finite Basis Problem for monoids of the form $U^0T_n(F)$, where $F$ is an infinite field.

\begin{problem}
Determine the status of the Finite Basis Problem for the monoid of all $n\times n$ triangular matrices with diagonal entries $0$ and $1$ over an infinite field for $n\ge 4$ and for $n=3$ if the field has prime characteristic.
\end{problem}

The monoid family $U^{\pm}T_n(F)$ is defined over any field $F$ of characteristic different from $2$. Theorem~\ref{thm:UpmTn} shows that the monoid $U^{\pm}T_n(F)$ is nonfinitely based when $F$ has characteristic~$0$, but for the group of units of $U^{\pm}T_n(F)$, the absence of finite basis of semigroup identities is established only for $n=2$ and $n=3$ (Proposition~\ref{prop:group3pm}).

\begin{problem}
Determine the status of the Finite Basis Problem for the group of all $n\times n$ triangular matrices with diagonal entries $\pm1$ a field of characteristic $0$ for $n\ge 4$.
\end{problem}

Theorem~\ref{thm:pm1} and Theorem~\ref{thm:UpmTn-prime}  imply that the monoid $U^{\pm}T_n(F)$ is inherently nonfinitely based if $n\ge 4$ and $F$ is a (finite or infinite) field of odd characteristic.

\begin{problem}
Determine the status of the Finite Basis Problem for the monoid of all $2\times 2$ and $3\times 3$ triangular matrices with diagonal entries $0$ and $\pm1$ over a field of odd characteristic.
\end{problem}

It is known (see \cite{Izh14,Okn15}) that for each $n$, the monoid $T_n(\mathbb{T})$ of $n\times n$ triangular tropical matrices satisfies nontrivial semigroup identities. One expects that $T_n(\mathbb{T})$ is nonfinitely based for all $n$ but so far this has been established only for $n=2$ \cite{CHLS16} and $n=3$ \cite{HZL21}.

\begin{problem}
Determine the status of the Finite Basis Problem for the monoid of all $n\times n$ triangular matrices over the tropical semiring for $n\ge 4$.
\end{problem}

The monoid family $ET_n(\mathbb{S})$ is defined over any semiring whose multiplicative idempotents commute, but our result on the Finite Basis Problem for $ET_n(\mathbb{S})$  (Corollary~\ref{cor:inherentET}) covers only the case where the idempotents in $\mathbb{S}$ are small.

\begin{problem}
Determine the status of the Finite Basis Problem for the monoid of all $n\times n$ triangular matrices with idempotent diagonal entries over additively idempotent semirings  whose multiplicative idempotents commute but are not necessarily small.
\end{problem}

\section{Laws involving multiplication and skew transposition}
\label{sec:unary}
\subsection{Twisted involutory semigroups}
\label{subsec:stability}

Recall that \sgps\ satisfying both $xy\approx yx$ and $x^2\approx x$ are called \emph{semilattices}. An involutory semigroup $(S,{}^*)$ whose reduct $S$ is a semilattice with an absorbing element 0 is said to be a \emph{twisted semilattice} if 0 is the only fixed point of the involution $x\mapsto x^*$. This class of involutory semigroups was first considered by Siemion Fajtlowicz~\cite{Faj72}. It is easy to see that the minimum nontrivial object in this class is the 3-element twisted semilattice $S\ell_3:=(\{e,f,0\}, {}^*)$ in which $e^2=e$, $f^2=f$ and all other products are equal to $0$, while the unary operation swaps $e$ and $f$ and fixes 0, i.e., $e^*:=f$, $f^*:=e$, and $0^*:=0$.

An involutory semigroup is called \emph{twisted} if the variety it generates contains $S\ell_3$. This notion is important in the context of the Finite Basis Problem because all gradations of infinite basedness considered in this paper persist under the addition of an involution whenever the resulting involutory semigroup is twisted.

\begin{lem}[Transfer Lemma]\label{lem:stability}
Let $(S,{}^*)$ be a twisted involutory semigroup.
\begin{itemize}
\item[$1.$] If the semigroup $S$ is nonfinitely based, then $(S,{}^*)$ is nonfinitely based.

\item[$2.$] If the involutory semigroup variety generated by $(S,{}^*)$ is finitely generated and the semigroup $S$ is strongly nonfinitely based, then $(S,{}^*)$ is strongly nonfinitely based.

\item[$3.$] The semigroup $S$ is inherently nonfinitely based if and only if $(S,{}^*)$ is inherently nonfinitely based.
\end{itemize}
\end{lem}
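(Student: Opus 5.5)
The plan rests on one syntactic device: translating involutory identities into plain semigroup identities and back, with $S\ell_3$ used to control the translation. I would proceed as follows.

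\emph{Consequence of twistedness.} Since $S\ell_3$ lies in the variety generated by $(S,{}^*)$, every involutory identity $\mathbf{u}\approx\mathbf{v}$ of $(S,{}^*)$ holds in $S\ell_3$. Substitute $x\mapsto e$ for one chosen variable $x$ and $1$ (adjoined if needed), or use the factor structure of $S\ell_3$. This shows that, for each variable $x$, the occurrences of $x$ and of $x^*$ behave compatibly on both sides. More precisely, if $\mathbf{u}$ contains both some $x$ and some $x^*$, so that its value is $0$ under $x\mapsto e$, then so does $\mathbf{v}$. Refining this with substitutions into $S\ell_3$ whose images lie in $\{e,f\}$, the identity must be \emph{balanced with respect to starring}. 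Concretely, replace each letter $x$ by $x$ or $x^*$ according to a sign assignment $\varepsilon\colon\mathcal{X}\to\{1,*\}$ making $\mathbf{u}$ star-free, so that $\mathbf{u}$ becomes a plain word. Then the same assignment makes $\mathbf{v}$ star-free as well, or else neither side can be made star-free.

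\emph{Translation.} This yields a translation $\mathbf{u}\approx\mathbf{v}\mapsto$ plain identities $\mathbf{u}^\varepsilon\approx\mathbf{v}^\varepsilon$, obtained by substituting $x\mapsto x$ or $x\mapsto x^*$ and then erasing stars. Here one works in the plain semigroup reduct, viewing $x^*$ as a new variable $\bar x$.

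\textbf{Item 1.} Suppose $(S,{}^*)$ had a finite basis $\Sigma$. Let $\Sigma'$ be the finite set of plain semigroup identities obtained from the identities of $\Sigma$ by the renaming $x^*\mapsto\bar x$. Every such identity holds in $S$, since $x$ and $\bar x$ may be interpreted independently. Now take a plain identity $\mathbf{p}\approx\mathbf{q}$ of $S$. It is an involutory consequence of $\Sigma$, so there is a derivation $\mathbf{p}=\mathbf{w}_0,\dots,\mathbf{w}_m=\mathbf{q}$ using substitutions of involutory words. The key point, which is the main obstacle, is to show that every intermediate word can be chosen star-free after a consistent renaming. For this, consider an elementary step $\mathbf{a}\sigma(\mathbf{s})\mathbf{b}\to\mathbf{a}\sigma(\mathbf{t})\mathbf{b}$. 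Applying the map ``erase stars'' (sending $x^*\mapsto x$) yields a step valid in $\Sigma'$ under the substitution $x\mapsto\sigma(x)$, $\bar x\mapsto$ (star-erased $\sigma(x)$ reversed). But reversal breaks the argument, so I would instead use the twisted-semilattice constraint. It implies that the words $\sigma(\mathbf{s})$ appearing in a derivation that starts from a star-free word consistent with $S\ell_3$ only ever involve $x$ and $x^*$ in ways forced by balance. Following the standard argument (as in Auinger--Dolinka--Volkov \cite{ADV12}), one passes to the star-free reduct via the map sending a word over $\mathcal{X}\cup\mathcal{X}^*$ to the plain word obtained by deleting starred letters' stars only when their variable does not also occur unstarred, which is legitimate by the $S\ell_3$ balance. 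I expect this bookkeeping to be the delicate part.

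\textbf{Item 2.} Suppose $(S,{}^*)$ lies in a finitely based, finitely generated involutory variety generated by a finite $(T,{}^*)$. Then $S$ lies in the variety generated by the reduct $T$, which is finitely generated. We may assume $(T,{}^*)$ is twisted, since it contains $S\ell_3$, and by Item 1 applied contrapositively (finite basedness transfers downward from $(T,{}^*)$ to $T$ by the same translation argument, now in the positive direction), $T$ is finitely based. This contradicts strong nonfinite basedness of $S$.

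\textbf{Item 3.} For the ``if'' direction, let $\mathbf{V}$ be a locally finite, finitely based semigroup variety containing $S$. Form the involutory variety of all $(T,{}^*)$ with $T\in\mathbf{V}$, and add the identities making it closed under reversal. It is locally finite, since a $k$-generated involutory semigroup is a $2k$-generated semigroup. It is finitely based, since the involution axioms together with $\mathbf{V}$'s identities and their duals suffice. This contradicts the assumption. For the ``only if'' direction, given a locally finite, finitely based involutory variety containing $(S,{}^*)$, its reducts generate a locally finite variety. That variety is finitely based via the Item 1 translation, using twistedness.
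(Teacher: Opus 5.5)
\textbf{Items 2 and 3.} These have the same architecture as the paper's proof. You argue by contraposition and apply Item 1 to a finite generator $(T,{}^*)$, resp.\ $(U,{}^*)$, which is twisted because its variety contains $S\ell_3$. For the `if' half of Item 3 you form the involutory variety defined by a finite basis of $\mathbf{V}$, which is the content of \cite[Lemma 2.1]{ADPV14}.

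\textbf{The gap is in Item 1.} The paper imports Item 1 from \cite[Theorem 4]{Lee17}, and your Items 2 and 3 both rest on it. Your set $\Sigma'$ need not consist of identities of $S$. An involutory identity says nothing about interpretations in which $x^*$ is not sent to $\varphi(x)^*$, so ``$x$ and $\bar x$ may be interpreted independently'' fails whenever a variable occurs in both forms. For example, $S\ell_3$ satisfies $xx^*\approx yy^*$ (both sides are always $0$), but $x\bar x\approx y\bar y$ fails in its reduct under $x,\bar x\mapsto e$, $y,\bar y\mapsto f$. The same identity refutes the literal reading of your preliminary claim that $\mathbf{v}$ contains both $x$ and $x^*$ whenever $\mathbf{u}$ does. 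Also, adjoining $1$ to $S\ell_3$ leaves its variety; only your final `balanced' formulation is correct. The derivation is then left as unresolved bookkeeping, and you never show that the star-erasing map sends steps to valid steps.

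\textbf{The missing idea:} twistedness makes the whole derivation plain.
\begin{itemize}
\item[(a)] Close $\Sigma$ under ${}^*$. Write a derivation of a plain identity $\mathbf{p}\approx\mathbf{q}$ of $S$ as elementary steps $\mathbf{a}\sigma(\mathbf{s})\mathbf{b}\to\mathbf{a}\sigma(\mathbf{t})\mathbf{b}$.
\item[(b)] Every intermediate word $\mathbf{w}$ satisfies $\mathbf{p}\approx\mathbf{w}$ in $(S,{}^*)$, hence in $S\ell_3$. Since $\mathbf{p}$ is star-free, so is $\mathbf{w}$.
\item[(c)] Hence no variable $y$ occurs in $\mathbf{s}$ both as $y$ and as $y^*$, because the nonempty words $\sigma(y)$ and $\sigma(y)^*$ cannot both be star-free. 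By $S\ell_3$ once more, $\mathbf{t}$ has the same content and the same star pattern as $\mathbf{s}$.
\item[(d)] Renaming $y^*\mapsto\bar y$ in such single-form identities gives genuine identities of $S$: given $\psi$, interpret $y$ as $\psi(\bar y)^*$.
\item[(e)] Each step becomes a plain step under $y\mapsto\sigma(y)$, $\bar y\mapsto\sigma(y)^*$, and both images are star-free.
\end{itemize}
Thus the renamed single-form members of $\Sigma\cup\Sigma^*$ form a finite basis of $S$.

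\textbf{Item 3 also omits local-finiteness checks.} The paper supplies these explicitly and notes they were missing in \cite{Lee17}.
\begin{itemize}
\item[(i)] In the `only if' half you must show $\var(S,{}^*)$ is locally finite: a member generated by $A$ is generated as a semigroup by $A\cup A^*$.
\item[(ii)] You must show $\var U$ is locally finite. This needs the HSP argument through finite involutory subsemigroups of direct powers of $(U,{}^*)$, not a bare assertion.
\item[(iii)] In the `if' half, the same HSP argument is needed to get $\var S$ locally finite from $\var(S,{}^*)$.
\end{itemize}
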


\begin{proof} \textbf{Claim 1} is~\cite[Theorem 4]{Lee17}; see also~\cite[Theorem 10.1]{Lee23a}.

\smallskip

\textbf{Claim 2}. We argue by contraposition and assume that the involutory semigroup $(S,{}^*)$ is not strongly nonfinitely based. Then $(S,{}^*)$ belongs to a finitely based involutory semigroup variety generated by a finite involutory semigroup $(T, {}^*)$. Since every identity of the semigroup  $T$ holds in the semigroup $S$, the latter belongs to the semigroup variety $\var T$. By Claim~1, the semigroup $T$ is finitely based. Hence the semigroup $S$ is not strongly nonfinitely based.

\smallskip

\textbf{Claim 3}. The `only if' part is~\cite[Theorem 16]{Lee17}, but the proof given there omitted some important details. Therefore, we provide a full proof here.

By definition, the fact that $S$ is inherently nonfinitely based means that the semigroup variety $\var S$ is inherently nonfinitely based. In particular, $\var S$ is locally finite. This implies that the involutory semigroup variety $\mathbf{V}$ generated by  $(S,{}^*)$ is also locally finite. Indeed, if $(T,{}^*)$ is an involutory semigroup in $\mathbf{V}$ and is generated as an involutory semigroup by a finite set $A$, then the semigroup $T$ belongs to the variety $\var S$ and is generated, as a plain semigroup, by the finite set $A\cup\{a^*\mid a\in A\}$. Therefore, $T$ is a finite set.

Now, if we assume that $\mathbf{V}$ is not inherently nonfinitely based, then $\mathbf{V}$ must be contained in some locally finite finitely based involutory semigroup variety $\mathbf{U}$. If $(U, {}^*)$ is an involutory semigroup generating $\mathbf{U}$, then the semigroup $U$ is finitely based by Claim~1. Since every semigroup identity of $U$ holds in the semigroup $S$, the latter belongs to the semigroup variety $\var U$. It remains to verify that the variety $\var U$ is locally finite, for this will yield a contradiction to the inherent nonfinite basedness of $S$.

Thus, let $Q\in\var U$ be a semigroup generated by a finite set $B$. By the HSP-theorem \cite[Theorem 11.9]{BuSa81}, there is an onto morphism $\varphi\colon R\twoheadrightarrow Q$, where $R$ is a subsemigroup of a direct power $\overline{U}$ of $U$. We may assume that the subsemigroup $R$ is generated by a finite set $C$ containing, for each $b\in B$, one element of $\varphi^{-1}(b)$. The involutory semigroup $(\overline{U},{}^*)$ belongs to the involutory semigroup variety $\mathbf{U}$ and therefore is locally finite. Hence the finite set $C$ generates a finite involutory subsemigroup of  $(\overline{U}, {}^*)$. Since this involutory subsemigroup contains $R$, it follows that $R$ is finite. Consequently, $Q=\varphi(R)$ is finite as well.

The `if' part holds even without the assumption that $(S,{}^*)$ is twisted, by \cite[Lemma 2.1]{ADPV14}. (In \cite{ADPV14}, this result is stated for finite $S$, but its proof does not use  finiteness.)
\end{proof}

\begin{remks}
1. In \cite[Theorem 3.1]{ADPV14}, it was shown that a \textbf{finite} twisted involutory semigroup is inherently nonfinitely based whenever its semigroup reduct is inherently nonfinitely based. The above proof of Lemma~\ref{lem:stability}{\red.3} demonstrates that the finiteness condition is not required.

\smallskip

2. The `twistedness' condition in  Lemma~\ref{lem:stability} is essential. An involutory semigroup that is not twisted can be finitely based even when its semigroup reduct is nonfinitely based. A classical example (due to John Isbell~\cite{Isb70}) is the one-relator monoid $\langle a,b\mid ab^2a=1\rangle$, which is in fact a group. Viewed as an involutory semigroup with group inversion as the unary operation, the group is finitely based; indeed, the single law $x^2y^2\approx y^2x^2$ forms an identity basis for it. On the other hand, the semigroup reduct of this group is nonfinitely based.

Yet another example is of interest because of its small size (which is in fact the minimum possible) and---in the context of the present paper---because it is a semigroup of triangular matrices. Consider the following six matrices from $U^0T_3(\mathbb{B})$:
\[
\begin{tabular}{cccccc}
$\left(\begin{matrix} 0&0&0\\0&0&0\\0&0&0 \end{matrix}\right)$,
&
$\left(\begin{matrix} 0&1&0\\0&1&0\\0&0&0 \end{matrix}\right)$,
&
$\left(\begin{matrix} 1&0&0\\0&0&1\\0&0&1 \end{matrix}\right)$,
&
$\left(\begin{matrix} 0&0&1\\0&0&1\\0&0&0 \end{matrix}\right)$,
&
$\left(\begin{matrix} 0&1&0\\0&0&0\\0&0&0 \end{matrix}\right)$,
&
$\left(\begin{matrix} 0&0&1\\0&0&0\\0&0&0 \end{matrix}\right)$.
\\
\rule{0cm}{.3cm}$O$&$e$&$f$&$ef$&$fe$&$fef$
\end{tabular}
\]
They form a subsemigroup of $U^0T_3(\mathbb{B})$ denoted by $L_3$. The 6-element semigroup $L_3$ is nonfinitely based~\cite{Lee12,ZL11}, being one of the four nonfinitely based semigroups of minimum possible size \cite{LZ15}. It admits a unique involution, which interchanges the matrices $ef$ and $fe$ and fixes all the other matrices of $L_3$. The corresponding involution semigroup is finitely based~\cite{Lee16}.

\smallskip

3. The reader may have noticed that the statement of Claim~2 has a form different from those of Claim~1 and the `only if' part of Claim~3. In fact, we do not know whether the implication
\begin{center}
\emph{$S$ is strongly nonfinitely based and $(S,{}^*)$ is twisted $\to$ $(S,{}^*)$ is strongly nonfinitely based}
\end{center}
holds. The difficulty is that it is by no means clear whether $(S,{}^*)$ generates a finitely generated variety if $S$ does. Note that, in general, if a semigroup $S$ is equationally equivalent to its finite subsemigroup $T$ and has an involution that leaves $T$ invariant, the involutory semigroup $(S,{}^*)$ need not be equationally equivalent to $(T, {}^*)$. For instance, the monoid $UT_n(\mathbb{T})$ is equationally equivalent to its submonoid $UT_n(\mathbb{B})$ by \cite[Corollary 3.4]{JF19}, whereas the involutory monoids  $(UT_n(\mathbb{T}),{}^S)$ and $(UT_n(\mathbb{B}),{}^S)$ are not  equationally equivalent~\cite[Theorem 5.2]{HZL21}.

\smallskip

4. No analogous `stability' result holds for finite basedness: if $(S,{}^*)$ is a twisted involutory semigroup and the semigroup $S$ is finitely based, $(S,{}^*)$ may nevertheless be nonfinitely based. Many instances of such `unstable' behavior can be found in the literature. From the perspective of the present paper, it is of interest that the smallest possible example is a monoid of triangular matrices considered as an involutory monoid under skew transposition.

The following five matrices from $T_2(\mathbb{B})$:
\begin{equation}\label{eq:A01}
\begin{pmatrix} 0 & 0\\ 0 & 0\end{pmatrix},\
\begin{pmatrix} 0 & 1\\ 0 & 0\end{pmatrix},\
\begin{pmatrix} 1 & 1\\ 0 & 0\end{pmatrix},\
\begin{pmatrix} 0 & 1\\ 0 & 1\end{pmatrix},\
\begin{pmatrix} 1 & 0\\ 0 & 1\end{pmatrix},
\end{equation}
form a submonoid of $T_2(\mathbb{B})$, which is usually denoted by $A_0^1$ in the literature. That this monoid is finitely based was shown by Charles Edmunds~\cite{Edm77}, and Edmond Lee~\cite{Lee08} proved that it is even hereditarily finitely based. Since the set $A_0^1$ is closed under skew transposition, one can consider the involutory monoid $(A_0^1, {}^S)$. The first four matrices in \eqref{eq:A01} form an involutory subsemigroup, and the map
\[
\begin{pmatrix} 0 & 0\\ 0 & 0\end{pmatrix}\mapsto 0,\ \
\begin{pmatrix} 0 & 1\\ 0 & 0\end{pmatrix}\mapsto 0,\ \
\begin{pmatrix} 1 & 1\\ 0 & 0\end{pmatrix}\mapsto e,\ \
\begin{pmatrix} 0 & 1\\ 0 & 1\end{pmatrix}\mapsto f
\]
is readily seen to be an involutory semigroup morphism from this subsemigroup onto $S\ell_3$. Hence, $S\ell_3$ belongs to the variety generated by $(A_0^1, {}^S)$, and the latter monoid is twisted. However, the involutory monoid $(A_0^1, {}^S)$ is nonfinitely based, as shown in~\cite{GZL202}. It has been recently established that all involution semigroups of order less than five are finitely based \cite{GLLZ26}; therefore $(A_0^1, {}^S)$ is of minimum possible size among nonfinitely based involutory semigroups whose semigroup reducts are finitely based.
\end{remks}

We are going to employ the Transfer Lemma in our study of the Finite Basis Problem for monoids of triangular matrices equipped with the skew transposition. To do so, we need to determine which of these monoids is twisted. We recall the convention stated in Section~\ref{subsec:matrices} that the term `semiring' denotes a structure  $\mathbb{S}=(S,+,\cdot)$ satisfying (S1)--(S5).

\begin{lem}\label{lem:twisted1}
For every semiring $\mathbb{S}$, the involutory monoids $(U^0T_n(\mathbb{S}),{}^S)$ and $(T_n(\mathbb{S}),{}^S)$ are twisted.
\end{lem}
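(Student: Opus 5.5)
Since $U^0T_n(\mathbb{S})$ is an involutory submonoid of $(T_n(\mathbb{S}),{}^S)$, it suffices to show that $S\ell_3$ lies in the variety generated by $(U^0T_n(\mathbb{S}),{}^S)$. Then it also lies in the larger variety generated by $(T_n(\mathbb{S}),{}^S)$. The plan mimics Remark~4 after the Transfer Lemma: find, inside $U^0T_n(\mathbb{S})$, a subsemigroup closed under skew transposition together with an involutory morphism from it onto $S\ell_3$.

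For $n=2$, take the four matrices
\[
\begin{pmatrix} 0 & 0\\ 0 & 0\end{pmatrix},\
\begin{pmatrix} 0 & 1\\ 0 & 0\end{pmatrix},\
\begin{pmatrix} 1 & 1\\ 0 & 0\end{pmatrix},\
\begin{pmatrix} 0 & 1\\ 0 & 1\end{pmatrix}.
\]
Their products only involve the sums $0+0$, $0+1$, and products of $0$s and $1$s. So their multiplication table is the same as over $\mathbb{B}$, and the value of $1+1$ is never needed. They are closed under multiplication, and skew transposition swaps the last two while fixing the first two. The map of Remark~4 sends the first two to $0$, the third to $e$ and the fourth to $f$; it is a morphism onto $S\ell_3$ that respects the involution. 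The checks are $e^2=e$, $f^2=f$, $ef$ being the strictly upper matrix (image $0$), $fe$ being the zero matrix, and the nil part forming an ideal.

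For general $n$, the natural embedding into the corner is not closed under skew transposition, so I would instead embed symmetrically with respect to the secondary diagonal. Use the idempotents $E_{11}+E_{1n}$ and $E_{1n}+E_{nn}$, together with $E_{1n}$ and $O$, where $E_{ij}$ denotes a matrix unit. These lie in $U^0T_n(\mathbb{S})$, and skew transposition maps $E_{ij}$ to $E_{n+1-j,\,n+1-i}$. So $E_{1n}$ is fixed and the two idempotents are swapped. The products are computed exactly as for $n=2$, since only rows and columns $1$ and $n$ are involved. Hence the same four-element involutory subsemigroup appears for every $n$, and the same morphism onto $S\ell_3$ works.

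The only step needing care is checking that the map is an involutory morphism. This requires the kernel class $\{O,E_{1n}\}$ to be an ideal of the four-element subsemigroup, and $ef$ and $fe$ to fall into it. This is a finite table check, and it is uniform in $\mathbb{S}$ because only the constants $0$ and $1$ occur.
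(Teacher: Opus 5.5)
There is a concrete error in your multiplication table. You claim that only the sums $0+0$ and $0+1$ arise, but for $n=2$
\[
\begin{pmatrix} 1 & 1\\ 0 & 0\end{pmatrix}\begin{pmatrix} 0 & 1\\ 0 & 1\end{pmatrix}=\begin{pmatrix} 0 & 1+1\\ 0 & 0\end{pmatrix}.
\]
In general $(E_{11}+E_{1n})(E_{1n}+E_{nn})=(1+1)E_{1n}$, because the $(1,n)$ entry is $1\cdot1+1\cdot1$. Over $\mathbb{B}$, where Remark~4 is set, this equals $E_{1n}$, and over $\mathbb{F}_2$ it equals $O$. Over $\mathbb{Z}$, a field of characteristic $0$, or $\mathbb{F}_3$, however, it is $2E_{1n}$, which is not one of your four matrices. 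So your four-element set is not closed under multiplication, and your map is not defined on the product $ef$. Your claim that the check is ``uniform in $\mathbb{S}$ because only $0$ and $1$ occur'' fails for precisely the rings that matter most in this paper.

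The construction can be repaired. Take
\[
T:=\{E_{11}+E_{1n},\,E_{1n}+E_{nn}\}\cup I,\qquad I:=\{O,\,E_{1n},\,(1+1)E_{1n}\}.
\]
Left multiplication by $E_{11}+E_{1n}$ and right multiplication by $E_{1n}+E_{nn}$ fix each $cE_{1n}$, and all other products involving $I$ are $O$. Hence $I$ is an ideal of $T$, and it is fixed pointwise by ${}^S$. The Rees quotient $T/I$ is then an involutory semigroup isomorphic to $S\ell_3$.

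The paper's argument is much shorter: simply drop the $E_{1n}$ summands. The matrices $E_{11}$ and $E_{nn}$ are idempotents of $U^0T_n(\mathbb{S})$, and skew transposition swaps them. Since $n\ge 2$, we have $E_{11}E_{nn}=E_{nn}E_{11}=O$. Thus $\{E_{11},E_{nn},O\}$ is already an involutory subsemigroup isomorphic to $S\ell_3$. No quotient is needed, and no sum of two nonzero terms ever arises, so the argument really is uniform in $\mathbb{S}$.
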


\begin{proof}
The $n\times n$ matrix with the (1,1) entry 1 and all other entries 0, its skew transpose, and
the $n\times n$ zero matrix constitute an involutory subsemigroup in the involutory monoids $(U^0T_n(\mathbb{S}),{}^S)$ and $(T_n(\mathbb{S}),{}^S)$, and this involutory subsemigroup is isomorphic to $S\ell_3$.
\end{proof}

\begin{remks}
1. Obviously, the argument in the proof of Lemma~\ref{lem:twisted1} applies equally to   $(M_n(\mathbb{S}),{}^S)$, and hence this involutory monoid is twisted for every semiring $\mathbb{S}$. We mention in passing that, in contrast, the monoid $M_n(\mathbb{S})$  equipped with the usual transposition need not be twisted. For instance, if $F$ is a finite field such that $|F|\equiv 3\pmod{4}$, the involutory monoid $(M_2(F),{}^T)$ is not twisted. Indeed, this follows from Lemma~\ref{lem:stability}{\red.3}, since the monoid $M_2(F)$ is inherently nonfinitely based \cite[Corollary 6.2]{Sap87a}, whereas the involutory monoid $(M_2(F),{}^T)$ is not \cite[Theorem 3.11.2]{ADV12}.

2. If multiplicative idempotents of a semiring $\mathbb{S}$ commute, the involutory monoid $(ET_n(\mathbb{S}),{}^S)$ is also twisted by the same argument. Recall that the monoid $ET_n(\mathbb{S})$ was defined in \eqref{eq:ET}.

3. For any field $F$ of odd characteristic, the involutory monoid $(U^{\pm}T_n(F),{}^S)$ contains $(U^0T_n(F),{}^S)$ as an involutory submonoid, and hence is twisted by Lemma \ref{lem:twisted1}. Recall that the monoid $U^{\pm}T_n(F)$ was defined in \eqref{eq:UpmTfull}.
\end{remks}

The (more involved) case of the involutory monoid $(UT_n(\mathbb{S}),{}^S)$ is handled in the next lemma.

\begin{lem}\label{lem:twisted2}
\emph{1}. The involutory monoid $(UT_n(\mathbb{S}),{}^S)$ is twisted if $n\ge3$ and\/ $\mathbb{S}$ is either an additively idempotent semiring, a field of characteristic\/ $0$ or the ring\/ $\mathbb{Z}$.

\emph{2}. The involutory monoid $(UT_n(\mathbb{S}),{}^S)$ is not twisted if either $n=2$ or\/  $\mathbb{S}$ is a field of finite characteristic.
\end{lem}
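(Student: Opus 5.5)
For Part 1 we must show that $S\ell_3$ belongs to the involutory semigroup variety generated by $(UT_n(\mathbb{S}),{}^S)$. We will exhibit an involutory subsemigroup of $(UT_n(\mathbb{S}),{}^S)$ that maps onto $S\ell_3$. Let $E_{k\ell}$ denote the matrix unit. Take
\[
a:=E+E_{12},\qquad a^S=E+E_{n-1\,n}.
\]
Since $n\ge3$, the positions $(1,2)$ and $(n-1,n)$ are distinct. Let $M$ be the involutory subsemigroup generated by $a$, that is, the subsemigroup generated by $a$ and $a^S$. Split $M$ into three parts:
\begin{itemize}
\item $P$, the set of powers of $a$;
\item $P^S$, the set of powers of $a^S$;
\item $J$, the set of products involving both $a$ and $a^S$.
\end{itemize}
Define $\psi\colon M\to S\ell_3$ by sending $P$ to $e$, $P^S$ to $f$, and $J$ to $0$.

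The key point is that $\psi$ is well defined, i.e.\ that the three parts are disjoint. For this, use the expansion of matrix products into sums over walks, as in the proof of Lemma~\ref{lem:ijexpansion}. Every entry of a product of copies of $a$ and $a^S$ is a sum of products of $0$s and $1$s, so it equals either $0$ or a sum of $1$s taken some positive number of times. Such a sum is nonzero in each of the three cases:
\begin{itemize}
\item if $\mathbb{S}$ is additively idempotent, such a sum equals $1$;
\item if $\mathbb{S}$ is $\mathbb{Z}$ or a field of characteristic $0$, such a sum is a positive integer.
\end{itemize}
Hence the support of a product is determined purely combinatorially. Powers of $a$ have $(n-1,n)$ entry $0$, powers of $a^S$ have $(1,2)$ entry $0$, and every element of $J$ has both entries nonzero. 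So $P$, $P^S$, $J$ are pairwise disjoint.

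It remains to check that $\psi$ is a morphism of involutory semigroups. The set $J$ is an ideal of $M$ and is closed under ${}^S$. Products inside $P$ stay in $P$, matching $e^2=e$, and likewise for $P^S$ and $f^2=f$. Mixed products land in $J$, matching $ef=fe=0$. Finally, ${}^S$ swaps $P$ and $P^S$, matching $e^*=f$. Thus $S\ell_3$ is a morphic image of an involutory subsemigroup, hence lies in the variety, which proves Part 1. I expect the only delicate point to be this no-cancellation argument. It is exactly where characteristic $0$ or additive idempotency is used; in characteristic $p$ the power $a^p=E$ would collapse $P$ into the identity.

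For Part 2 we exhibit an identity of $(UT_n(\mathbb{S}),{}^S)$ that fails in $S\ell_3$.
\begin{itemize}
\item If $n=2$, every matrix $E+\alpha E_{12}$ is fixed by skew transposition. So $(UT_2(\mathbb{S}),{}^S)$ satisfies $x^*\approx x$, whereas $e^*=f\ne e$ in $S\ell_3$.
\item If $\mathbb{S}$ is a field of characteristic $p$, then $UT_n(\mathbb{S})$ is a group of exponent $q:=p^{\lceil\log_p n\rceil}$, as in Comment $^{3)}$ after Table~\ref{tab:plainstatus}. So it satisfies $x^qy\approx y$, whereas in $S\ell_3$ we get $e^qf=ef=0\ne f$.
\end{itemize}
In either case $S\ell_3$ is not in the variety, so the involutory monoid is not twisted.
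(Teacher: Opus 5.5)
Your proof is correct. Part~2 is the paper's argument: the identity $x^*\approx x$ for $n=2$, and $x^{q}y\approx y$ in finite characteristic. Part~1 takes a genuinely different route.

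For $\mathbb{Z}$ and fields of characteristic~$0$, the paper uses three large sets of unitriangular matrices with nonnegative integer entries. These are $H_{12}$, $H_{n-1\,n}$ and $Z$: all strictly upper entries are positive, except for a forced zero at $(1,2)$, at $(n-1,n)$, or nowhere, respectively. Nonnegativity of the entries is what shows that these sets multiply like $e$, $f$, $0$. For additively idempotent semirings, the paper simply says that the argument of \cite[Lemma 3.1]{HZL21} for $\mathbb{T}$ carries over verbatim. That argument relies on the same kind of positivity, namely that no cancellation occurs among nonzero entries.

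You instead take the involutory subsemigroup generated by the single matrix $E+E_{12}$, and you look only at the two positions $(1,2)$ and $(n-1,n)$. In a product of unitriangular matrices, each superdiagonal entry $(i,i+1)$ is just the sum of the corresponding entries of the factors. So for a word in $a$ and $a^S$, these two entries are $k\cdot 1$ and $\ell\cdot 1$, where $k$ and $\ell$ count the occurrences of $a$ and $a^S$. This makes the well-definedness of $\psi$ immediate. It also treats both cases of Part~1 uniformly, with no appeal to \cite{HZL21}.

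Your argument also shows that the only property of $\mathbb{S}$ needed is $k\cdot 1\ne 0$ for all $k\ge 1$, that is, $0\notin R$ in the notation of Proposition~\ref{prop:Tn(S)summary}. So it proves that $(UT_n(\mathbb{S}),{}^S)$ with $n\ge 3$ is twisted for every such semiring. Conversely, if $k\cdot 1=0$ for some $k$, then $\mathbb{S}$ is a ring of finite characteristic, and $UT_n(\mathbb{S})$ is a group of finite exponent. Combined with the Part~2 argument, your approach therefore essentially gives a complete dichotomy for $n\ge3$.

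The paper's version has the merit of matching the existing tropical template, so the additively idempotent case can be settled by citation. But it depends on an order, or on zero-sum-freeness, of the ground semiring, which your argument does not need.
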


\begin{proof}
\textbf{Claim 1} was proved in \cite[Lemma 3.1]{HZL21} for the tropical semiring $\mathbb{T}$. The argument given there works verbatim for an arbitrary additively idempotent semiring.

For the proof of the remaining part of the claim, it suffices to show that for $n\ge3$, the involutory monoid $(UT_n(\mathbb{Z}),{}^S)$ is twisted. Indeed, every field $F$ of characteristic $0$ contains $\mathbb{Z}$ as a subring, and hence $(UT_n(F),{}^S)$ contains $(UT_n(\mathbb{Z}),{}^S)$ as an involutory submonoid.

Consider in $UT_n(\mathbb{Z})$ the following three sets of unitriangular matrices with nonnegative entries:
\begin{align*}
H_{12}&:=\left\{\bigl(\alpha_{ij}\bigr)_{n\times n}\in UT_n(\mathbb{Z})\;\middle|\;  \alpha_{12}=0,\ \alpha_{ij}>0\ \text{if}\ 1\le i<j\le n\ \text{and}\ (i,j)\ne(1,2)  \right\},\\
H_{n-1\,n}&:=\left\{\bigl(\alpha_{ij}\bigr)_{n\times n}\in UT_n(\mathbb{Z})\;\middle|\;  \alpha_{n-1\,n}=0,\ \alpha_{ij}>0\ \text{if}\ 1\le i<j\le n\ \text{and}\ (i,j)\ne(n-1,n)  \right\},\\
Z&:=\left\{\bigl(\alpha_{ij}\bigr)_{n\times n}\in UT_n(\mathbb{Z})\;\middle|\;  \alpha_{ij}>0\ \text{for all}\ 1\le i<j\le n\right\}.
\end{align*}
Since $n\ge3$, $(1,2)\ne (n-1,n)$ whence the three sets are disjoint. Clearly, $A^S\in H_{n-1\,n}$ for every matrix $A\in H_{12}$ and $B^S\in H_{12}$ for every matrix $B\in H_{n-1\,n}$, whereas $C^S\in Z$ for every matrix $C\in Z$. It is easy to compute that $AB,BA,AC,CA,BC,CB\in Z$ for all $A\in H_{12}$,  $B\in H_{n-1\,n}$, and $C\in Z$. Besides that, each of the sets $H_{12},H_{n-1\,n},Z$ is readily seen to be closed under matrix multiplication. Therefore, $(H_{12}\cup H_{n-1\,n}\cup Z, {}^S)$ is an involutory subsemigroup in $(UT_n(\mathbb{Z}),{}^S)$, and the map $H_{12}\cup H_{n-1\,n}\cup Z\to S\ell_3$ that sends matrices in $H_{12}$, $H_{n-1\,n}$, and $Z$ to $e$, $f$, and 0, respectively, is an onto morphism of involutory semigroups. Therefore $S\ell_3$ belongs to the variety generated by $(UT_n(\mathbb{Z}),{}^S)$.

 \smallskip

\textbf{Claim 2}. Since $A^S=A$ for all matrix $A\in UT_2(\mathbb{S})$, the involutory monoid $(UT_2(\mathbb{S}),{}^S)$ satisfies the identity $x^*\approx x$. This identity fails in $S\ell_3$, and therefore $S\ell_3$ does not belong to the variety generated by $(UT_2(\mathbb{S}),{}^S)$.

For every field $F$ of finite characteristic $p$, the monoid $UT_n(F)$ is a group of exponent $p^{\lceil\log_p n \rceil}$ and therefore satisfies the identity $x^{p^{\lceil\log_p n \rceil}}y\approx y$. This identity fails in every nontrivial semilattice, in particular, in $S\ell_3$. Hence $S\ell_3$ does not belong to the variety generated by the involutory monoid $(UT_n(F),{}^S)$.
\end{proof}

\subsection{Summary of known results and our contribution}
As in Section~\ref{subsec:summary}, we provide an overview of the known results and some of our present findings (highlighted in blue) in the form of a table. Table~\ref{tab:involutorystatus} is organized in the same way as Table~\ref{tab:plainstatus}: its rows are labeled by the underlying semirings $\mathbb{S}$ while the columns correspond to the involutory matrix monoids $(M_n(\mathbb{S}), {}^S)$,  $(T_n(\mathbb{S}), {}^S)$, $(U^0T_n(\mathbb{S}), {}^S)$, and $(UT_n(\mathbb{S}), {}^S)$. Cells in a row labeled by a semiring are merged whenever the corresponding matrix monoids over this semiring coincide.

It should be clear that, in view of the Lemma~\ref{lem:twisted1}, each nonfinite-basedness result in the columns of Table~\ref{tab:plainstatus} corresponding to the monoids $M_n(\mathbb{S})$, $T_n(\mathbb{S})$, and $U^0T_n(\mathbb{S})$ implies an analogous result for the involutory matrix monoids $(M_n(\mathbb{S}),{}^S)$, $(T_n(\mathbb{S}),{}^S)$, and $(U^0T_n(\mathbb{S}),{}^S)$, respectively, by the Transfer Lemma. By Lemma~\ref{lem:twisted2}{\red.1}, the same transfer applies to the nonfinite-basedness results in the column corresponding to the monoid $UT_n(\mathbb{S})$ whenever $n\ge 3$ and $\mathbb{S}$ is either an additively idempotent semiring, a field of characteristic $0$, or the ring $\mathbb{Z}$. Such `induced' results are highlighted in pink. Some of them have already appeared in the literature, while some, to the best of our knowledge, have not. We overview the consequences of the Transfer Lemma in Subsection~\ref{subsec:transfer}; see Proposition~\hyperref[prop:multiprop]{3.4.i} and Table~\ref{tab:transfer}.

As in Table~\ref{tab:plainstatus}, some cells in Table~\ref{tab:involutorystatus} are empty or incomplete in the sense that the results gathered in a cell do not cover all involutory monoids corresponding to it. This indicates that no solution to the Finite Basis Problem for the monoids not mentioned therein is currently known to us. An explicit list of open problems is given in Section~\ref{subsec:involopen}.

\begin{landscape}
\newcommand{\circled}[1]{%
  \tikz[baseline=(char.base)]{
    \node[shape=circle,draw,inner sep=1pt] (char) {#1};}}
\begin{table}[p]
\caption{Current status of the Finite Basis Problem for the involutory monoids $(M_n(\mathbb{S}), {}^S)$,  $(T_n(\mathbb{S}), {}^S)$, $(U^0T_n(\mathbb{S}), {}^S)$, and $(UT_n(\mathbb{S}), {}^S)$ Acronyms: FB = finitely based; HFB = hereditarily finitely based; NFB = nonfinitely based; SNFB = strongly nonfinitely based; INFB = inherently nonfinitely based. Footnote marks 1)--4) point to comments on the next page; marks of the form \protect\circled{i} refer to Proposition~\hyperref[prop:multiprop]{3.4.i}, i = 1,\dots,14.}
    \label{tab:involutorystatus}
 {\centering
    \begin{tabular}{|c|c|c|c|c|}
   \hline
   \rule[-6pt]{0pt}{18pt} Semiring $\mathbb{S}$ & $(M_n(\mathbb{S}), {}^S)$ & $(T_n(\mathbb{S}), {}^S)$ & $(UT^0_n(\mathbb{S}), {}^S)$ & $(UT_n(\mathbb{S}), {}^S)$, $n\ge3^{1)}$\\
   \hline
    \rule[-2pt]{0pt}{16pt} Boolean semiring $\mathbb{B}$  &  \tikz[baseline]{%
\node[fill=pink!30, rounded corners]{INFB}}$^{\circled{\scriptsize 1}}$ & \multicolumn{2}{c|}{\tikz[baseline]{%
\node[fill=pink!30, rounded corners]{INFB for $n\ge3$}}$^{\circled{\scriptsize 2}}$}  & \tikz[baseline]{%
\node[fill=pink!30, rounded corners]{SNFB for $n\ge5$}}$^{\circled{\scriptsize 3}}$\\
    \rule[-6pt]{0pt}{18pt}   &  & \multicolumn{2}{c|}{NFB for $n=2$  \cite{GZL20}} & NFB for $n=3,4$ \cite{ZLW20}\\
    \hline
    \rule[-8pt]{0pt}{20pt} 2-element field $\mathbb{F}_2$ & \tikz[baseline]{%
\node[fill=pink!30, rounded corners]{INFB}}$^{\circled{\scriptsize 4}}$ & \multicolumn{2}{c|}{\tikz[baseline]{%
\node[fill=pink!30, rounded corners]{SNFB for $n\ge4$}}$^{\circled{\scriptsize 5}}$\tikz[baseline]{%
\node[rounded corners]{NFB for $n=2$ \cite{ZL20}}}} & HFB$^{3)}$\\
    \hline
    \rule[-2pt]{0pt}{16pt} Finite field &  \tikz[baseline]{%
\node[fill=pink!30, rounded corners]{INFB}}$^{\circled{\scriptsize 4}}$  & \tikz[baseline]{%
\node[fill=pink!30, rounded corners]{INFB for $n\ge4$}}$^{\circled{\scriptsize 6}}$  &\tikz[baseline]{%
\node[fill=pink!30, rounded corners]{SNFB for $n\ge4$}}$^{\circled{\scriptsize 7}}$ & HFB$^{3)}$ \\
    \rule[-8pt]{0pt}{12pt} with $\ge3$ elements &  &  NFB for $n=2$ \cite{ZL20} &  NFB for $n=2$ \cite{ZL20} & \\
    \hline
    \rule[-2pt]{0pt}{16pt}  Infinite field of &  FB$^{2)}$  &  FB for $n\ge 3$ &  NFB for $n=2$ \cite{ZL20} &  HFB$^{3)}$ \\
    \rule[-4pt]{0pt}{14pt} finite characteristic & &  ({\blue Proposition~\ref{prop:3x3infinitefield}}) & & \\
    \rule[-8pt]{0pt}{16pt} && NFB for $n=2$  \cite{ZL20} &&\\
    \hline
    \rule[-2pt]{0pt}{16pt} $\mathbb{Z}$ or field of &  FB$^{2)}$  &  FB for $n\ge 3$  & \tikz[baseline]{%
\node[fill=pink!30, rounded corners]{NFB for $n=3$}}$^{\circled{\scriptsize 8}}$  & \tikz[baseline]{%
\node[fill=pink!30, rounded corners]{NFB for $n\ge 3$}}$^{\circled{\scriptsize 9}}$ \\[-4pt]
    \raisebox{12pt}{characteristic 0} & &  \raisebox{10pt}{({\blue Proposition~\ref{prop:3x3infinitefield}})}  &  \raisebox{6pt}{NFB for $n=2$ \cite{ZJL17}} & \\[-8pt]
    \rule[-8pt]{0pt}{16pt} && NFB for $n=2$ \cite{ZJL17} &  &\\
    \hline
    \rule[-2pt]{0pt}{16pt} Tropical semiring $\mathbb{T}$ & &\tikz[baseline]{%
\node[fill=pink!30, rounded corners]{NFB for $n=2,3$}}$^{\circled{\scriptsize 10}}$ & \tikz[baseline]{%
\node[fill=pink!30, rounded corners]{INFB for $n\ge3$}}$^{\circled{\scriptsize 11}}$ & \tikz[baseline]{%
\node[fill=pink!30, rounded corners]{NFB for $n\ge5$}}$^{\circled{\scriptsize 12}}$ \\
     &  & & NFB for $n=2$ \cite{HZL21} & NFB for $n=3,4$ \cite{HZL21}\\
    \hline
    \rule{0pt}{16pt} Bounded & \tikz[baseline]{%
\node[fill=pink!30, rounded corners]{INFB}}$^{\circled{\scriptsize 13}}$ & \tikz[baseline]{%
\node[fill=pink!30, rounded corners]{INFB for $n\ge3$}}$^{\circled{\scriptsize 14}}$ &\tikz[baseline]{%
\node[fill=pink!30, rounded corners]{INFB for $n\ge3$}}$^{\circled{\scriptsize 11}}$ & \tikz[baseline]{%
\node[fill=pink!30, rounded corners]{NFB for $n\ge5$}}$^{\circled{\scriptsize 12}}$  \\
    \rule[-4pt]{0pt}{14pt} distributive lattice &  &  NFB for $n=2$ & NFB for $n=2$ \cite{HZL21} & NFB for $n=3,4$ \cite{HZL21}\\
    \rule[-8pt]{0pt}{18pt}  & & ({\blue Corollary \ref{cor:BLT}}) & &\\
    \hline
    \rule{0pt}{16pt} Semiring with & & &  \tikz[baseline]{%
\node[fill=pink!30, rounded corners]{INFB for $n\ge3$}}$^{\circled{\scriptsize 11}}$ & \tikz[baseline]{%
\node[fill=pink!30, rounded corners]{NFB for $n\ge5$}}$^{\circled{\scriptsize 12}}$ \\
    \rule[-8pt]{0pt}{18pt} idempotent addition & & & NFB for $n=2$ \cite{HZL21}$^{4)}$  & NFB for $n=3,4$ \cite{HZL21}$^{4)}$\\
    \hline
    \end{tabular}\par}
\end{table}
\end{landscape}

\begin{minipage}{0.96\linewidth}
\small
{}$^{1)}$ The case $n=2$ is excluded because for any semiring $\mathbb{S}$, the involutory monoid $(UT_2(\mathbb{S}), {}^S)$ is finitely based. Indeed, every $2\times 2$ unitriangular matrix is symmetric with respect to the secondary diagonal, whence $(UT_2(\mathbb{S}), {}^S)$ satisfies the identity $x^*\approx x$. Obviously, modulo this identity, every involutory semigroup identity is equivalent to a `plain' semigroup identity. Furthermore, the monoid $UT_2(\mathbb{S})$ is commutative, and every commutative semigroup is finitely based~\cite[Theorem~9]{Per69}.

\smallskip

{}$^{2)}$ For fields $F$ of characteristic $\ne2$ containing square roots of ${-1}$ and ${2}$, the involutory monoids $(M_n({F}), {}^S)$ and $(M_n({F}), {}^T)$, where ${}^T$  denotes the usual transpose, are isomorphic; see~\cite[p. 45]{ADPV14} for an explicit isomorphism. That the latter involutory monoid is finitely based for every infinite field $F$ follows from~\cite[Theorem 3.7]{ADV12}. Alternatively, for any infinite field $F$ and any $n\ge 3$, the fact that the involutory monoid $(M_n({F}), {}^S)$ is finitely based follows from Proposition~\ref{prop:3x3infinitefield}.

\smallskip

{}$^{3)}$ For every field $F$ of finite characteristic $p$, the monoid $UT_n(F)$ is a nilpotent group of exponent $p^{\lceil\log_p n\rceil}$. For any group $(G,\cdot,{}^{-1})$ of exponent $e>1$, every basis $\Gamma$ of its group identities can be converted into a basis for the semigroup identities of the semigroup $G$ by replacing, for every variable $x$, each occurrence of $x^{-1}$ with $x^{e-1}$ in all identities in $\Gamma$ and adding the identities $x^ey\approx y\approx yx^e$. Clearly, the resulting identity basis of $G$ is finite whenever $\Gamma$ is finite. The group identities of every nilpotent group have a finite basis by Lyndon's theorem~\cite[Theorem 34.14]{Neu67}, and an inspection of the argument in Lyndon's theorem shows that the result holds for involutory identities of nilpotent groups equipped with an additional involution. Since the variety $\var(UT_n(F),{}^S)$ consists of nilpotent groups of exponent dividing $p^{\lceil\log_p n \rceil}$ equipped with an additional involution, this variety is hereditarily finitely based.

\smallskip

{}$^{4)}$ In \cite{HZL21}, additively idempotent semirings are assumed to have commutative multiplication. Inspecting the arguments in \cite{HZL21} shows that this assumption is inessential for the cited results to hold.
\end{minipage}

\bigskip

\subsection{Proofs}
\subsubsection{Consequences of the Transfer Lemma}
\label{subsec:transfer}
To present all the `induced' results highlighted in pink in Table~\ref{tab:involutorystatus} in a compact form, we have organized them in Table~\ref{tab:transfer}; see the next page. The $i$-th row of Table~\ref{tab:transfer} corresponds to the following statement:

\addtocounter{thm}{1}
\begin{multiprop}\label{prop:multiprop}
The involutory monoid listed in the first column has the nonfinite-basedness property specified in the second column.
\end{multiprop}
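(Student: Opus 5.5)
The plan is to verify Proposition~3.4.i row by row, using one uniform scheme. For each involutory monoid $(X,{}^S)$ listed in Table~\ref{tab:transfer}, we do three things. First, we establish that $(X,{}^S)$ is twisted. Second, we cite the corresponding nonfinite-basedness result for the plain semigroup reduct $X$ from Table~\ref{tab:plainstatus} and its comments. Third, we invoke the appropriate claim of the Transfer Lemma~\ref{lem:stability}.

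\textbf{Twistedness.} For the columns $M_n$, $T_n$ and $U^0T_n$ this is Lemma~\ref{lem:twisted1}, together with Remark~1 after it for $M_n$. For the column $UT_n$ with $n\ge3$ it is Lemma~\ref{lem:twisted2}.1. That lemma applies to the Boolean semiring, the tropical semiring, bounded distributive lattices and arbitrary additively idempotent semirings, and also to $\mathbb{Z}$ and to fields of characteristic~$0$.

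\textbf{Plain results.}
\begin{itemize}
\item INFB of $M_n(\mathbb{B})$, $M_n(F)$ for $F$ finite, and $M_n(L)$: Comment~$^{1)}$, the paragraph following the remarks on $G_4$, and Corollary~\ref{cor:lattice}.
\item INFB of $T_n(\mathbb{B})$ for $n\ge3$: \cite{LL11,VG04}.
\item SNFB of $UT_n(\mathbb{B})$ for $n\ge5$: Comment~$^{2)}$.
\item SNFB of $T_n(\mathbb{F}_2)$ for $n\ge4$: \cite{GSV25} or Theorem~\ref{thm:strongU0T}.
\item INFB of $T_n(F)$ for $|F|\ge3$ and $n\ge4$: \cite{VG03}.
\item SNFB of $U^0T_n(F)$ for $n\ge4$: Theorem~\ref{thm:strongU0T}.
\item NFB of $U^0T_3$ over $\mathbb{Z}$ or a field of characteristic~$0$: \cite{Vol15}.
\item NFB of $UT_n$ over $\mathbb{Z}$ or a field of characteristic~$0$ for $n\ge3$: Comment~$^{5)}$.
\item NFB of $T_n(\mathbb{T})$ for $n=2,3$: \cite{CHLS16,HZL21}.
\item INFB of $U^0T_n(\mathbb{S})$ for additively idempotent $\mathbb{S}$ and $n\ge3$: Corollary~\ref{cor:inherentU0T}.
\item SNFB of $UT_n(\mathbb{S})$ for $n\ge5$: Comment~$^{6)}$.
\item INFB of $T_n(L)$ for $n\ge3$: Corollary~\ref{cor:trianglelattice}.
\end{itemize}

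\textbf{Applying the Transfer Lemma.}

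Every INFB entry follows directly from Claim~3 of Lemma~\ref{lem:stability}, whose `only if' direction needs nothing beyond twistedness.

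Every NFB entry follows from Claim~1. This includes rows~12, where we deliberately downgrade the SNFB of $UT_n(\mathbb{S})$ for $n\ge5$ to NFB: since $\mathbb{S}$ is infinite in general, Claim~2 is not available there.

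The SNFB entries (rows~3, 5, 7) are the only delicate point. Claim~2 additionally requires that the \emph{involutory} variety be finitely generated. These rows concern finite monoids: $UT_n(\mathbb{B})$, $T_n(\mathbb{F}_2)$, and $U^0T_n(F)$ with $F$ finite. In each case $(X,{}^S)$ is itself a finite involutory semigroup, so it generates a finitely generated involutory variety, and Claim~2 applies verbatim. Row~3 additionally uses Lemma~\ref{lem:twisted2}.1 with $\mathbb{S}=\mathbb{B}$, which is additively idempotent.

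The main obstacle I anticipate is bookkeeping rather than mathematics. For each row we must check that the hypothesis of the relevant claim really holds, in particular finiteness for the SNFB rows. We must also check that no entry claims SNFB for an infinite ground semiring. That is why the tropical, lattice and additively idempotent rows in the $UT_n$ column record only NFB, and why the rows for infinite fields record only NFB transfers.
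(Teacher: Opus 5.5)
Your proposal is correct and follows the paper's argument: twistedness from Lemma~\ref{lem:twisted1} (with its Remark~1 for $M_n$) or Lemma~\ref{lem:twisted2}.1, the plain-monoid result from the cited sources, and then the matching claim of the Transfer Lemma. You also note that rows 3, 5 and 7 involve finite involutory monoids, so the finite-generation hypothesis of Claim~2 holds; this makes explicit a point the paper's one-line proof leaves implicit.
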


\begin{proof}
The Transfer Lemma applies, since the involutory monoid in the first column is twisted by either Lemma~\ref{lem:twisted1} or Lemma~\ref{lem:twisted2}{\red.1}, and its semigroup reduct has the same nonfinite-basedness property by the result(s) cited in the third column.
\end{proof}

In the last column of Table~\ref{tab:transfer}, we cite the source in which Proposition~\hyperref[prop:multiprop]{3.4.i} first appeared. In two cases (marked with $\star$), the corresponding result was proved before the Transfer Lemma had been established; therefore, the present argument based on this lemma streamlines the original proof. Empty cells in the last column indicate cases where, to the best of our knowledge, the corresponding result has not been explicitly recorded in the literature. Nevertheless, except for rows 7, 11, 13, and 14, where the underlying plain monoid fact in the third column comes from the present paper, Proposition~\hyperref[prop:multiprop]{3.4.i} is almost certainly known to experts.

\renewcommand{\arraystretch}{1.4}
\begin{table}[p]
\caption{Nonfinite-basedness results for the involutory monoids $(M_n(\mathbb{S}), {}^S)$,  $(T_n(\mathbb{S}), {}^S)$, $(U^0T_n(\mathbb{S}), {}^S)$, and $(UT_n(\mathbb{S}), {}^S)$ that follow from the Transfer Lemma. Acronyms: NFB = nonfinitely based; SNFB = strongly nonfinitely based; INFB = inherently nonfinitely based; AIS = additively idempotent semiring; BDL = bounded distributive lattice. In the last column, $\star$ marks results obtained before the Transfer Lemma was established.}
\label{tab:transfer}
{\centering
\begin{tabular}{r|p{2.5cm}|p{1.7cm}|p{6.4cm}|p{3.1cm}|}
\cline{2-5}
&\multicolumn{1}{|c|}{Monoid} &
\multicolumn{1}{c|}{Result} &
\multicolumn{1}{c|}{Reference for plain monoid result} &
\multicolumn{1}{c|}{Reference} \\
\cline{2-5}
1.&$(M_n(\mathbb{B}), {}^S)$ & INFB & See Comment {}$^{1)}$ after Table~\ref{tab:plainstatus} &\\
\cline{2-5}
2.&$(T_n(\mathbb{B}), {}^S)$ & INFB for~$n\ge 3$ & \cite[Theorem 2.1]{VG04} for $n\ge 4$;

\cite[Theorem 4.1]{LL11} for $n=3$ & \cite[Theorem 3.14]{ADV12}$^\star$\\
\cline{2-5}
3.&$(UT_n(\mathbb{B}), {}^S)$ & SNFB for~$n\ge 5$ & \cite[Theorem~1]{Vol04} and \cite[Theorem~8]{GSV25},

combined with \cite[Proposition~2.7]{SV23} &\\
\cline{2-5}
4.&$(M_n(F), {}^S)$

$F$ = finite field & INFB & \cite[Corollary 6.2]{Sap87a} & \\
\cline{2-5}
5.&$(T_n(\mathbb{F}_2), {}^S)$ & SNFB for~$n\ge 4$ & \cite[Theorem 11]{GSV25} &\\
\cline{2-5}
6.&$(T_n(F), {}^S)$

$F$ = finite field
with $|F|\ge 3$
& INFB for~$n\ge 4$ & \cite[Theorem on p. 474]{VG03} &\\
\cline{2-5}
7.&$(U^0T_n(F), {}^S)$

$F$ = finite field & SNFB for~$n\ge 4$ & Theorem~\ref{thm:strongU0T} &\\
\cline{2-5}
8.&$(U^0T_3(F), {}^S)$

$F=\mathbb{Z}$ or field with $\mathsf{char}\,F{=}0$ & NFB & \cite[Theorem 2]{Vol15} & \cite[Theorem 2]{Vol15}$^\star$\\
\cline{2-5}
9.&$(UT_n(F), {}^S)$
$F=\mathbb{Z}$ or field
with $\mathsf{char}\,F{=}0$ & NFB  for~$n\ge 3$ & \cite[Proposition 5]{Sap87a} &\\
\cline{2-5}
10.&$(T_n(\mathbb{T}), {}^S)$ & NFB for~$n{=}2,3$ & \cite[Corollary 5.4]{CHLS16} for $n=2$;

\cite[Theorem 3.15]{HZL21} for $n=3$
& \cite[Theorem 3.3]{HZL21} \cite[Theorem 3.16]{HZL21}\\
\cline{2-5}
11.&$(U^0T_n(\mathbb{S}), {}^S)$

$\mathbb{S}$ = AIS & INFB for~$n\ge 3$ & Corollary~\ref{cor:inherentU0T} &\\
\cline{2-5}
12.&$(UT_n(\mathbb{S}), {}^S)$

$\mathbb{S}$ = AIS & NFB for~$n\ge 5$ & \cite[Corollary~3.4]{JF19}, combined with

\cite[Theorem~1]{Vol04}& \cite[Theorem 5.3]{HZL21}\\
\cline{2-5}
13.&$(M_n(L), {}^S)$

L = BDL & INFB & Corollary~\ref{cor:lattice} &\\
\cline{2-5}
14.&$(T_n(L), {}^S)$

L = BDL & INFB for~$n\ge 3$ & Corollary~\ref{cor:trianglelattice} &\\
\cline{2-5}
\end{tabular}\par}
\end{table}

In addition, we include a few further applications of the Transfer Lemma.

\begin{prop}\label{prop:pminvol}
The involutory monoid $(U^{\pm}T_n(F), {}^S)$ is nonfinitely based if $F$ is a field of characteristic $0$, and inherently nonfinitely based if $F$ is a field of odd characteristic and $n\ge 4$.
\end{prop}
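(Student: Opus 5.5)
The plan is to apply the Transfer Lemma (Lemma~\ref{lem:stability}) directly. By Remark~3 following Lemma~\ref{lem:twisted1}, the involutory monoid $(U^{\pm}T_n(F),{}^S)$ contains $(U^0T_n(F),{}^S)$ as an involutory submonoid whenever the characteristic of $F$ is not $2$. Lemma~\ref{lem:twisted1} shows that the latter is twisted, so $S\ell_3$ lies in the variety generated by $(U^{\pm}T_n(F),{}^S)$. Hence $(U^{\pm}T_n(F),{}^S)$ is twisted for every field of characteristic $0$ or odd characteristic. Skew transposition preserves the diagonal entries up to reversal, so $U^{\pm}T_n(F)$ really is closed under ${}^S$, and the involutory monoid makes sense.

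For characteristic $0$, I will invoke Theorem~\ref{thm:UpmTn}: the semigroup reduct $U^{\pm}T_n(F)$ is nonfinitely based for every $n\ge 2$, with the case $n=2$ covered by Corollary~\ref{cor:UpmT2}. Claim~1 of the Transfer Lemma then gives that $(U^{\pm}T_n(F),{}^S)$ is nonfinitely based. This needs no finiteness or local finiteness assumptions.

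For odd characteristic and $n\ge4$, I will use the `only if' direction of Claim~3 of the Transfer Lemma. This requires the reduct $U^{\pm}T_n(F)$ to be inherently nonfinitely based, and that depends on whether $F$ is finite.
\begin{itemize}
\item If $F$ is infinite, this is exactly Theorem~\ref{thm:UpmTn-prime}.
\item If $F$ is finite, $U^{\pm}T_n(F)$ is a finite monoid containing the submonoid $U^{\pm}_{11}T_n(F)$, which is inherently nonfinitely based by Theorem~\ref{thm:pm1}. The variety $\var U^{\pm}T_n(F)$ is locally finite by Proposition~\ref{prop:birkhoff}. It contains an inherently nonfinitely based semigroup, and inherent nonfinite basedness is inherited by locally finite varieties containing such a variety. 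So $U^{\pm}T_n(F)$ is inherently nonfinitely based.
\end{itemize}
In both subcases Claim~3 transfers the property to $(U^{\pm}T_n(F),{}^S)$.

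The argument is a straightforward assembly of earlier results. The only delicate point is the finite-field subcase, where one must check that the upward contagion of inherent nonfinite basedness applies. It does, because the variety is finitely generated and hence locally finite.
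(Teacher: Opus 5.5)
Your proposal is correct and follows the paper's proof essentially step for step. Twistedness comes from Remark~3 after Lemma~\ref{lem:twisted1}. In characteristic $0$, you combine Claim~1 of the Transfer Lemma with Theorem~\ref{thm:UpmTn}; in odd characteristic with $n\ge 4$, you combine Claim~3 with either Theorem~\ref{thm:pm1} (via the submonoid $U^{\pm}_{11}T_n(F)$, for finite fields) or Theorem~\ref{thm:UpmTn-prime} (for infinite fields). Your appeal to Proposition~\ref{prop:birkhoff}, to carry inherent nonfinite basedness from $U^{\pm}_{11}T_n(F)$ up to the finite monoid $U^{\pm}T_n(F)$, only makes explicit a step the paper leaves implicit.
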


\begin{proof}
The involutory monoid $(U^{\pm}T_n(F), {}^S)$ is twisted by Remark~3 made after Lemma~\ref{lem:twisted1}.

If $F$ is a field of characteristic $0$, the monoid $U^{\pm}T_n(F)$ is nonfinitely based by Theorem~\ref{thm:UpmTn}, and Claim 1 of the Transfer Lemma applies.

If $F$ is a finite field of odd characteristic, the monoid $U^{\pm}T_n(F)$ contains the monoid $U^{\pm}_{11}T_n(F)$ defined in \eqref{eq:UpmT} as a submonoid. For $n\ge 4$, the latter monoid is inherently nonfinitely based by Theorem~\ref{thm:pm1}, and hence $U^{\pm}T_n(F)$ is also inherently nonfinitely based. If $F$ is an infinite field of odd characteristic and $n\ge4$, the monoid  $U^{\pm}T_n(F)$ is inherently nonfinitely based by Theorem~\ref{thm:UpmTn-prime}. Hence, Claim 3 of the Transfer Lemma applies, showing that also $(U^{\pm}T_n(F), {}^S)$ with $n\ge 4$ is inherently nonfinitely based for any field $F$ of odd characteristic.
\end{proof}

If $F$ is a field of characteristic different from 2 and $n=2m$ is even, the monoid $T_n(F)$ admits another involution, the so-called \emph{symplectic} involution. The \emph{symplectic transpose} $A^Y$ of a matrix $A\in T_n(F)$ is defined by
\[
A^{Y}:=\begin{pmatrix}
    E_m & 0\\
    0 & -E_m
\end{pmatrix}A^{S}\begin{pmatrix}
    E_m & 0\\
    0 & -E_m
\end{pmatrix},
\]
where $E_m$ is the $m\times m$ identity matrix. It is known that, under mild conditions on the field $F$, every involution ${}^*$ on $T_n(F)$ is equivalent to either the skew transposition or the symplectic involution, in the sense that the involutory monoid $(T_n(F),{}^*)$ is isomorphic to either $(T_n(F),{}^S)$ or $(T_n(F),{}^Y)$; see \cite[Proposition~2.5]{DKL06}.\footnote{In \cite{DKL06}, the result is stated for involutions ${}^*$ preserving addition, that is, satisfying $(a+b)^*=a^*+b^*$. However, it is known that every automorphism of the monoid $T_n(F)$ respects addition~\cite[Corollary of Theorem 7]{St69}, and this easily implies that every involution of this monoid also respects addition.}

Each of the submonoids $U^0T_n(F)$ and $U^{\pm}T_n(F)$ is closed under the symplectic involution, and the same argument as in Lemma~\ref{lem:twisted1} shows that each of the involutory monoids $(U^0T_n(F),{}^Y)$, $(U^{\pm}T_n(F),{}^Y)$, and $(T_n(F),{}^Y)$ is twisted. Therefore, the Transfer Lemma yields the following nonfinite-basedness results for these involutory monoids for every even $n\ge 4$:
\begin{itemize}
    \item the analogs of Propositions~\hyperref[prop:multiprop]{3.4.6} and~\hyperref[prop:multiprop]{3.4.7}, with ${}^S$ replaced by ${}^Y$ and $F$ any finite field of odd characteristic;
    \item the analog of Proposition~\ref{prop:pminvol}, again with ${}^S$ replaced by ${}^Y$ and $F$ any field of characteristic different from 2.
\end{itemize}

\begin{prop}\label{prop:etinvol}
For every semiring \/ $\mathbb{S}$ with commuting and small idempotents, the involutory monoid $(ET_n(\mathbb{S}), {}^S)$ is inherently nonfinitely based whenever $n\ge 3$.
\end{prop}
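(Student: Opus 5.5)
The plan is to apply Claim~3 of the Transfer Lemma (Lemma~\ref{lem:stability}), in the same way as in the proof of Proposition~\ref{prop:pminvol}. Two facts are needed: that the involutory monoid $(ET_n(\mathbb{S}),{}^S)$ is twisted, and that its semigroup reduct $ET_n(\mathbb{S})$ is inherently nonfinitely based for $n\ge3$.

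For the first fact, we first note that $ET_n(\mathbb{S})$ is closed under skew transposition. Indeed, skew transposition permutes the diagonal entries of a triangular matrix, sending $\alpha_{ii}$ to position $(n+1-i,n+1-i)$, and it preserves triangularity. Hence the idempotent-diagonal condition is preserved. Twistedness then follows as in Remark~2 after Lemma~\ref{lem:twisted1}. Let $e_{11}$ be the matrix with $(1,1)$ entry $1$ and all other entries $0$. Its skew transpose is $e_{nn}$. Both matrices lie in $ET_n(\mathbb{S})$, since their diagonal entries $0$ and $1$ are idempotent. They are idempotent, and for $n\ge2$ we have $e_{11}e_{nn}=e_{nn}e_{11}=O$. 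Together with the zero matrix $O$, they therefore form an involutory subsemigroup isomorphic to $S\ell_3$.

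For the second fact, we invoke Corollary~\ref{cor:inherentET}: when $\mathbb{S}$ has commuting and small idempotents, the monoid $ET_n(\mathbb{S})$ with $n\ge3$ is inherently nonfinitely based. This rests on Theorem~\ref{thm:si-semirings}, which gives the equational equivalence with $T_n(\mathbb{B})$, together with the results of \cite{LL11,VG04}.

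Finally, Claim~3 of the Transfer Lemma states that a twisted involutory semigroup is inherently nonfinitely based if and only if its semigroup reduct is. Unlike Claim~2, this claim requires no finiteness or finite generation of $ET_n(\mathbb{S})$ or of the involutory variety. So the conclusion follows immediately from the two facts above.

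There is essentially no obstacle. The only point to check carefully is that the twistedness witness stays inside $ET_n(\mathbb{S})$ and that this set is invariant under ${}^S$, and both are direct from the definitions.
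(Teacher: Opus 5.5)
Your proposal is correct and matches the paper's own argument. The paper likewise gets twistedness from the $S\ell_3$ subsemigroup formed by $e_{11}$, its skew transpose $e_{nn}$, and $O$ (Remark~2 after Lemma~\ref{lem:twisted1}), gets inherent nonfinite basedness of the reduct $ET_n(\mathbb{S})$ for $n\ge 3$ from Corollary~\ref{cor:inherentET}, and then concludes by Claim~3 of the Transfer Lemma (Lemma~\ref{lem:stability}).
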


\begin{proof}
The involutory monoid $(ET_n(\mathbb{S}), {}^S)$  is twisted by Remark~2 made after Lemma~\ref{lem:twisted1}. Since the monoid $ET_n(\mathbb{S})$ with $n\ge 3$ is inherently nonfinitely based by Corollary~\ref{cor:inherentET}, the Transfer Lemma applies, showing that also $(ET_n(\mathbb{S}), {}^S)$ with $n\ge 3$ is inherently nonfinitely based.
\end{proof}

\subsubsection{Matrices over fields}
\label{subsec:fieldsinv}

The following result uses machinery analogous to that employed in \cite[Section 3.2]{ADV12} in the study of the identities of the involutory monoid $(M_n({F}), {}^T)$, where ${}^T$ is the usual transpose. We nevertheless include a detailed proof for completeness.

\begin{prop}\label{prop:3x3infinitefield}
For any infinite field $F$, the involutory monoid $(T_n(F), {}^S)$ with $n\ge 3$ is finitely based; moreover, all identities holding in $(T_n(F), {}^S)$ with $n\ge 3$ follow from associativity and the involution laws $(xy)^S\approx y^Sx^S$, $(x^S)^S\approx x$.
\end{prop}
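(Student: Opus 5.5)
We need to show that no nontrivial involutory identity $\mathbf{u}\approx\mathbf{v}$ (with $\mathbf{u}\ne\mathbf{v}$ in the free involutory semigroup $\mathcal{I}(\mathcal{X})$) holds in $(T_n(F),{}^S)$ for $n\ge3$ and $F$ infinite. Then every identity of $(T_n(F),{}^S)$ is trivial and therefore follows from associativity and the two involution laws. The approach is to realize the free involutory semigroup inside $(T_3(F),{}^S)$, or at least to separate any two distinct involutory words there. Since $T_3(F)$ embeds into $T_n(F)$ compatibly with ${}^S$ (place it in the corner block symmetric about the secondary diagonal, e.g.\ rows/columns $1,\lceil (n+1)/2\rceil, n$ for $n$ odd, and handle even $n$ similarly by a centrally symmetric embedding), it suffices to treat $n=3$, or directly a centrally symmetric $3$-element index set.

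First, I use the plain-semigroup fact recalled in Comment $^{4)}$: $T_n(F)$ over an infinite field satisfies no nontrivial semigroup identity. Its proof works with generic triangular matrices and upper-corner entries as noncommutative polynomials. I would adapt it as follows. Substitute for each variable $x$ a ``generic'' matrix $X$ whose entries are independent commuting indeterminates over $F$. Then $X^S$ is again triangular, with the same indeterminates reflected. Under this substitution a word in $\mathcal{X}\cup\mathcal{X}^*$ becomes a word in the $2|\mathcal{X}|$ letters $X, X^S$. For $n\ge3$, these generic matrices and their skew transposes have algebraically independent entries in the positions $(1,2)$ and $(2,3)$: $X_{12}$ and $X_{23}$ are distinct indeterminates, while $(X^S)_{12}=X_{23}$ and $(X^S)_{23}=X_{12}$. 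Diagonal entries, however, are shared: $(X^S)_{11}=X_{33}$ and $(X^S)_{33}=X_{11}$, while $(X^S)_{22}=X_{22}$.

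The key step is to recover the involutory word $\mathbf{u}=y_1\cdots y_m$ ($y_k\in\mathcal{X}\cup\mathcal{X}^*$) from the polynomial entries of its value. The $(1,3)$ entry of the product is
\[
\sum_{k<l}\ \prod_{s<k}(Y_s)_{11}\,(Y_k)_{12}\,\prod_{k<s<l}(Y_s)_{22}\,(Y_l)_{23}\,\prod_{s>l}(Y_s)_{33}
\;+\;\sum_{k}\ \prod_{s<k}(Y_s)_{11}\,(Y_k)_{13}\,\prod_{s>k}(Y_s)_{33}.
\]
The second sum alone already determines $\mathbf{u}$. Choose a monomial-order argument, or simply specialize: take each $X$ with independent $X_{11}=a_x$, $X_{33}=b_x$, and $X_{13}=c_x$, and set $X_{12}=X_{23}=0$. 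Then $(X^S)_{11}=b_x$, $(X^S)_{33}=a_x$, and $(X^S)_{13}=c_x$. The coefficient polynomial $\sum_k \prod_{s<k}\alpha(y_s)\,c_{y_k}\prod_{s>k}\beta(y_s)$, where $\alpha(x)=a_x$, $\alpha(x^*)=b_x$, $\beta(x)=b_x$, $\beta(x^*)=a_x$, determines $\mathbf{u}$ only up to the ambiguity of $x$ versus $x^*$ in the $c$ position. To fix that ambiguity, I would reinstate the $(1,2),(2,3)$ entries and read the first sum, where $x$ contributes $X_{12}$ at position $k$ and $x^*$ contributes $X_{23}$. Concretely: distinct words give distinct polynomials in $F[a_x,b_x,c_x,d_x,e_x,\dots]$. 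Since $F$ is infinite, distinct polynomials are separated by some evaluation, which yields a substitution into $T_3(F)$ distinguishing $\mathbf{u}$ and $\mathbf{v}$.

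The main obstacle is this reconstruction step, which requires a careful combinatorial argument that the map from involutory words to the tuple of polynomial entries is injective. The cleanest route I would try first is a leading-monomial argument. In the $(1,3)$-entry terms of the second type, the monomial $\prod_{s<k}\alpha(y_s)\,c_{y_k}\prod_{s>k}\beta(y_s)$ for each $k$ records, via the commutative multiset of $a,b$ exponents, how many occurrences of each letter and each starred letter lie before and after position $k$. Combining these for all $k$ with the sign information (starred versus unstarred) extracted from first-type terms with $d_x=X_{12}$, $e_x=X_{23}$ reconstructs the sequence $y_1,\dots,y_m$. Finiteness of $F$ would break the final evaluation step, consistent with the table.
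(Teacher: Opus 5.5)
There is a genuine gap. Your overall plan is sound and in the spirit of the paper: substitute triangular matrices with polynomial entries to separate two distinct involutory words, then specialize using that $F$ is infinite. But the step you yourself call the main obstacle is never proved, and the mechanism you sketch for it is wrong.

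In your specialization ($X_{12}=X_{23}=0$), the exponent of $a_x$ in the $k$-th monomial of the $(1,3)$ entry counts occurrences of $x$ before position $k$ \emph{together with} occurrences of $x^*$ after it. So the before/after bookkeeping you want to read off is not recoverable. Monomials for different $k$ can also coincide: for $xx^*$ both positions give $a_xc_x$. Worse, in characteristic $2$ the words $xx^*$ and $x^*x$ receive exactly the same matrix under that specialization. Their $(1,3)$ entries $2a_xc_x$ and $2b_xc_x$ both vanish, and their diagonals agree. So the second sum does not determine $\mathbf{u}$, and the proposed combination with the ``first-type'' terms is left unsubstantiated.

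The missing idea is to read the word off the $(1,2)$ entry instead. For $\mathbf{u}=y_1\cdots y_m$ it equals $\sum_{k=1}^{m}\prod_{s<k}(Y_s)_{11}\,(Y_k)_{12}\prod_{s>k}(Y_s)_{22}$.
\begin{itemize}
\item The $k$-th monomial has degree exactly $k-1$ in the indeterminates in positions $(1,1)$ and $(n,n)$. These are disjoint from those in positions $(2,2)$ and $(n-1,n-1)$ when $n\ge3$, so no two monomials coincide and nothing cancels in any characteristic.
\item Its unique off-diagonal factor is $X_{12}$ if $y_k=x$ but $X_{n-1,n}$ if $y_k=x^*$. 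These are distinct indeterminates precisely because $n\ge 3$.
\end{itemize}
This is exactly what the paper's Lemma~\ref{lem:free} does with the one-parameter matrices $z(t)$ and $z(t)^S$. Their $(1,2)$ and $(2,3)$ entries are $\sum_{w_i=x}t^{i-1}$ and $\sum_{w_j=y}t^{j-1}$.

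Second, your reduction to $n=3$ fails for even $n$. The map $i\mapsto n+1-i$ is then fixed-point-free, so every index set invariant under it has even size, and no centrally symmetric $3$-element index set exists. This is why the paper embeds $(T_4(F),{}^S)$ for even $n$ and proves a separate $4\times 4$ freeness statement. Once the $(1,2)$-entry argument is in place, however, you need no embedding at all, since it runs verbatim for generic matrices in $T_n$ for every $n\ge 3$.

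Compared with the paper, your multi-indeterminate version also bypasses the reduction to one-variable identities via $\mathcal{I}(\mathcal{Z})\subseteq\mathcal{I}(\{z\})$. The cost is the standard fact that a nonzero polynomial in finitely many variables over an infinite field has a non-root. The paper needs only that a one-variable polynomial has finitely many roots.
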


We need the following observation, in which $F[t]$ stands for the ring of polynomials in $t$ with coefficients in a field $F$.

\begin{lem}\label{lem:free}
 For every field $F$, the matrices
\[
A:=\begin{pmatrix}
t&1&0\\
0&1&0\\
0&0&t
\end{pmatrix},
\quad
B:=\begin{pmatrix}
t&0&0\\
0&1&1\\
0&0&t
\end{pmatrix}
\]
generate a free subsemigroup in the monoid $T_3(F[t])$, and the matrices
\[
\begin{pmatrix}
t&1&0&0\\
0&1&0&0\\
0&0&1&0\\
0&0&0&t
\end{pmatrix},
\quad
\begin{pmatrix}
t&0&0&0\\
0&1&0&0\\
0&0&1&1\\
0&0&0&t
\end{pmatrix}
\]
generate a free subsemigroup in the monoid $T_4(F[t])$.
\end{lem}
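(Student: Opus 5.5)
We have to show that the two matrices $A,B$ (and their $4\times4$ analogues) generate free subsemigroups. Equivalently, distinct words in $A,B$ give distinct matrices. The plan is to compute, for an arbitrary word $\mathbf{w}$ over $\{A,B\}$, the $(1,3)$ entry of its value in $T_3(F[t])$, and to show that this polynomial determines $\mathbf{w}$.

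The key observation is that $A=tE_{11}+E_{22}+tE_{33}+E_{12}$ and $B=tE_{11}+E_{22}+tE_{33}+E_{23}$. Each has diagonal $\diag(t,1,t)$, with a single off-diagonal unit entry, at position $(1,2)$ for $A$ and $(2,3)$ for $B$.

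By the walk expansion used in the proof of Lemma~\ref{lem:ijexpansion}, valid for arbitrary triangular matrices over a commutative ring, the $(1,3)$ entry of $\varphi(\mathbf{w})$ for $\mathbf{w}=w_1\cdots w_m$ is a sum over walks from $1$ to $3$. A walk going directly $1\to3$ is impossible, since the $(1,3)$ entries of both $A$ and $B$ vanish. So each contributing walk stays at $1$ for a while, uses a letter $w_p=A$ to step $1\to2$, stays at $2$, uses a letter $w_q=B$ with $q>p$ to step $2\to3$, and then stays at $3$. Its contribution is $t^{p-1}\cdot 1\cdot t^{m-q}$, because diagonal entries at positions $1$ and $3$ equal $t$ and at position $2$ equal $1$. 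Hence
\[
\varphi(\mathbf{w})_{13}=\sum_{p<q,\ w_p=A,\ w_q=B} t^{\,p-1+m-q}.
\]

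Next we recover $\mathbf{w}$ from its value. The length $m$ is read off from the $(1,1)$ entry $t^m$. If $\mathbf{w}\ne\mathbf{w}'$ have the same length, take the first position $r$ where they differ, say $w_r=A$ and $w'_r=B$. We would compare the polynomials directly, but a cleaner route is to recover the letters one at a time using cancellation. Suppose $\mathbf{u}X=\mathbf{u}'Y$ with $X,Y\in\{A,B\}$. If $X=Y$, we may cancel $X$, since $A$ and $B$ are invertible over the field of fractions $F(t)$. So it suffices to show that the last letter is determined by the matrix. For $\mathbf{w}=\mathbf{u}A$ the $(2,3)$ entry is $0$ when $\mathbf{u}$ contains no $B$ (and is generally $(\mathbf{u})_{23}\cdot t$), while for $\mathbf{w}=\mathbf{u}B$ the $(2,3)$ entry is $(\mathbf{u})_{22}\cdot 1+(\mathbf{u})_{23}\cdot t$.

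Computing the $(2,3)$ entry of any word $\mathbf{w}$ with the same walk expansion gives
\[
\varphi(\mathbf{w})_{23}=\sum_{w_q=B} t^{m-q}.
\]
This polynomial has nonzero constant term exactly when $w_m=B$. So the last letter is read off from the $(2,3)$ entry modulo $t$. Then we cancel it using invertibility in $T_3(F(t))$ and induct on length. The argument never needs the $(1,3)$ entry.

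For the $4\times4$ matrices the same argument applies. The diagonal is $(t,1,1,t)$, and $B$ contributes $1$ at $(3,4)$. The $(3,4)$ entry of a word equals $\sum_{w_q=B}t^{m-q}$, so it again detects whether the last letter is $B$. Cancellation in $T_4(F(t))$ finishes the induction, and the length is recovered from the $(1,1)$ entry $t^m$.

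The only delicate point is keeping track of the diagonal weights in the walk expansion. It is routine, so I expect no real obstacle.
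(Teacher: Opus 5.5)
Your proof is correct, but it takes a different route from the paper's. The paper proves injectivity in one step. A straightforward induction gives $\varphi(\mathbf{w})_{11}=t^k$ and $\varphi(\mathbf{w})_{12}=\sum_{w_i=x}t^{i-1}$. The latter is a polynomial with $0/1$ coefficients whose support is exactly the set of positions of $x$ in $\mathbf{w}$. So the matrix determines the length and the positions of $x$, hence the whole word, with no cancellation and no passage to $F(t)$.

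You instead peel letters off the right end. The constant term of the $(2,3)$ entry tells you whether the last letter is $B$. Right cancellation, valid because $A$ and $B$ have determinant $t^2$ and are invertible over $F(t)$, then reduces to a shorter word. This ``last-letter detection plus cancellativity'' scheme is sound and would adapt to other invertible generators, but here it does more work than needed. Your own formula $\varphi(\mathbf{w})_{23}=\sum_{w_q=B}t^{m-q}$ already records all positions of $B$ at once. Together with $m$, read off from the $(1,1)$ entry, it determines $\mathbf{w}$ directly, just as the paper does with the $(1,2)$ entry.

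Your exponent $m-q$ is the correct one. The paper's display gives the $(2,3)$ entry as $\sum_{w_j=y}t^{j-1}$, which is wrong (e.g.\ $(BA)_{23}=t$). The slip is harmless there, because the $(1,2)$ entry alone already determines the word.

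The $(1,3)$ computation and the abandoned ``first differing position'' comparison in your write-up are never used and can be dropped.
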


\begin{proof}
Both claims follow by the same reasoning, which is in fact a standard freeness argument for matrix semigroups. We present it only for the case of $3\times 3$ matrices.

Consider the morphism $\varphi\colon\{x,y\}^+\to T_3(F[t])$ extending the substitution $x\mapsto A$, $y\mapsto B$. To prove that the subsemigroup generated by $A$ and $B$ is free, it suffices to show that $\varphi$ is one-to-one, that is, that each word $\mathbf{w}\in\{x,y\}^+$ is uniquely determined by the matrix $\varphi(\mathbf{w})$.
Let $\mathbf{w}=w_1\cdots w_k$, where $w_1,\dots,w_k\in\{x,y\}$. A straightforward induction on $k$ shows that
\[
\varphi(\mathbf{w})=\begin{pmatrix}
t^{k}& \sum\limits_{\{i\,\mid\,w_i=x\}} t^{i-1} & p_{13}(t)\\
0&1& \sum\limits_{\{j\,\mid\,w_j=y\}} t^{j-1}\\
0&0&t^{k}
\end{pmatrix},
\]
where $p_{13}(t)$ is some polynomial. Consequently, the matrix $\varphi(\mathbf{w})$ uniquely determines the positions of the variables $x$ and $y$ in $\mathbf{w}$.
\end{proof}

\begin{proof}[Proof of Proposition~\ref{prop:3x3infinitefield}] If we call identities that follow from associativity and the involution laws \emph{trivial}, our claim amounts to saying that the involutory monoid $(T_n(F), {}^S)$ with $n\ge 3$ does not satisfy any nontrivial involutory identity. We first show this for $n=3$.

It is known (and easy to verify) that the free involutory semigroup on one generator $z$ contains, as a unary subsemigroup, a free involutory semigroup on countably many generators, namely, $\mathcal{I}(\mathcal{Z})$ where
\[
\mathcal{Z}:=\{zz^*z,\ z(z^*)^2z,\ \dots,\ z(z^*)^nz,\ \dots\}.
\]
Therefore we only need to verify that $(T_3(F), {}^S)$ satisfies no nontrivial involutory identity in the single variable $z$. Such an identity can be written as $\mathbf{u}(z,z^S)\approx\mathbf{v}(z,z^S)$ where $\mathbf{u}$ and $\mathbf{v}$ are distinct words. The first claim of Lemma~\ref{lem:free} implies that $\mathbf{u}(A,B)\ne \mathbf{v}(A,B)$ in the monoid $T_3(F[t])$. Setting
\[
\mathbf{u}(A,B)=\begin{pmatrix} u_{11}&u_{12}&u_{13}\\ 0&u_{22}&u_{23}\\ 0&0&u_{33}
\end{pmatrix}\ \text{ and } \ \mathbf{v}(A,B)=\begin{pmatrix} v_{11} & v_{12}&v_{13}\\ 0&v_{22}&v_{23}\\ 0&0&v_{33}
\end{pmatrix},
\]
we see that, for some indices $1\le i\le j\le 3$, the polynomials $u_{ij}$ and $v_{ij}$ are distinct, whence $u_{ij}-v_{ij}$ is a nonzero polynomial. Now take
any $\lambda\in F$ and set $z(\lambda):=\left(\begin{smallmatrix}\lambda &1&0\\0&1&0\\0&0&\lambda\end{smallmatrix}\right)$. Then $z(\lambda)=A(\lambda)$ and $z(\lambda)^S=B(\lambda)$. Therefore, if the equality
\begin{equation}\label{eq:lambda}
\mathbf{u}(z(\lambda),z(\lambda)^S)=\mathbf{v}(z(\lambda),z(\lambda)^S)
\end{equation}
holds in $(T_3(F), {}^S)$, then $\lambda$ must be a root of the nonzero polynomial $u_{ij}-v_{ij}$. Since a nonzero polynomial over a field can have only finitely many roots, \eqref{eq:lambda} can hold for only finitely many $\lambda\in F$. As $F$ is an infinite field, equality \eqref{eq:lambda} fails for all but finitely many $\lambda$, and so, in particular, the identity $\mathbf{u}(z,z^S)\approx\mathbf{v}(z,z^S)$ fails in  $(T_3(F), {}^S)$.

In the same way, the second claim of Lemma~\ref{lem:free} implies that the involutory monoid $(T_4(F), {}^S)$ does not satisfy any nontrivial involutory identity. The general case of $(T_n(F), {}^S)$ for arbitrary $n\ge 3$ now follows from the observation that $(T_n(F), {}^S)$ contains an involutory submonoid isomorphic to $(T_3(F), {}^S)$ whenever $n$ is odd and to $(T_4(F), {}^S)$ whenever $n\ge 4$ is even. The submonoid isomorphic to $(T_3(F), {}^S)$ consists of all matrices of the form
\[
\begin{pmatrix}
\alpha_{11} & 0 & \cdots & 0 &
\alpha_{1m} &
0 & \cdots & 0 &
\alpha_{1n}
\\
0 & 0 & \cdots & 0 & 0 & 0 & \cdots & 0 & 0
\\
\vdots & \vdots &  & \vdots & \vdots &
\vdots &  & \vdots & \vdots
\\
0 & 0 & \cdots & 0 & 0 & 0 & \cdots & 0 & 0
\\
0 & 0 & \cdots & 0 &
\alpha_{mm} &
0 & \cdots & 0 &
\alpha_{mn}
\\
0 & 0 & \cdots & 0 & 0 & 0 & \cdots & 0 & 0
\\
\vdots & \vdots &  & \vdots & \vdots &
\vdots &  & \vdots & \vdots
\\
0 & 0 & \cdots & 0 & 0 & 0 & \cdots & 0 & 0
\\
0 & 0 & \cdots & 0 & 0 & 0 & \cdots & 0 &
\alpha_{nn}
\end{pmatrix},
\qquad\text{where }\ m=\frac{n+1}{2},
\]
that is, all matrices $\bigl(\alpha_{ij}\bigr)_{n\times n}\in T_n(F)$ such that $\alpha_{ij}=0$ whenever
\[
(i,j)\ne (1,1),(1,\frac{n+1}2),(1,n),(\frac{n+1}2,\frac{n+1}2),(\frac{n+1}2,n),(n,n).
\]
Likewise, the submonoid isomorphic to $(T_4(F), {}^S)$ consists of all matrices
of the form
\[
\begin{pmatrix}
\alpha_{11}
& 0 & \cdots & 0
& \alpha_{1m}
& \alpha_{1,m+1}
& 0 & \cdots & 0
& \alpha_{1n}
\\
0 & 0 & \cdots & 0
& 0 & 0
& 0 & \cdots & 0
& 0
\\
\vdots & \vdots & & \vdots
& \vdots & \vdots
& \vdots & & \vdots
& \vdots
\\
0 & 0 & \cdots & 0
& 0 & 0
& 0 & \cdots & 0
& 0
\\
0 & 0 & \cdots & 0
& \alpha_{mm}
& \alpha_{m,m+1}
& 0 & \cdots & 0
& \alpha_{mn}
\\
0 & 0 & \cdots & 0
& 0
& \alpha_{m+1,m+1}
& 0 & \cdots & 0
& \alpha_{m+1,n}
\\
0 & 0 & \cdots & 0
& 0 & 0
& 0 & \cdots & 0
& 0
\\
\vdots & \vdots & & \vdots
& \vdots & \vdots
& \vdots & & \vdots
& \vdots
\\
0 & 0 & \cdots & 0
& 0 & 0
& 0 & \cdots & 0
& 0
\\
0 & 0 & \cdots & 0
& 0 & 0
& 0 & \cdots & 0
& \alpha_{nn}
\end{pmatrix},
\qquad\text{where }\  m=\frac n2,
\]
that is, all matrices $\bigl(\alpha_{ij}\bigr)_{n\times n}\in T_n(F)$ such that $\alpha_{ij}=0$ whenever
\[(i,j)\ne (1,1),(1,\frac{n}2),(1,\frac{n}2+1),(1,n),(\frac{n}2,\frac{n}2),(\frac{n}2,\frac{n}2+1),(\frac{n}2,n),(\frac{n}2+1,\frac{n}2+1),(\frac{n}2+1,n),(n,n).
\]
\end{proof}

\begin{remks} 1. Clearly, the argument in the proof of Proposition~\ref{prop:3x3infinitefield} equally applies to the involutory monoid $(T_n(\mathbb{Z}), {}^S)$ with $n\ge 3$.

2. Proposition~\ref{prop:3x3infinitefield} contrasts the result of \cite{ZL20} where it was shown that the involutory monoid $(T_2(F), {}^S)$ is nonfinitely based for any field $F$.
\end{remks}

\subsubsection{Matrices over additively idempotent semirings} Here we address the finite basis problem for involutory monoids $(ET_n(\mathbb{S}), {}^S)$, where $\mathbb{S}$ is an additively idempotent semiring with commuting multiplicative idempotents. By Proposition~\ref{prop:etinvol}, this problem has a negative solution for all $n\ge 3$, albeit under the additional assumption that the multiplicative idempotents of $\mathbb{S}$ are small. We now turn to the remaining case $n=2$.

We will use a sufficient condition for nonfinite basedness of an involutory semigroup  established in~\cite{HZL21}. It involves two series of terms in the free involutory semigroup indexed by integers $k\ge 2$:
\[
\mathbf{p}_k:=\mathbf{a}_k\mathbf{c}_k\mathbf{b}_k\quad \text{and}\quad \mathbf{q}_k:=\mathbf{a}_k\mathbf{d}_k\mathbf{b}_k,
\]
where
\[
\begin{split}
\mathbf{a}_k:=b_{k-1}\cdots b_1a_{k-1}\cdots a_1, & \qquad \mathbf{c}_k:=x_1a_1\cdots x_{k-1}a_{k-1}x_kx^*_1b_1\cdots x^*_{k-1}b_{k-1}x^*_k,\\
\mathbf{b}_k:=a_1\cdots a_{k-1}b_1\cdots b_{k-1}, & \qquad  \mathbf{d}_k:=x_kb_{k-1}x_{k-1}\cdots b_1x_1x^*_ka_{k-1}x^*_{k-1}\cdots a_1x^*_1.
\end{split}
\]

\begin{prop}\cite[Theorem 4.1]{HZL21}\label{prop:T2BD}
An involution monoid $(M, {}^*)$  is nonfinitely based if it:
\begin{enumerate}
    \item[\emph{1)}] satisfies the identities $\mathbf{p}_k\approx \mathbf{q}_k$ for all $k\ge 2$, and
    \item[\emph{2)}] generates the variety containing the involution monoid $(T_2(\mathbb{B}), {}^S)$.
\end{enumerate}
\end{prop}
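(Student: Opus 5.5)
The plan is the standard ``no bounded derivation'' argument. Suppose, for a contradiction, that $(M,{}^*)$ has a finite identity basis $\Sigma$. Let $r$ be the maximum number of variables occurring in an identity of $\Sigma$, and fix $k>r$ (taking $k$ somewhat larger if needed). By condition 1), $(M,{}^*)$ satisfies $\mathbf{p}_k\approx\mathbf{q}_k$. Birkhoff's completeness theorem then yields a derivation
\[
\mathbf{p}_k=\mathbf{w}_0,\ \mathbf{w}_1,\ \dots,\ \mathbf{w}_m=\mathbf{q}_k .
\]
Here each step replaces a factor $\theta(\mathbf{u})$ by $\theta(\mathbf{v})$: we have $\mathbf{w}_i=\mathbf{s}\,\theta(\mathbf{u})\,\mathbf{t}$ and $\mathbf{w}_{i+1}=\mathbf{s}\,\theta(\mathbf{v})\,\mathbf{t}$, where $\mathbf{u}\approx\mathbf{v}$ lies in $\Sigma$ or is obtained from such an identity by applying the involution to both sides, and $\theta$ is a substitution into the free involutory semigroup. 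Since $\mathbf{p}_k\ne\mathbf{q}_k$, it suffices to prove by induction on $i$ that $\mathbf{w}_i=\mathbf{p}_k$ for every $i$.

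The first ingredient is a description of the identities of $(T_2(\mathbb{B}),{}^S)$, which every $\mathbf{u}\approx\mathbf{v}\in\Sigma$ satisfies by condition 2). The skew transpose of a matrix in $T_2(\mathbb{B})$ swaps the two diagonal entries and fixes the corner entry. So the value of a term is computed by the case $n=2$ of Lemma~\ref{lem:ijexpansion}, reading $x^*$ as a letter whose diagonal entries are those of $x$ interchanged. This gives an analogue of Proposition~\ref{prop:identitiesTnB}. The two sides must have the same content once $x$ and $x^*$ are identified. Moreover, each single letter occurrence $y\in\{x,x^*\}$ on one side, with gaps $G_1,G_2$, must be matched on the other side by an occurrence of $y$ (or of the dual letter, with the roles of the gaps swapped) whose gaps are contained in $G_1,G_2$, where gaps are measured in the ``diagonal'' sense (mixing $x$ and $x^*$ appropriately). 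Testing with substitutions such as $x\mapsto\left(\begin{smallmatrix}1&1\\0&0\end{smallmatrix}\right)$, for which $x^*=\left(\begin{smallmatrix}0&1\\0&1\end{smallmatrix}\right)$, then forces strong rigidity on the relative order of $x$ and $x^*$ in the two sides.

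The main step, and the expected obstacle, is the following combinatorial claim. Let $\mathbf{u}\approx\mathbf{v}$ hold in $(T_2(\mathbb{B}),{}^S)$ with at most $r<k$ variables, and let $\theta(\mathbf{u})$ be a factor of $\mathbf{p}_k$. Then $\theta(\mathbf{v})=\theta(\mathbf{u})$. I would exploit the rigid structure of $\mathbf{p}_k=\mathbf{a}_k\mathbf{c}_k\mathbf{b}_k$:
\begin{itemize}
\item every $a_i$ and $b_i$ occurs exactly twice, and the occurrences in the prefix $\mathbf{a}_k$ and the suffix $\mathbf{b}_k$ act as ``fences'';
\item each $x_i$ occurs once, and each $x_i^*$ occurs once.
\end{itemize}
The passage from $\mathbf{c}_k$ to $\mathbf{d}_k$ reverses the order of all $k$ blocks on both halves simultaneously. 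With fewer than $k$ variables, some variable $z$ of $\mathbf{u}$ must have $\theta(z)$ contain two consecutive separators, or else $\theta(\mathbf{u})$ avoids part of the pattern. Using the letter-with-gaps description, I would show that no letter can be moved across a fence letter $a_i$ or $b_i$, and no $x_i$ can be moved relative to $x_i^*$. Hence any identity from $\Sigma$ applied inside $\mathbf{p}_k$ only rewrites $\theta(\mathbf{u})$ to itself. I would first handle the case where $\theta(\mathbf{u})$ lies inside $\mathbf{c}_k$, then the cases where it overlaps $\mathbf{a}_k$ or $\mathbf{b}_k$. The pigeonhole count on the $k$ ``crossings'' $x_ia_i$ and $x_i^*b_i$ is where $k>r$ is used.

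With the claim established, the induction shows $\mathbf{w}_m=\mathbf{p}_k\ne\mathbf{q}_k$. This contradicts the choice of the derivation, so $(M,{}^*)$ is nonfinitely based.
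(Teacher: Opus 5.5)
Your overall frame is the right one: assume a finite basis $\Sigma$, take $k$ larger than the number of variables in $\Sigma$, and analyse an elementary derivation from $\mathbf p_k$ to $\mathbf q_k$. But the key claim inside it is false. You assert that if $\mathbf u\approx\mathbf v$ holds in $(T_2(\mathbb B),{}^S)$, involves fewer than $k$ variables, and $\theta(\mathbf u)$ is a factor of $\mathbf p_k$, then $\theta(\mathbf v)=\theta(\mathbf u)$.

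Here is a counterexample. By Proposition~\ref{prop:identitiesTnB} with $n=2$, the monoid $T_2(\mathbb B)$, and hence $(T_2(\mathbb B),{}^S)$, satisfies the two-variable identity $xyx\approx xyx^2$. The only new occurrence of $x$ on the right has gaps $\{x,y\},\{x\}$, and it is dominated by the last $x$ of $xyx$, which has gaps $\{x,y\},\varnothing$. Since $\mathbf a_k$ ends with $a_1$ and $\mathbf c_k$ begins with $x_1a_1$, the word $\mathbf p_k$ contains the factor $a_1x_1a_1=\theta(xyx)$ for $\theta\colon x\mapsto a_1,\ y\mapsto x_1$. One application of the identity therefore replaces this factor by $a_1x_1a_1a_1$ and yields a word different from $\mathbf p_k$. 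Likewise, a direct $2\times2$ computation shows that $xyx^*\approx x^2yx^*$ holds in $(T_2(\mathbb B),{}^S)$. With $x\mapsto x_1$ and $y\mapsto a_1x_2a_2\cdots a_{k-1}x_k$, it duplicates $x_1$ inside $\mathbf c_k$.

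Nothing prevents such identities from lying in $\Sigma$. Take $(M,{}^*)=(T_2(\mathbb B),{}^S)$, which satisfies both hypotheses: any finite basis may be enlarged by these identities. So a derivation can leave $\mathbf p_k$ at its very first step, and your induction ``$\mathbf w_i=\mathbf p_k$ for all $i$'' cannot be carried out. Your structural description is also inaccurate: each $a_i$ and $b_i$ occurs three times in $\mathbf p_k$, once in each of $\mathbf a_k,\mathbf c_k,\mathbf b_k$, not twice.

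What is missing is an invariant weaker than literal equality. You need a property $P$ of words that $\mathbf p_k$ has and $\mathbf q_k$ lacks. For instance, $P$ could record the relative order of $x_1,\dots,x_k$: in $\mathbf p_k$ they are increasing and separated by the middle occurrences of the $a_i$, while in $\mathbf q_k$ they are decreasing and separated by the $b_i$. You then need a lemma of the following form. Suppose $\mathbf w$ has $P$ and $(T_2(\mathbb B),{}^S)$ satisfies $\mathbf p_k\approx\mathbf w$, and $\mathbf w'$ arises from $\mathbf w$ by one application of an identity of $(M,{}^*)$ in fewer than $k$ variables. Then $\mathbf w'$ has $P$.

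This lemma must be proved for arbitrary such $\mathbf w$, not merely for factors of $\mathbf p_k$; such words may contain squares and extra occurrences of the $a_i$, $b_i$, and even of the $x_i$. That is where the gap description of the identities of $(T_2(\mathbb B),{}^S)$ and the counting against $k$ genuinely enter, and your sketch does not yet address it.

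Note also that the gap description is sharper than you state. The two sides of an identity of $(T_2(\mathbb B),{}^S)$ must have the same content as subsets of $\mathcal X\cup\mathcal X^*$, not merely after identifying $x$ with $x^*$. This is because the $(1,1)$ entry of the image of $x^*$ is the $(2,2)$ entry of the image of $x$, and these can be chosen independently. This finer information is likely to be needed in such a lemma.
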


\begin{prop}\label{prop:ET2D}
Let \/ $\mathbb{S}$ be an additively idempotent semiring whose multiplicative idempotents commute. Then the involutory monoid $(ET_2(\mathbb{S}), {}^S)$ is nonfinitely based.
\end{prop}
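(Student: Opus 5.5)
The plan is to verify the two hypotheses of Proposition~\ref{prop:T2BD} for $(M,{}^*)=(ET_2(\mathbb{S}),{}^S)$. Condition~2) is immediate. The $2\times2$ matrices with entries $0,1$ form an involutory submonoid of $(ET_2(\mathbb{S}),{}^S)$ isomorphic to $(T_2(\mathbb{B}),{}^S)$, since $0,1$ are idempotents and $1+1=1$ by additive idempotency. So the whole work lies in condition~1): showing that $\mathbf{p}_k\approx\mathbf{q}_k$ holds for every $k\ge2$.

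For condition~1) I will pass to plain words. Under a substitution $\varphi$ into $(ET_2(\mathbb{S}),{}^S)$, each $x^*$ is sent to $\varphi(x)^S$. Hence I treat $x$ and $x^*$ as independent letters and regard $\mathbf{p}_k,\mathbf{q}_k$ as semigroup words over the alphabet $\{a_i,b_i,x_j,x_j^*\}$. It then suffices to show that $ET_2(\mathbb{S})$ satisfies these plain identities. By Lemma~\ref{lem:ijexpansion_forES} with $n=2$, the diagonal entries of $\varphi(\mathbf{w})$ equal $\prod_{x\in\con(\mathbf{w})}\varphi(x)_{ii}$, because the diagonal idempotents commute. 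The $(1,2)$ entry is the sum, over all occurrences of single letters $\mathbf{w}=u_s$ in $\mathbf{u}$, of
\[
\prod_{x\in G_1}\varphi(x)_{11}\cdot\varphi(u_s)_{12}\cdot\prod_{x\in G_2}\varphi(x)_{22},
\]
where $G_1$ and $G_2$ are the contents of the prefix and suffix. Summands with the same letter and the same gap pair coincide, and additive idempotency then collapses the sum to one term per distinct triple (letter, $G_1$, $G_2$). So it suffices to check two things: $\con(\mathbf{p}_k)=\con(\mathbf{q}_k)$, which is clear, and that the two words have the same set of triples $(y,\con(\text{prefix}),\con(\text{suffix}))$ over all occurrences of each letter $y$. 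Note that this is an equality test, with no appeal to smallness, which is exactly why the weaker hypothesis on idempotents suffices here.

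The main step is this combinatorial check. The prefix $\mathbf{a}_k$ already contains all $a_i,b_i$, and the suffix $\mathbf{b}_k$ does too. So every occurrence inside $\mathbf{a}_k$ has left content made of some $a$'s and $b$'s and right content the full content, identically in both words; symmetrically for $\mathbf{b}_k$. For an occurrence inside the middle factor $\mathbf{c}_k$ or $\mathbf{d}_k$, both gap contents contain all $a$'s and $b$'s. Hence only the $x$-letters determine the pair. In $\mathbf{c}_k$ an occurrence of $x_j$ has $\{x_1,\dots,x_{j-1}\}$ on its left and $\{x_{j+1},\dots,x_k\}\cup\{x^*_1,\dots,x^*_k\}$ on its right; an occurrence of $x_j^*$ has all $x_i$ together with $x^*_1,\dots,x^*_{j-1}$ on its left and $x^*_{j+1},\dots,x^*_k$ on its right. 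In $\mathbf{d}_k$ the $x$'s appear in reverse order. I will therefore carefully account for the $a$-, $b$- and $x$-letters inside the middle factors and check that the occurrences of $a_i,b_i$ there produce gap pairs already arising from occurrences in $\mathbf{a}_k$ or $\mathbf{b}_k$ (or in the other middle factor). I also need to check that each occurrence of $x_j$ in one word has a matching one in the other. I expect this bookkeeping for the $x$-letters to be the real obstacle, since the orders of the $x_j$ in $\mathbf{c}_k$ and $\mathbf{d}_k$ are reversed and the naive gap sets differ.

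If that comparison fails letter-by-letter, I will refine it as follows. Because $x_j$ and $x_j^*$ are evaluated at $\varphi(x_j)$ and $\varphi(x_j)^S$, whose diagonal entries are swapped, the $(1,2)$ contributions of $x_j$ in $\mathbf{c}_k$ and of $x_j^*$ in $\mathbf{d}_k$ (and conversely) can be matched after using $\varphi(x^*)_{11}=\varphi(x)_{22}$, $\varphi(x^*)_{22}=\varphi(x)_{11}$ and $\varphi(x^*)_{12}=\varphi(x)_{12}$. So I will compare the expansions of the actual involutory values, pairing terms across $x\leftrightarrow x^*$, rather than insisting on the plain-word identity. This is presumably the mechanism by which \cite{HZL21} verified the identities for $(T_2(\mathbb{B}),{}^S)$-like monoids, and I expect it to close the argument.
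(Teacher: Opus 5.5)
Your treatment of condition 2) is fine, but the route you commit to for condition 1) cannot succeed. With $x_j^*$ read as fresh letters, $\mathbf{p}_k\approx\mathbf{q}_k$ is false already in $T_2(\mathbb{B})\subseteq ET_2(\mathbb{S})$. In $\mathbf{c}_k$ the letter $x_1$ precedes $x_2$, whereas in $\mathbf{d}_k$ it follows it. So sending $x_1\mapsto\left(\begin{smallmatrix}1&1\\0&1\end{smallmatrix}\right)$, $x_2\mapsto\left(\begin{smallmatrix}0&0\\0&1\end{smallmatrix}\right)$, and every other letter (including $x_1^*,x_2^*$) to $E$ gives $x_1x_2\ne x_2x_1$ for every $k\ge 2$. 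The triple sets are therefore genuinely different: decoupling $x$ from $x^*$ throws away exactly what makes the identity true.

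Your claim that the $a_i,b_i$ inside the middle factors only reproduce gap pairs already present elsewhere is also false. The occurrence of $a_i$ in $\mathbf{c}_k$ has the $x$-letters $x_1,\dots,x_i$ on its left and $x_{i+1},\dots,x_k,x_1^*,\dots,x_k^*$ on its right. No occurrence of $a_i$ in $\mathbf{q}_k$ has this gap pair, and the same holds for $b_i$.

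Your fallback is the right mechanism and is essentially the paper's argument, but as written it is only an expectation, and it must cover every middle-factor occurrence, not just the $x$'s. The paper reduces to $A_{11}C_{12}B_{22}=A_{11}D_{12}B_{22}$. It handles the $x$-terms via $\varphi(u)_{12}=\varphi(u^*)_{12}$ for $u=x_1\cdots x_kx_1^*\cdots x_k^*$, noting $u^*=x_k\cdots x_1x_k^*\cdots x_1^*$. It handles the $a$- and $b$-terms by the swap $\varphi(x)_{11}=\varphi(x^*)_{22}$.

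What you must supply is the explicit bijection between occurrences in $\mathbf{c}_k$ and in $\mathbf{d}_k$: $x_j$ with $x_j^*$, $x_j^*$ with $x_j$, $a_i$ with $a_i$, and $b_i$ with $b_i$. You then need to check that paired terms agree. Write $e_j=\varphi(x_j)_{11}$, $f_j=\varphi(x_j)_{22}$, $s=\varphi(a_i)_{12}$, and suppress the common $a$-, $b$-diagonal factors. The $a_i$-term is then $(\prod_{j\le i}e_j)\,s\,(\prod_{j>i}f_j\prod_j e_j)$ in $\mathbf{c}_k$ and $(\prod_j e_j\prod_{j>i}f_j)\,s\,(\prod_{j\le i}e_j)$ in $\mathbf{d}_k$.

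Equating such terms moves idempotents across $s$. So you need the diagonal idempotents to commute with arbitrary entries, not merely with each other. This follows from ${}^S$ being an anti-automorphism of $ET_2(\mathbb{S})$: comparing the $(1,2)$ entries of $(MN)^S$ and $N^SM^S$ for $M=\left(\begin{smallmatrix}e&0\\0&0\end{smallmatrix}\right)$ and $N=\left(\begin{smallmatrix}0&s\\0&0\end{smallmatrix}\right)$ forces $es=se$. The paper uses this tacitly as well.
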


\begin{proof}
Clearly, $(T_2(\mathbb{B}), {}^S)$ is an involutory submonoid of $(ET_2(\mathbb{S}), {}^S)$ and therefore belongs to the variety generated by $(ET_2(\mathbb{S}), {}^S)$. In view of Proposition \ref{prop:T2BD}, it remains to verify that the involutory monoid $(ET_2(\mathbb{S}), {}^S)$ satisfies each identity $\mathbf{p}_k\approx \mathbf{q}_k$ with $k\ge 2$.
	
Let $\varphi\colon\mathcal{X}\to ET_2(\mathbb{S})$ be any substitution. Since the diagonal entries of the matrices in $ET_2(\mathbb{S})$ are commuting idempotents and $\mathsf{con}(\mathbf{p}_k)=\mathsf{con}(\mathbf{q}_k)$, it follows that $\varphi(\mathbf{p}_k)_{ii}=\varphi(\mathbf{q}_k)_{ii}$ for $i=1, 2$. To prove  $\varphi(\mathbf{p}_k)=\varphi(\mathbf{q}_k)$, it therefore remains to show that $\varphi(\mathbf{p}_k)_{12}=\varphi(\mathbf{q}_k)_{12}$.

Let $A:=\varphi(\mathbf{a}_k)$, $B:=\varphi(\mathbf{b}_k)$, $C:=\varphi(\mathbf{c}_k)$, and $D:=\varphi(\mathbf{d}_k)$. Note that $\mathsf{con}(\mathbf{a}_k)=\mathsf{con}(\mathbf{b}_k)$ and $\mathsf{con}(\mathbf{c}_k)=\mathsf{con}(\mathbf{d}_k)$. Since the diagonal entries of the matrices in $ET_2(\mathbb{S})$ are commuting idempotents, it follows that $A_{ii}=B_{ii}$ and $C_{ii}=D_{ii}$ for $i=1, 2$. A direct calculation yields
\[
\begin{split}
	\varphi(\mathbf{p}_k)_{12} & =A_{11}C_{11}B_{12}+A_{11}C_{12}B_{22}+A_{12}C_{22}B_{22},\\
	\varphi(\mathbf{q}_k)_{12} & =A_{11}D_{11}B_{12}+A_{11}D_{12}B_{22}+A_{12}D_{22}B_{22}.
\end{split}
\]
Thus it suffices to show that $A_{11}C_{12}B_{22}=A_{11}D_{12}B_{22}$.
	
From the proof of Lemma \ref{lem:ijexpansion}, all terms in the expansion of $C_{12}$ (resp., $D_{12}$) are in one-to-one correspondence with all walks of length $4k-1$ from $1$ to $2$. Each term in the expansion of $C_{12}$ is of the form
\[
\varphi(x_1)_{11}\cdots\varphi(y)_{12}\cdots \varphi(x^*_k)_{22}=\varphi(x_1)_{11}\cdots \varphi(w)_{11}\varphi(y)_{12}\varphi(z)_{22}\cdots \varphi(x^*_k)_{22}.
\]
Let $F:=\{a_1, \cdots a_{k-1}, b_1, \cdots, b_{k-1}\}$ and $K:=\{x_1,\cdots, x_k, x^*_1,\cdots, x^*_k\}$. The expansion of $C_{12}$ divides into two sums according to the value of $y$:
\[
C_{12}=\sum_{y\in F} \varphi(x_1)_{11}\cdots \varphi(y)_{12}\cdots \varphi(x^*_k)_{22} +\sum_{y\in K}\varphi(x_1)_{11}\cdots \varphi(y)_{12}\cdots \varphi(x^*_k)_{22}.
\]
Similarly,
\[
D_{12}=\sum_{y\in F} \varphi(x_k)_{11}\cdots \varphi(y)_{12}\cdots \varphi(x^*_1)_{22} +\sum_{y\in K}\varphi(x_k)_{11}\cdots \varphi(y)_{12}\cdots \varphi(x^*_1)_{22}.
\]
For $i=1, \ldots, k-1$, since $a_i, b_i \in \mathsf{con}(\mathbf{a}_k)=\mathsf{con}(\mathbf{b}_k)$ and the diagonal entries of each matrix are commuting idempotent, we have
\[
A_{11}\varphi(a_i)_{11}=A_{11}=A_{11}\varphi(b_i)_{11}\quad \mbox{and}\quad \varphi(a_i)_{22}B_{22}=B_{22}=\varphi(b_i)_{22}B_{22}.
\]
It follows that
\begin{equation}\label{4.9}
\begin{split}
& A_{11}(\sum_{y\in K}\varphi(x_1)_{11}\cdots \varphi(y)_{12}\cdots \varphi(x^*_k)_{22})B_{22}=A_{11}\varphi(x_1\cdots x_kx^*_1\cdots x^*_k)_{12}B_{22}\\
= & A_{11}\varphi(x_k\cdots x_1x^*_k\cdots x^*_1)_{12}B_{22}=A_{11}\sum_{y\in K}\varphi(x_k)_{11}\cdots \varphi(y)_{12}\cdots \varphi(x^*_1)_{22}B_{22}.
\end{split}
\end{equation}
	
Now, observe that $\varphi(x_i)_{11}=\varphi(x^*_i)_{22}$ and $\varphi(x_i)_{22}=\varphi(x^*_i)_{11}$. Let
\[
X:=\sum\limits_{y\in F} \varphi(x_1)_{11}\cdots \varphi(y)_{12}\cdots \varphi(x^*_k)_{22}\quad \mbox{and}\quad Y:=\sum\limits_{y\in F} \varphi(x_k)_{11}\cdots \varphi(y)_{12}\cdots \varphi(x^*_1)_{22}.
\]
Consider a generic summand $\varphi(x_1)_{11}\cdots \varphi(y)_{12}\cdots \varphi(x^*_k)_{22}$ in $X$, where, without loss of generality, assume $y=a_{i-1}$ (the case $y=b_{i-1}$ is similar). Then
\[
\begin{split}
& A_{11}(\varphi(x_1)_{11}\cdots \varphi(a_{i-1})_{12}\cdots \varphi(x^*_k)_{22})B_{22}\\
= &  A_{11}(\varphi(x_1\cdots x_{i-1})_{11}\varphi(a_{i-1})_{12}\varphi(x_i\cdots x_kx^*_1\cdots x^*_k)_{22})B_{22}\\
= & A_{11}(\varphi((x_1\cdots x_{i-1})^*)_{22}\varphi(a_{i-1})_{12}\varphi((x_i\cdots x_kx^*_1\cdots x^*_k)^*)_{11})B_{22}\\
=& A_{11}(\varphi(x_k\cdots x_1x^*_k\cdots x^*_i)_{11}\varphi(a_{i-1})_{12}\varphi(x^*_{i-1}\cdots x^*_1)_{22}B_{22}\\
= & A_{11}(\varphi(x_k)_{11}\cdots \varphi(a_{i-1})_{12}\cdots \varphi(x^*_1)_{22})B_{22}
\end{split}
\]
Thus each summand of $A_{11}XB_{22}$ corresponds to a summand of $A_{11}YB_{22}$. The converse correspondence holds similarly, so $A_{11}XB_{22}=A_{11}YB_{22}$. Consequently,
\[
A_{11}C_{12}B_{22}=A_{11}D_{12}B_{22},
\]
which together with (\ref{4.9}) yields $\varphi(\mathbf{p}_k)_{12}=\varphi(\mathbf{q}_k)_{12}$, as required.
\end{proof}

We note that, in contrast to Proposition~\ref{prop:etinvol}, Proposition~\ref{prop:ET2D} does not require the assumption that the multiplicative idempotents of $\mathbb{S}$ are small.

Recall that $ET_n(L)=T_n(L)$ for any bounded distributive lattice $L$. Hence,  Proposition~\ref{prop:ET2D} implies the following:

\begin{cor}\label{cor:BLT}
For every bounded distributive lattice $L$, the involutory monoid $(T_2(L), {}^S)$, is nonfinitely based.
\end{cor}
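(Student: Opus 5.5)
This corollary is a direct specialization of Proposition~\ref{prop:ET2D}, and I would prove it by checking that a bounded distributive lattice $L$ satisfies that proposition's hypotheses. First, $L=(L,\vee,\wedge)$ with join as addition and meet as multiplication satisfies (S1)--(S5). The bottom element $0$ is neutral for join and absorbing for meet. The top element $1$ is neutral for meet, and $0\ne1$ as long as $L$ is nontrivial, which the paper's convention on semirings requires. Distributivity of the lattice gives (S3). Second, $L$ is additively idempotent, since $a\vee a=a$. Third, every element is a multiplicative idempotent ($a\wedge a=a$), and meet is commutative, so the multiplicative idempotents commute. Hence Proposition~\ref{prop:ET2D} applies to $L$, and $(ET_2(L),{}^S)$ is nonfinitely based.

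It remains to identify $ET_2(L)$ with $T_2(L)$. By definition \eqref{eq:ET}, $ET_2(L)$ consists of the triangular matrices whose diagonal entries are idempotent. Since every element of $L$ is idempotent, this condition is vacuous, so $ET_2(L)=T_2(L)$, as already noted after \eqref{eq:ET}. The involution on both sides is the same skew transposition. Therefore the involutory monoids $(ET_2(L),{}^S)$ and $(T_2(L),{}^S)$ coincide, and the claim follows.

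There is no real obstacle here. The only point that deserves care is confirming that Proposition~\ref{prop:ET2D} does not need the small-idempotents assumption, which the paper explicitly notes. That remark matters only for more general semirings, though, because in a bounded lattice every element satisfies $e\le 1$ anyway.
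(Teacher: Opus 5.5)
Your proposal is correct and follows essentially the paper's own route. The paper derives the corollary as an immediate specialization of Proposition~\ref{prop:ET2D}, using that $ET_n(L)=T_n(L)$ for a bounded distributive lattice $L$; your verification of the semiring axioms, additive idempotency and commuting idempotents simply makes explicit what the paper leaves implicit.
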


Combining this fact with Proposition~\hyperref[prop:multiprop]{3.4.14}, we obtain a complete solution to the Finite Basis Problem for involutory monoids of triangular matrices over lattices.

\begin{prop}\label{prop:BLT}
For every bounded distributive lattice $L$, the involutory monoid $(T_n(L), {}^S)$ is nonfinitely based, and for $n\ge 3$, it is inherently nonfinitely based.
\end{prop}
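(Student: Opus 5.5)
The statement merely combines two results already established, so I will split the argument according to the size of $n$.

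\emph{Case $n\ge 3$.} I will use Proposition~\hyperref[prop:multiprop]{3.4.14}, which says that $(T_n(L),{}^S)$ is inherently nonfinitely based. Recall how it was obtained: by Lemma~\ref{lem:twisted1} the involutory monoid is twisted, by Corollary~\ref{cor:trianglelattice} its semigroup reduct $T_n(L)$ is inherently nonfinitely based, and Claim~3 of the Transfer Lemma then transfers this property to $(T_n(L),{}^S)$. To get the plain nonfinite basedness claimed for these $n$, I still need that inherent nonfinite basedness implies nonfinite basedness. By definition, an inherently nonfinitely based variety is locally finite and is not contained in any finitely based locally finite variety. If it were itself finitely based, it would be such a variety containing itself, which is a contradiction.

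\emph{Case $n=2$.} Here $(T_2(L),{}^S)$ is nonfinitely based by Corollary~\ref{cor:BLT}, which is Proposition~\ref{prop:ET2D} specialised via $ET_2(L)=T_2(L)$. Before invoking Proposition~\ref{prop:ET2D} I will check its hypotheses for $L$. A bounded distributive lattice is additively idempotent since $a\vee a=a$. Its multiplication $\wedge$ is commutative, so in particular its multiplicative idempotents commute. Finally, every element is a multiplicative idempotent, and this is exactly why $ET_2(L)=T_2(L)$.

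Putting the two cases together, $(T_n(L),{}^S)$ is nonfinitely based for every $n\ge2$, and inherently nonfinitely based for $n\ge3$. I see no real obstacle here, since all the substantive work was done in Proposition~\ref{prop:ET2D} and in the transfer argument. The only steps to state explicitly are the implication from inherent nonfinite basedness to nonfinite basedness and the observation that $ET_n(L)=T_n(L)$.
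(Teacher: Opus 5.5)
Your proposal is correct and is the paper's argument: the case $n=2$ comes from Corollary~\ref{cor:BLT}, that is, Proposition~\ref{prop:ET2D} with $ET_2(L)=T_2(L)$, and the case $n\ge 3$ comes from Proposition~3.4.14, via Lemma~\ref{lem:twisted1}, Corollary~\ref{cor:trianglelattice} and Claim~3 of the Transfer Lemma. You also spell out two points the paper leaves implicit: that inherent nonfinite basedness implies nonfinite basedness, and that a bounded distributive lattice satisfies the hypotheses of Proposition~\ref{prop:ET2D}.
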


\subsection{Open problems}
\label{subsec:involopen}
Here, in parallel to Section~\ref{subsec:plainopen}, we summarize the as yet unsettled instances of the Finite Basis Problem for monoids of triangular matrices equipped with skew transposition.

For involutory monoids of triangular matrices over a finite field, only the case of  $3\times 3$ matrices remains open.

\begin{problem}
Determine the status of the Finite Basis Problem for the involutory monoid of all $3\times 3$ triangular matrices over a finite field.
\end{problem}

\begin{problem}
 Determine the status of the Finite Basis Problem for the involutory monoid of all $3\times 3$ triangular matrices with diagonal entries $0$ and $1$ over a finite field.
\end{problem}

For involutory monoids of the form $(T_n(F),{}^S)$, where $F$ is an infinite field, the Finite Basis Problem is completely solved by Proposition~\ref{prop:3x3infinitefield}, \cite[Corollary 14]{ZJL17} and \cite[Corollary 4.12]{ZL20}. In contrast, involutory monoids of the form $(U^0T_n(F),{}^S)$, where $F$ is an infinite field, the answer to the Finite Basis Problem is known only for $n=2$ and, if $\mathsf{char}\,F= 0$, for $n=3$.

\begin{problem}
Determine the status of the Finite Basis Problem for the involutory monoid of all $n\times n$ triangular matrices with diagonal entries $0$ and $1$ over an infinite field for $n\ge 4$ and for $n=3$ if the field has prime characteristic.
\end{problem}

The monoid family $U^{\pm}T_n(F)$ is defined over any field $F$ of characteristic different from $2$. Proposition~\ref{prop:pminvol} shows that the  involutory monoid $(U^{\pm}T_n(F),{}^S)$  is nonfinitely based when $F$ has characteristic $0$ and inherently nonfinitely based when $n\ge 4$ and $F$ is a (finite or infinite) field of odd characteristic.

\begin{problem}
Determine the status of the Finite Basis Problem for the involutory monoid of all $2\times 2$ and $3\times 3$ triangular matrices with diagonal entries $0$ and $\pm1$ over a field of odd characteristic.
\end{problem}

Concerning the Finite Basis Problem for the involutory monoid $(T_n(\mathbb{T}),{}^S)$ of all $n\times n$ triangular tropical matrices, no results are currently known beyond the cases $n=2,3$, in which the Transfer Lemma applies; see Proposition~\hyperref[prop:multiprop]{3.4.10}.

\begin{problem}
Determine the status of the Finite Basis Problem for the involutory monoid of all $n\times n$ triangular matrices over the tropical semiring for $n\ge 4$.
\end{problem}

The monoid family $ET_n(\mathbb{S})$ is defined over any semiring whose multiplicative idempotents commute, but our results on the Finite Basis Problem for the involutory monoid $(ET_n(\mathbb{S}),{}^S)$  (Propositions~\ref{prop:etinvol} and~\ref{prop:ET2D}) cover only the cases where the idempotents in $\mathbb{S}$ are small or $n=2$.

\begin{problem}
Determine the status of the Finite Basis Problem for the involutory monoid of all $n\times n$ triangular matrices $(n\ge 3)$ with idempotent diagonal entries over additively idempotent semirings  whose multiplicative idempotents commute but are not necessarily small.
\end{problem}

\section*{Conclusion}
Studying triangular matrices from the viewpoint of the Finite Basis Problem for identities involving either multiplication alone or multiplication together with skew transposition reveals a number of interesting phenomena. In particular, the boundaries between different gradations of nonfinite basedness turn out to be remarkably subtle: even changing the admissible values of a single matrix entry from 0,1 to $\pm1$ may result in monoids with very different finite basis behavior.

A classification of methods for studying the Finite Basis Problem in semigroups was given in the third-named author's 2001 survey~\cite{Vol01}; see also \cite[Section 1.4.2]{Lee23a}. Since then, new useful methods have been developed, in particular, in~\cite{ACHLV15}, \cite{Lee17}, and~\cite{GSV25}. All the methods surveyed in~\cite{Vol01}, as well as the newer methods, have been employed in the present study, and the limitations of these methods become apparent in several cases. Since many open problems remain, the need for new techniques is evident, especially for addressing the Finite Basis Problem for monoids of matrices over the tropical semiring.

\medskip

\noindent\textbf{Acknowledgments.} The authors are indebted to Marianne Johnson and Edmond W. H. Lee for their comments and valuable suggestions.


\begin{thebibliography}{99}
\bibitem{AVG09}
J. Almeida,  M.V. Volkov, S. V. Goldberg, Complexity of the identity checking problem for finite semigroups. Zap. Nauchn. Sem. POMI \textbf{358}, 5--22 (2008) [In Russian; English translation: J. Math. Sci. \textbf{158}(5), 605--614 (2009)]

\bibitem{Ami74}
S. A. Amitsur, Polynomial identities. Israel J. Math. \textbf{19}, 183--199 (1974)

\bibitem{ACHLV15}
K. Auinger, Y. Z. Chen, X Hu, Y. F. Luo, M. V. Volkov, The finite basis
problem for Kauffman monoids. Algebra Universalis \textbf{74}(3–4), 333--350 (2015)

\bibitem{ADPV14}
K. Auinger, I. Dolinka, T. V. Pervukhina, M. V. Volkov, Unary enhancements of inherently non-finitely based semigroups. Semigroup Forum \textbf{89}(1), 41--51 (2014)

\bibitem{ADV12}
K. Auinger, I. Dolinka, M. V. Volkov, Matrix identities involving multiplication and transposition. J. Eur. Math. Soc.  \textbf{14}(3), 937--969 (2012).

\bibitem{AV20}
K. Auinger, M. V. Volkov, Combinatorial skeletons of 2-cobordism and annular categories with applications to equational logic. Preprint arXiv 2002.01016v2 (2026) https://doi.org/10.48550/arXiv.2002.01016

\bibitem{Bis04}
S. Bistarelli, Semirings for Soft Constraint Solving and Programming. Lecture Notes in Computer Science, vol. 2962. Springer, Berlin (2004)

\bibitem{Br68}
T.C. Brown, On locally finite semigroups. Ukrainsk. Mat. Zh. \textbf{20}, 732--738 (1968). [In Russian; English translation: Ukrainian Math. J. \textbf{20}, 631--636 (1968)]

\bibitem{Br71}
T.C. Brown, An interesting combinatorial method in the theory of locally finite semigroups. Pacific J. Math. \textbf{36}, 285--289 (1971)

\bibitem{BuSa81}
S. Burris, H.P. Sankappanavar, A Course in Universal Algebra. Springer, Berlin (1981)

\bibitem{CKR84}
Z. Q. Cao, K. H. Kim, F. W. Roush, Incline Algebra and Applications. Ellis Horwood, Chichester (1984)

\bibitem{Cha70}
M. Chacron, A note on matrices with entries in a distributive lattice. Bull. Soc. Math. Belg. \textbf{22}, 143--145 (1970)

\bibitem{CHL16}
Y. Z. Chen, X Hu, Y. F. Luo, The finite basis property of a certain semigroup of upper triangular matrices over a field. J. Algebra Appl. \textbf{15}(9), article no. 1650177 (2016)

\bibitem{CHLS16}
Y. Z. Chen, X. Hu, Y. F. Luo, O. Sapir, The finite basis problem for the monoid of two-by-two upper triangular tropical matrices. Bull. Aust. Math. Soc. \textbf{94}, 54--64 (2016)

\bibitem{CMZ17}
A. E. Clement, S. Majewicz, M. Zyman, The Theory of Nilpotent Groups. Birkh\"auser, Cham (2017)

\bibitem{DKL06}
O. M. Di Vincenzo, P. Koshlukov, R. La Scala, Involutions for upper triangular matrix algebras. Adv. Appl. Math. \textbf{37}, 541--568 (2006)

\bibitem{Edm77}
C. C. Edmunds, On certain finitely based varieties of semigroups. Semigroup Forum \textbf{15}, 21--39 (1977)

\bibitem{Faj72}
S. Fajtlowicz, Equationally complete semigroups with involution. Algebra Universalis \textbf{1}(1), 355--358 (1972)

\bibitem{GLLZ26}
M. Gao, E. W. H. Lee, Y. F. Luo, W. T. Zhang, Finite basis problem for involution semigroups of order four, Pacific J. Math. \textbf{342}(1), 163--206  (2026)

\bibitem{GZL20}
M. Gao, W. T. Zhang, Y. F. Luo, The monoid of $2\times 2$ triangular boolean matrices under skew transposition is non-finitely based. Semigroup Forum \textbf{100}, 153--168 (2020)

\bibitem{GZL202}
M. Gao, W. T. Zhang, Y. F. Luo, A nonfinitely based involutory semigroup of order five. Algebra Universalis \textbf{81}(3), article no. 31 (2020)

\bibitem{Gau96}
S. Gaubert, On the Burnside problem for semigroups of matrices in the (max, +) algebra. Semigroup Forum \textbf{52}, 271--292 (1996)

\bibitem{Give64}
Y. Give'on, Lattice matrices. Inform. Control \textbf{7}, 477--484 (1964)

\bibitem{GM78}
I. Z. Golubchik, A. V. Mikhalev, A note on varieties of semiprime rings with semigroup identities. J. Algebra \textbf{54}, 42--45 (1978)

\bibitem{Guna98}
J. Gunawardena (ed.). Idempotency. Publications of the Newton Institute, vol. 11. Cambridge University Press, Cambridge (1998)

\bibitem{GLV22}
S. V. Gusev, E. W.H. Lee, B. M. Vernikov, The lattice of varieties of monoids. Japan. J. Math. \textbf{17}, 117--183 (2022)

\bibitem{GSV25}
S. V. Gusev, O. B. Sapir,  M. V. Volkov, Strongly nonfinitely based monoids. J. Comb. Algebra \textbf{9}(1-2), 129--144 (2025)

\bibitem{GV*}
S. V. Gusev, M. V. Volkov, Strongly nonfinitely based monoids of small order (in preparation)

\bibitem{Hall:1959}
M. Hall, Jr., The Theory of Groups. The Macmillan Company, New York (1959)

\bibitem{HM79}
T. E. Hall, W. D. Munn, Semigroups satisfying minimal conditions. II.  Glasgow Math. J. \textbf{20}, 133--140 (1979)

\bibitem{HZL21}
B. B. Han, W. T. Zhang, Y. F. Luo, Equational theories of upper triangular tropical matrix semigroups. Algebra Universalis \textbf{82}(3), article no. 44 (2021)

\bibitem{Howie:1995}
J. M. Howie, Fundamentals of Semigroup Theory. Clarendon Press, Oxford (1995)

\bibitem{Isb70}
J. R. Isbell, Two examples in varieties of monoids. Proc. Cambridge Philos. Soc. \textbf{68}(2), 265--266 (1970)

\bibitem{Izh14}
Z. Izhakian, Semigroup identities in the monoid of triangular tropical matrices. Semigroup Forum \textbf{88}(1), 145--161 (2014)

\bibitem{JF19}
M. Johnson, P. Fenner, Identities in unitriangular and gossip monoids. Semigroup Forum \textbf{98}, 338--354 (2019)

\bibitem{Kam22}
M. Kambites, Free objects in triangular matrix varieties and quiver algebras over semirings. J. Algebra \textbf{590}, 439--462 (2022)

\bibitem{KR04}
K. H. Kim, F. W. Roush, Inclines and incline matrices: a survey. Linear Algebra Appl. \textbf{379}, 457--473 (2004)

\bibitem{KV24}
N. V. Kitov, M. V. Volkov, Identities in twisted Brauer monoids. In A. A. Ambily, V. B. Kiran Kumar (eds.), {Semigroups, Algebras and Operator Theory}. Springer Proceedings in Mathematics \& Statistics, vol. {436}, pp. 79--103. Springer, Singapore (2024)

\bibitem{Kra91}
A. N. Krasilnikov, The identities of a group with nilpotent commutator subgroup are finitely based. {Izv. Akad. Nauk SSSR, Ser. Mat.} \textbf{54}(6), 1181--1195 (1991). [In Russian; English translation: {Mathematics of the USSR--Izv.} \textbf{37}(3),  539--553 (1991)]

\bibitem{Lee76}
A. Lee, Secondary symmetric, skewsymmetric and orthogonal matrices. Period. Math. Hungarica \textbf{7}, 63--70 (1976)

\bibitem{Lee08}
E. W. H. Lee, On the variety generated by some monoid of order five. Acta Sci. Math. (Szeged) \textbf{74}, 509--537 (2008)

\bibitem{Lee12}
 E. W. H. Lee, {A sufficient condition for the non-finite basis property of semigroups}. Monatsh. Math. \textbf{168}(3-4), 461--472  (2012)

\bibitem{Lee16}
 E. W. H. Lee, Finitely based finite involution semigroups with non-finitely based reducts. Quaest. Math. \textbf{39}(2),  217--243 (2016)

\bibitem{Lee17}
E. W. H. Lee, Equational theories of unstable involution semigroups. Electron. Res. Announc. Math. Sci. \textbf{24}, 10--20 (2017)

\bibitem{Lee23a}
E. W. H. Lee, Advances in the Theory of Varieties of Semigroups. Frontiers in Mathematics. Birkh\"auser, Cham (2023)

\bibitem{Lee23b}
E. W. H. Lee,  Embedding finite involution semigroups in matrices with transposition. Discrete Appl. Math. 340, 327--330 (2023)

\bibitem{LZ15}
 E. W. H. Lee, W. T. Zhang, Finite basis problem for semigroups of order six. LMS J. Comput. Math. \textbf{18}(1), 1--129  (2015)

\bibitem{LL11}
J. R. Li, Y. F. Luo, On the finite basis problem for the monoids of triangular boolean matrices. Algebra Universalis \textbf{65}, 353--362 (2011)

\bibitem{LiMa:2005}
G. L. Litvinov, V. P. Maslov (eds.), Idempotent Mathematics and Mathematical Physics. Contemporary Mathematics, vol. 377. American Mathematical Society, Providence, RI (2005)

\bibitem{LS77}
R. C. Lyndon, P. E. Schupp, Combinatorial Group Theory. Ergebnisse der Mathematik und ihrer Grenzgebiete. 2. Folge, vol. 89. Springer, Berlin (1977)

\bibitem{Mal53}
A. I. Mal'cev, Nilpotent semigroups. Ivanov. Gos. Ped. Inst. U\v{c}. Zap. Fiz.-Mat. Nauki \textbf{4} (1953), 107--111 [In Russian; reprinted in: Selected Works. Vol. 1: Classical Algebra, pp. 335--339. Nauka, Moscow (1976)]

\bibitem{Mal67}
A. I. Mal'cev, Multiplication of classes of algebraic systems. Sibirsk. Mat. \v Z. \textbf{8}, 346--365 (1967) [In Russian; reprinted in: Selected Works. Vol. 2: Mathematical Logic and General Theory of Algebraic Systems, pp. 355--372. Nauka, Moscow (1976); English translation: Siberian Math.\ J. \textbf{8}, 254--267 (1967); reprinted in: The Metamathematics of Algebraic Systems. Collected Papers: 1936–1967. Studies in Logic and the Foundations of Mathematics, vol. 66, pp. 422--446. North Holland, Amsterdam (1971)]

\bibitem{Mal71}
Yu. N. Mal’cev, A basis for the identities of the algebra of upper-triangular matrices. Algebra i Logika \textbf{10}(4), 393--400 (1971). [In Russian; English translation:  Algebra and Logic \textbf{10}, 242--247 (1971)]

\bibitem{Mace4}
W. McCune, Prover9 and Mace4. (2009). http://www.cs.unm.edu/~mccune/prover9/


\bibitem{Neu67}
H. Neumann, Varieties of Groups. Ergebnisse der Mathematik und ihrer Grenzgebiete. 2. Folge, vol. 37. Springer, Berlin (1967)

\bibitem{Okn98}
J. Okni\'nski, Semigroups of Matrices. Series in Algebra, vol. 6. World Scientific, Singapore (1998)

\bibitem{Okn15}
J. Okni\'nski,  Identities of the semigroup of upper triangular tropical matrices. Commun.
Algebra \textbf{43}(10), 4422--4426 (2015)

\bibitem{Per69}
P. Perkins, Bases for equational theories of semigroups. J. Algebra \textbf{11},  298--314 (1969)

\bibitem{Pol80}
S. V. Polin, Identities of the algebra of triangular matrices. Sibirsk. Mat. Zh. \textbf{21}(4), 206--215 (1980).  [In Russian; English translation: Siberian Math. J. \textbf{21}  638--645  (1980)]


\bibitem{Sap87a}
M. V. Sapir, Problems of Burnside type and the finite basis property in varieties of semigroups. {Izv. Akad. Nauk SSSR, Ser. Mat.} \textbf{51}(2), 319--340 (1987). [In Russian; English translation: {Mathematics of the USSR--Izv.} \textbf{30}(2), 295--314 (1988)]

\bibitem{Sap87b}
M. V. Sapir, Inherently nonfinitely based finite semigroups. {Mat. Sb.} \textbf{133}(2), 154--166 (1987). [In Russian; English translation: Mathematics of the USSR--Sb. \textbf{61}, 155--166 (1988)]

\bibitem{SV23}
O. B. Sapir, M. V. Volkov, Catalan monoids inherently nonfinitely based relative to finite $\mathrsfs{R}$-trivial semigroups. J. Algebra \textbf{633},  138--171 (2023)

\bibitem{Sh94}
L. N. Shevrin, On the theory of epigroups. I.  {Mat. Sb.} \textbf{185}(8), 129--160 (1994). [In Russian; English translation: Russian Acad. Sci. Sb. Math., \textbf{82}(2), 485--512 (1995)]

\bibitem{SVV09}
L. N. Shevrin, B. M. Vernikov, M.V. Volkov, Lattices of semigroup varieties. Izv. Vyssh. Uchebn. Zaved. Mat. no. 3, 3--36 (2009). [In Russian; English translation: Russian Math. (Iz. VUZ), \textbf{53}(3), 1--28 (2009)]

\bibitem{Sid81}
P. N. Siderov, A basis for the identities of an algebra of triangular matrices over an arbitrary field. Pliska Stud. Math. Bulgar. \textbf{2}, 143--152 (1981). [In Russian]

\bibitem{Sko86}
L. A. Skornyakov, Invertible matrices over distributive structures. Sibirsk. Mat. Zh. \textbf{27}(2), 182–-185 (1986). [In Russian; English translation: Siberian Math. J. \textbf{27}(2), 289--292 (1986)]

\bibitem{St69}
W. Stephenson, Unique addition rings. Canadian J. Math. \textbf{21}, 1455--1461 (1969)

\bibitem{Tan02}
Yi-jia Tan, On compositions of lattice matrices. Fuzzy Sets Syst. \textbf{129}(1), 19--28 (2002)

\bibitem{Vol89}
M. V. Volkov, On finite basedness of semigroup varieties. Mat. Zametki \textbf{45}(3), 12--23 (1989) [In Russian; English translation (entitled `The finite basis question for varieties of semigroups'): Math. Notes \textbf{45}, 187--194 (1989)]

\bibitem{Vol00}
M. V. Volkov, The finite basis problem for finite semigroups: a survey. In: M.~P.~Smith, E.~Giraldes, P.~Mendes Martins (eds.), {Semigroups}, pp. 244--279. World Scientific, Singapore (2000)

\bibitem{Vol01}
M. V. Volkov, The finite basis problem for finite semigroups. {Sci. Math. Jpn.} \textbf{53}, 171--199 (2001)

\bibitem{Vol04}
M. V. Volkov, Reflexive relations, extensive transformations and piecewise testable languages of a given height. Int. J. Algebra and Computation \textbf{14}(5-6), 817--827 (2004)

\bibitem{Vol15}
 M. V. Volkov, A nonfinitely based semigroup of triangular matrices. In: J. Meakin,  A.R.Rajan, P. G. Romeo (eds.), Semigroups, Algebra and Operator Theory, [Springer Proc. Math. \& Statistics. 142], pp. 27--38. Springer India, New Delhi (2015)

 \bibitem{Vol25}
 M. V. Volkov, Identities of triangular Boolean matrices. In: G. Badia, A. Blass, N. Dershowitz, M. Droste (eds.), Fields of Logic and Computation IV: Essays Dedicated to Yuri Gurevich on the Occasion of His 85th Birthday, Lecture Notes in Computer Science. Springer, Cham (accepted). Preprint arXiv:2412.16113v4 (2025).  https://doi.org/10.48550/arXiv.2412.16113

\bibitem{VG03}
 M.V. Volkov, I.A. Goldberg, Identities of semigroups of triangular matrices over finite fields. Mat. Zametki \textbf{73}(4), 502--510 (2003). [In Russian; English translation: Math. Notes \textbf{73}(4), 474--481 (2003)]

\bibitem{VG04}
 M.V. Volkov, I.A. Goldberg, The finite basis problems for monoids of triangular boolean matrices. In: Algebraic Systems, Formal Languages, and Conventional and Unconventional Computation Theory, RIMS Kokyuroku \textbf{1366}, 205--214 (2004)

\bibitem{ZJL17}
W. T. Zhang, Y. D. Ji, Y. F. Luo, The finite basis problem for infinite involution semigroups of triangular $2\times 2$ matrices. Semigroup Forum \textbf{94}, 426--441 (2017)

\bibitem{ZLL12}
W. T. Zhang, J. R. Li,  Y. F. Luo, On the variety generated by the monoid of triangular $2 \times 2$ matrices over two-element field. Bull. Aust. Math. Soc. \textbf{86}, 64--77 (2012)

\bibitem{ZLL13}
W. T. Zhang, J. R. Li,  Y. F. Luo, Hereditarily finitely based semigroups of triangular matrices over finite fields. Semigroup Forum \textbf{86}(2), 229--261 (2013)

\bibitem{ZL11}
W. T. Zhang, Y. F. Luo, A new example of a minimal nonfinitely based semigroup. Bull. Aust. Math. Soc. \textbf{84}(3), 484--491 (2011)

\bibitem{ZL20}
W. T. Zhang, Y. F. Luo, The finite basis problem for involutory semigroups of triangular $2\times 2$ matrices. Bull. Aust. Math. Soc. \textbf{101}, 88--104 (2020)

\bibitem{ZLW20}
W. T. Zhang, Y. F. Luo, N. Wang, Finite basis problem for involution monoids of unitriangular boolean matrices. Algebra Universalis \textbf{81}(1), article no. 7 (2020)

\bibitem{Zim80}
A. I. Zimin, Semigroups that are nilpotent in the sense of Mal'cev, Izv. Vyssh. Uchebn. Zaved. Mat. no.6, 23--29  (1980). [In Russian]

\end{thebibliography}
\end{document}